\newtheorem{theorem}{Theorem}[section]
\newtheorem{lemma}[theorem]{Lemma}
\newtheorem{corollary}[theorem]{Corollary}
\theoremstyle{remark}
\newtheorem*{remark}{Remark}
\newcounter{ctr}\loop\stepcounter{ctr}\edef\X{\@Alph\c@ctr}%
\edef\csname s\X\endcsname{\noexpand\mathscr{\X}}
\edef\csname c\X\endcsname{\noexpand\mathcal{\X}}
\edef\csname b\X\endcsname{\noexpand\boldsymbol{\X}}
\edef\csname I\X\endcsname{\noexpand\mathbb{\X}}
\let\@IE\IE\let\IE\undefined
\newcommand{\IE}{\mathop{{}\@IE}\mathopen{}}
\newcommand{\E}{\IE}
\let\@IP\IP\let\IP\undefined
\newcommand{\IP}{\mathop{{}\@IP}\mathopen{}}
\newcommand{\Var}{\mathop{\mathrm{Var}}\mathopen{}}
\newcommand{\bigo}{\mathop{{}\mathrm{O}}\mathopen{}}
\newcommand{\law}{\mathop{{}\sL}\mathopen{}}
\numberwithin{equation}{section}
\def\cite{\citet*}
\crefname{equation}{}{}
\crefname{lemma}{Lemma}{Lemmas}
\crefname{page}{p.}{pp.}
\def\^#1{\relax\ifmmode {\mathaccent"705E #1} \else {\accent94 #1} \fi}
\def\~#1{\relax\ifmmode {\mathaccent"707E #1} \else {\accent"7E #1} \fi}
\def\*#1{\relax#1^\ast}
\edef\-#1{\relax\noexpand\ifmmode {\noexpand\bar{#1}} \noexpand\else \-#1\noexpand\fi}
\def\>#1{\vec{#1}}
\def\.#1{\dot{#1}}
\def\1{\mathop{\mathbf{1}}\mathopen{}}
\def\atop{\@@atop}
\def\emptyset{\varnothing}
\def\epsilon{\varepsilon}
\begin{document}

\begin{frontmatter}
\title{Cram\'er-type Moderate deviations under local dependence}
\runtitle{Moderate deviation by Stein's method}

\begin{aug}
\author[A,C]{\fnms{Song-Hao}
\snm{Liu}\ead[label=e3]{liush@sustech.edu.cn}},
\and
\author[B,D]{\fnms{Zhuo-Song} \snm{Zhang}\ead[label=e2,mark]{zszhangstat@gmail.com}}
\address[A]{Department of Statistics and Data Science,
Southern University of Science and Technology,
\printead{e3}}
\address[C]{Department of Statistics,
The Chinese University of Hong Kong,
}

\address[B]{Department of Statistics and Data Science,
National University of Singapore,
\printead{e2}}
\address[D]{Department of Mathematics,
University of California, San Diego,
}

\end{aug}

\begin{abstract}
      We establish Cram\'er-type moderate deviation theorems for sums of locally
	dependent random variables  and  combinatorial central limit
    theorems. Under some mild exponential moment conditions, 
	optimal error bounds
	and convergence ranges are obtained. Our main results are more general or shaper than the existing results in the literature. 
	 The main results follows from a more general 
 Cram\'er-type moderate deviation theorem for dependent random variables without any boundedness assumptions, which is of independent interest. The proofs couple Stein's method with a recursive argument.
\end{abstract}

\begin{keyword}[class=MSC2020]
\kwd[Primary ]{60F10}
\kwd[; secondary ]{60F05}
\end{keyword}

\begin{keyword}
\kwd{Stein's method}
\kwd{Cram\'er-type moderate deviation}
\kwd{local dependence}
\kwd{combinatorial central
limit theorem}
\kwd{Stein identity}
\end{keyword}

\end{frontmatter}
\section{Introduction}
Moderate deviations estimate the relative errors for distributional approximations. Since
\cite{Cra38}  proved a moderate deviation result for tail probabilities of sums of independent random variables, Cram\'er-type moderate deviation
theorems have been  widely applied to
estimate rare event probabilities. Specially, for independent and identically distributed (i.i.d.) random variables
$X_1, \dots, X_n$ with zero mean and unit variance  satisfying that $\IE e^{t_0 |X_1|} \leq c$ for some $t_0 > 0$, it follows that 
\begin{align*}
	\biggl\lvert\frac{\IP(W_n > x)}{ 1 - \Phi(x) } - 1\biggr\rvert \leq A n^{-1/2} (1 +
	x^{3}) \text{\quad for\quad $0 \leq x \leq a n^{1/6},$}
\end{align*}
where $W_n = (X_1 + \dots + X_n)/\sqrt{n} $, $\Phi(x)$ is the standard normal distribution function, and $A$ and $a$ are positive constants depending only on $t_0$ and $c$. We remark that the range $0 \leq x
\leq an^{1/6}$ and the error term $n^{-1/2}(1 + x^3)$ are optimal for i.i.d. random variables. For
other results on Cram\'er-type moderate deviations, we refer the reader to \cite{Li61P}  and \cite{petrov2012sums} . 

Moderate deviation theorems for independent random variables have been well studied in the literature.  
However, the data may not be independent in the era of big data. It is necessary to develop the
corresponding limit theory for dependent random variables.

In this paper, we focus on Cram\'er-type moderate deviations for sum of locally dependent random variables (see \cref{applocal}) and combinatorial central
limit theorems (see \cref{CBCLT}).
A family of locally dependent random variables means that certain subset of the random variables are
independent of those outside their respective neighborhoods, which is a generalization of
$m$-dependence. 
Although absolute error bounds of normal approximation for sums of locally dependent random variables  have been well studied in the 
literature \citep[see, e.g.,][]{Bal89,Ba89A,Ri94O,Dem96,chenshao2004,fang2019},  
few results for Cram\'er-type moderate deviation theorem for locally dependent random fields have been proved even when assuming that the random variables are bounded. Under certain dependence structures, \cite[]{raic2007}  proved a large deviation result
with some  sophisticated assumptions, which,  however, seems to be too restricted to apply to other applications.
In \cref{thm:9_27_2}, we provide a Cram\'er-type moderate deviation result under local dependence and some mild exponential moment conditions. 

Combinatorial central limit theorem is the central limit theorem for a family of permutation statistics $\sum_{i = 1}^n X_{i, \pi(i)}$, where $n \geq 1$, $\mathbf{X} \coloneqq \{ X_{i,j} : 1 \leq i , j \leq n \}$ is an $n \times n
$ array of random variables, 
and $\pi$ is a uniform random permutation of $\left\{ 1,2,\dots,n \right\}$, independent of
$\mathbf{X}$. Absolute error bounds of normal approximation for $\sum_{i = 1}^n X_{i, \pi(i)}$ have also been well studied
in the literature  
\citep[see][]{Hoe51,Ho78,Goldstein2005,Chen2013From,chen2015error}.  
For relative error bounds, \cite{frolov2019large} obtained
a moderate deviation result under
some Bernstein-type conditions. 
 However, he did not provide the error
 bound. 
In \cref{thm:11_24_1}, we prove a Cram\'er-type moderate deviation result for combinatorial central limit theorems with best possible convergence rates and ranges.

Classical proofs of Cram\'er-type moderate deviations are based on the conjugate method and Fourier
transforms, which perform well when dealing with independent random variables. Nevertheless, it is not easy to apply the Fourier transform without
independence assumptions. Alternatively, Stein's method is a powerful tool in dealing with
dependent structures.  
Since introduced by \cite{stein1972}, Stein's method has been widely applied to prove
optimal
Berry--Esseen bounds and $L_1$ bounds with explicit constant factors for many distributional
approximations \citep{chen2010normal,Cha14}, and moreover, it turns out that Stein's
method can also be used to obtain moderate deviation theorems.
For examples, \cite{Chen2013From}  first applied Stein's method to prove Cram\'er-type moderate deviation
results for normal approximation via Stein identity, and recently,
\cite{shao2018cram}  further obtain a
Cram\'er-type moderate deviation result for nonnormal approximations. In both papers, the authors
made some boundedness assumptions about the random variables of interest. To relax
boundedness assumptions,
\cite{zhang2019cramertype} applied Stein's
method using the exchangeable pair approach to develop a Cram\'er-type
moderate deviation result for unbounded case. However, \cite{zhang2019cramertype}'s result cannot be
applied to deal with locally dependent random variables.

In order to prove \cref{thm:9_27_2} and \cref{thm:11_24_1}, we consider the Stein identity approach of Stein's method. 
Specifically, let $W$ be a random variable, and assume that there exists a random function $\hat{K}(u)$ and a
random variable $R$ such that for all absolutely continuous functions $f$, the following identity holds: 
\begin{equation}
	\IE \{ W f(W) \} = \IE \biggl\{ \int_{-\infty}^{\infty} f'(W + u) \hat{K}(u) du \biggr\} + \IE \{ R f(W) \}. 
    \label{e1}
\end{equation}
The equality \cref{e1} is called Stein identity 
(see Section 2.5 of \cite{chen2010normal}).
 Both $L_1$ bounds and Berry--Esseen bounds via Stein's identity have been well studied in the literature, and we refer the readers to  \cite{chen2010normal} for a detailed survey.  
Based on \cref{e1}, \cite{Chen2013From} proved a Cram\'er-type moderate deviation theorem  for $W$
under the following conditions:
there exists
$\delta_0, \delta_1, \delta_2$ and $\theta$ such that 
\begin{align}
	\begin{aligned}
			& \hat K (u)  = 0 \text{ for $|u| > \delta_0$}, & 
		\lvert \IE \{ \hat{K}_1 \vert W \} - 1 \rvert & \leq \delta_1 (1 + \lvert W \rvert	) , \\
														  & \IE \{ \hat{K}_1 \vert W \} \leq \theta	, 
							&
		\lvert \IE \{R \vert W\} \rvert & \leq \delta_2 (1 + \lvert W \rvert) .  
	\end{aligned}
    \label{eq-con-chen}
\end{align}
However, the conditions may be restricted to apply in some applications. First, the random function $\hat K(u)$
is assumed to be positive and supported on a bounded interval $[-\delta_0, \delta_0]$, where the constant $\delta_0$ is of order $\bigo(n^{-1/2})$ in
some typical applications. Second, the conditional expectations may not be easy to calculate if we
know few on the distribution of $W$. 

To improve \cite{Chen2013From}'s moderate deviation result, we establish a general
Cram\'er-type moderate deviation result (\cref{thm1}) without assuming that the random function
$\hat K(u)$ is positive and supported on a bounded interval, which may be of independent interest for other applications. 
There are several advantages of our result. First, 
optimal error bounds and optimal ranges are obtained for moderate deviations of locally
dependent sums and combinatorial central limit theorems. 
Second, we relax the boundedness assumption and thus
our general theorem can be applied to a much wider class of statistics.

The rest of this paper is organized as follows. We give the result for  locally dependent random
variables in \cref{applocal}. Moderate deviation for combinatorial central limit theorems are
discussed in \cref{CBCLT}. Our
general theorem is given in \cref{sec:main_results}. We prove our general result in
\cref{sec:p_general_thm}. Finally, the proofs
of our results in \cref{applocal,CBCLT} are presented in \cref{sec:general method,sec:p_th3}. Some supplementary materials are given in the appendix. 
    \section{Moderate deviation for sums of locally dependent random variables}\label{applocal}
In this section, we prove a Cram\'er-type moderate deviation theorem for  sums of locally dependent random variables.

We follow the notation in
\cite{chenshao2004}. Let $\mathcal{J}$ be an index set and let $\{ X_i, i \in \mathcal{J} \}$ be
a field of random variables with zero means and finite variances. Let $W = \sum_{i
\in \mathcal{J}} X_i$ and assume that $\Var (W) = 1$. 
For $A \subset \mathcal{J}$, write $X_A = \{ X_i , i \in A\}$, $A^c = \{ j \in \mathcal{J} : j \not\in
A\}$ and denote by $\lvert A \rvert$ the cardinality of $A$. 

We now introduce the following local dependence conditions:  
\begin{enumerate}
	\item [(LD1)] For each $i \in \mathcal{J}$, there exists $A_i \subset \mathcal{J}$ such that
	    $X_i$ is independent of $X_{A_i^c}$.  
    \item [(LD2)] For each $i \in \mathcal{J}$, there exists $B_{i}\subset \mathcal{J}$ such that
        $B_{i}\supset A_{i}$ and  $X_{A_i}$ is independent of $X_{B_i^c}$.  
\end{enumerate}
We note that local dependence satisfying (LD1) and (LD2) is a generalization of $m$-dependence. These local dependence conditions were firstly introduced by \cite{chenshao2004}, and we refer the
reader to other types of local dependence structures in \cite{Bal89,Ba89A,Ri94O,Dem96,fang2019}. 
Absolute error bounds such as $L_1$ bounds and Berry--Esseen bounds for locally dependent random variables have also been well studied in
the literature. For example, in Section 4.7 of \cite{chen2010normal}, an $L_1$ bound was established under (LD1)
and (LD2). \cite{chenshao2004} proved several sharp Berry--Esseen bounds under different local dependence conditions and some polynomial moment
conditions. 
Although Cram\'er-type moderate deviations have been proved for $m$-dependent random variables (see, e.g., \cite{Hei82}), as far as we know, no Cram\'er-type moderate
deviation results have been obtained for locally dependent random variables even for bounded cases.

Let {$N_{i} = \{ j \in \mathcal{J} : 
B_{i} \cap B_{j} \neq\emptyset\}$} and let $\kappa \coloneqq \max_{i \in \mathcal{J}} \lvert N_i \rvert$. 
Let $n = \lvert \mathcal{J} \rvert$. 
Assume that there exist $a_n \geq 1$ and $b \geq 1$ such that 
for all $i \in \mathcal{J}$, 
\begin{align}
	\label{eq:9_19_2}
	\IE \Bigl\{ \exp \Bigl( a_n \sum_{j \in B_i} \lvert X_j \rvert \Bigr) \Bigr\} \leq b. 
\end{align}
We have the following theorem.

\begin{theorem}
	 	 \label{thm:9_27_2}
		 Under \textup{(LD1)} and \textup{(LD2)}, and  assume that \cref{eq:9_19_2} holds. Then
	\begin{equation}
        \biggl\lvert \frac{ \IP [ W > z ] }{1 - \Phi(z)} - 1 \biggr\rvert \leq  C
         \delta_{n} ( 1 + z^3 ) \quad 
		\label{eq:9_27_1}
	\end{equation}
    for $0\leq z\leq c a_{n}^{1/3}\min \{ 1,  \kappa^{-1/3} (1+\theta_{n})^{-2/3}\},$ 
    where $C$ and $c$ are absolute constants and $\delta_{n}=   \kappa^2
	a_n^{-1} ( 1 +  \theta_n^6)$ and $\theta_n = b^{1/2}n^{1/2} a_n^{-1}$.
\end{theorem}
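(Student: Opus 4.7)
The plan is to derive \cref{thm:9_27_2} as an application of the general Cramér-type moderate deviation result \cref{thm1}. The main task is to construct a Stein identity of the form \cref{e1} for $W = \sum_{i \in \mathcal{J}} X_i$ from the local dependence structure, and then to verify the moment hypotheses of \cref{thm1} using the exponential moment condition \cref{eq:9_19_2}.

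Setting $W_i = \sum_{j \in A_i} X_j$, condition \textup{(LD1)} gives that $X_i$ is independent of $W - W_i$, so $\IE\{X_i f(W - W_i)\} = 0$ for every absolutely continuous $f$. Writing $f(W) - f(W - W_i) = \int_{-W_i}^{0} f'(W+u)\,du$ and summing over $i$, we obtain the Stein identity
\begin{equation*}
\IE\{W f(W)\} = \IE\Bigl\{\int_{-\infty}^{\infty} f'(W+u)\,\wh K(u)\,du\Bigr\},
\end{equation*}
with random kernel
\begin{equation*}
\wh K(u) = \sum_{i \in \mathcal{J}} X_i \bigl[\1(-W_i \leq u \leq 0) - \1(0 < u \leq -W_i)\bigr]
\end{equation*}
and remainder $R \equiv 0$. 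A direct check shows $\int \wh K(u)\,du = \sum_i X_i W_i$, whose expectation equals $\Var(W) = 1$. Crucially, $\wh K(u)$ is neither nonnegative nor supported on a $u$-interval of length $\bigo(n^{-1/2})$, which is exactly the setting \cref{thm1} is designed to accommodate.

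Next I would verify the moment assumptions of \cref{thm1} for $\wh K$. The decisive tool is \textup{(LD2)}: since $X_{A_i}$ is independent of $X_{B_i^c}$, the triple $(X_i, W_i, W_{B_i})$ is independent of $W_{B_i^c} = W - W_{B_i}$. Conditioning on $W_{B_i^c}$ therefore decouples the local quantities entering $\wh K$ from the bulk, and the exponential moment bound \cref{eq:9_19_2} immediately controls expressions such as $\IE\{|X_i| |W_i|^{\ell} e^{t |W_{B_i}|}\}$ for $t$ of order $a_n$. Combined with a Gaussian-type tail bound for $W_{B_i^c}$ at moderate scales (available from $\Var(W)=1$ and $|B_i| \leq \kappa$), this produces the required estimates on $\IE|\wh K(u)|$, on the second moment of $\sum_i X_i W_i - 1$, and on mixed quantities $\IE\{|\wh K(u)|(1+|W|)^k\}$. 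The overlap parameter $\kappa$ enters through the number of pairs $(i,j)$ with $B_i \cap B_j \neq \emptyset$, which is precisely what yields the $\kappa^2$ prefactor in $\delta_n$.

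The main obstacle will be matching the precise quantitative form of the bound, namely obtaining the error $\kappa^2 a_n^{-1}(1+\theta_n^6)(1+z^3)$ together with the range $z \leq c a_n^{1/3}\min\{1,\kappa^{-1/3}(1+\theta_n)^{-2/3}\}$. The factor $a_n^{-1}$ comes from the typical size of $W_i$ in the $L^1$ sense; the polynomial $(1+\theta_n^6)$ arises when successive powers of $|W|$, $|W_i|$, and $|W_{B_i}|$ are absorbed through the truncation and conjugate-measure steps underlying \cref{thm1}; and the cutoff for $z$ is the point at which these error contributions first become comparable to the Gaussian tail $1-\Phi(z)$. Once these moment estimates are assembled, substitution into \cref{thm1} delivers \cref{eq:9_27_1}.
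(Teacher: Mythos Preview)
Your outline matches the paper's approach: construct the Stein kernel from (LD1) with $R\equiv 0$, then verify condition (A1) of \cref{thm1} using (LD2) and the exponential moment \cref{eq:9_19_2}, and read off the parameters. The identification of $\kappa$ as arising from the count of overlapping pairs $B_i\cap B_j\neq\emptyset$ is also correct.

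One step, however, needs care. You propose to combine the local exponential bounds with ``a Gaussian-type tail bound for $W_{B_i^c}$ at moderate scales (available from $\Var(W)=1$ and $|B_i|\leq\kappa$).'' First, $\kappa=\max_i|N_i|$, not $\max_i|B_i|$. More seriously, a Gaussian tail for the bulk $W_{B_i^c}$ is precisely the kind of statement you are trying to prove, so invoking it would be circular; and variance information alone only gives Chebyshev. The paper avoids this entirely. The conditions in (A1) are $\Psi_{\beta,t}(W)$-weighted moments, not tail probabilities, and the paper bounds quantities of the form $\IE\{\zeta_i\,e^{3tT_i}\Psi_{\beta,t}(W-V_i)\}$ (with $V_i=\sum_{k\in B_i}X_k$, $T_i=\sum_{k\in B_i}|X_k|$) directly by $Cb^{1/4}h(t)\,\IE\{\zeta_i\,e^{3a_nT_i/8}\}$ via H\"older's inequality with a carefully chosen exponent $1+\epsilon$, $\epsilon\asymp m_0/a_n$ (\cref{lem:loc}); no standalone tail estimate for any sub-sum is ever used. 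For the second-moment quantities feeding $r_1,r_3,r_4$, the off-diagonal contributions with $j\notin N_i$ are handled not by conditioning but by a second-order Taylor expansion of $\Psi_{\beta,t}$ around $W_{ij}=W-\sum_{k\in B_i\cup B_j}X_k$ (\cref{lemA.2}), which kills the first-order term because $\IE\xi_i=\IE\xi_j=0$ and $\xi_i$, $\xi_j$ become independent of the remaining factor. Once these lemmas are in place, substituting the resulting $r_0,\dots,r_4,\rho,\tau_j$ into \cref{thm1} gives exactly the stated $\delta_n$ and range.
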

\begin{remark}
    When $a_n$ is of order $\bigo(n^{1/2})$ and $\kappa$ and  $b$ are of order $\bigo(1)$, we have
    $\theta_n = \bigo(1)$ and $\delta_n = \bigo(n^{-1/2})$. Therefore, the error bound in
    \cref{eq:9_27_1} is of order $(1+z^{3})/\sqrt{n}$ and the range is $0\leq z\leq
    c n^{1/6}$.
	Specifically, for i.i.d. random variables $\xi_1, \dots, \xi_n$ satisfying that $\E \xi_1 = 0$,
	$\Var (\xi_1) = 1/n$ and $\IE e^{\sqrt{n} |\xi_1 |} \leq b_0$ for some $b_0 > 0$, we have that
    \cref{eq:9_19_2} holds with $B_{i}=\{i\}$, $a_n = \sqrt{n} $ and $b = b_0$. Hence, \cref{thm:9_27_2} reduces to 
	\begin{align*}
		\biggl\lvert \frac{ \IP( \sum_{i = 1}^n \xi_i > z ) }{1 - \Phi(z)} - 1 \biggr\rvert \leq C n^{-1/2} (1 + z^3) 
		\text{ for $0 \leq z \leq cn^{1/6}$},
	\end{align*}
	where $c,C$ are constants depending only on $b_0$. Thus, \cref{thm:9_27_2} is optimal in the
	sense that it provides optimal error bounds and ranges for sum of i.i.d. random variables. 
\end{remark}

\begin{remark}
	We remark that there are some different dependence structures other than (LD1) and (LD2) in the literature, e.g., decomposable random variables, dependency graphs, and so on.  
For decomposable random variables, \cite[]{raic2007}  proved a large deviation result
with some  sophisticated assumptions, which maybe too strict to apply to other applications. 
\end{remark}

To illustrate that our result gives optimal error bounds and ranges for other settings, we consider the following corollary for $m$-dependent random fields. 
Let $d \geq 1$ and let $\mathbb{Z}^d$ denote the $d$-dimensional space of positive integers. For any $i = (i_1, \dots, i_d), j = (j_1, \dots, j_d) \in \mathbb{Z}^d$, we define the distance by $\lvert i - j \rvert \coloneqq \max_{1  \leq k \leq d} \lvert i_k -
j_k \rvert$, and for $A, B \subset \mathbb{Z}^d$, we define the distance between $A$ and $B$ by $\rho(A, B) = \inf \{ \lvert i - j  \rvert : i \in A, j \in B \}$.
Let $\mathcal{J}$ be a subset of $\mathbb{Z}^d$, and we say a field of random variables $\{ X_i : i \in \mathcal{J} \}$ is an \emph{$m$-dependent random field} if $\{ X_i , i \in A \}$ and $\{ X_j , j \in B \}$ are independent whenever $\rho(A, B) > m$ for any $A, B
\subset \mathcal{J}$. 
If we choose $A_i = \{ j \in \mathcal{J} : \lvert i - j \rvert \leq m \}$, $B_i = \{ j \in \mathcal{J} :
\lvert i - j \rvert \leq 2m \}$, 
then (LD1) and (LD2) are satisfied with $\kappa = (8 m+1)^d$, and \cref{thm:9_27_2} reduces to the following corollary. 

\begin{corollary}
	Let $\{ X_i : i \in \mathcal{J} \}$ be an $m$-dependent random field on $\mathbb{Z}^d$ with $\IE \{ X_i \} = 0$,
	$W = \sum_{i \in \mathcal{J}} X_i$ and $\Var (W) = 1$. If \cref{eq:9_19_2} is satisfied,  
	 then 
	 \cref{eq:9_27_1} holds with $\kappa = (8m + 1)^{d}$. 
	\label{cor1}
\end{corollary}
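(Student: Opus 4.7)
The plan is to realize \cref{cor1} as a direct specialization of \cref{thm:9_27_2}, so the only work is to check that the proposed neighborhoods $A_i,B_i$ satisfy (LD1) and (LD2) and that the maximum neighborhood size $\kappa$ is controlled by $(8m+1)^d$. The moment hypothesis \cref{eq:9_19_2} is kept verbatim in the hypothesis of the corollary, so no additional moment work is needed.

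First I would verify (LD1): with $A_i = \{j \in \mathcal{J} : |i-j| \leq m\}$, every $j \in A_i^c$ satisfies $|i-j| > m$, so $\rho(\{i\}, A_i^c) > m$, and the $m$-dependence of the field forces $X_i$ to be independent of $X_{A_i^c}$. Next, for (LD2) I take $B_i = \{j \in \mathcal{J} : |i-j| \leq 2m\}$, which clearly contains $A_i$. For arbitrary $j \in A_i$ and $k \in B_i^c$, the triangle inequality for the $\ell^\infty$ metric gives
\begin{equation*}
|j-k| \geq |i-k| - |i-j| > 2m - m = m,
\end{equation*}
hence $\rho(A_i, B_i^c) > m$ and $m$-dependence again yields independence of $X_{A_i}$ and $X_{B_i^c}$.

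Finally, to bound $\kappa$ I note that if $B_i \cap B_j \neq \emptyset$ then any $k$ in the intersection satisfies $|i-j| \leq |i-k| + |k-j| \leq 4m$; therefore $N_i \subseteq \{j : |i-j| \leq 4m\}$, a set of cardinality at most $(8m+1)^d$. Plugging this $\kappa$ into \cref{thm:9_27_2} yields \cref{eq:9_27_1} in the stated form. There is no real obstacle: the corollary is a bookkeeping reduction, and the only design choice is to make $B_i$ twice as thick as $A_i$ so that the nested-neighborhood independence in (LD2) becomes a one-line triangle-inequality argument.
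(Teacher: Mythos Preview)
Your proposal is correct and matches the paper's approach exactly: the paper specifies the same choices $A_i=\{j:|i-j|\le m\}$ and $B_i=\{j:|i-j|\le 2m\}$ and asserts (LD1), (LD2) with $\kappa=(8m+1)^d$, then invokes \cref{thm:9_27_2}. Your writeup merely fills in the one-line triangle-inequality justifications that the paper leaves implicit.
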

\begin{remark}
	Under the conditions of \cref{cor1} with $d=1$, Corollary 4.1 of \cite{Hei82} 
reduces to the following result:
\begin{equation}
		\begin{aligned}
			\left|\frac{\IP( \sum^{n}_{i=1} X_{i}>z)}{1-\Phi(z)}-1\right|\leq C 
			n a_{n}^{-3}(1+z^{3})
		\end{aligned}
		\label{eq:1000}
	\end{equation}
    for $0\leq z\leq c a_{n} n^{-1/3}$ where $C$ and $c$ are constants depending only on
    $m$ and $b$. Since $\IE W^{2}=1$, we have $m \sum^{n}_{i=1} \IE X_{i}^{2}> 1 $. On the other hand, by
    \cref{eq:9_19_2}, $\sup_{1\leq i\leq n} \IE X_{i}^{2}\leq C b a_{n}^{-2}$. Thus, $n
    a_{n}^{-2}\geq C_{0}$, for some constant $C_{0}$ depend on $m$ and $d$.  Up this constant
    $C_{0}$,
    \citeauthor{Hei82}'s result is
    not better than ours and  
	the moderate deviations for $m$-dependent random field on $\mathbb{Z}^d$ seems to be new. 
	\label{rem:10}
\end{remark}
\section{Moderate deviation for combinatorial central limit theorems}\label{CBCLT}
Let $n \geq 1$, and let $\mathbf{X} \coloneqq \{ X_{i,j} : 1 \leq i , j \leq n \}$ be an $n \times n $ array of independent random variables with $\IE \{ X_{i,j} \} = a_{i,j}$ and $\Var (X_{i,j}) = c_{i,j}^2$. 
Moreover, assume that
\begin{equation}
	\sum^{n}_{i=1}  a_{i,j}=0 \quad \text{for all $1\leq j\leq n$,}
	\quad \text{ } \quad 
    \sum^{n}_{j=1}  a_{i,j}=0\quad \text{for all $1\leq i\leq n$},
	\label{Ex2}
\end{equation}
and
\begin{equation}
        \frac{1}{n-1} \sum^{n}_{i=1} \sum^{n}_{j=1}  a^{2}_{i,j} + \frac{1}{n} \sum_{i = 1}^n \sum_{j = 1}^n c_{i,j}^2=1.
	\label{eq:2020_11_8}
\end{equation}
Let $\mathcal{S}_{n}$ be the collection of all permutations
over  $[n]:= \left\{ 1,2,\dots,n \right\}$ and let $\pi$ be a random permutation chosen uniformly from
$\mathcal{S}_{n}$ independent of $\mathbf{X}$.   Let  
\begin{equation}
    \begin{aligned}
      W= \sum^{n}_{i=1} X_{i,\pi(i)}.
    \end{aligned}
    \label{eq:sec4_1}
\end{equation}
Combinatorial central limit theorems for $\tilde{W} \coloneqq \sum_{i = 1}^{n} a_{i,\pi(i)}$, which is a special case of $W$, was firstly introduced by \cite{Hoe51}. 
For the random variable $\tilde{W}$, \cite{Goldstein2005} proved a
Berry--Esseen theorem for $\tilde{W}$ by Stein's method and zero--bias
coupling, and  
\cite{Chen2013From} also gives the moderate deviation result of the normal approximation, where the convergence rate and range depend on $\max_{i,j} \lvert a_{ij} \rvert$.  \cite{hu2007cramer}
proved a moderate deviation result for the simple random sample problem, which is an application of the combinatorial central
limit theorems. 
The Berry--Esseen bounds of combinatorial central limit theorems for $W$ was firstly studied by \cite{Ho78}, who proved an error bound  using the concentration inequality approach, and \cite{chen2015error} obtained a new error bound
$451n^{-1}\sum_{i = 1}^n\sum_{j = 1}^n \E \lvert X_{i,j} \rvert^3$ via exchangeable pair approach.  
Recently, \cite{frolov2019large} gave
a Cram\'er-type moderate deviation result for general combinatorial central limit theorems under
some Bernstein type conditions, but the author did not provide the error
bounds.

The following theorem provides a Cram\'er-type moderate deviation result for $W$. 

\begin{theorem}
    \label{thm:11_24_1}
Assume that there exist $\alpha_{n}\geq 1$ and $b\geq1$ such that 
    \begin{equation}
		\max_{1 \leq i , j \leq n} \E \{\exp( \alpha_{n} \left|X_{i,j}\right| ) \}\leq b. 
        \label{eq:11_24_6}
    \end{equation}
    Then 
    \begin{equation}
        \begin{aligned}
            \left \vert  \frac{\IP(W> z)}{1-\Phi(z)}-1  \right \vert \leq C \delta_{n} (1+z^{3}), 
        \end{aligned}
        \label{eq:11_24_7}
    \end{equation}
 for $ 0\leq z\leq c \alpha_{n}^{1/3}\min\{ 1,
 b^{-1} (\theta_{n}^{-1/2}+\theta_{n})^{-1} \},$
    where $C$ and $c$ are absolute constants, $\theta_{n}= n^{1/2}\alpha_{n}^{-1}$ and 
    $\delta_{n}=b^{2} (\alpha_{n}^{-1}+n^{-1/2}) (\theta_{n}^{-2}+\theta_{n}^{6})$.
\end{theorem}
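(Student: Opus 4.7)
The plan is to reduce \cref{thm:11_24_1} to the general Cramér-type moderate deviation result \cref{thm1} by constructing a Stein identity of the form \eqref{e1} for $W = \sum_{i=1}^n X_{i,\pi(i)}$. The statistic involves two independent sources of randomness — the uniform permutation $\pi$ and the independent array $\mathbf{X}$ — and the construction will have to handle both simultaneously.

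For the coupling, I would build an exchangeable pair $(W, W^{*})$ as follows. Let $(I,J)$ be uniform on $[n]^{2}$ independent of $(\pi, \mathbf{X})$, let $\pi^{*}$ be obtained from $\pi$ by transposing $\pi(I)$ and $\pi(J)$, and let $\mathbf{X}^{*}$ be obtained from $\mathbf{X}$ by replacing $X_{I,\pi(I)}$, $X_{J,\pi(J)}$ with independent copies; set $W^{*} = \sum_{i} X^{*}_{i,\pi^{*}(i)}$. The difference $\Delta := W - W^{*}$ depends only on the four entries $X_{I,\pi(I)}, X_{J,\pi(J)}, X_{I,\pi(J)}, X_{J,\pi(I)}$ and their starred analogues, and this localization is what allows the exponential moment assumption \eqref{eq:11_24_6} to be exploited. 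Starting from the exchangeability identity $\E\{Wf(W)\} = \tfrac{1}{2\lambda}\E\{\Delta(f(W)-f(W^{*}))\}$ for a suitable normalizer $\lambda$ (which will be of order $n^{-1}$ times a normalization involving $a_{i,j}$ and $c_{i,j}^{2}$), a Taylor expansion of $f(W^{*})$ around $W$ produces an integrated representation $\E\{\int f'(W+u)\widehat{K}(u)\,du\}$ plus a remainder term $R$ corresponding to higher-order terms. Here $\widehat{K}$ will neither be positive nor compactly supported, but \cref{thm1} is designed precisely to accommodate this.

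The remaining work is to verify the quantitative hypotheses of \cref{thm1}: exponential moment bounds on $\int|\widehat{K}(u)|\,du$, on $\int u^{2}|\widehat{K}(u)|\,du$, and on $R$, along with the near-normalization $\E\int\widehat{K}(u)\,du \approx 1$ that follows from \eqref{Ex2}--\eqref{eq:2020_11_8}. The permutation randomness contributes factors of order $n^{-1/2}$ (through the $a_{i,j}$ part), while the array randomness contributes factors of order $\alpha_{n}^{-1}$ (through the $c_{i,j}^{2}$ part); adding the two yields the prefactor $\alpha_{n}^{-1} + n^{-1/2}$ in $\delta_{n}$, and the $b^{2}$ and polynomial-in-$\theta_{n}$ factors arise from propagating \eqref{eq:11_24_6} through the quartic combinations of entries appearing in $\Delta$.

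The main obstacle will be the exponential control of $R$: the two-step perturbation of both $\pi$ and $\mathbf{X}$ creates cross terms that couple both sources of randomness, and these must be estimated in the strong sense required by moderate deviations (rather than in the weaker senses sufficient for $L^{1}$ or Berry--Esseen bounds, as in \citet{Ho78,chen2015error}). I expect that handling these cross terms, and in particular matching the two distinct scales $\theta_{n}^{-1/2}$ and $\theta_{n}$ that appear in the admissible range of $z$, will force a careful decomposition of $\widehat{K}$ into components supported near and far from the origin, and will be where the recursive argument advertised in the introduction plays its role.
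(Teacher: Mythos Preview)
Your high-level plan --- build an exchangeable pair, extract a Stein identity of the form \eqref{e1}, and feed the resulting $\widehat K$ and $R$ into \cref{thm1} --- is exactly the paper's route. But two of your key details are off, and one of them would derail the argument if you followed it literally.

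First, the exchangeable pair is simpler than you propose: the paper picks $\mathbf{I}=(I_1,I_2)$ uniformly from $[n]_2$ and sets $W'=W-X_{I_1,\pi(I_1)}-X_{I_2,\pi(I_2)}+X_{I_1,\pi(I_2)}+X_{I_2,\pi(I_1)}$, i.e.\ it only transposes two values of $\pi$ and leaves the array $\mathbf{X}$ untouched. No resampling of array entries is needed, and your resampling step is vacuous anyway since the entries you propose to refresh do not appear in $W^{*}$.

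Second --- and this is the real gap --- the remainder $R$ does \emph{not} come from a Taylor expansion. The passage from the exchangeable-pair identity to \eqref{e1} uses only the fundamental theorem of calculus, $f(W)-f(W-\Delta)=\int_{-\Delta}^{0}f'(W+u)\,du$, which is exact and produces $\widehat K$ with no higher-order remainder. The term $R$ arises instead because the linear regression is only approximate: one computes $\E\{W-W'\mid\mathbf{X},\pi\}=\tfrac{2}{n-1}(W-R)$ with $R=-\tfrac{1}{n}\sum_{i,j}X_{i,j}$, and this $R$ is what appears in \eqref{e1}. It is a centred sum of $n^{2}$ independent entries (by \eqref{Ex2}), and its control under $\Psi_{\beta,t}(W)$ is a separate, fairly direct calculation --- not a Taylor tail.

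Finally, the technical work in verifying condition (A1) is not a near/far decomposition of $\widehat K$, and the recursive argument is already fully contained in \cref{thm1}. What the application actually requires is a family of conditioning lemmas: for functions $\zeta_{\mathbf{i},\mathbf{j}}$ of a few array entries, bound $\E\{\zeta_{\mathbf{i},\pi(\mathbf{i})}\Psi_{\beta,t}(W)\}$ by conditioning on $\pi(\mathbf{i})=\mathbf{j}$, then decouple via a permutation transform of the type introduced by \citet{Goldstein2005} (the map $\mathcal{P}_{\mathbf{i},\mathbf{j}}\pi$) to compare $\E\{\Psi_{\beta,t}(W)\mid\pi(\mathbf{i})=\mathbf{j}\}$ back to $h(t)=\E\Psi_{\beta,t}(W)$. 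These estimates, applied to $\widehat K_1-1$, $\widehat K_{2,t}$, $\widehat K_{3,t}$, $\widehat K_{4,t}$ and $|R|$, produce the parameters $r_0,\dots,r_4,\rho$ that feed into \cref{thm1} and give the stated $\delta_n$ and range for $z$.
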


\begin{remark} 
	If $\max_{1\leq i,j\leq n} |X_{i,j}|$ is of order $\bigo(n^{-1/2})$, then we can choose $\alpha_n = \bigo(n^{1/2})$ and
	$b = \bigo(1)$, and  \cref{eq:11_24_7} reduces to 
	\begin{align*}
		\left \vert  \frac{\IP(W> z)}{1-\Phi(z)}-1  \right \vert \leq C n^{-1/2}(1 + z^3)
	\end{align*}
	for $0 \leq z \leq c n^{1/6}$ for some constants $c , C > 0$. 
\end{remark}
\begin{remark}
	In  \cite[]{Chen2013From}, the authors proved a moderate deviation for the case where $X_{i,j} = a_{i,j}$ is nonrandom. Specially, our result recovers (4.1) in \cite[]{Chen2013From}. 
\end{remark}

\section{A general theorem via Stein identity}%
\label{sec:main_results} 
In this section, we proof a general theorem for dependent random variables, which will be used to
prove \cref{thm:9_27_2,thm:11_24_1}. The theorem is based on Stein identity, and it is also of independent interest and can be applied to many other applications. 
Let $W$ be the random variable of interest satisfying the Stein identity \cref{e1} with a random function
$\hat{K}(u)$ and a random variable $R$. To give our general theorem, we first introduce the
following notation. For $t \geq 0$ and for $u \in \IR$, let 
\begin{align}
    K(u)&=\IE \{\hat{K}(u)\}, \quad 
	\hat{K}_1 =  \int_{-\infty}^{\infty} \hat{K}(u) du , \label{ek1}\\
	\hat{K}_{2,t} & =  \int_{-\infty}^{\infty} |u| e^{t |u|} \vert \hat{K}(u) \vert du , \label{ek2}\\
	\hat{K}_{3,t} & = \int_{ \lvert u \rvert \leq 1 }^{} e^{ 2 t |u| } \bigl( \hat{K}(u) - K(u)
	\bigr)^2 du ,\label{eq:sec2_2}\\
	\hat{K}_{4,t} & = \int_{ \lvert u \rvert \leq 1 }^{} |u| e^{ 2 t |u| } \bigl( \hat{K}(u) - K(u)
	\bigr)^2 du \label{eq:sec2_3},  
\end{align}
and 
\begin{align}
	M_t & = \int_{|u| \leq 1} e^{t|u|} \lvert {K}(u) \rvert du . 
	\label{eq:sec2_1}
\end{align}
For any $\beta \geq 0$ 
and $t \geq 0$, let
\begin{align}
	\Psi_{\beta, t}(w)  = 
	\begin{cases}
		e^{t w} + 1 & \text{ if $w \leq \beta$, }\\
		2 e^{ t \beta } - e^{ t ( 2 \beta - w ) } + 1 & \text{ if $w > \beta$. }
	\end{cases}
\label{eq:sec1_1}
\end{align}
We remark that the function $\Psi_{\beta,t}$ is a smoothed version of the truncated exponential
function, which plays an important role in relaxing the boundedness assumption in applications.  

Our general result is based on the following conditions: 
\begin{itemize}
	\item [(A1)]  Assume that there exist constants $m_0 > 0, \rho > 0$ and $r_j \geq 0, \tau_j \geq 0$ for $j = 0, 1, \dots, 4$ such that for all
		$\beta, t\in [0, m_0]
		$, 
        \begin{align}
		\IE \{ |R| \Psi_{\beta, t}(W) \}                        
		& \leq r_0 (1 + t^{\tau_0}) \IE \{ \Psi_{\beta, t}(W)  \}, \label{el3a}\\
		\IE \{ \lvert \IE\{\hat{K}_1\vert W\} - 1 \rvert \Psi_{\beta, t}(W) \} 
		& \leq r_1 (1 + t^{\tau_1}) \IE \{ \Psi_{\beta, t}(W)  \}, \label{el3b}\\
		\IE \{ \hat{K}_{2,t} \Psi_{\beta, t}(W) \}               
		& \leq r_2 (1 + t^{\tau_2}) \IE \{ \Psi_{\beta, t}(W)  \}, \label{el3c}\\
            \IE \{ \hat{K}_{3,t}  \Psi_{\beta,t}(W) \} & \leq  r_{3}( 1 + t^{\tau_3} ) \IE \{
			\Psi_{\beta, t}(W)  \}, \label{eq:9_20_11}\\
                \IE \{ \hat{K}_{4,t}  \Psi_{\beta,t}(W) \} & \leq  r_4( 1 + t^{\tau_4} ) \IE \{
			\Psi_{\beta, t}(W)  \}\label{eq:9_20_12} 
	   \end{align}
	   and 
	   \begin{align}
			\sup_{0 \leq t \leq m_0}M_t & \leq \rho.
	\label{eq:r_3}
	   \end{align}
   \end{itemize}
       We now state our general result. 
\begin{theorem}
	\label{thm1}
	Under condition (A1).
	Let $\tau = \max\{ \tau_0 + 1, \tau_1 + 2, \tau_2 + 3, \tau_3 + 1, \tau_4 + 1 \}$ and let 
	\begin{align}
	\label{eq-z0}
	z_0 =  \min \bigl\{   m_0 , 0.02 e^{-\tau/2}( r_0^{1/(\tau_0 + 1)}+ 
	r_1^{1/(\tau_1 + 2)}+ r_2^{1/(\tau_2 + 3)})^{-1} \bigr\}.
	\end{align}
	We have  
	\begin{equ}
		\left \vert\frac{ \IP [ W > z ] }{1 - \Phi(z)}-1  \right \vert \leq   {  \biggl(
        \frac{4}{\delta(m_0)}  + C (150^{\tau} + \rho ) e^{\tau^{2}/2}\biggr) \delta (z)} 
        \label{et1}
	\end{equ}
    for 
	$0 \leq z \leq z_0$, 
	where $\Phi(z)$ is the standard normal distribution function and  $C$ is an absolute constant, and 
   \begin{multline}
	   \label{eq-deltat}
	   \delta(z) = r_0 (1 + z^{\tau_0 + 1}) + r_1 ( 1  + z^{\tau_1 + 2} ) + r_2 ( 1 + z^{\tau_2
	   + 3}) \\+ r_3(1 + z^{\tau_3 + 1}) + r_4^{1/2} (1 + z^{\tau_4 + 1}). 
   \end{multline}
\end{theorem}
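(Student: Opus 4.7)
The plan is to apply the Stein identity~\cref{e1} with test functions built from the smoothed truncated exponential $\Psi_{\beta,t}$ in~\cref{eq:sec1_1}, and to distill from the resulting integral identity a differential inequality for
\[
H_\beta(t) \coloneqq \E\{\Psi_{\beta,t}(W)\}.
\]
The use of $\Psi_{\beta,t}$ in place of the raw exponential $e^{tw}$ is the key device for handling unbounded $W$, and a recursive/inductive argument on the cutoff $\beta$ should allow one to extend the resulting bound out to $\beta = z$ at the end.

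Concretely, I would substitute $f=\Psi_{\beta,t}$ into~\cref{e1} and Taylor-expand $\Psi'_{\beta,t}(W+u)$ around $u=0$. The leading term on the right is $\E\{\Psi'_{\beta,t}(W)\hat K_1\}$, which by conditioning on $W$ and applying \cref{el3b} is approximately $\E\{\Psi'_{\beta,t}(W)\}$, while the first-order $u$-correction is controlled by $\hat K_{2,t}$ via~\cref{el3c}. For the stochastic fluctuation $\hat K(u)-K(u)$ arising from the decomposition $\hat K(u) = K(u) + (\hat K(u)-K(u))$ inside the integral, I would apply Cauchy--Schwarz together with \cref{eq:9_20_11,eq:9_20_12}, which naturally produces the $r_4^{1/2}$ appearing in $\delta(z)$; the deterministic part involving $K(u)$ can be absorbed using the integrability condition~\cref{eq:r_3}. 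The remainder $\E\{R\,\Psi_{\beta,t}(W)\}$ is handled by~\cref{el3a}. Combining these estimates, exploiting the pointwise bound $\Psi'_{\beta,t}(w)\leq t\,\Psi_{\beta,t}(w)$, and comparing with the Gaussian integration-by-parts identity $\E\{Z\Psi_{\beta,t}(Z)\}=\E\{\Psi'_{\beta,t}(Z)\}$, I expect to arrive at a differential inequality of the schematic form $H_\beta'(t)\leq (t + \mathrm{err}(t))\,H_\beta(t)$ whose integration yields an exponential moment bound of the form $H_\beta(t)\leq e^{t^2/2}(1+\widetilde\delta(t))$ on $[0,z_0]$.

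To pass from exponential moments to the ratio estimate in~\cref{et1}, I would set $\beta=z$ and apply Markov's inequality $\IP(W>z)\leq e^{-tz}H_z(t)$, optimizing at $t=z$ to match the Gaussian tail. The matching lower bound on $\IP(W>z)$ is the subtler half; one typically runs an analogous Stein-identity comparison against the normal (or equivalently a recursion on $z$ that translates the exponential moment control into a two-sided ratio via $1-\Phi(z)\asymp e^{-z^2/2}/(z\sqrt{2\pi})$), which is where the coefficient $4/\delta(m_0)$ in~\cref{et1} enters as an anchor at the endpoint. The principal obstacle is the bookkeeping: each of the five quantities appearing in condition~(A1) contributes an error with its own power of $t$, and these powers must be propagated consistently through the Taylor expansion, the Cauchy--Schwarz step, the integration of the differential inequality, and the final substitution $t=z$, so that the bound matches $\delta(z)$ of~\cref{eq-deltat} with exactly the exponents $\tau_j+1$, $\tau_j+2$, $\tau_j+3$; this is also what forces the specific form of $z_0$ in~\cref{eq-z0}.
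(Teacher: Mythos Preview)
Your plan correctly identifies the exponential moment bound for $H_\beta(t)=\E\{\Psi_{\beta,t}(W)\}$ as a key ingredient---this is indeed proved in the paper (their Lemma~5.3), via essentially the differential inequality you describe. But you substantially overestimate what this bound can deliver on its own, and this is where the argument breaks down.

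Even granting the ideal outcome $H_z(z)\le e^{z^2/2}\bigl(1+\widetilde\delta(z)\bigr)$, Markov's inequality at $t=z$ gives only
\[
\IP(W>z)\;\le\; e^{-z^2/2}\bigl(1+\widetilde\delta(z)\bigr),
\]
whereas $1-\Phi(z)\sim \phi(z)/z = e^{-z^2/2}/\bigl(z\sqrt{2\pi}\bigr)$. The ratio is therefore of order $z$, not $1+O(\delta(z))$; the exponential-moment route is a factor of $z$ too crude to yield~\cref{et1}, for the upper bound already, and no amount of bookkeeping fixes this. The exponential bound is an \emph{auxiliary} tool in the paper, not the main engine.

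What the paper actually does is apply Stein's method to the \emph{smoothed indicator} $h_{z,\epsilon}$ (not to $\Psi_{\beta,t}$). The Stein solution $f_{z,\epsilon}$ has explicit pointwise bounds scaled by $1-\Phi(z)$, and substituting~\cref{e1} into the Stein equation produces terms in which $\IP(W+s>z)-\IP(Z+s>z)$ appears directly. These are bounded in terms of the very quantity
\[
C_0 \;=\; \sup_{0\le z\le z_0}\biggl\lvert\frac{\IP(W>z)-(1-\Phi(z))}{\delta(z)(1-\Phi(z))}\biggr\rvert
\]
itself (this is where the recursion you allude to actually lives, and it drives both the upper and lower bounds, not just the lower one). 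Combining with a Berry--Esseen bound for small $z$ and solving the resulting recursive inequality $C_0\le 0.8\,C_0+\text{const}$ yields~\cref{et1}. The exponential moment estimate enters only when bounding individual pieces of this decomposition; it is not itself converted to a tail ratio via Markov.
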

	We give some remarks on our general result. 
	\begin{remark}
		\citet{Chen2013From} proved a moderate deviation for Stein identities under a boundedness assumption \cref{eq-con-chen}.  
		On that basis, for $0 \leq t \leq \delta_0^{-1}$, it can be shown that \cref{el3c} is satisfied with $r_2 = 3 \theta \delta_0$. 
		Moreover, for all $t, \beta \in(0, \delta_0^{-1})$, one can verify (see, e.g., (5.5) and (5.6) of \cite{Chen2013From}) that there exists a constant $C > 0$ depending only on $\theta$ such that 
		\begin{align*}
			\IE \bigl\{ \lvert W \rvert \Psi_{\beta, t}(W) \bigr\} \leq C (1 + t) \IE \{ \Psi_{\beta, t}(W) \}. 
		\end{align*}
		Thus, we have that \cref{el3a,el3b} are satisfied with $r_0 = C' \delta_2, r_1 = C' \delta_1$ and $\tau_0 = \tau_1 = 1$, where $C' > 0$ is an absolute constant.  
		Therefore, by \cref{thm1}, we have that \cref{et1} holds with $m_0 = \delta_0^{-1}$, $\rho = \theta$, $\tau = 3$, $\tau_0 = \tau_1 = 1$, $\tau_2 = \tau_3 = \tau_4 = 0$, $r_0 = C' \delta_2$, $r_1 = C'\delta_1$, $r_2 = 3 \theta \delta_0$ and 
		\begin{align*}
			r_3 = 8 \int_{ |u| \leq \delta_0 } \IE \{ (\hat{K}(u) - K(u))^2  \} du, \quad 
			r_4 = 8 \delta_0 \int_{ |u| \leq \delta_0 } \IE \{ (\hat{K}(u) - K(u))^2  \} du. 
		\end{align*}
		Note that this result involves two terms $r_3$ and $r_4^{1/2}$ that did not appear in \cite{Chen2013From}. 
		However, in many applications, both $r_3$ and $r_4^{1/2}$ have the same order as $\delta_0$. 
		This shows that our result \cref{thm1} covers Theorem 3.1 of \cite{Chen2013From} with the cost of two additional terms. 
	\end{remark}
	\section{Proof of \autoref{thm1}}\label{sec:p_general_thm}%
	In this section, we provide the proof of \cref{thm1}. Our proof is novel in two ways.  
	On one hand, the proof of \cref{thm1} is a combination of Stein's method and a recursive
	method. The recursive method has been applied to obtain optimal Berry--Esseen bounds for both
	univariate and multivariate normal approximations; see \cite{Ra03N,Rai19a} and \cite{chen_adrian_xia2020}
	for examples.  
	On the other hand, use a truncated exponential function to control tail probabilities.
	It is known that exponential-type tail probabilities play a
crucial role in the proof of Cram\'er-type moderate deviations. In \cite{Chen2013From} and
\cite{shao2018cram}, the authors used  exponential
	functions directly to prove upper bounds for such tail probabilities. In the present paper, 
	a key observation is that the exponential function can be replaced by a {smoothed}
    truncated exponential function $\Psi_{\beta,t}$ (defined in \cref{eq:sec1_1} in \cref{sec:main_results}) when proving exponential-type
	tail probabilities, and the function $\Psi_{\beta,t}$ plays an important role in relaxing the
	boundedness assumption when applying our general theorem.

This section is organized as follows. 
We first develop two preliminary lemmas, 
\cref{expbound,l-BE}, whose proofs are postponed to \cref{sec_proof_lemma}. In \cref{expbound},  we
establish an upper bound of the ratio for the expectation of a smoothed indicator function, and we provide a 
Berry--Esseen bound under \cref{e1} in \cref{l-BE}.  The proof of \cref{thm1} is given in
\cref{proof_thm}, where we apply \cref{expbound,l-BE} and a smoothing inequality. 
\subsection{Preliminary lemmas}
\label{sec:proof_of_main_result}

We first introduce some notation. 
Let $Z \sim N(0,1)$,  $\phi(w) = (1 / \sqrt{2\pi} ) e^{-w^2/2}$ and $\Phi(w) = \int_{-\infty}^w \phi(t) dt$. In what follows, we write $Nh = \IE \{ h(Z) \}$ for any function $h$. 
For any $z \geq 0$ and $\epsilon > 0$, let 
\begin{align*}
	h_{z, \epsilon}(w) = 
	\begin{cases}
		1                         & \text{if $w \leq z$}, \\
		0                         & \text{if $w > z + \epsilon$}, \\
		1 + \epsilon^{-1} (z - w) & \text{if $z < w \leq z + \epsilon$}.
	\end{cases}
\end{align*}
Let 
\begin{equ}
	C_0 = \sup_{0 \leq z \leq {z_0}} \biggl\lvert \frac{ \IP [ W > z ] - (1-\Phi(z)) }{\delta(z)
	(1-\Phi(z))} \biggr\rvert, 
	\label{deltan}
\end{equ}
where $\delta(z)$ is given in \cref{eq-deltat}.
The following lemma gives a relative error for the test function $h_{z,\epsilon}$.
\begin{lemma}
\label{expbound}
Assume that condition (A1) holds and $z_{0}$ in \cref{eq-z0} satisfies $z_{0}\geq 8$.
	Let $z$ be a fixed real number satisfying $8 \leq z \leq z_0$, and let 
	${\epsilon \coloneqq \epsilon(z) = 40 e^{\tau/2}r_2}(1 + z^{\tau_2})$. We have 
	\begin{equation}
		\frac{\lvert \IE \{ h_{z,\epsilon}(W) \} - Nh_{z,\epsilon} \rvert}{\delta(z) (1 - \Phi(z))} 
        \leq   0.75 \Bigl(C_0 + \frac{1}{\delta(m_0)}\Bigr) + (184 + 2\rho) e^{\tau/2}+ (150 e^{\tau/2})^{\tau}.
        \label{eHbound}
	\end{equation}
\end{lemma}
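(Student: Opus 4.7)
The plan is to combine the standard Stein equation framework with the truncated-exponential test function $\Psi_{\beta,t}$ and a recursive estimate based on $C_{0}$. First I would let $f = f_{z,\epsilon}$ denote the bounded solution of the Stein equation $f'(w) - w f(w) = h_{z,\epsilon}(w) - N h_{z,\epsilon}$, and recall the standard bounds $0 \le f \le \sqrt{2\pi}/4$, $|f'| \le 1$, the identity $f''(w) = f(w) + w f'(w) + h_{z,\epsilon}'(w)$, and the sharper pointwise estimates $|f(w)|, |f'(w)| \le C (1-\Phi(z)) e^{w^{2}/2}$ for $w$ in the relevant range. Using the Stein identity \cref{e1}, I then rewrite
\begin{equation*}
\E h_{z,\epsilon}(W) - N h_{z,\epsilon} = \E\{f'(W) - W f(W)\} = T_{1} + T_{2} + T_{3},
\end{equation*}
where $T_{1} = \E\{f'(W)(1 - \hat{K}_{1})\}$, $T_{2} = \E\{\int [f'(W) - f'(W+u)] \hat{K}(u)\,du\}$, and $T_{3} = -\E\{R f(W)\}$.

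Next I would dominate $|f(W)|$ and $|f'(W)|$ by constant multiples of $(1-\Phi(z)) e^{-tz} \Psi_{z,t}(W)$, exploiting the shape of $\Psi_{z,t}$---exponential growth below $\beta = z$ and saturation above---with $t$ chosen of order $z$. The outer expectations of $T_{1}$ and $T_{3}$ are then controlled directly by \cref{el3b,el3a}, giving factors $r_{1}(1 + t^{\tau_{1}})$ and $r_{0}(1 + t^{\tau_{0}})$ respectively. For $T_{2}$ I would Taylor-expand $f'(W+u) - f'(W) = \int_{0}^{u} f''(W+v)\,dv$ and split $\hat{K}(u) = K(u) + (\hat{K}(u) - K(u))$: the deterministic part is handled via $M_{t} \le \rho$ together with \cref{el3c}, producing $\rho$ and $r_{2}(1 + t^{\tau_{2}})$ contributions, while the fluctuation part is treated by Cauchy--Schwarz and brings in $\hat{K}_{3,t}^{1/2}$ and $\hat{K}_{4,t}^{1/2}$ factors that are absorbed by \cref{eq:9_20_11,eq:9_20_12}. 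The net outcome is a bound proportional to $\delta(z) \cdot e^{-tz} \E\{\Psi_{z,t}(W)\}$, with the proportionality constant a polynomial in $t$ together with $\rho$.

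The recursive step closes the estimate by relating $\E\{\Psi_{z,t}(W)\}$ back to $(1-\Phi(z)) e^{tz}$. Splitting at $z$ gives $\E\{\Psi_{z,t}(W)\} \le \E\{e^{tW}\mathbf{1}_{W \le z}\} + (2 e^{tz} + 1) \IP(W > z)$. By \cref{deltan}, the tail satisfies $\IP(W > z) \le (1 + C_{0}\delta(z))(1-\Phi(z))$ for $z \le z_{0}$, whereas a boundary contribution at $z = m_{0}$ (beyond which the a priori estimate is unavailable) is controlled by the trivial bound $\IP(W > z) \le 1 = \delta(m_{0})^{-1}\delta(m_{0})$, which generates the $4/\delta(m_{0})$ prefactor in \cref{eHbound}. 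A direct Gaussian computation of $\E\{e^{tZ}\mathbf{1}_{Z \le z}\}/[(1-\Phi(z)) e^{tz}]$ with $t \asymp z$ gives an absolute constant times $e^{\tau^{2}/2}$. Substituting these tail estimates and using the specific choice $\epsilon = 40 e^{\tau/2} r_{2}(1 + z^{\tau_{2}})$ from the statement to absorb the bias from $h_{z,\epsilon}$ on the slope interval $(z, z + \epsilon]$ into the $r_{2}(1 + z^{\tau_{2} + 3})$ term in $\delta(z)$ yields the claimed form.

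The main obstacle will be the numerical bookkeeping that drives the coefficient of $C_{0}$ strictly below one---specifically down to $0.75$. This requires sharp Gaussian estimates, a carefully optimized $t = t(z)$ and $\beta = z$ within the admissible window $t, \beta \le m_{0}$ (so that conditions \cref{el3a,el3b,el3c,eq:9_20_11,eq:9_20_12,eq:r_3} apply), and precise tracking of how the additive $r_{3}$ and $r_{4}^{1/2}$ contributions from the Cauchy--Schwarz step interact with the polynomial factors $t^{\tau_{j}}$, so that the constants $0.75$, $184 + 2\rho$, and $(150 e^{\tau/2})^{\tau}$ emerge in their stated form rather than in a weaker version with inflated numerical factors.
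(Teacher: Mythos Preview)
Your decomposition into $T_{1}, T_{2}, T_{3}$ matches the paper's $I_{2}, I_{1}, I_{3}$, and the idea of splitting $\hat K = K + (\hat K - K)$ is also the same. However, the mechanism by which you propose to introduce the recursive constant $C_{0}$ is wrong, and this is where the argument would fail.

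First, the pointwise domination ``$|f'(W)| \lesssim (1-\Phi(z))\,e^{-tz}\Psi_{z,t}(W)$'' is simply false near $W \approx z$. There $|f'(w)| \approx 1$, while with $t=z$ and $\beta=z$ one has $(1-\Phi(z))e^{-z^2}\Psi_{z,z}(z) \approx 1-\Phi(z)$, off by a factor $(1-\Phi(z))^{-1}$. The pointwise bound only holds on $\{W \le z\}$ (where $|f'(w)| \le (1+w e^{w^{2}/2})(1-\Phi(z))$); on $\{W > z\}$ one must use $|f'|\le 1$ directly. More importantly, when you Taylor-expand $f'(W+u)-f'(W)$ and invoke $f'' = f + w f' + h_{z,\epsilon}'$, the singular piece $h_{z,\epsilon}'(w) = -\epsilon^{-1}\1(z<w\le z+\epsilon)$ produces a contribution of the form
\[
\epsilon^{-1}\E\int\!\!\int_{0}^{u} \1(z<W+v\le z+\epsilon)\,\hat K(u)\,dv\,du,
\]
and bounding the indicator crudely by $e^{-tz}\Psi_{z,t}(W+v)$ (plus $\E\Psi_{z,t}(W)\le 4e^{t^{2}/2}$ from \cref{l5.1}) yields only a term of order $(1-\Phi(z))$, \emph{not} $\delta(z)(1-\Phi(z))$. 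Divided by $\delta(z)(1-\Phi(z))$ this blows up like $1/\delta(z)$, so it cannot be written as $0.75\,C_{0} + \text{const}$.

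The paper does not route $C_{0}$ through $\E\{\Psi_{\beta,t}(W)\}$ at all: \cref{l5.1} bounds that quantity by $4e^{t^{2}/2}$ unconditionally, and this is used only for the smooth $g'(w) = (wf(w))'$ part of $f''$ (the paper's $I_{11}, I_{12}$) and for $I_{2},I_{3}$. The recursion enters exclusively through the indicator piece $h_{z,\epsilon}(W+u) - h_{z,\epsilon}(W)$, which is bounded by $\epsilon^{-1}\int_{u\wedge 0}^{u\vee 0}\1(z<W+s\le z+\epsilon)\,ds$ and then handled by writing $\IP(z<W+s\le z+\epsilon) = \IP(z<Z+s\le z+\epsilon) + [\text{difference}]$. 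The difference is exactly what \cref{ll2} controls by $2e^{\tau/2}e^{z|u|}\delta(z)(1-\Phi(z))(C_{0}+c_{0})$, and the choice $\epsilon = 40 e^{\tau/2} r_{2}(1+z^{\tau_{2}})$ cancels against $\int e^{z|u|}|u K(u)|\,du \le 3r_{2}(1+z^{\tau_{2}})$ to give the coefficient $0.15$ (and $0.16$ for the $z+\epsilon$ endpoint). For the fluctuation part $\hat K - K$ the same comparison is combined with Young's inequality to get $0.44$; together $0.31 + 0.44 = 0.75$. Your plan to extract $C_{0}$ from the tail $\IP(W>z)$ inside $\E\Psi_{z,t}(W)$ (and to bound $\E\{e^{tW}\1_{W\le z}\}$ by a Gaussian computation, which is unjustified for $W$) bypasses this mechanism and cannot produce a coefficient strictly below $1$ on $C_{0}$.
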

We also need to develop a Berry--Esseen bound to prove \cref{thm1}. The following lemma is a slight modification of Theorem 2.1 in  \cite{chen_adrian_xia2020}, because the Stein identity \cref{e1} in our paper involves an additional error
term $\IE \{Rf(W)\}$ compared with that in \cite{chen_adrian_xia2020}. 
The proof is given in \cref{sec_proof_lemma}, where we used a similar argument to the proof of Theorem 2.1 in  \cite{chen_adrian_xia2020}.
\begin{lemma}
	\label{l-BE}
	Let $W$ be a random variable satisfying $\IE  W  = 0$ and $\IE  W^2  = 1$. Assume that \cref{e1} and (A1) hold. Then, 
	\begin{align*}
		\sup_{z \in \IR} \lvert \IP ( W \leq z ) - \Phi(z) \rvert
		& \leq 4 r_0 + 4 r_1 + 28 r_2 + 20 r_3 + 13 r_4^{1/2}. 
	\end{align*}
\end{lemma}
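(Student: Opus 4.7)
The plan is to follow the recursive Stein's-method scheme of Chen--Adrian--Xia (2020, Theorem 2.1), augmenting each step with the handling of the additional term $\IE\{Rf(W)\}$ in the Stein identity \cref{e1}. Denote $\Delta = \sup_{z \in \IR}\lvert \IP(W\le z)-\Phi(z)\rvert$. The goal is to derive an inequality of the form $\Delta \le a_0 r_0 + a_1 r_1 + a_2 r_2 + a_3 r_3 + a_4 r_4^{1/2}$, with the constants matching the statement. Throughout the proof I will only invoke condition (A1) at $t=0$ (so $\Psi_{\beta,0}\equiv 2$), which converts each inequality into an unweighted bound, for instance $\IE\lvert\IE\{\hat K_1\mid W\}-1\rvert\le r_1$ and $\IE \hat K_{3,0}\le r_3$, etc.

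First, I fix $z\in\IR$ and $\epsilon>0$, take the smoothed indicator $h_{z,\epsilon}$, and let $f=f_{z,\epsilon}$ solve the standard Stein equation $f'(w)-wf(w)=h_{z,\epsilon}(w)-Nh_{z,\epsilon}$, using the well-known bounds $\|f\|_\infty,\|f'\|_\infty\le C$ and $\|f''\|_\infty\le C/\epsilon$. Applying \cref{e1},
\begin{equ}
\IE h_{z,\epsilon}(W)-Nh_{z,\epsilon} = \IE \bigl\{ f'(W)(1-\IE\{\hat K_1\mid W\})\bigr\} + \IE\!\int\!\bigl(f'(W)-f'(W+u)\bigr)\hat K(u)\,du - \IE\{Rf(W)\}.
\end{equ}
The first and third summands are immediately controlled by $\|f'\|_\infty r_1$ and $\|f\|_\infty r_0$ via (A1). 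For the middle integral I split it at $|u|\le 1$ and $|u|>1$: for $|u|>1$ I use $|f'(W)-f'(W+u)|\le 2\|f'\|_\infty$ together with $\int_{|u|>1}|\hat K(u)|\,du \le \hat K_{2,0}$, producing an $r_2$ contribution. For $|u|\le 1$ I separate the mean and fluctuation parts: writing $\hat K(u)=K(u)+(\hat K(u)-K(u))$, the mean piece is bounded using $\int_{|u|\le 1}|u||K(u)|\,du$ combined with a Taylor expansion $f'(W+u)-f'(W)=\int_0^u f''(W+s)\,ds$, and the fluctuation piece is bounded in $L^2$ via Cauchy--Schwarz, yielding $r_3^{1/2}$ contributions that, together with the $r_4^{1/2}$ via integration-against-$|u|$, survive.

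The recursive step is the usual one: the bound on $\IE f''(W+s)$ requires estimating $\IP(W+s \in [z,z+\epsilon])$, and this is replaced by $\IP(Z \in [z-s, z+\epsilon - s]) + 2\Delta \le \epsilon/\sqrt{2\pi}+2\Delta$, which introduces $\Delta$ on the right-hand side. Combining with the smoothing inequality $\Delta\le \sup_z |\IE h_{z,\epsilon}(W) - Nh_{z,\epsilon}| + c\epsilon$, I obtain a self-improving inequality of the form $\Delta\le B + \gamma\Delta$ with $\gamma<1$ after choosing $\epsilon$ proportional to $r_2$. Solving for $\Delta$ and tracking the accumulated numerical constants produces the claimed bound $4r_0+4r_1+28r_2+20r_3+13r_4^{1/2}$.

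The main obstacle will be constant bookkeeping in the recursive step: the extra $\IE\{Rf(W)\}$ term only adds a clean summand $\|f\|_\infty r_0$ and is easily absorbed, but matching the specific constants $28$, $20$, and $13$ requires using the sharpest available Stein-solution bounds (e.g. $\|f'\|_\infty\le\sqrt{\pi/2}$ type bounds) and the sharpest smoothing inequality, and choosing $\epsilon$ optimally. Apart from this numerical tuning, the proof is a direct transcription of the Chen--Adrian--Xia argument, with $-\IE\{Rf(W)\}$ treated as a fourth additive error term bounded by $\|f\|_\infty r_0 \le r_0$ via (A1).
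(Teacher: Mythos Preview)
Your overall scheme is correct and matches the paper's proof: set $\gamma=\sup_z|\IP(W\le z)-\Phi(z)|$, apply (A1) at $t=0$ so that $\Psi_{\beta,0}\equiv 2$, solve the smoothed Stein equation for $h_{z,\epsilon}$, decompose the error, and close via a recursive inequality in $\gamma$. One minor difference: the paper takes $\epsilon=\gamma/2$ rather than $\epsilon\propto r_2$. Either choice closes the recursion, but $\epsilon=\gamma/2$ is what turns the $J_4$-contribution $(4\gamma+0.8\epsilon)\epsilon^{-1}r_2$ into a clean $9.6\,r_2$ and ultimately yields the specific constants $4,4,28,20,13$.

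There is, however, a genuine gap in your treatment of the fluctuation piece
\[
J_5=-\IE\int_{|u|\le 1}\bigl(f'(W+u)-f'(W)\bigr)\bigl(\hat K(u)-K(u)\bigr)\,du.
\]
You say Cauchy--Schwarz produces an ``$r_3^{1/2}$ contribution,'' but the lemma requires a \emph{linear} $20\,r_3$ term together with $13\,r_4^{1/2}$. A straight Cauchy--Schwarz on the integrand gives a factor $(\IE\hat K_{3,0})^{1/2}\sim r_3^{1/2}$, which is the wrong power and, in the intended regime $r_3\ll 1$, a strictly weaker bound. The correct device (used in the paper's $J_5$ estimate, following the cited Berry--Esseen argument) is Young's inequality $ab\le a^2/(2c)+cb^2/2$ with a free parameter; with $a=0.18$ this gives
\[
|J_5|\le a\gamma + 0.2a\epsilon + \frac{2r_3}{a} + \Bigl(2a+\frac{0.4}{a}\Bigr)r_4^{1/2} + 5r_4^{1/2}.
\]
The small $a\gamma$ is absorbed into the recursion alongside the $0.4\epsilon=0.2\gamma$ from smoothing and the $\gamma$-piece of $J_4$, while $2r_3/a$ is linear in $r_3$. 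Solving $\gamma\le 0.4\gamma + 2r_0+2r_1+16.8r_2+11.2r_3+7.6r_4^{1/2}$ then gives the stated constants. Without this parameterized splitting you cannot separate the $r_3$ and $r_4^{1/2}$ dependences correctly, nor obtain $r_3$ to the first power.
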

\subsection{Proof of \autoref{thm1}}\label{proof_thm}
\begin{proof}
	[Proof of \cref{thm1}]
It follows from \cref{l-BE} that 
	\begin{equation}
		\sup_{0 \leq z \leq 9}\biggl\lvert \frac{ \IP (W \leq z) - \Phi(z) }{ \delta(z) (1 - \Phi(z)) } \biggr\rvert
		\leq \frac{ 28 }{ 1 - \Phi(9) } . 
        \label{ePPe}
	\end{equation}
    It now suffices to prove \cref{et1} for the case $9 \leq z \leq z_0$. When $z_{0}\geq 8$, from
    \cref{eq-z0}, we have 
    \begin{equation}
        \begin{aligned}
            0.02 e^{-\tau/2} \min \bigl\{  r_0^{-1/(\tau_0 + 1)},
    r_1^{-1/(\tau_1 + 2)}, r_2^{-1/(\tau_2 + 3)} \bigr\}\geq 8,
        \end{aligned}
        \label{eq:5_3}
    \end{equation}
    and one can verify that  
    \begin{equation}
        \begin{aligned}
            \max\{r_{0},r_{1},r_2\} \leq 0.02 e^{-\tau/2}.
       \end{aligned}
        \label{eq:ass_1}
    \end{equation}
    Next, we use a smoothing inequality to prove the upper bound for the case $9\leq z\leq
    z_{0}$. 
	Let $\epsilon \coloneqq	 \epsilon(z) = 40 e^{\tau/2}r_2(1 + z^{\tau_2})$.
	By the following well-known inequalities: 
	\begin{equation}
		\frac{1}{1 + x} \phi(x) \leq 1 - \Phi(x) \leq \frac{1}{x} \phi(x) \quad \text{ for all $x \geq 1$, }
        \label{eNorm}
	\end{equation}
    we have
    \begin{equation}
        \label{eq-51}
        \phi(z - \epsilon)  \leq  e^{ z \epsilon} \phi(z) \leq e^{ z \epsilon} (1 + z) ( 1 - \Phi(z) ). 
    \end{equation}
    By \cref{eq-z0}, we have ${z_0} \leq 0.02 e^{-\tau/2} r_2^{-1/(\tau_2+3)}$, in other word 
    \begin{equation}
        \begin{aligned}
            r_2 \leq
            \Bigl(\frac{0.02e^{-\tau/2}}{z_{0}}\Bigr)^{\tau_{2}+3}.
        \end{aligned}
        \label{eq:2_1}
    \end{equation}
   It  then follows that for $z_0 \geq 8$, 
    \begin{equation}
		\begin{aligned}
			z_0 \epsilon 
			& \leq 40 e^{\tau/2} r_2 z_0 + 40 e^{\tau/2} r_2 z_0^{1 + \tau_2}\\ 
            & \leq 40 e^{\tau/2} \Bigl(\frac{0.02e^{-\tau/2}}{z_{0}}\Bigr)^{\tau_{2}+3} z_0 + 40 e^{\tau/2} \Bigl(\frac{0.02e^{-\tau/2}}{z_{0}}\Bigr)^{\tau_{2}+3} z_0^{1 + \tau_2} \\
            & \leq \frac{80 e^{\tau/2}(0.02 e^{-\tau/2})^{3}}{z_{0}^{2}} \\
			& \leq  0.03, 
		\end{aligned}
		\label{eq:zebound}
    \end{equation}
    where we use the fact that $\tau\geq 3$.
	Thus, it follows that 
	\begin{equation}
		e^{z_0 \epsilon} \leq 1.05. 
        \label{eq-52}
	\end{equation}
    Also, note that for $8 \leq z \leq z_0$,
    \begin{equation}
        \begin{aligned}
            (1 + z^{\tau_2}) \leq 2 z^{\tau_{2}}\leq  \frac{1}{256} z^{\tau_{2}+3} \leq
    0.004(1 + z^{\tau_2 + 3}),
        \end{aligned}
        \label{eq:5_4}
    \end{equation}
    and by \cref{eq:2_1},
    \begin{equation}
        r_2(1 + z^{\tau_2 + 3}) \leq(0.02e^{-\tau/2})^{\tau_{2}+3}
    \frac{1+z_{0}^{\tau_{2}+3}}{z_{0}^{\tau_{2}+3}}\leq  0.04 e^{-\tau/2}.
        \label{eq:5_5}
    \end{equation}
 Then, by \cref{eq:5_4,eq:5_5}, we have 
	for $8 \leq z \leq z_0$,
	\begin{equation}
		\begin{split}
			\epsilon(z) & =  40 e^{\tau/2} r_2(1 + z^{\tau_2}) \\
						& \leq 0.16 e^{\tau/2} r_2(1 + z^{\tau_2 + 3})\\
						& \leq 0.1. 
		\end{split}
		\label{eq-eps-bound}
	\end{equation}
    Noting that $1 + z \leq 1.25 {z_0}$ for all $8 \leq z \leq {z_0}$, by
    \cref{eq-51,eq:zebound,eq-52} we have 
    \begin{align*}
        \begin{split}
            \Phi(z) - \Phi(z - \epsilon) 
            & \leq \epsilon \phi(z - \epsilon) \\
            & \leq \epsilon e^{ z \epsilon } (1 + z) ( 1 - \Phi(z) )\\
            & \leq 1.25  {z_0} \epsilon e^{{z_0} \epsilon} (1 - \Phi(z))\\
            & \leq 0.05 ( 1 - \Phi(z )) , 
        \end{split}
    \end{align*}
	and thus, 
	\begin{equation}
		( 1 - \Phi(z - \epsilon) ) \leq (1 - \Phi(z)) + ( \Phi(z) - \Phi(z - \epsilon) ) \leq 1.05 ( 1 - \Phi(z) ). 
        \label{ePPc}
	\end{equation}
	On the other hand, 
	for $8 \leq z \leq {z_0}$, we have  
	\begin{equation}
        (1 + z)(1 + z^{\tau_2})\leq 1.25^{2} z^{\tau_{2}+1} \leq \frac{1.25^2}{64} z^{\tau_{2}+3}\leq 0.04 (1 + z^{\tau_2 + 3}).
        \label{ezz21}
	\end{equation}
    Then, by \cref{eq-52,ezz21}, we have
	\begin{equ}
		\Phi(z) - \Phi(z - \epsilon)
		& \leq \epsilon ( 1 + z ) e^{z \epsilon} (1 - \Phi(z) ) \\
        & \leq 40 r_2 e^{\tau/2} e^{z \varepsilon} (1 + z^{\tau_2})(1 + z) ( 1 - \Phi(z) )\\
		& \leq 2 r_2 e^{\tau/2} (1 + z^{\tau_2 + 3}) (1 - \Phi(z)),
        \label{ePPb}
	\end{equ}
	and similarly, 
	\begin{equation}
		\Phi(z + \epsilon) - \Phi(z) \leq 2 r_2 e^{\tau/2}(1 + z^{\tau_2 + 3}) (1 - \Phi(z)). 
        \label{ePPa}
	\end{equation}
	Recall that $C_0$ is defined as in \cref{deltan}. By \cref{eHbound,ePPa}, we have for $8 \leq z \leq {z_0}$, 
	\begin{equation}
		\begin{split}
            \MoveEqLeft \IP [ W \leq z ] - \Phi(z) \\
		& \leq \IE \{ h_{z, \epsilon}(W) - Nh_{z, \epsilon} \} 
		+ \Phi(z + \epsilon) - \Phi(z) \\
		  &
                        \leq   
						\bigl( 0.75(C_0 + \delta(m_0)^{-1}) + (186 + 2 \rho) e^{\tau/2} + (150 e^{\tau/2})^{\tau}\bigr) \delta(z) (1 - \Phi(z)).
        \label{eUpper}
		\end{split}
	\end{equation}
	Let 
	$
		\epsilon' = 40 e^{\tau/2} r_2(1 + (z - \epsilon)^{\tau_2}). 
	$
	By \cref{eHbound} with replacing $z$ and $\epsilon$ by $z - \epsilon$ and $\epsilon'$, respectively, and by \cref{ePPc}, we have for $9 \leq z \leq {z_0}$, 
	\begin{equation}
		\begin{split}
            \MoveEqLeft  \vert \IE \{ h_{z - \epsilon, \epsilon'}(W) - Nh_{z - \epsilon, \epsilon'}
		\}\vert \\
		& 
		\leq \bigl( 0.75(C_0 + \delta(m_0)^{-1}) + (184 + 2\rho) e^{\tau/2} + (150 e^{\tau/2})^{\tau}\bigr)
					\delta(z) (1 - \Phi(z - \epsilon)) 
        \\
		& 
		\leq \bigl( 0.8(C_0 + \delta(m_0)^{-1}) + (194 + 2.1 \rho) e^{\tau/2} + 1.05(150 e^{\tau/2})^{\tau}\bigr)
					\delta(z) (1 - \Phi(z )) 
        \label{eHbound2}
		\end{split}
	\end{equation}
	Thus, by \cref{eHbound2,ePPb}, we have for $9 \leq z \leq {z_0}$, 
	\begin{equation}
		\begin{split}
            \MoveEqLeft[1] \IP [ W \leq z ] - \Phi(z)\\ 
        & 
                    \geq \IE \{ h_{z - \epsilon, \epsilon'} (W)  \} - N h_{z  - \epsilon, \epsilon'} - \bigl( \Phi(z) - \Phi( z- \epsilon ) \bigr) \\
        & 
		\geq - \bigl( 0.8(C_0 + \delta(m_0)^{-1}) + (196 + 2.1 \rho) e^{\tau/2}+ 1.05(150 e^{\tau/2})^{\tau} \bigr)
					\delta(z) (1 - \Phi(z )) .
        \label{eLower}
		\end{split}
	\end{equation}
	By \cref{eUpper,eLower}, we have for $9 \leq z \leq {z_0}$, 
	\begin{equation}
        \begin{split}
            		\MoveEqLeft \bigl\lvert  \IP (W \leq z) - \Phi(z)  \bigr\rvert\\
					& \leq  \bigl( 0.8(C_0 + \delta(m_0)^{-1}) + (196 + 2.1 \rho) e^{\tau/2}+ 1.05(150 e^{\tau/2})^{\tau} \bigr)
					\delta(z) (1 - \Phi(z ))  . 
                    \label{ePPd}
        \end{split}
	\end{equation}
	Moving $\delta(z) (1 - \Phi(z ))$ in \cref{ePPd} to the left-hand side (LHS) and 
	taking the supremum over $9 \leq z \leq z_0$, and by \cref{ePPe}, we have 
	\begin{equation}
		\label{5.19}
        C_0 
		\leq  \bigl( 0.8(C_0 + \delta(m_0)^{-1}) + (530 + 2.1 \rho) e^{\tau/2} + 1.05(150 e^{\tau/2})^{\tau}\bigr)
					   +  \frac{ 28 }{ 1 - \Phi(9) }. 
    \end{equation}
	Solving the recursive inequality \cref{5.19}, we obtain 
	\begin{equation*}
        C_0 \leq  4 \delta(m_0)^{-1}  + C (150^{\tau} + \rho) e^{\tau^2/2}, 
	\end{equation*}
	which proves \cref{et1}. 
\end{proof}
\subsection{Proofs of \texorpdfstring{\cref{expbound,l-BE}}{Lemmas 5.1 and 5.2}}\label{sec_proof_lemma}
{In order to prove \cref{expbound}, we need to prove the following lemmas. 
Recall that $\Psi_{\beta,t}(w)$ is defined as in \cref{eq:sec1_1}.
The first lemma gives an upper bound for the truncated exponential moment $\IE \{ \Psi_{\beta,
t}(W)\}$.}

\begin{lemma}[Exponential bound]
    \label{l5.1}
    Assume that conditions \cref{el3a,el3b,el3c} hold and $z_{0}$ in \cref{eq-z0} satisfies $z_{0}\geq 8$. We have  
	\begin{equation}
		\begin{aligned}
				\IE \{ \Psi_{\beta, t}(W)\} \leq 4 e^{t^2/2} \quad \text{for $0 \leq t \leq z_0$},
		\end{aligned}
		\label{eq:rmexp}
	\end{equation}
where $\Psi_{\beta,t}(w)$ is defined in \cref{eq:sec1_1}.
\end{lemma}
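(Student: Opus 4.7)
The plan is to prove \cref{l5.1} via a Gronwall-type argument driven by a Stein-identity estimate. Let $G(t) := \IE\{\Psi_{\beta,t}(W)\}$; since $\Psi_{\beta,t}$ is bounded for each fixed $\beta$, $G(t)$ is finite, and the goal is the \emph{$\beta$-uniform} bound $G(t) \leq 4 e^{t^2/2}$ for $0 \leq t \leq z_0$.

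First, I would collect three pointwise estimates on $\Psi_{\beta,t}$, each verified by a short piecewise computation from \cref{eq:sec1_1}: (i) $0 \leq \Psi'_{\beta,t}(w) \leq t\,\Psi_{\beta,t}(w)$; (ii) $|\Psi''_{\beta,t}(w)| \leq t^2\,\Psi_{\beta,t}(w)$ off $w=\beta$; and (iii) $\Psi_{\beta,t}(w+v) \leq e^{t|v|}\,\Psi_{\beta,t}(w)$, where the case $w \leq \beta < w+v$ reduces to the AM--GM identity $e^{tw} + e^{t(2\beta-w)} \geq 2 e^{t\beta}$. I would also record the algebraic identity $\Psi'_{\beta,t}(w) = t\,\Psi_{\beta,t}(w) - t - t\,\Delta_{\beta,t}(w)$, where $\Delta_{\beta,t}(w) := 2\bigl(e^{t\beta} - e^{t(2\beta-w)}\bigr)\mathbf{1}(w>\beta)$ is a nonnegative boundary correction bounded by $\Psi_{\beta,t}(w)$ itself.

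Second, I would apply the Stein identity \cref{e1} with $f = \Psi_{\beta,t}$ and use the Taylor expansion $\Psi'_{\beta,t}(W+u) = \Psi'_{\beta,t}(W) + \int_0^u \Psi''_{\beta,t}(W+v)\,dv$. Estimates (ii)--(iii) dominate the remainder pointwise by $t^2\,|u|\,e^{t|u|}\,\Psi_{\beta,t}(W)$, so after integrating and invoking \cref{el3c} its expectation is at most $t^2\,r_2(1+t^{\tau_2})\,G(t)$. Conditioning on $W$ to handle $\hat K_1$ and using \cref{el3a,el3b} together with $\Psi'_{\beta,t} \leq t\,\Psi_{\beta,t}$ yields the core estimate
\[
  \bigl|\IE\{W\,\Psi_{\beta,t}(W)\} - \IE\{\Psi'_{\beta,t}(W)\}\bigr| \leq \gamma(t)\,G(t), \quad \gamma(t) := r_0(1+t^{\tau_0}) + t\,r_1(1+t^{\tau_1}) + t^2\,r_2(1+t^{\tau_2}).
\]
I would then convert this into a differential inequality for $G$. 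Differentiating $\Psi_{\beta,t}$ in $t$ gives $\partial_t \Psi_{\beta,t}(w) = w\,\Psi'_{\beta,t}(w)/t + B_{\beta,t}(w)$ with $B_{\beta,t}(w) := 2\beta\bigl(e^{t\beta} - e^{t(2\beta-w)}\bigr)\mathbf{1}(w>\beta) \geq 0$. Taking expectations, substituting the core estimate, and using the algebraic identity of Step~1 to eliminate $\IE\{\Psi'_{\beta,t}(W)\}$ in favor of $t(G(t)-1) - t\,\IE\{\Delta_{\beta,t}(W)\}$, the result is a Gronwall inequality $G'(t) \leq (t + \tilde\gamma(t))\,G(t)$ on $[0,z_0]$ with $\tilde\gamma(t)$ of the same order as $\gamma(t)/t$. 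Integrating from $0$ to $t$ with $G(0)=2$, and using that \cref{eq-z0} together with \cref{eq:ass_1} force $\int_0^{z_0}\tilde\gamma(s)\,ds \leq \log 2$, delivers $G(t) \leq 4\,e^{t^2/2}$.

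The main obstacle is controlling the boundary term $B_{\beta,t}$, because $2\beta\,e^{t\beta}$ is not uniformly bounded in $\beta$ and cannot be discarded pointwise. The key observation that should close the circle is that on $\{w > \beta\}$ one has $\Psi_{\beta,t}(w) \geq e^{t\beta}+1$, so that $\IE\{B_{\beta,t}(W)\}$ can be absorbed into a combination of $t\,G(t)$ and a small $\gamma(t)\,G(t)$ contribution by re-applying the Stein estimate to $\IE\{W\Psi_{\beta,t}(W)\mathbf{1}(W>\beta)\}$. Once this absorption is carried out uniformly in $\beta$, the Gronwall step becomes routine and produces the constant $4$ thanks to the $e^{\log 2} = 2$ slack built into the smallness of the $r_i$'s.
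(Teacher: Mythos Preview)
Your overall architecture---Gronwall on $G(t)=\IE\{\Psi_{\beta,t}(W)\}$, driven by the Stein identity applied to $\Psi_{\beta,t}$---is exactly the paper's approach. The gap is that the ``main obstacle'' you identify is an illusion, and your proposed workaround (re-applying Stein to a truncated expectation) is both unnecessary and unlikely to work as stated.

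Here is the point you are missing. In your own notation, $B_{\beta,t}(w)=\beta\,\Delta_{\beta,t}(w)$. Hence, substituting your algebraic identity $\Psi'_{\beta,t}(w)=t\Psi_{\beta,t}(w)-t-t\Delta_{\beta,t}(w)$ into $\partial_t\Psi_{\beta,t}(w)=\tfrac{w}{t}\Psi'_{\beta,t}(w)+B_{\beta,t}(w)$ gives
\[
\partial_t\Psi_{\beta,t}(w)=w\bigl(\Psi_{\beta,t}(w)-1\bigr)+(\beta-w)\,\Delta_{\beta,t}(w).
\]
Since $\Delta_{\beta,t}(w)\geq 0$ and is supported on $\{w>\beta\}$, the second term is $\leq 0$. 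Thus the clean pointwise inequality
\[
\partial_t\Psi_{\beta,t}(w)\;\leq\; w\bigl(\Psi_{\beta,t}(w)-1\bigr)
\]
holds for all $w$, and the boundary contribution is simply discarded by sign. This is precisely what the paper does (in one line), and it yields $G'(t)\leq \IE\{W(\Psi_{\beta,t}(W)-1)\}$ directly. Your core estimate then gives $G'(t)\leq (t+\gamma(t))G(t)$ with $\gamma(t)=r_0(1+t^{\tau_0})+t\,r_1(1+t^{\tau_1})+t^2 r_2(1+t^{\tau_2})$, and integrating with $G(0)=2$ finishes the proof once you check $\int_0^{z_0}\gamma(s)\,ds\leq \log 2$ from \cref{eq-z0}.

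A secondary issue: you wrote that the Gronwall rate $\tilde\gamma(t)$ is ``of the same order as $\gamma(t)/t$''. That cannot be right, because $r_0(1+t^{\tau_0})/t$ is not integrable at $0$. The correct rate is $\gamma(t)$ itself, not $\gamma(t)/t$; this is what falls out once you use the cancellation above rather than trying to control $\IE\{\Psi'_{\beta,t}(W)\}$ and $\IE\{B_{\beta,t}(W)\}$ separately.
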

\begin{proof}
Since $z_{0}\geq 8$, we have \cref{eq:ass_1} holds.
Let 
\begin{align*}
	\delta_1 (t) = r_0 (1 + t^{\tau_0 + 1} ) + r_1 (1 + t^{\tau_1 + 2}) + r_2 ( 1 + t^{\tau_2 + 3} ) \quad \text{for $t \geq 0$}.
\end{align*}
Then, recalling that $\tau \geq 3$ and that $z_0$ is defined in \cref{eq-z0}, by a similar
argument to that in \cref{eq:5_5}, we have 
$\delta_1 (z_0) \leq 0.1 e^{-\tau/2} \leq 0.1$.
We prove a more general result as follows: for all $0 \leq t \leq  m_0$, 
	\begin{equation}
        \label{eq:l5.1}
		\IE \{ \Psi_{\beta, t}(W) \} \leq 2 e^{t^2/2 + 2 \delta_1(t)}, 
	\end{equation}
	which, together with the fact that $\delta_1(z_0) \leq 0.1$,  implies \cref{eq:rmexp} immediately. 

	It now suffices to prove \cref{eq:l5.1}.
For $t \geq 0$, let $h(t) = \IE \{ \Psi_{\beta, t}(W) \}$. As $\Psi_{\beta, t}(w) \leq 2 e^{ t \beta } + 1$, then $h(t) < \infty$ for all $0 \leq t \leq  m_0$. 
Write 
\begin{align*}
	\Psi'_{\beta,t}(w) = \frac{\partial}{\partial w} \Psi_{\beta, t}(w), \quad \Psi''_{\beta,t}(w) = \frac{\partial^2}{\partial w^2} \Psi_{\beta, t}(w). 
\end{align*}
By the definition of $\Psi_{\beta,t}(W)$, 
\begin{align}
	\label{ePsit}
	\frac{\partial}{\partial t} \Psi_{\beta, t}(w)
	& = 
	\begin{cases}
		w e^{tw} & \text{if $w \leq \beta$, }\\
		2 \beta e^{t \beta} - ( 2\beta - w ) e^{t (2\beta - w)} & \text{if $w > \beta$},
	\end{cases}\\
	\label{ePsiw}
	\Psi_{\beta,t}'(w) 
	& = 
	\begin{cases}
		\mathrlap{t e^{ tw }}\hphantom{2 \beta e^{t \beta} - ( 2\beta - w ) e^{t (2\beta - w)}} & \text{if $w \leq \beta$}, \\
		t e^{ t (2 \beta - w) } & \text{if $w > \beta$,}
	\end{cases}
	\intertext{and}
	\label{ePsiww}
	\Psi_{\beta,t}''(w) 
	& = 
	\begin{cases}
		\mathrlap{t^2 e^{ tw }}\hphantom{2 \beta e^{t \beta} - ( 2\beta - w ) e^{t (2\beta - w)}} & \text{if $w \leq \beta$}, \\
		- t^2 e^{ t (2 \beta - w) } & \text{if $w > \beta$}.
	\end{cases}
\end{align}
Also, 
\begin{equation}
	\frac{\partial}{\partial t} \Psi_{\beta, t}(w) 
	\leq w (\Psi_{\beta, t}(w) - 1), \quad 
	\Psi_{\beta,t}' (w) 
	\leq t \Psi_{\beta, t}(w). 
    \label{ePsitt}
\end{equation}

By \cref{ePsit} and the first inequality of \cref{ePsitt}, it follows that 
\begin{equ}
	h'(t) 
    & = \IE \biggl\{ \frac{\partial}{\partial t} \Psi_{\beta, t}(W) \biggr\} \leq \IE \bigl\{ W
	(\Psi_{\beta, t}(W) - 1) \bigr\}. 
    \label{el31}
\end{equ}
By \cref{e1,ePsitt} and noting that $\IE W = 0$ and $|\Psi_{\beta,t}(w) - 1| \leq \Psi_{\beta,t}(w)$ for all $w \in \IR$, we have 
\begin{equ}
	\IE \bigl\{ W (\Psi_{\beta, t}(W)-1) \bigr\}  
	& = \IE \biggl\{ \int_{-\infty}^{\infty} \Psi_{\beta,t}'(W + u) \hat{K}(u) du \biggr\} + \IE \bigl\{ R (\Psi_{\beta, t}(W) - 1) \bigr\}\\
	& =  \IE \biggl\{ \int_{-\infty}^{\infty} \Psi_{\beta,t}'(W) \hat{K}(u) du \biggr\} + \IE \bigl\{ R (\Psi_{\beta, t}(W) - 1) \bigr\}\\
    & \quad +  \IE \biggl\{  \int_{-\infty}^{\infty} \bigl( \Psi_{\beta,t}' (W + u) -
	\Psi_{\beta,t}'(W)\bigr) \hat{K}(u) du \biggr\}\\
    & \leq  t h(t) + t \IE \{ \vert \IE\{\hat{K}_1\vert W\} - 1 \vert \Psi_{\beta, t}(W) \} + \IE \{ \lvert R \Psi_{\beta, t}(W)  \rvert \} \\
	& \quad +  \IE \biggl\{  \int_{-\infty}^{\infty} \bigl( \Psi_{\beta,t}' (W + u) -
	\Psi_{\beta,t}'(W)\bigr) \hat{K}(u) du \biggr\}.
    \label{el32}
\end{equ}
By \cref{ePsiww}, for all $w \in \IR$, we have
\begin{equation}
    \begin{aligned}
\left|\Psi_{\beta, t}^{\prime}(w+u)-\Psi_{\beta, t}^{\prime}(w)\right| & \leq|u| \sup _{s \leq|u|}\left|\Psi_{\beta, t}^{\prime \prime}(w+s)\right| \\
& \leq|u| t^{2} \sup _{s \leq|u|} \Psi_{\beta, t}(w+s) \\
& \leq|u| t^{2} e^{t|u|} \Psi_{\beta, t}(w).
   \end{aligned}
    \label{eq:5_6}
\end{equation}
By \cref{ek2,el3c,eq:5_6}, we have that the last term of \cref{el32} can be bounded by 
\begin{equ}
	\MoveEqLeft 
	\biggl\lvert  \IE \biggl\{  \int_{-\infty}^{\infty} \bigl( \Psi_{\beta,t}' (W + u) -
	\Psi_{\beta,t}'(W)\bigr) \hat{K}(u) du \biggr\} \biggr\rvert \\
	& \leq t^2 \IE \{ \hat{K}_{2,t} \Psi_{\beta, t}(W) \}  \leq r_2 t^2 ( 1 + t^{\tau_2} ) \IE \{ \Psi_{\beta, t}(W) \} . 
    \label{el33}
\end{equ}
Substituting \cref{el3a,el3b,el33,el32} into \cref{el31} yields 
\begin{align*}
	h'(t) \leq t h(t) + r_0  (1 + t^{\tau_0}) h(t)  + r_1 t(1 + t^{\tau_1}) h(t)  + r_2 t^2 (1 + t^{\tau_2}) h(t) . 
\end{align*}
Solving the differential inequality yields
\begin{equation}
    \begin{aligned}
        h(t)\leq 2 \exp \biggl\{\frac{t^{2}}{2}+ r_{0} \biggl(t+\frac{t^{\tau_{0}+1}}{\tau_{0}+1}\biggr)+
            r_{1} \biggl(\frac{t^{2}}{2}+\frac{t^{\tau_{1}+2}}{\tau_{1}+2}\biggr)+ r_{2}
        \biggl(\frac{t^{3}}{3}+\frac{t^{\tau_{2}+3}}{\tau_{2}+3}\biggr)\biggr\}.
    \end{aligned}
    \label{eq:5_1}
\end{equation}
Since $\tau_{0},\tau_{1},\tau_{2}\geq 0$, by Young's inequality, we have 
\begin{equation}
    \begin{aligned}
		t+\frac{t^{\tau_{0}+1}}{\tau_{0}+1}& \leq 2(1+t^{\tau_{0}+1}),\\
		\frac{t^{2}}{2}+\frac{t^{\tau_{1}+2}}{\tau_{1}+2}& \leq (1+t^{\tau_{1}+2}),\\
		\frac{t^{3}}{3}+\frac{t^{\tau_{2}+3}}{\tau_{2}+3}& \leq (1+t^{\tau_{2}+3}).
    \end{aligned}
    \label{eq:5_2}
\end{equation}
Combining \cref{eq:5_1,eq:5_2} yields \cref{eq:l5.1}, 
as desired. 
\end{proof}
{The following lemma establish an error bound for differences between tail probabilities 
of $W$ and $Z$. This is one of the key observations in our proof, since we can give a bound  between
$\IP(W> z)$ and $\IP(Z> z)$ for $z$ slightly large than $z_{0}$, which is very important in the
     recursive argument.}
\begin{lemma}
	\label{ll2}
	Assume that the conditions in \cref{expbound} hold. For $8 \leq z \leq {z_0}$, $0 \leq \epsilon \leq 2$, $\lvert u \rvert \leq 1$ and $u \wedge 0 \leq s \leq u \vee 0$, we have  
	\begin{equation}
		\lvert \IP [ W + s > z ] - \IP [Z + s > z] \rvert 
		\leq 2 e^{\tau/2}  e^{z |u|} \delta(z) (1 - \Phi(z)) \bigl( C_0 + c_0 \bigr)
        \label{ePWz}
	\end{equation}
	and 
	\begin{equation}
		\lvert \IP [ W + s > z + \epsilon ] - \IP [Z + s > z + \epsilon] \rvert\leq 2 e^{\tau/2}  e^{z |u| + z \epsilon} \delta(z) (1 - \Phi(z)) \bigl( C_0 + c_0 \bigr), 
        \label{ePWy}
	\end{equation}
 where $C_0$ is defined as in \cref{deltan}, $\tau$ is as in \cref{thm1}, 
and $c_{0} \coloneqq 1/\delta(m_{0}) + (150 e^{\tau/2})^{\tau}$. 
\end{lemma}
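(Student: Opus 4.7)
The plan is to rewrite both inequalities in a common form. Setting $y = z - s$ for \cref{ePWz} and $y = z + \epsilon - s$ for \cref{ePWy}, the target becomes a bound on $\lvert \IP[W > y] - (1 - \Phi(y))\rvert$. Since $\lvert s \rvert \leq \lvert u \rvert \leq 1$ and (by \cref{eq-eps-bound}) $\epsilon \leq 0.1$, we have $\lvert y - z \rvert \leq \lvert u \rvert$ in the first case and $\lvert y - z \rvert \leq \lvert u \rvert + \epsilon$ in the second, so $y \in [7, z_0 + 2]$ throughout. I would then split into two subranges for $y$ and treat them separately.

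First I would collect two comparison estimates valid for all $y$ in the relevant range. Using the Mill's ratio bounds $\phi(x)/(1+x) \leq 1 - \Phi(x) \leq \phi(x)/x$ for $x \geq 1$ (as in \cref{eNorm}), one obtains $(1 - \Phi(y))/(1 - \Phi(z)) \leq C e^{z\lvert y - z \rvert}$ for $z \geq 8$, which produces exactly the exponential prefactor $e^{z\lvert u \rvert}$ in \cref{ePWz} and $e^{z\lvert u \rvert + z\epsilon}$ in \cref{ePWy}. Since $\delta$ is a polynomial with nonnegative coefficients, one also has $\delta(y) \leq C\delta(z)$ for $\lvert y - z \rvert \leq 2$ and $z \geq 8$ by a routine estimate.

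In the subrange $y \leq z_0$ the definition of $C_0$ in \cref{deltan} gives $\lvert \IP[W > y] - (1 - \Phi(y))\rvert \leq C_0 \delta(y)(1 - \Phi(y))$, and the two comparison estimates convert this into the desired bound of the form $C \cdot C_0 \cdot e^{z\lvert u \rvert}\delta(z)(1 - \Phi(z))$ (with the extra $e^{z\epsilon}$ in the second case), contributing the $C_0$ term. In the subrange $z_0 < y \leq z_0 + 2$, where $C_0$ does not apply, I would instead control $\IP[W > y]$ directly by a generalized Markov inequality using \cref{l5.1}: taking $\beta = y$ and $t = z_0$, the function $\Psi_{y,z_0}$ is nondecreasing with $\Psi_{y, z_0}(y) = e^{z_0 y} + 1$, hence
\begin{equation*}
    \IP[W > y] \leq \frac{\IE\{\Psi_{y, z_0}(W)\}}{\Psi_{y, z_0}(y)} \leq 4 e^{z_0^2/2 - z_0 y}.
\end{equation*}
This, combined with the trivial upper bound $(1 - \Phi(y))$, is to be absorbed into $c_0 \delta(z)(1 - \Phi(z)) e^{z\lvert u \rvert}$ (resp.\ with $e^{z\epsilon}$), with the large constant $(150 e^{\tau/2})^\tau$ in $c_0$ providing the slack.

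The main obstacle is this last absorption step in the subrange $y > z_0$: one must show that the constant arising from the Markov-type estimate, when compared to $\delta(z)(1 - \Phi(z))$ for $z$ near $z_0$, does not exceed $c_0 = 1/\delta(m_0) + (150 e^{\tau/2})^\tau$ up to the allowed factor $2e^{\tau/2}$. This requires carefully using the upper bounds on $r_0, r_1, r_2$ implied by the definition of $z_0$ in \cref{eq-z0} (as exploited in \cref{eq:5_3,eq:ass_1,eq:2_1}) to show that $\delta(z)$ is not too small relative to the constants generated by the tail bound. The remainder is a routine but technically delicate computation; combining the two subranges then yields both \cref{ePWz} and \cref{ePWy}.
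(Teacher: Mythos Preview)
Your overall strategy --- reduce to bounding $\lvert \IP[W>y]-(1-\Phi(y))\rvert$ for $y=z-s$ or $y=z+\epsilon-s$, split according to whether $y\leq z_0$ or $y>z_0$, and use the Mill's ratio estimate \cref{eNorm} together with the polynomial comparison \cref{ez3} to pass from $y$ back to $z$ --- is exactly the paper's approach. The subrange $y\leq z_0$ is handled identically.

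The difference lies in the subrange $y>z_0$. You propose a Markov bound via $\Psi_{y,z_0}$ and \cref{l5.1}; the paper does something simpler. Since $y>z_0$ implies $\IP[W>y]\leq\IP[W>z_0]$ and $1-\Phi(y)\leq 1-\Phi(z_0)$, one has $\lvert \IP[W>y]-(1-\Phi(y))\rvert\leq \IP[W>z_0]+(1-\Phi(z_0))$, and now $\IP[W>z_0]$ is controlled by the very definition of $C_0$ at the endpoint $z_0$: $\IP[W>z_0]\leq(1+C_0\delta(z_0))(1-\Phi(z_0))$. This avoids your Markov step entirely, and in particular avoids a genuine gap in your argument: \cref{l5.1} is only proved for $\beta\in[0,m_0]$ (this is where condition (A1) applies), and $\beta=y$ may exceed $m_0$ since the lemma only guarantees $z_0\leq m_0$, not $z_0+2\leq m_0$.

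Your ``main obstacle'' is then precisely the paper's absorption step: the paper shows from \cref{eq-z0} and \cref{eq-deltat} that $1/\delta(z_0)\leq c_0$, whence $1\leq c_0e^{\tau/2}\delta(z)$ via \cref{ez3}, so the stray term $2(1-\Phi(z_0))$ is absorbed into $2e^{\tau/2}c_0\delta(z)(1-\Phi(z))$. One minor correction: you invoke \cref{eq-eps-bound} to claim $\epsilon\leq 0.1$, but the lemma hypothesis is $0\leq\epsilon\leq 2$; the estimate \cref{eq-eps-bound} belongs to a later proof for a specific choice of $\epsilon$ and is not available here.
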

\begin{proof}[Proof of \cref{ll2}]
We first introduce some inequalities. For $z \geq 8$ and $0 \leq a \leq 3$, we have  
\begin{equ}
	1 - \Phi( z - a ) & \leq \frac{ 1 }{ z - a } \phi(z - a) \leq \frac{ e^{za} }{   z - a } \phi(z) \\
						& \leq {\frac{ (1 + z^2) e^{za} }{ z ( z - a )	 } ( 1 -
						\Phi(z) )}\leq 2 e^{z a} ( 1 - \Phi(z) ).  
	\label{ePhi}
\end{equ}
Moreover, noting that
\begin{equation*}
    (1 + (z + 3)^\ell) \leq 1.1 (z+3)^{\ell}\leq 1.1 \times  1.375^{\ell} z^{\ell}\leq e^{\tau/2} ( 1 + z^\ell ) \quad \text{ for all $1 \leq \ell \leq \tau$ and $z \geq 8$}, 
\end{equation*}
and by the definition of $\delta(z)$ as in \cref{thm1}, 
we have 
\begin{equation}
	\label{ez3}
    \delta(z + a) \leq e^{\tau/2} \delta(z) \quad \text{for all $0\leq a\leq 3$ and $z \geq 8$}. 
\end{equation}
We first prove \cref{ePWz}. To this end, we consider three cases.

(1). If $s > 0$, then by the definition of $C_{0}$ in \cref{deltan}, by \cref{ePhi} and noting that  $ \lvert s  \rvert \leq |u| \leq 1 $, we have  
	\begin{equ}
		\label{e5.4-01}
		\lvert \IP [ W + s > z ] - \IP [ Z + s > z ] \rvert
		& \leq C_0 \delta(z - s) ( 1 - \Phi(z - s) ) \\
		& \leq 2 C_0e^{z|u|}  \delta(z)  (1 - \Phi(z)) . 
	\end{equ}

(2). 	If $s < 0$ and $z - s \leq {z_0}$, by \cref{ez3} and noting that $|s| \leq 1$,
	\begin{equ}
		\label{e5.4-02}
		\lvert \IP [ W + s > z ] - \IP [ Z + s > z ] \rvert
		& \leq C_0 \delta(z - s) ( 1 - \Phi(z - s) )  \\
		  & \leq C_0 e^{\tau/2} \delta(z)  (1 - \Phi(z)) . 
	\end{equ}

(3). If $s < 0$ but $z - s > {z_0}$, it then follows that ${z_0} \geq z \geq 8$ and $\lvert z - {z_0} \rvert \leq |s| \leq |u| \leq 1$.
By \cref{deltan,ePhi,ez3}, 
	\begin{align*}
		\begin{split}
			|\IP [ W + s > z ] - \IP [ Z + s  >z ] |
            & \leq \IP [W > {z_0}] + \IP [ Z > z_{0} ] \\
            & \leq ( 1 - \Phi({z_0})) + C_0 \delta({z_0}) ( 1 - \Phi({z_0})) + 1 -
            \Phi(z_{0})\\
            & \leq 2 e^{z|u|} (1 - \Phi(z)) (1 + e^{\tau/2} C_0 \delta({z}) ). 
		\end{split}
	\end{align*}
	By \cref{eq-z0}, we have 
	\begin{align*}
        z_0 \geq 0.02 e^{-\tau/2} \min \Bigl\{ 50 e^{\tau/2} m_0, \frac{1}{3} r_0^{-1/(\tau_0 + 1)}, \frac{1}{3} r_1^{-1/(\tau_1 + 2)}, \frac{1}{3} r_2^{-1/(\tau_2 + 3)} \Bigr\}. 
	\end{align*}
	Hence, by \cref{eq-deltat} and recalling that $c_{0}=1/\delta(m_{0}) + (150 e^{\tau/2})^{\tau}$, we have 
	\begin{align*}
		\frac{1}{\delta({z_0})} \leq c_0.
	\end{align*}
	By \cref{ez3} and the fact that $z \leq {z_0} \leq z + 1$, we now have  
	\begin{align*}
		1 = \frac{\delta(z)}{\delta(z)} \leq e^{\tau/2}\delta(z) \frac{1}{\delta({z_0})} \leq c_0 e^{\tau/2} \delta(z)	.    
	\end{align*}
	Therefore, it follows that 
	\begin{equation}
		|\IP [ W + s > z ] - \IP [ Z + s  >z ] |
		\leq 2 (C_0 + c_0) e^{\tau/2} e^{z|u|} \delta(z) (1 - \Phi(z)). 
		\label{e5.4-03}
	\end{equation}
	Combining \cref{e5.4-01,e5.4-02,e5.4-03} yields \cref{ePWz}. The inequality \cref{ePWy} can be shown similarly. 
\end{proof}
{We next give the proof of \cref{expbound}, which couples Stein's method with the recursive method. The proof includes two parts. First, based on Stein's method, we split the numerator on the left hand side of
\cref{eHbound} into several terms. Second, with the help of \cref{l5.1,ll2}, we bound these parts by recursive
arguments. }
\begin{proof}[Proof of \cref{expbound}]
We first introduce some notation and inequalities. We fix $8 \leq z \leq z_0$ in this proof. 
Because $z_{0}\geq 8$, we have that \cref{eq:ass_1} holds.	
	We also choose $\beta \coloneqq {z_0}$ in the function $\Psi_{\beta, t}(w)$, and 
	let $\epsilon = 40 e^{\tau/2} r_2(1 + z^{\tau_2})$. By \cref{eq-52,eq-eps-bound}, we have
    $e^{\beta \epsilon} \leq 1.05$ and $\epsilon \leq 0.1$.   

	Now, 
	consider the Stein equation 
	\begin{equ}
		f'(w) - wf(w) = h_{z, \epsilon}(w) - \IE \{ h_{z, \epsilon}(Z) \}, 
        \label{eSte}
	\end{equ}
	and let $f:= f_{z, \epsilon}$ be its solution. Let $g(w) = w f(w)$ and  let  
	\begin{math}
		\upsilon (w) = (2\pi)^{-1/2} \int_{0}^{ w } s e^{ -(z + \epsilon - \epsilon s)^2/2 } ds. 
	\end{math}
	Recall that $Z \sim N(0,1)$,  $Nh_{z,\epsilon} = \E h_{z,\epsilon}(Z)$, $\phi(\cdot)$ is the standard normal probability density function and $\Phi(\cdot)$ is the standard normal distribution function.
	It can be shown that (see, e.g., Lemma 5.3 of \cite{chenshao2004})
	\begin{align}
		Nh_{z, \epsilon} & =  \Phi(z) + \epsilon \upsilon(1) = \Phi(z) + \int_z^{z + \epsilon} \Bigl( 1
        + \frac{ z - s }{\epsilon} \Bigr) \phi(s) ds ,  
        \label{eNh}
		\\
		f(w) & =  
            \label{eSol}
		\begin{dcases}
			\frac{\Phi(w)}{\phi(w)} \bigl( 1 - Nh_{z, \epsilon}\bigr) & \text{if $w \leq z$},\\
			\frac{ 1 - \Phi(w) }{\phi(w)} Nh_{z, \epsilon} -  \frac{\epsilon}{\phi(w)} \upsilon \Bigl( 1 + \frac{z - w}{\epsilon} \Bigr)  & \text{if $z < w \leq z + \epsilon$}, \\
			\frac{1 - \Phi(w)}{\phi(w)} Nh_{z, \epsilon} & \text{if $w > z + \epsilon$}, 
		\end{dcases}
	\end{align}
	and 
	\begin{align}
		g'(w) = 
		\begin{dcases}
			\biggl( \frac{ (1 + w^2) \Phi(w) }{\phi(w)} + w \biggr) \bigl( 1 - N h_{z, \epsilon} \bigr) & \text{ if $w \leq z$, }\\
			\biggl( \frac{ (1 + w^2) (1 - \Phi(w)) }{\phi(w)} - w \biggr) Nh_{z, \epsilon}\\
			\qquad - \frac{\epsilon (1 + w^2)}{\phi(w)} \upsilon \biggl( 1 + \frac{z - w}{\epsilon} \biggr) +  \frac{w(z - w + \epsilon)}{\epsilon} 
			& \text{ if $z < w \leq z + \epsilon$, }\\
			\biggl( \frac{ (1 + w^2) (1 - \Phi(w)) }{\phi(w)} - w \biggr) Nh_{z, \epsilon} & \text{ if $w > z + \epsilon$. }
		\end{dcases}
		\label{egprime}
	\end{align}
	Thus, by \cref{e1,eSte}, 
	\begin{equ}
		\MoveEqLeft \bigl\lvert \IE \{ h_{z,\epsilon}(W) \} - Nh_{z,\epsilon} \bigr\rvert\\
		& = \bigl\lvert \IE \{ f'(W) - W f(W) \} \bigr\rvert\\
		& = \biggl\lvert \IE \{ f'(W)\} - \IE \biggl\{ \int_{-\infty}^{\infty} f'(W + u) \hat{K}(u) du \biggr\} - \IE \{ R f(W) \} \biggr\rvert\\
		& \leq  |I_1| + |I_2| + |I_3|, 
        \label{eII123}
	\end{equ}
	where 
	\begin{align*}
		I_1 & =   \IE \biggl\{ \int_{-\infty}^{\infty} f'(W + u)-f'(W) \hat{K}(u) du \biggr\} , & 
		I_2 & =   \IE \{ f'(W) ( 1 - \hat{K}_1 ) \} , & 
		I_3 & =   \IE \{ Rf(W) \} . 
	\end{align*}
	For $I_1$, by \cref{eSte}, we have 
	\begin{equ}
		I_1 = I_{11} + I_{12} + I_{13} + I_{14}, 
        \label{eI1}
	\end{equ}
	where 
	\begin{align*}
		I_{11} & = \IE \biggl\{ \int_{-\infty}^{\infty} \bigl( g(W + u) - g(W) \bigr) \hat{K}(u) du \biggr\}, \\
		I_{12} & = \IE \biggl\{ \int_{ \lvert u \rvert > 1 }^{} \bigl( h_{z,\epsilon}(W + u) - h_{z, \epsilon}(W) \bigr) \hat{K}(u) du \biggr\}, \\
		I_{13} & = \IE \biggl\{ \int_{ \lvert u \rvert \leq 1 } \bigl( h_{z, \epsilon} (W + u) - h_{z, \epsilon}(W) \bigr) K(u) du \biggr\}, \\
		I_{14} & = \IE \biggl\{ \int_{ \lvert u \rvert \leq 1 } \bigl( h_{z, \epsilon}(W + u) - h_{z, \epsilon}(W) \bigr) ( \hat{K}(u) - K(u) ) du \biggr\}. 
	\end{align*}

	In what follows, we prove the following inequalities: 
	\begin{align}
        \lvert	 I_{11}\rvert  & \leq 41 r_2 (1 + z^{\tau_2 + 3}) (1 - \Phi(z)), \label{l5.2-I11}\\
\lvert	I_{12} \rvert & \leq  r_2 (1 + z^{\tau_2 + 3}) (1 - \Phi(z)), \label{l5.2-I12}\\
\lvert	I_{13} \rvert & 
		\leq 0.31 (C_0 + c_0) \delta(z) (1 - \Phi(z))  + (1 + 2\rho e^{\tau/2}) r_2 (1 + z^{\tau_2 + 3}) (1 - \Phi(z)), 
	\label{l5.2-I13}
	\\
\lvert	I_{14} \rvert & \leq  (0.44C_0 + 0.44c_0 + 100 e^{\tau/2} ) \delta(z) (1 - \Phi(z)) , \label{l5.2-I14}\\
\lvert	I_2 \rvert & \leq   66 r_1 (1 + z^{\tau_1 + 2}) (1 - \Phi(z)),\label{l5.2-I2}\\
\lvert	I_3 \rvert & \leq 82 r_0 (1 + z^{\tau_0 + 1})(1 - \Phi(z)). \label{l5.2-I3}
	\end{align}
	Combining \crefrange{l5.2-I11}{l5.2-I3}, we complete the proof of \cref{expbound}. It now suffices to prove \crefrange{l5.2-I11}{l5.2-I3}. We remark that we use a
	recursive method in  the proofs of \cref{l5.2-I13,l5.2-I14}, and  the proofs for $I_{11},I_{12},I_2$ and $I_3$ are routine. 

	{\medskip\noindent\it (i) Proof of \cref{l5.2-I11}.}
	For $I_{11}$, we have 
	\begin{align*}
		\lvert I_{11} \rvert
		& \leq \biggl\lvert \IE \biggl\{ \int_{-\infty}^{\infty} \int_0^u g'(W + s) \hat{K}(u) ds du \biggr\} \biggr\rvert \leq I_{111} + I_{112} + I_{113} , 
	\end{align*}
	where 
	 \begin{align*}
	 I_{111} & = \biggl\vert \IE \biggl\{ \int_{-\infty}^{\infty} \int_0^u g'(W + s) \1( W + s \leq
	 0 ) \hat{K}(u) ds du \biggr\} \biggr\vert, \\
	 I_{112} & = \biggl\vert \IE \biggl\{ \int_{-\infty}^{\infty} \int_0^u g'(W + s) \1( 0 < W + s \leq z ) \hat{K}(u) ds du \biggr\} \biggr\rvert, \\
	 I_{113} & = \biggl\vert \IE \biggl\{ \int_{-\infty}^{\infty} \int_0^u g'(W + s) \1(W + s > z) \hat{K}(u) ds du \biggr\} \biggr\rvert. 
	 \end{align*}
	We now bound these terms separately. 

	\medskip 
	{\it\noindent (1) Bound of $I_{111}$. }
	Observe that
	\begin{align*}
		0 \leq \frac{(1 + w^2) \Phi(w)}{\phi(w)} + w \leq 2 \quad \text{ for $w \leq 0$, } 
	\end{align*}
	and thus, by \cref{egprime}, 
	\begin{math}
		| g'(w) | \1 [ w \leq 0 ] \leq 2 ( 1 - Nh_{z, \epsilon} ).  
	\end{math}
	By \cref{el3c} with $t = 0$, noting that $\Psi_{\beta,0}(w) \equiv 2$, we have 
    \begin{align}
        \int_{ -\infty }^{\infty} \IE \{ |u \hat{K}(u)|\} du \leq 2 r_2. 
        \label{eq-2r3}
    \end{align}
	Moreover, 
	\begin{equation}
		\label{eq-Nhbound}
		1 - Nh_{z, \epsilon} \leq 1 - \Phi(z). 
	\end{equation}
	Thus,
	\begin{equation}
		 I_{111}  
		 \leq 2 ( 1 - Nh_{z, \epsilon} ) \IE \biggl\{  \int_{-\infty}^{\infty} |u \hat{K}(u)| du \biggr\} \leq 2 r_2 (1 - \Phi(z)) . 
		\label{l5.2-I111}
	\end{equation}

	\medskip 
	{\it\noindent (2) Bound of $I_{112}$. }
	Observe that 
	\begin{equation}
		0 \leq \frac{(1 + w^2)}{\phi(w)} + w \leq 3 (1 + w^2) e^{w^2/2} \quad  \text{for $0 \leq w \leq z$}.
		\label{l5.2-z1}
	\end{equation}
	For any $0 \leq a \leq b \leq z$ and for any $u \wedge 0 \leq s \leq u \vee 0$, we have 
	\begin{equ}
		\label{l5.2-z2}
		\MoveEqLeft \IE \{ (1 + (W + s)^2) e^{ (W + s)^2 / 2 } \lvert \hat{K}(u)\rvert  \1 ( a \leq W + s \leq b ) \} \\
		& \leq (1 + b^2) \IE \{ \lvert \hat{K}(u) \rvert e^{(W + s)^2/2 - b (W + s) + b (W + s)} \1 (a \leq W + s \leq b) \}\\
		& \leq (1 + b^2) e^{a^2/2 - ab} \IE \{ \lvert \hat{K}(u) \rvert e^{b (W + s)} \1 (a \leq W + s \leq b) \}\\
		& \leq (1 + b^2) e^{(b - a)^2/2} e^{-b^2/2} \IE \{ \lvert \hat{K}(u) \rvert e^{b |u|}
        \Psi_{z_{0}, b}(W) \}.
	\end{equ}
	Denote by $\lfloor z \rfloor$ the greatest integer which is smaller than or equal  to
	$z$. Noting that for  $u\wedge 0 \leq s\leq u\vee 0$,
	by \cref{egprime,eq-Nhbound,l5.2-z1} and applying \cref{l5.2-z2} with $a = j - 1, b = j$ and $a = \lfloor z \rfloor, b = z$, respectively, we have 
	\begin{align*}
		\MoveEqLeft | \IE \{ g'(W + s) \hat{K}(u) \1 (0 \leq W + s \leq z) \} |\\
		& \leq 3 ( 1 - \Phi(z) )\IE \{ ( 1 + (W + s)^2 ) e^{ (W + s)^2/2 } \vert
		\hat{K}(u)\vert  \1 ( 0 \leq W + s \leq z ) \} 
		\\
		& \leq 3 ( 1 - \Phi(z) )\sum_{j = 1}^{ \lfloor z \rfloor } \IE \{ ( 1 + (W + s)^2 ) e^{ (W + s)^2/2 } |\hat{K}(u)| \1 ( j - 1 \leq W + s \leq j ) \} \\
		& \quad + 3 ( 1 - \Phi(z) )\IE \{ ( 1 + (W + s)^2 ) e^{ (W + s)^2/2 } \vert\hat{K}(u)\vert  \1 ( \lfloor z \rfloor \leq W + s \leq z ) \}\\
		& \leq 3 e^{1/2} ( 1 - \Phi(z) )\sum_{j = 1}^{ \lfloor z \rfloor }  ( 1 + j^2 )
        e^{-j^2/2}\IE \{ |\hat{K}(u)| e^{ j|u| } \Psi_{z_{0}, j}(W)  \} \\
		&  \quad + 3 e^{1/2}(1 - \Phi(z)) ( 1 + z^2 ) e^{-z^2/2}\IE \{ |\hat{K}(u)| e^{ z |u|}
        \Psi_{z_{0}, z}(W) \} . 
	\end{align*}
	Thus, by the definition of $I_{112}$, and by \cref{el3c,l5.1}, we have for $0 \leq z \leq z_0$, 
	\begin{align*}
		I_{112}
		& \leq 5 ( 1 - \Phi(z)  ) \sum_{j = 1}^{ \lfloor z \rfloor } (1 + j^2) e^{-j^2/2} \IE
        \bigl\{ \hat{K}_{2,j} \Psi_{z_{0}, j}(W)  \bigr\}\\
		& \quad  +  5 ( 1 - \Phi(z)  )  (1 + z^2) e^{ - z^2/2 }\IE \bigl\{
        \hat{K}_{2,z} \Psi_{z_{0}, z}(W)  \bigr\} \\
		& \leq 20 r_2 (1 - \Phi(z)) \biggl(\sum_{j = 1}^{ \lfloor z \rfloor } (1 + j^2) (1 + j^{\tau_2})  + (1 + z^2) (1 + z^{\tau_2})\biggr). 
	\end{align*}
	For all $\ell \geq 0$ and $z \geq 8$, it can be shown that 
    \begin{equation}
        \begin{aligned}
           \MoveEqLeft \sum_{j = 1}^{ [z] } (1 + j^2)(1 + j^{\ell})\\&\leq  \sum_{j = 1}^{ [z] }
            1+j^{2}+j^{\ell}+j^{2+\ell}\leq z+\frac{z^{3}}{3}+ \frac{z^{2}}{2}+
           \frac{z}{6}+\sum_{j = 1}^{ [z] }(j^{\ell}+j^{2+\ell})\\&\leq 0.51 z^{\ell+3}+ \sum_{j = 1}^{ [z] }(j^{\ell}+j^{2+\ell})
        \end{aligned}
        \label{eq:5_7}
    \end{equation}
    and for any $m\geq 0$ and $n\geq 1$ we have 
    \begin{equation}
        \begin{aligned}
            \sum_{j = 1}^{ n }j^{m} \leq n^{m}+ \sum^{n-1}_{j=1} \int_{j}^{j+1} x^{m}dx=
            n^{m}+
            \frac{1}{m+1} n^{m+1}\leq \Bigl(\frac{1}{m+1}+ \frac{1}{n}\Bigr)  n^{m+1}.
        \end{aligned}
        \label{eq:5_8}
    \end{equation}
    Recalling that $z\geq 8$ and $\ell\geq 0$, by \cref{eq:5_8} with $n=[z]$ and $m=\ell$ or $\ell+2$,  we have 
    \begin{equation}
        \begin{aligned}
            \sum_{j = 1}^{ [z] } (1 + j^2)(1 + j^{\ell})&\leq  0.51 z^{\ell+3}+ \Bigl(\frac{1}{\ell+1}+\frac{1}{8}\Bigr)
            z^{\ell+1}+\Bigl(\frac{1}{\ell+1}+\frac{1}{8}\Bigr) z^{\ell+3}\\
                                                        &\leq
                                                        \Bigl(0.51+
                                                        \frac{1}{8^{2}(\ell+1)}+\frac{1}{8^{3}}+\frac{1}{\ell+3}+\frac{1}{8}\Bigr)z^{\ell+3}\leq
                                                        z^{\ell+3}.
        \end{aligned}
        \label{eq:5_9}
    \end{equation}
  Then,  for all $\ell \geq 0$ and $z \geq 8$,
    \begin{equation}
        \begin{aligned}
    &	\sum_{j = 1}^{ [z] } (1 + j^2)(1 + j^{\ell})\leq (1+z^{\ell + 3}),\\
        \quad
    &	(1+z^{2})(1 + z^{\ell})\leq 2 \Bigl(1+\frac{1}{64}\Bigr) z^{\ell+2}\leq  \frac{2.032}{8} z^{\ell+3}\leq 0.26 (1+z^{ \ell + 3}) . 
        \end{aligned}
		\label{epbound}
    \end{equation}
    Thus, 
    \begin{align}
		\label{l5.2-I112}
        I_{112} & \leq 26 r_2 (1 + z^{\tau_2 + 3}) ( 1 - \Phi(z) ). 
    \end{align}
	
	\medskip 
	{\it\noindent (3) Bound of $I_{113}$. }
    According to Eqs. (4.5) and (4.6) in \cite{chenshao2004} , we have  
    $|f(w)|\leq 1$ and $|f'(w)|\leq 1$ for $w\in \IR$. Thus, recalling that $g(w)= w f(w)$ and by the fact that $\varepsilon\leq 1$, we have 
	\begin{equation}
        \lvert g'(w) \rvert\leq \lvert f(w)+ w f'(w)\rvert\leq 1+z+\varepsilon \leq 4 ( z + 1 )
        \quad \text{if $z+\varepsilon\geq w \geq z$}.   
        \label{egwa_1}
	\end{equation}
    By \cref{egprime,egwa_1} and the fact that 
    \begin{equation}
        \begin{aligned}
            \Bigl\lvert \frac{ (1 + w^2) (1 - \Phi(w)) }{\phi(w)} - w \Bigr\rvert\leq 1 \text{ for }
            w\geq 8,
        \end{aligned}
        \label{eq:5_10}
    \end{equation}
  we have 
  \begin{equation}
        \lvert g'(w) \rvert\leq   4 ( z + 1 )
        \quad \text{if $w \geq z$}.   
        \label{egwa}
	\end{equation}
	For any $\ell \geq 0$ and $z \geq 8$, we have 
	\begin{equation}
        (1 + z)^2 (1 + z^{\ell})\leq 2\times 1.125^2 z^{\ell+2} \leq 0.32 (1 + z^{\ell + 3}). 
		\label{epbound3}
	\end{equation}
	By \cref{el3c,eq:rmexp,egwa,eNorm,epbound3} and the Markov's inequality, 
	\begin{equ}
		I_{113} 
		& \leq 4 (1 + z) \IE \biggl\{ \int_{-\infty}^{\infty} \int_{ 0 \wedge u }^{ 0 \vee u } \1 (W + s > z) \lvert \hat{K}(u) \rvert  du \biggr\} \\
        & \leq 4 (1 + z)\Psi_{z_{0}, z}(z)^{-1}\IE \biggl\{ \int_{-\infty}^{\infty} \lvert u
        \hat{K}(u) \rvert \Psi_{z_{0}, z}(W + |u|) du \biggr\}\\
		& \leq 4 (1 + z)e^{-z^2}\IE \biggl\{ \int_{-\infty}^{\infty} e^{ z |u| } \lvert u \hat{K}(u)
        \rvert \Psi_{z_{0}, z}(W) du \biggr\}\\
		& \leq 16  (2 \pi)^{1/2}  r_2 (1 + z)(1 + z^{\tau_2}) \phi(z) \\
		& \leq 40.2  r_2 (1 + z)^2(1 + z^{\tau_2})( 1 - \Phi(z)) \\
		& \leq 13 r_2(1 + z^{\tau_2 + 3})(1 - \Phi(z)). 
		\label{l5.2-I113}
	\end{equ}
	Therefore, \cref{l5.2-I11} follows from \cref{l5.2-I111,l5.2-I112,l5.2-I113}.

	{\medskip\noindent\it (ii) Proof of \cref{l5.2-I12}.}
	By the Markov inequality, 
	\begin{equation*}
        \begin{split}
            		\lvert I_{12} \rvert 
                    & \leq \IE \biggl\{ \int_{ \lvert u \rvert > 1 }^{} \1 (W + u > z ) \lvert
            			\hat{K}(u) \rvert du \biggr\} \\
            		& \quad + \IE \biggl\{ \int_{ \lvert u \rvert > 1 }^{} \1
            			({W > z }) 
            \lvert \hat{K}(u) \rvert du \biggr\} \\
            		& \leq \IE \biggl\{ \int_{-\infty}^{\infty}  \vert u\vert e^{-z^2}
                    \Psi_{z_{0}, z}(W + |u| ) |\hat{K}(u)| du \biggr\} \\
            		& \quad + \IE \biggl\{ \int_{-\infty}^{\infty} \vert u\vert e^{-z^2}
                    \Psi_{z_{0}, z}(W) |\hat{K}(u)| du \biggr\} \\
                    & \leq 2 \IE \biggl\{ \int_{-\infty}^{\infty} e^{-z^{2}} \Psi_{z_{0}, z}(W) |u| e^{z|u| } \lvert \hat{K}(u) \rvert du \biggr\} \\
                    & \leq 2 e^{-z^{2} } \IE \{ \hat{K}_{2,z} \Psi_{z_{0}, z}(W ) \}\\
					& \leq 8 r_2 (1 + z^{\tau_2}) e^{-z^2/2}, 
        \end{split}
    \end{equation*}
    where we used \cref{el3c,eq:rmexp} in the last line. 
	By \cref{ezz21,eNorm,eq-52},  
	we have 
	\begin{equation*}
		\begin{split}
		        \lvert I_{12} \rvert 
		& \leq  8 (2\pi)^{1/2} r_2 (1 + z^{\tau_2})\phi(z) \\
		& \leq 21 r_2 (1 + z)(1 + z^{\tau_2}) ( 1 - \Phi(z) ) \\
		& \leq  r_2 (1 + z^{\tau_2 + 3}	) (1 - \Phi(z)),  
		\end{split}
	\end{equation*}
	which proves \cref{l5.2-I12}.

	{\medskip\noindent\it (iii) Proof of \cref{l5.2-I13}.}
	Observe that (see also (2.5) of \cite{chen_adrian_xia2020})
	\begin{equ}
		\lvert h_{z,\epsilon}(w + u) - h_{z,\epsilon}(w) \rvert
		& \leq \frac{1}{\epsilon} \int_{u \wedge 0}^{u \vee 0} \1 [ z < w + s \leq z + \epsilon ] ds \\
		& \leq \1 ( z - u \vee 0 < w \leq z - u \wedge 0 + \epsilon ). 
        \label{hineq}
	\end{equ}
	Recall that $8 \leq z \leq {z_0}$, and by \cref{hineq} and Fubini's theorem,
	\begin{equ}
		\label{eI13a}
		| I_{13} | 
		& \leq  \int_{ \lvert u \rvert \leq 1 }  \IE \{ | h_{z,\epsilon}(W + u) -
		h_{z,\epsilon}(W) \vert\}  |K(u)| du \\
		& \leq \frac{1}{\epsilon} \int_{ \lvert u \rvert \leq 1 } \int_{ u \wedge 0 }^{ u \vee 0 } \IP [ z < W + s \leq z + \epsilon ] |K(u)| ds du \\
		& \leq  I_{131} + I_{132} + I_{133},
	\end{equ}
	where 
	\begin{equ}
        I_{131} & \coloneqq  \int_{ \lvert u \rvert \leq 1 }  \bigl(\Phi( z - 0 \wedge u + \epsilon ) -
        \Phi(z - 0 \vee u) \bigr)|K(u)| du ,\\
		I_{132} & \coloneqq \frac{1}{\epsilon} \biggl\lvert \int_{ \lvert u \rvert \leq 1 } \int_{ u \wedge 0 }^{ u \vee 0 } ( \IP [ W + s > z ] - \IP [ Z + s > z ] ) |K(u)| ds du\biggr\rvert,\\
		I_{133} & \coloneqq \frac{1}{\epsilon} \biggl\lvert \int_{ \lvert u \rvert \leq 1 } \int_{ u \wedge 0 }^{ u \vee 0 } ( \IP [ W + s > z + \epsilon ] - \IP [ Z + s > z + \epsilon ] ) |K(u)| ds du \biggr\rvert. 
        \label{eI13}
	\end{equ}
    One can easily verify that  $(1 + z^{\tau_2})(1 + z^{3})/(1+z^{\tau_{2}+3})$ is a decreasing
    function for $z\geq 1$ and $\tau_{2}\geq 0$, and 
    \begin{equation}
        \begin{aligned}
            \sup_{z\geq 2 }  \frac{ (1 + z^{\tau_2})(1 + z^{3})}{1+z^{\tau_{2}+3}}\leq 2.
        \end{aligned}
        \label{eq:5_11}
    \end{equation}
	Thus, for $z \geq 8$, 
	\begin{align}
		\label{ezboun5}
        (1 + z)  \leq 1.125 z\leq  0.02 (1 + z^{3}) ,\\
        (1 + z^{\tau_2})(1 + z^{3})  \leq 2 (1 + z^{\tau_2 + 3}) .
		\label{ezbound6}
	\end{align}
	Moreover,  if $\beta=0$, then $1 \leq \Psi_{\beta, t}(w) \leq 3$ for all $w$ and $t$, and by \cref{el3c} with $\beta = 0$, we have 
	\begin{equation}
		\label{eq-2r5}
        \int_{|u| \leq 1} e^{z |u|} \lvert u K(u) \rvert du \leq \IE \{\hat{K}_{2,z}\} \leq 3 r_2(1 + z^{\tau_2}).
	\end{equation}
	For $I_{131}$, 
    by \cref{eNorm,eq-51,ezboun5}, for $z\geq 8$ and $ \lvert u\rvert\leq 1$,
	\begin{equation}
        \begin{split}
            \Phi( z - 0 \wedge u + \epsilon ) - \Phi(z - 0 \vee u) 
            & \leq ( |u| + \epsilon ) \phi(z - 0 \vee u) \\
            & \leq ( |u| + \epsilon ) e^{ z |u| } \phi(z) \\
            & \leq ( |u| + \epsilon ) e^{ z |u| } ( 1 + z ) ( 1 - \Phi(z) )\\
            & \leq ( 0.02 |u| + 0.02 \epsilon ) e^{z|u|} (1 + z^{3}) (1 - \Phi(z)). 
        \end{split}
        \label{ePhibound}
	\end{equation}
	Then, recalling that $\epsilon = 40 e^{\tau/2} r_2(1 + z^{\tau_2})$, by
	\cref{eq-2r5,ezbound6,eq:r_3},   
	\begin{equation}
        \begin{split}
            I_{131} & \leq 0.02 (1 + z^{ 3}) (1 - \Phi(z))\int_{ \lvert u\rvert \leq 1 }^{} ( |u| + \epsilon ) e^{z |u|}  \lvert K(u) \rvert du \\
					& \leq (0.06 + 0.8\rho e^{\tau/2}) r_2 (1 + z^{ 3})(1 + z^{\tau_2}) ( 1 - \Phi(z) )\\
					& \leq (0.12 + 1.6\rho e^{\tau/2}	  ) r_2 (1 + z^{\tau_2 + 3}) (1 - \Phi(z)).
        \end{split}
        \label{eI131}
	\end{equation}

	As for $I_{132}$,
	by \cref{ll2,eq-2r5} and recalling that $\epsilon = 40 e^{\tau/2} r_2(1 + z^{\tau_2})$, we have  for $8 \leq z \leq {z_0}$ and  $|u| \leq 1$,
	\begin{equ}
		 I_{132}  
		& \leq 2 e^{\tau/2} \epsilon^{-1} (C_0 + c_0) \delta(z) ( 1 - \Phi(z) )\int_{ \lvert u \rvert \leq 1 } e^{z |u|} |uK(u)| du \\
		& \leq 6 e^{\tau/2} r_2 \epsilon^{-1} (C_0 + c_0) \delta(z) ( 1 - \Phi(z) )(1 + z^{\tau_2}) \\
		& {\leq 0.15 (C_0 + c_0) \delta(z) ( 1 - \Phi(z) )}. 
        \label{eI132}
	\end{equ}

	As for $I_{133}$, as $8 \leq z \leq {z_0}$ and $0 \leq \epsilon \leq 1$, by \cref{ll2} and \cref{eq-52} again, we have  
	\begin{equation}
		\begin{split}
			I_{133} & \leq 0.15 e^{z_0 \epsilon} ( C_0 + c_0	) \delta(z) ( 1 - \Phi(z) ) \\
					& \leq 0.16 ( C_0 + c_0) \delta(z) ( 1 - \Phi(z) ) .
        \label{eI133}
		\end{split}
	\end{equation}
	By \cref{eI13,eI131,eI132,eI133}, we have 
	\begin{equation*}
        \begin{split}
            		\lvert I_{13} \rvert 
					& \leq 0.31 (C_0 + c_0)\delta(z) (1 - \Phi(z)) + (0.12 + 1.6 \rho e^{\tau/2}) r_2 (1 + z^{\tau_1 + 3}) (1 - \Phi(z)).
        \end{split}
	\end{equation*}
	This proves \cref{l5.2-I13}.

	{\medskip\noindent\it (iv) Proof of \cref{l5.2-I14}.}	
	Without loss of generality, we assume that $r_4 > 0$. Without this assumption, the proof would
	be even easier. 
    Note that by \cref{hineq},
	\begin{align*}
		\lvert I_{14} \rvert 
		& \leq \IE \biggl\{ \int_{ \lvert u  \rvert \leq 1 }^{} \1 [ z - 0 \vee u < W \leq z - 0 \wedge u + \epsilon ] \lvert \hat{K}(u) - K(u) \rvert du \biggr\} . 
	\end{align*}
	Recall  Young's inequality 
	\begin{align*}
		a b \leq \frac{a^2}{2c} + \frac{cb^2}{2} \quad \text{for $a, b \geq 0$ and  $c > 0$.}
	\end{align*}
    Applying Young's inequality with  
	$a = \1 [ z - 0 \vee u < W \leq z - 0 \wedge u + \epsilon ] , b = \lvert \hat{K}(u) - K(u) \rvert \1 (W > z - 0 \vee u) $ and 
	$$c = \frac{ e^{z|u|} }{99 e^{\tau/2}} \bigl( e^{\tau/2} (4.1 C_0 + 4.1 c_0 +1.6) + r_4^{-1/2}|u| \bigr),  $$ 
	we have 
	\begin{equ}
		\lvert I_{14} \rvert
		& \leq \frac{1}{2}  \int_{ \lvert u \rvert \leq 1 }^{} c^{-1}{ \IP [ z - 0 \vee u < W \leq z - 0 \wedge u + \epsilon ] } d u  \\
		& \quad + \frac{1}{2}\IE \biggl\{ \int_{ \lvert u \rvert \leq 1 }^{} c \bigl( \hat{K}(u) - K(u) \bigr)^2 \1 [ W > z - 0 \vee u ] du \biggr\}\\
		& \coloneqq I_{141} + I_{142}.
        \label{eI14a}
	\end{equ}

	Using a similar argument to \cref{ePhibound}, and by \cref{ezbound6} and the fact that
	$\tau_{4}\geq 0$, we have 
	\begin{equ}
		\MoveEqLeft 
		\IP [  z - 0 \vee u < Z \leq z - 0 \wedge u + \epsilon  ]\\
		& \leq |u| e^{z|u|} (1 + z^{\tau_4 + 1})(1 - \Phi(z)) + 0.8 e^{\tau/2} r_2(1 + z^{\tau_2})(1 + z^{3})(1 - \Phi(z))\\
		& \leq r_4^{-1/2}|u|e^{z|u|} \delta(z) (1 - \Phi(z)) + 1.6 e^{\tau/2} \delta(z) (1 - \Phi(z)).
		\label{eq-5.64}
	\end{equ}
    By \cref{eq-5.64,ll2}, and noting that $e^{{z_0} \epsilon} \leq 1.05$ in \cref{eq-52}, 
    \begin{equation}
        \begin{aligned}
		   \MoveEqLeft \IP [ z - 0 \vee u < W \leq z - 0 \wedge u + \epsilon ] 
            \\
       &   \leq \IP [  z - 0 \vee u < Z \leq z - 0 \wedge u + \epsilon  ] \\
       &    \quad + \lvert \IP [ W > z - 0 \vee u ] - \IP [ Z > z - 0 \vee u ] \rvert \\
       &    \quad + \lvert \IP [ W > z - 0 \wedge u + \epsilon ] - \IP [ Z > z - 0 \wedge u + \epsilon] \rvert \\
	   &     \leq \bigl( e^{\tau/2} (4.1 C_0 + 4.1 c_0 + 1.6) + r_4^{-1/2}|u|\bigr)  e^{ z |u| } \delta(z) (1 - \Phi(z)) . 
        \end{aligned}
       \label{eI14b}  
   \end{equation}
   Then, we have 
   \begin{equation}
	   \label{l5.2-I141}	
	   I_{141}  \leq 99 e^{\tau/2} \delta(z) (1 - \Phi(z)).
   \end{equation}

   	Moreover, as $z \geq 8$, by the Markov inequality and by \cref{l5.1}, 
	\begin{equation}
        \begin{split}
            \MoveEqLeft \IE \biggl\{  \int_{ \lvert u\rvert \leq 1 }^{} |u| e^{z|u|}\bigl( \hat{K}(u) - K(u) \bigr)^2 \1 ( W > z - 0 \vee u) du  \biggr\} \\
            & \leq e^{-z^2} \IE \biggl\{   \int_{ \lvert u\rvert \leq 1 }^{} |u| e^{2z|u|}\bigl(
            \hat{K}(u) - K(u) \bigr)^2 \Psi_{z_{0}, z}(W)du   \biggr\} \\
            & \leq r_4  ( 1 + z^{\tau_4} ) e^{-z^2} \IE \{ \Psi_{z_{0}, z}(W) \} \\
            & \leq 4 (2\pi)^{1/2} r_4 (1 + z^{\tau_4})(1 + z) ( 1 - \Phi(z) )\\
            & \leq 21 r_4 (1 + z^{\tau_4 + 1}) ( 1 - \Phi(z) ), 
        \end{split}
        \label{eI14c}
	\end{equation}
    where in the last line we used the inequality that 
    \begin{align*}
        (1 + z)(1 + z^{\tau_4}) \leq 2 (1 + z^{\tau_4 + 1}) \quad \text{for $z \geq 8$}.
    \end{align*}
	Similarly, 
	\begin{equation}
        \begin{split}
            \MoveEqLeft \IE \biggl\{  \int_{ \lvert u\rvert \leq 1 }^{} e^{z|u|}\bigl( \hat{K}(u) - K(u) \bigr)^2 \1 ( W > z - 0 \vee u) du  \biggr\} \\
            & \leq 4 (2\pi)^{1/2} r_3 (1 + z) (1 + z^{\tau_3}) ( 1 - \Phi(z) )\\
            & \leq 21 r_3 (1 + z^{\tau_3 + 1}) ( 1 - \Phi(z) ). 
        \end{split}
        \label{eI14d}
	\end{equation}
	Then, by \cref{eI14c,eI14d},  we have 
	\begin{equ}
		I_{142} & \leq \frac{e^{\tau/2} (4.1 C_0  + 4.1 c_0 + 1.6)}{198 e^{\tau/2}} \times 21 r_3(1 + z^{\tau_3 + 1}) (1 - \Phi(z)) \\
				& \quad + \frac{21r_4^{1/2} }{198 e^{\tau/2} } (1 + z^{\tau_4 + 1}) (1 - \Phi(z))\\
		 & \leq 0.44 (C_0 + c_0 ) \delta(z)(1 - \Phi(z)) + r_3(1 + z^{\tau_3 + 1})(1 - \Phi(z))\\
		 & \quad +  r_4^{1/2} (1 + z^{\tau_4 + 1})(1 - \Phi(z))\\
		 & \leq (0.44 C_0 + 0.44 c_0 + 1) \delta(z) (1 - \Phi(z)).
        \label{bI14}
	\end{equ}
	Combining \cref{l5.2-I141,bI14} yields \cref{l5.2-I14}. 

	{\medskip\noindent\it (v) Proof of \cref{l5.2-I2}.}	
	Note that (see, e.g., p. 2010 of \cite{chenshao2004})
	\begin{align}
		| f'(w) |
		& \leq 
		\begin{dcases}
			1 - \Phi(z) & \text{if $w < 0$},\\
			\biggl( \frac{w \Phi(w)}{\phi(w)} + 1 \biggr) \bigl( 1 - \Phi(z) \bigr) & \text{if $0 \leq w \leq z$, }\\
			1 & \text{otherwise}.
		\end{dcases}
        \label{bfprime}
	\end{align}
	Observe that 
	\begin{align*}
		\lvert I_{2} \rvert
		& \leq \IE \{ \lvert f'(W) ( 1 - \IE\{\hat{K}_1\vert W\} ) \rvert \1 (W \leq 0) \} \\
		& \quad + \IE \{ \lvert f'(W) ( 1 - \IE\{\hat{K}_1\vert W\} ) \rvert \1 (0 < W \leq z) \} \\
		& \quad + \IE \{ \lvert f'(W) ( 1 - \IE\{\hat{K}_1\vert W\}) \rvert \1 (W > z) \} \\
		&\coloneqq I_{21} + I_{22} + I_{23}. 
	\end{align*}
	For $I_{21}$, since $-1 \leq w \Phi(w)/\phi(w) \leq 0$ for $w \leq 0$, and by
	\cref{el3b,bfprime}, we have for $z \geq 8$
	\begin{equ}
        I_{21} & \leq (1 - \Phi(z)) \IE \{ \lvert \IE\{\hat{K}_1\vert W\} - 1 \rvert \} \\
			   & \leq 2 r_1  (1 - \Phi(z)) \\
               & \leq 0.1 r_1 (1 + z^{\tau_1 + 2}) (1 - \Phi(z)) .  
        \label{bI21}
	\end{equ}
	For $I_{22}$, by \cref{el3b,bfprime}, we have 
	\begin{equ}
		I_{22} & \leq (2 \pi)^{1/2} ( 1 - \Phi(z) ) \IE \{ (W e^{W^2/2} + 1) \lvert \IE\{\hat{K}_1\vert W\} - 1 \rvert \1 ( 0 < W \leq z ) \} \\
			   & \leq 2.6 ( 1 - \Phi(z) ) I_{24} + 5.2 r_1  (1 - \Phi(z)) , 
        \label{eI22}
	\end{equ}
	where 
	\begin{align*}
		I_{24} & \coloneqq \IE \{ W e^{W^2/2} \lvert \IE\{\hat{K}_1\vert W\} - 1 \rvert \1 ( 0 < W \leq z ) \} \\
			   & = \sum_{j = 1}^{ [z] } \IE \{ W e^{W^2/2} \lvert \IE\{\hat{K}_1\vert W\} - 1\rvert \1 ( j - 1 < W \leq j ) \} \\
			   & \quad + \IE \{ W e^{W^2/2} \lvert \IE\{\hat{K}_1\vert W\} - 1\rvert \1 ( [z] < W \leq z ) \} \\
			   & \coloneqq I_{241} + I_{242}. 
	\end{align*}
	For $I_{241}$, noting that 
	$$w^2/2 - jw \leq (j - 1)^2/2 - j(j - 1) = - j^2/2 + 1/2 \quad  \text{ for } j - 1 < w \leq j, $$ 
	we have for $z \geq 8$, by \cref{el3b,eq:rmexp}, 
	\begin{equ}
		I_{241} & \leq \sum_{j = 1}^{ [z] } j \IE \{ e^{W^2/2 - j W + jW} \lvert \IE\{\hat{K}_1\vert W\} - 1 \rvert \1( j - 1 \leq W \leq j ) \} \\
				& \leq e^{1/2} \sum^{ [z] }_{j = 1}  j e^{-j^2/2} \IE \{ \lvert \IE\{\hat{K}_1\vert W\} - 1 \rvert
                \Psi_{z_{0}, j}(W) \} \\
                & \leq 4 e^{1/2}  r_1 \sum_{j = 1}^{\lfloor z \rfloor} j (1 + j^{\tau_1})
              \\&{\leq 13.2 r_1 (1 + z^{\tau_1 + 2})}. 
        \label{eI241}
	\end{equ}
	The last inequality is similar to that in
    \cref{epbound}.
    Similarly, for $z \geq 8$, 	\begin{equ}
        I_{242} & \leq 4 e^{1/2} r_1 z(1 + z^{\tau_1}) \leq  1.7 r_1(1 +z^{\tau_1 + 2}).  
        \label{eI242}
	\end{equ}
	By \cref{eI241,eI242}, we have 
	\begin{equ}
		I_{24} & \leq 15 r_1 (1 + z^{\tau_1 + 2}) \quad \text{for $z \geq 8$}.  
        \label{eI24}
	\end{equ}
	By \cref{eI22,eI24}, we have for $z \geq 8$, 
	\begin{equ}
		I_{22} 
		& \leq 39 r_1 (1 + z^{\tau_1 + 2}) (1 - \Phi(z)) + 5.2 r_1 (1 - \Phi(z)) \\
		& \leq 40  r_1 (1 + z^{\tau_1 + 2}) (1 - \Phi(z)).
        \label{bI22}
	\end{equ}
	As for $I_{23}$, by \cref{l5.1,el3b,eNorm,bfprime} and recalling $z \geq 8$, we have 
	\begin{equ}
		I_{23} & \leq \IE \{ \lvert\IE\{\hat{K}_1\vert W\} - 1 \rvert \1 ( W > z ) \} \\
               & \leq e^{-z^2} \IE \{ \lvert \IE\{\hat{K}_1\vert W\} - 1 \rvert \Psi_{z_{0}, z}(W) \} \\
			   & \leq 4  r_1 (1 + z)(1 + z^{\tau_1})e^{-z^2/2} \\
			   &\leq 10.1 r_1 (1 + z)(1 + z^{\tau_1})\phi(z) \\
			   & \leq 10.1 r_1 (1 + z)^2(1 + z^{\tau_1}) ( 1 - \Phi(z) ) \\
			   & \leq 25.2 r_1 (1 + z^{\tau_1 + 2}) (1 - \Phi(z)). 
        \label{bI23}
	\end{equ}
	By \cref{bI21,bI22,bI23}, we have 
	\begin{equ}
		|I_2| \leq 66  r_1 (1 + z^{\tau_1 + 2}) (1 - \Phi(z)).  
        \label{bI2}
	\end{equ}

	{\medskip\noindent\it (vi) Proof of \cref{l5.2-I3}.}	
	 It is known that $0 \leq f(w) \leq 1$ (see p.\ 2010 of \cite{chenshao2004}). Note that by
	 \cref{eq-Nhbound,eSol},
	\begin{align*}
		\lvert I_3 \rvert
		& \leq I_{31} + I_{32} + I_{33}, 
	\end{align*}
	where 
	\begin{align*}
        I_{31} & = (1 - \Phi(z)) \IE \{ \lvert R \rvert \1 (W \leq 0)\}, \\
        I_{32} & = \sqrt{2\pi} (1 - \Phi(z)) \IE \{ e^{W^2/2} \lvert R \rvert \1 (0 < W \leq z	) \}, \\
		I_{33} & = \IE \{ \lvert R \rvert \1 ( W > z ) \}. 
	\end{align*}
    For $I_{31}$, by \cref{el3a} with $t=0$, we have 
	\begin{align*}
		I_{31} & \leq  r_0 (1 + z^{\tau_0 + 1}) (1 - \Phi(z)) \quad    \text{for $z \geq 8$}. 
	\end{align*}
	For $I_{32}$, similar to \cref{bI22}, we have 
	\begin{align*}
		I_{32} & \leq 60 r_0 (1 + z^{\tau_0 + 1}) (1 - \Phi(z)). 
	\end{align*}
	For $I_{33}$, similar to \cref{bI23}, we have 
	\begin{align*}
		I_{33} & \leq 21 r_0 (1 + z^{\tau_0 + 1}) ( 1 - \Phi(z) ). 
	\end{align*}
	Combining the foregoing inequalities, we have 
	\begin{equation*}
		\lvert I_3 \rvert \leq 82 r_0 (1 + z^{\tau_0 + 1}) (1 - \Phi(z)). 
	\end{equation*}
	This proves \cref{l5.2-I3}.
\end{proof}

Now, we prove \cref{l-BE}.
\begin{proof}
	[Proof of \cref{l-BE}]
	In this proof, we develop a Berry--Esseen bound using the idea in \cite{chen_adrian_xia2020}. Let 
	\begin{align*}
		\gamma := \sup_{z \in \IR} \bigl\lvert \IP(W \leq z) - \Phi(z) \bigr\rvert, 
	\end{align*}
	and let $\epsilon = \gamma/2$. Consider the Stein equation \cref{eSte}, and denote by $f_{z, \epsilon}$ the solution to \cref{eSte}, which is given in \cref{eSol}. Let $g_{z, \epsilon}(w) = wf_{z,\epsilon}(w)$. By \cite{chenshao2004}, we have 
	\begin{equation}
		0 \leq f_{z, \epsilon} \leq 1, \quad \vert f_{z, \epsilon}'\vert \leq 1.
		\label{l5.b-01}
	\end{equation}
	Note that 
	\begin{equation}
		\label{l5.b-03}
		\gamma \leq \sup_{z \in \IR} \lvert \IE \{ h_{z, \epsilon}(W) \} - Nh_{z, \epsilon} \rvert + 0.4 \epsilon. 
	\end{equation}
	Now, we bound the first term on the right hand side of \cref{l5.b-03}.
	By \cref{e1} and \cref{eSte}, we have 
	\begin{equ}
		\MoveEqLeft  \IE \{ h_{z, \epsilon}(W) \} - Nh_{z, \epsilon}  \\
		& = \IE \{ f_{z,\epsilon}'(W) \} - \IE \{ W f_{z, \epsilon}(W) \}\\
		& = \IE \{ f_{z,\epsilon}'(W) \} - \IE \{ R f_{z,\epsilon}(W) \} - \IE \biggl\{ \int_{-\infty}^{\infty} f_{z,\epsilon}'(W + u) \hat{K}(u) du \biggr\} \\
		& \coloneqq J_1 + J_2 + J_3 + J_4 + J_5, 
		\label{l5.b-04}
	\end{equ}
	where 
	\begin{align*}
		J_1 & =  \IE \{ f_{z,\epsilon}'(W) (1 - \hat{K}_1) \}, \\
		J_2 & = - \IE \{R f_{z,\epsilon}(W)\}, \\
		J_3 & = - \IE \biggl\{\int_{ \lvert u \rvert > 1 } \bigl( f_{z,\epsilon}'(W + u) - f_{z, \epsilon}'(W) \bigr) \hat{K}(u) du\biggr\}, \\
		J_4 & = - \IE \biggl\{ \int_{ \lvert u \rvert \leq 1 } \bigl( f_{z,\epsilon}'(W + u) - f_{z, \epsilon}'(W) \bigr) {K}(u) du \biggr\}, \\
		J_5 & = - \IE \biggl\{ \int_{ \lvert u \rvert \leq 1 } \bigl( f_{z,\epsilon}'(W + u) - f_{z, \epsilon}'(W) \bigr) ( \hat{K}(u) - {K}(u) ) du\biggr\}.
	\end{align*}
	By \crefrange{el3a}{eq:9_20_12} with $t = 0$ and noting $1 \leq \Psi_{\beta, 0}(w) \leq 2$, we have 
	\begin{equation}
		\label{l5.b-02}
		\begin{split}
			\IE  |R|  & \leq 2 r_0, \qquad  \IE  \lvert \IE \{ \hat{K}_1 \vert W \} - 1 \rvert   \leq 2 r_1, \\
            \IE  \hat{K}_{2,0}  & \leq 2 r_2, \qquad  \IE  \hat{K}_{3,0}   \leq 2 r_3,
            \qquad \IE  \hat{K}_{4,0}  \leq 2 r_4. 
		\end{split}
	\end{equation}
	Now, by \cref{l5.b-01,l5.b-02}, we have 
	\begin{equation}
		\lvert J_1 \rvert \leq 2 r_1, \quad \lvert J_2 \rvert \leq 2 r_0, 
	\quad 
    \lvert J_3 \rvert \leq 2 \IE \{ \hat{K}_{2,0} \} \leq 4 r_2. 
		\label{l5.b-05}	
	\end{equation}
	By Eqs. (2.12), (2.15) and (2.16) of \cite{chen_adrian_xia2020}, we have  with $a = 0.18$, 
	\begin{equation}
		\begin{split}
			\lvert J_4 \rvert  & \leq 4r_2 + \frac{4\gamma + 0.8 \epsilon}{\epsilon} r_2, \\ 
			\lvert J_5 \rvert  & \leq a \gamma + 0.2 a \epsilon + \frac{2r_3}{a} + (2a + 0.4/a) r_4^{1/2} + 5 r_4^{1/2}. 
		\end{split}
		\label{l5.b-06}
	\end{equation}
	Combining \cref{l5.b-03,l5.b-04,l5.b-05,l5.b-06} gives
	\begin{align*}
		\gamma \leq 0.4 \gamma + 2r_0 + 2r_1 + 16.8 r_2 + 11.2 r_3 + 7.6 r_4^{1/2}. 
	\end{align*}
	Solving the recursive inequality yields the desired result. 
\end{proof}

\section{Proof of \autoref{thm:9_27_2}}%
\label{sec:general method}
We apply \cref{thm1} to prove \cref{thm:9_27_2}. To this end, 
we first construct Stein identity. Then, we prove a preliminary lemma which help us to prove
\cref{thm:9_27_2}.  
Denote by $C, C_{1}, C_{2}, \dots$  absolute constants, which may take different values in different
places. For $a,b\in \mathbf{R}$, let $a\wedge b:= \min \{a,b\}$ and $a\vee b:=\max \{a,b\}$.

For each $i \in \mathcal{J}$, let $Y_i = \sum_{j \in A_i} X_i$. Further, define 
\begin{equ}
	\hat{K}_i(u) & = X_i \{ \1 ( - Y_i \leq u < 0 ) - \1 ( 0 \leq u \leq - Y_i ) \}, \quad
	\hat{K}(u) = \sum_{i \in \mathcal{J}} \hat{K}_i(u), 
    \label{eki}
\end{equ}
and define 
\begin{equ}
	K_i(u) = \IE \{ \hat{K}_i(u) \}, \quad K(u) =  \sum_{i \in
	\mathcal{J}} K_i(u). 
    \label{eeki}
\end{equ}
Note that $\IE X_i=0$ and  $X_i$ and $W - Y_i$ are independent, thus $\IE \{ X_i f(W - Y_i) \} = 0$. Therefore, 
\begin{align*}
	\IE \{ W f(W) \}
	& = \sum_{i \in \mathcal{J}} \IE \{ X_i ( f(W) - f(W - Y_i) ) \}\\
	& = \IE \int_{-\infty}^{\infty} f'(W + t) \hat K(t) dt. 
\end{align*}
Hence, it follows that \cref{e1} holds with $R = 0$ and $\hat K(t)$ defined as in \cref{eki}.
 
\subsection{A preliminary lemma}
Let $\hat{K}_{1}, \hat{K}_{2,t}, \hat{K}_{3,t}, \hat{K}_{4,t}$ and $M_t$  be as in
\cref{ek1,ek2,eq:sec2_2,eq:sec2_3,eq:sec2_1} with $\hat{K}(u)$ in \cref{eki} and $h(t)= \IE
\{\Psi_{\beta,t}(W)\}$.
The following lemma provides the upper bounds of the terms in Condition (A1), whose proof is put in \cref{apdxA}.
\begin{lemma}
	\label{l31} 
Under \textup{(LD1)} and \textup{(LD2)}, let $m_0=(a_{n}^{1/3}/4)\wedge (a_{n}/16).$ 
	For $0 \leq t, \beta \leq m_0$,  we have 
\begin{align}
		\IE \{ \lvert \IE\{\hat{K}_1\vert W\} - 1 \rvert \Psi_{\beta, t}(W) \} 
		& \leq C_{1}  (b \kappa n a_{n}^{-3} + b^{1/2} \kappa^{1/2}
		n^{1/2} a_{n}^{-2}) (1 + t) \IE \{
			\Psi_{\beta, t}(W)  \}, \label{ap1el3b}\\
		\IE \{ \hat{K}_{2,t} \Psi_{\beta, t}(W) \}               
		& \leq C_{2} b  n a_{n}^{-3}  \IE \{
			\Psi_{\beta, t}(W)  \}, \label{ap1el3c}\\
            \IE \{ \hat{K}_{3,t}  \Psi_{\beta,t}(W) \} & \leq  C_{3} (\kappa b n a_{n}^{-3}+  \kappa^{2} b^{2}
			n^{2} a_{n}^{-5})( 1 + t^{2} ) \IE \{
			\Psi_{\beta, t}(W)  \} , \label{ap1eq:9_20_11}\\
                \IE \{ \hat{K}_{4,t}  \Psi_{\beta,t}(W) \} & \leq C_{4} (\kappa b n a_{n}^{-4}+  \kappa^{2} b^{2}
				n^{2} a_{n}^{-6})( 1 + t^{2} ) \IE \{
			\Psi_{\beta, t}(W)  \}, 
				\label{ap1eq:9_20_12} 
	   \end{align}
	   and 
	   \begin{align}
			\sup_{0 \leq t \leq m_0}M_t & \leq C_{5} b n a_{n}^{-2}.
	\label{eq:ap1r_3}
	   \end{align}
\end{lemma}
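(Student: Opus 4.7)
The plan is to bound each of the five quantities separately, using two main ingredients throughout: the local independence given by (LD1) and (LD2), and the Lipschitz-type inequality $\Psi_{\beta,t}(w+s)\leq e^{t|s|}\Psi_{\beta,t}(w)$, which follows immediately from $|\Psi'_{\beta,t}(w)|\leq t\Psi_{\beta,t}(w)$ by integrating the differential inequality. Writing $V_i^B=\sum_{j\in B_i}|X_j|$, the exponential moment hypothesis \cref{eq:9_19_2} yields $\IE\{(V_i^B)^p e^{tV_i^B}\}\leq C p!\,b(a_n-t)^{-p}$ for $0\leq t<a_n$, which supplies the elementary local moment estimates needed below. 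A preliminary observation is that $\hat K_1=\sum_i X_iY_i$, and since $\IE\{X_iX_j\}=0$ whenever $j\notin A_i$ by (LD1), $\IE\hat K_1=\IE W^2=1$.

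For $\hat K_{2,t}$ and $M_t$, the crude bounds $\hat K_{2,t}\leq\sum_i|X_i|Y_i^2 e^{t|Y_i|}$ and $|K(u)|\leq\sum_i|\IE\{X_i\1(|u|\leq|Y_i|)\}|$ reduce matters to controlling $\IE\{|X_i|Y_i^2 e^{t|Y_i|}\Psi_{\beta,t}(W)\}$ for each $i$. Using the split $W=W^{[i]}+\sum_{j\in B_i}X_j$ with $W^{[i]}:=\sum_{j\notin B_i}X_j$, together with $\Psi_{\beta,t}(W)\leq e^{tV_i^B}\Psi_{\beta,t}(W^{[i]})$ and the fact that $X_{A_i}$ is independent of $W^{[i]}$ by (LD2), I would factor the expectation, bound the local factor by $Cb/a_n^3$ via the exponential moment condition, and then transfer $\Psi_{\beta,t}(W^{[i]})$ back to $\Psi_{\beta,t}(W)$ at the cost of another $e^{tV_i^B}$; summing over the $n$ indices yields \cref{ap1el3c} and \cref{eq:ap1r_3}. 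For \cref{ap1el3b}, conditional Jensen's inequality reduces the problem to $\IE\{|\hat K_1-1|\Psi_{\beta,t}(W)\}$, after which Cauchy--Schwarz splits this into $(\Var\hat K_1)^{1/2}(\IE\Psi_{\beta,2t}(W))^{1/2}$; the second factor is controlled by \cref{l5.1} applied with $2t$, and for the first, $\Var\hat K_1=\sum_{i,j}\Cov(X_iY_i,X_jY_j)$ is supported on the $\leq\kappa n$ pairs with $j\in N_i$ (the other covariances vanish because $X_{A_i}\perp X_{A_j}$ when $B_i\cap B_j=\emptyset$), each of size $O(b/a_n^4)$; this yields the $b^{1/2}\kappa^{1/2}n^{1/2}a_n^{-2}$ term, and the $bn\kappa a_n^{-3}$ piece absorbs the $(1+t)$ factor and the residual correction from the decoupling.

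For the quadratic quantities $\hat K_{3,t}$ and $\hat K_{4,t}$, I would expand $(\hat K-K)^2=\sum_{i,j}(\hat K_i-K_i)(\hat K_j-K_j)$ and split into \emph{near} pairs $j\in N_i$ and \emph{far} pairs $j\notin N_i$. For near pairs, the diagonal bound $|\hat K_i(u)\hat K_j(u)|\leq|X_iX_j|\1(|u|\leq|Y_i|\wedge|Y_j|)$ combined with the single-index decoupling above produces the $\kappa bn a_n^{-3}$ (resp.\ $\kappa bn a_n^{-4}$) contribution after using $|N_i|\leq\kappa$. For far pairs, $(\hat K_i-K_i)$ and $(\hat K_j-K_j)$ are independent of each other and each has mean zero, so writing $\Psi_{\beta,t}(W)=[\Psi_{\beta,t}(W)-\Psi_{\beta,t}(W^{[i,j]})]+\Psi_{\beta,t}(W^{[i,j]})$ with $W^{[i,j]}:=\sum_{k\notin B_i\cup B_j}X_k$ makes the second summand contribute zero (by joint independence of $X_{A_i\cup A_j}$ and $W^{[i,j]}$, which holds since $B_i\cap B_j=\emptyset$ combined with (LD2)), leaving the telescoped increment; this is bounded via the Lipschitz estimate for $\Psi_{\beta,t}$ and then split into two independent single-site first-moment factors, delivering the $\kappa^2 b^2 n^2 a_n^{-5}$ (resp.\ $a_n^{-6}$) second-order term. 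The main obstacle is precisely this last step: one must cleanly factor out both local neighborhoods from $\Psi_{\beta,t}(W)$ while taming the residual exponential factors $e^{t(V_i^B+V_j^B)}$ so that only a single copy of $\IE\Psi_{\beta,t}(W)$ appears on the right, rather than a squared expectation or an uncontrolled exponential moment.
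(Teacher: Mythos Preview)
Your overall architecture---decouple via $\Psi_{\beta,t}(W)\le e^{tV_i^B}\Psi_{\beta,t}(W^{[i]})$, use (LD2) to factor, split $(\hat K-K)^2$ into near and far pairs---matches the paper. But the obstacle you flag at the end is real and is not resolved by your proposal, and your treatment of \cref{ap1el3b} has a circularity.

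\emph{The transfer-back step.} Your chain ends at $\IE\Psi_{\beta,t}(W^{[i]})$, and you propose to bound this by $\IE\{e^{tV_i^B}\Psi_{\beta,t}(W)\}$. That last expectation is \emph{not} $\le C\,h(t)$: $e^{tV_i^B}$ is correlated with $\Psi_{\beta,t}(W)$, and iterating the Lipschitz trick only sends you in circles. The paper's device is a H\"older inequality with a carefully tuned exponent $1+\epsilon$, $\epsilon=16m_0/a_n$. Two things make this work: (i) since $\beta,t\le m_0$ one has $\Psi_{\beta,t}(w)\le 3e^{m_0^2}$, hence $\Psi_{\beta,t}^{1+\epsilon}\le (3e^{m_0^2})^{\epsilon}\Psi_{\beta,t}\le C\,\Psi_{\beta,t}$ because $\epsilon m_0^2\le 1/3$; (ii) the conjugate factor involves $e^{(1+\epsilon)tV_i^B/\epsilon}$ with $(1+\epsilon)t/\epsilon\le a_n/8$, which is controlled by \cref{eq:9_19_2}. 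This yields $\IE\{\zeta_i e^{3tT_i}\Psi_{\beta,t}(W_i)\}\le C b^{1/4}h(t)\,\IE\{\zeta_i e^{3a_nT_i/8}\}$ directly (the paper's Lemma~A.1), without ever bounding $h(t)$ by a constant.

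\emph{The Cauchy--Schwarz for $\hat K_1$.} Your plan uses unweighted Cauchy--Schwarz and then invokes \cref{l5.1} at $2t$ to control $\IE\Psi_{\beta,2t}(W)$. But \cref{l5.1} assumes condition (A1), which is precisely what Lemma~\ref{l31} is used to verify---so this is circular. The paper avoids this by the \emph{weighted} Cauchy--Schwarz
\[
\IE\{|\hat K_1-1|\Psi_{\beta,t}(W)\}\le h(t)^{1/2}\bigl(\IE\{|\hat K_1-1|^2\Psi_{\beta,t}(W)\}\bigr)^{1/2},
\]
so that the target quantity $h(t)$ stays as a factor on both sides; the second moment on the right is then bounded by $C(\cdots)h(t)$ via the near/far decomposition (the paper's Lemma~A.2), giving the claimed $C(\cdots)^{1/2}h(t)$.

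\emph{Far pairs.} Your one-step telescope $\Psi_{\beta,t}(W)-\Psi_{\beta,t}(W^{[i,j]})$ gives only a first-order gain and leaves factors of $V_i^B,V_j^B$ that are not jointly independent under (LD2) (which controls $X_{A_i}$, not $X_{B_i}$). The paper instead does a two-step Taylor expansion, first around $W_i=W-V_i$ and then around $W_{ij}$: the zeroth-order term vanishes because $\xi_i\perp(\xi_j,W_i)$, the first-order term vanishes because $\xi_j\perp(\xi_i,V_i,W_{ij})$, and only genuinely second-order remainders (carrying $t^2$) survive, which are then handled by the H\"older trick above.
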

	\subsection{Proof of  \autoref{thm:9_27_2}}
	\proof [Proof of \cref{thm:9_27_2}] We apply \cref{thm1} to prove  \cref{thm:9_27_2}. Recalling
	that $\theta_n = b^{1/2} n^{1/2} a_n^{-1}$, by \cref{l31}, we have that condition (A1) is satisfied with $r_0= \tau_0 = 0$ and 
	\begin{align}
		r_1 & = C_1 \kappa  \theta_n (\theta_n  + 1) a_n^{-1}, & \tau_1 = 1, \label{eq:sec6_17} \\
		r_2 & = C_2 \theta_n^2 a_n^{-1}, & \tau_2 =  0, \label{eq:sec6_18}\\
		r_3 & = C_3 (\kappa \theta_n^2 + 1)^2a_n^{-1}, & \tau_3 = 2, \nonumber\\
		r_4 & = C_4 (\kappa \theta_n^2 + 1)^2a_n^{-2}, & \tau_4 = 2,  \nonumber\\
		\rho & =  C_5 \theta_n^2, \label{eq:sec6_19} 
	\end{align}
	where $C_1, C_2, C_3, C_4$ and $C_5$ are absolute constants.
     Recalling the definition of $\delta(t)$ in \cref{eq-deltat}, and that
     $m_0=(a_{n}^{1/3}/4)\wedge (a_{n}/16)$,  we have
  \begin{equation}
    \begin{aligned}
        \delta(m_{0})&\geq  (r_{0}+r_{1}+r_{2}+r_{3}+r_{4}^{1/2}) (1+m_{0}^{3})\\&\geq C 
        \theta_{n} \kappa (1+\theta_{n}+ \kappa \theta_{n}^{3}) (a_{n}^{-1}+a_{n}^{2}\wedge 1)\geq C
         \theta_{n}\kappa.
    \end{aligned}
    \label{eq:3_11_5}
\end{equation}
Combining \cref{eq:sec6_19,eq:3_11_5} and noting that $\kappa\geq 1$, we can see that the right hand side of \cref{et1} is less than
\begin{equation}
    \begin{aligned}
		C\Bigl( \frac{1}{\kappa\theta_{n}}+ \theta_{n}^{2}+1\Bigr) \delta(z) & \leq 2 C\Bigl( \frac{1}{\theta_{n}}+
		\theta_{n}^{2}\Bigr) (r_1 + r_2 + r_3 + r_4^{1/2}) (1 + z^3) \\
																				   & \leq C' \delta_n (1 + z^3),
    \end{aligned}
    \label{eq:3_11_6}
\end{equation}
where $\delta_n = \kappa^2 a_n^{-1} (1 + \theta_n^6)$, $C'$ is an absolute constant and we use the
fact that $x^{2}+1/x>1$ for $x>0$ in the first inequality.
On the other hand, by \cref{eq:sec6_17,eq:sec6_18}, we have
\begin{equation}
    \begin{aligned}
        r_{1}^{1/3} + r_2^{1/3} \leq C \kappa^{1/3} a_{n}^{-1/3} (1 + \theta_n)^{2/3}.
    \end{aligned}
    \label{eq:3_14_1}
\end{equation}
Applying \cref{thm1} and by \cref{eq:3_11_6,eq:3_14_1}, we obtain the desired result. 
\endproof

\section{Proof of \autoref{thm:11_24_1}}\label{sec:p_th3}
In this section, we use the exchangeable pair to construct Stein identity \cref{e1}.
For any $k \geq 1$ and $k$-fold index $\mathbf{i} \in \IN^k$, we denote by $i_j$  its $j$-th element. 
Let $[n]_k \coloneqq \{ \mathbf{i} = (i_1, \dots, i_k) \in \IN^k : 1 \leq i_1 \neq \dots \neq i_k \leq n\}$ be a class of $k$-fold indices.
Let $\mathbf{I} \coloneqq (I_1,I_2)$ be chosen uniformly from $[n]_2$ and be independent of $\pi$ and $\mathbf{X}$, and let $W' = W - X_{I_1,\pi(I_1)} - X_{I_2,\pi(I_2)} + X_{I_1,\pi(I_2)} + X_{I_2,\pi(I_1)}$. Then, it follows that $(W,W')$ is an
exchangeable pair. Moreover, we have 
\begin{align*}
	\E \{ W - W' \vert \mathbf{X}, \pi \}
	& = \frac{1}{n(n-1)} \sum_{\mathbf{i} \in [n]_2} \E \{ X_{i_1,\pi(i_1)} + X_{i_2,\pi(i_2)} - X_{i_1,\pi(i_2)}  - X_{i_2,\pi(i_1)} \vert \mathbf{X}, \pi \}\\
	& = \frac{2}{n-1} (W - R), 
\end{align*}
where 
\begin{equ}
	R = - \frac{1}{n} \sum_{i = 1}^n \sum_{j = 1}^n X_{i,j}.
    \label{eq-s7-R}
\end{equ}
By exchangeability,  with $\lambda = 2/(n - 1)$ and $\Delta = W - W'$, we have 
\begin{equ}
	0 & =  \E \{ (W - W') (f(W) + f(W')) \} = 2 \E \{ \Delta f(W) \} - \E \{ \Delta ( f(W) - f(W - \Delta) ) \} \\
	  & = 2 \lambda \E \{ (W - R) f(W) \} - \E \{ \Delta ( f(W) - f(W - \Delta) ) \}.
    \label{eq-s7-exch}
\end{equ}
Rearranging \cref{eq-s7-exch} yields
\begin{align*}
	\E \{ W f(W) \}
	& = \E \int_{-\infty}^{\infty} f'(W + u) \hat K(u) du + \E \{ R f(W) \}, 
\end{align*}
where 
\begin{equ}
	\hat K(u) & =  \frac{1}{2\lambda} \E\{\Delta \bigl( \1 (- \Delta \leq u \leq 0) - \1 (0 < u \leq -\Delta)\bigr) \vert \mathbf{X},\pi \}\\
			  & = \frac{1}{4n} \sum_{\mathbf{i} \in [n]_2} D_{\mathbf{i},\pi(\mathbf{i})} \bigl( \1 ( - D_{\mathbf{i}, \pi(\mathbf{i})}  \leq u \leq 0) - \1 ( 0 < u \leq -D_{\mathbf{i},\pi(\mathbf{i})} )	 \bigr), 
    \label{4khatu}
\end{equ}
and $D_{\mathbf{i},\mathbf{j}} = X_{i_1,j_1} + X_{i_2,j_2} - X_{i_1,j_2} - X_{i_2,j_1}$ for any $\mathbf{i} = (i_1, i_2)$ and $\mathbf{j} = (j_1,j_2)$.
Therefore, the condition \cref{e1} is satisfied. 

In what follows, denote by $C, C_{1}, C_{2},\dots$  absolute constants, which may take different values in different places. 
\subsection{A preliminary lemma}
The following lemma will be useful in the proof of
\cref{thm:11_24_1}, and the proof of this lemma is put in \cref{apdxB}. Let $\hat{K}_{1},
\hat{K}_{2,t}, \hat{K}_{3,t}, \hat{K}_{4,t}$ and $M_t$ be as in
\cref{ek1,ek2,eq:sec2_2,eq:sec2_3,eq:sec2_1} with $\hat{K}(u)$ defined as in \cref{4khatu} and
$h(t)=\IE\Psi_{\beta,t}(W)$. 
\begin{lemma}
    \label{lem:11_24_3}
    For $n\geq 4$ and $0 \leq t, \beta \leq \alpha_n^{1/3}/64 $,   we have 
\begin{align}
		\IE \{ |R| \Psi_{\beta, t}(W) \}                        
        & \leq  C_{0} b \alpha_n^{-1} \IE \{
			\Psi_{\beta, t}(W)  \}, \label{a21el3a}\\
    \IE \bigl\{ \lvert \hat K_1 - 1 \rvert \Psi_{\beta,t}(W) \bigr\}   & \leq C_{1} b
    (n \alpha_{n}^{-3}+ n^{1/2} \alpha_{n}^{-2}+ n^{-1/2}) \IE \{
			\Psi_{\beta, t}(W)  \}, \label{ap2el3b}\\
		\IE \{ \hat{K}_{2,t} \Psi_{\beta, t}(W) \}               
		& \leq C_{2} b  n \alpha_{n}^{-3}  \IE \{ \Psi_{\beta, t}(W)  \}, \label{ap2el3c}\\
    \IE \{ \hat{K}_{3,t}  \Psi_{\beta,t}(W) \} & \leq  C_{3}b^{2}  (  n \alpha_{n}^{-3}+   
             n^{2} \alpha_{n}^{-5})( 1 + t^{2} ) \IE \{
			\Psi_{\beta, t}(W)  \}, \label{eq-7.4}\\
            \IE \{ \hat{K}_{4,t}  \Psi_{\beta,t}(W) \} & \leq C_{4}b^{2} (  n \alpha_{n}^{-4}+   
            n^{2} \alpha_{n}^{-6})( 1 + t^{2} ) \IE \{
			\Psi_{\beta, t}(W)  \}\label{ap2eq:9_20_12} 
	   \end{align}
	   and 
	   \begin{align}
			\sup_{0 \leq t \leq m_0}M_t & \leq C_{5} b n \alpha_{n}^{-2}.
	\label{eq:ap2r_3}
	   \end{align}
\end{lemma}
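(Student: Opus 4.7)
The plan is to treat all six inequalities by a unified decoupling scheme. Each quantity on the left side is built from the differences $D_{\mathbf i,\pi(\mathbf i)}$, which depend on only four entries of $\mathbf X$, namely $X_{i_1,\pi(i_1)}, X_{i_2,\pi(i_2)}, X_{i_1,\pi(i_2)}, X_{i_2,\pi(i_1)}$. Writing $W = W^{(\mathbf i)} + X_{i_1,\pi(i_1)} + X_{i_2,\pi(i_2)}$, where $W^{(\mathbf i)}$ is the leave-two-out sum, I would use a pointwise inequality of the form
\begin{equation*}
    \Psi_{\beta,t}(w + s) \leq e^{t|s|}\bigl(\Psi_{\beta,t}(w) + 1\bigr)
\end{equation*}
to replace $\Psi_{\beta,t}(W)$ by $\Psi_{\beta,t}(W^{(\mathbf i)})$ at the cost of multiplicative factors $e^{t(|X_{i_1,\pi(i_1)}|+|X_{i_2,\pi(i_2)}|)}$. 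Since $t\leq \alpha_n^{1/3}/64$ is much smaller than $\alpha_n$, these extra factors are absorbed by \cref{eq:11_24_6}, and \cref{l5.1} then controls $\E\Psi_{\beta,t}(W^{(\mathbf i)})$. Conditional on $\pi$, the four $X$-entries entering $D_{\mathbf i,\pi(\mathbf i)}$ are independent of $W^{(\mathbf i)}$, which is what makes the decoupling work.

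\textbf{Integral collapse and the four $\hat K$-bounds.} From \cref{4khatu}, for each $p\geq 0$ one has the pointwise bound
\begin{equation*}
    \int |u|^p e^{t|u|}\, |\hat K(u)|\, du
    \leq \frac{1}{4n(p+1)}\sum_{\mathbf i\in[n]_2} |D_{\mathbf i,\pi(\mathbf i)}|^{p+2} e^{t|D_{\mathbf i,\pi(\mathbf i)}|},
\end{equation*}
so \cref{ap2el3c}, together with the two variance-type quantities \cref{eq-7.4,ap2eq:9_20_12} (after applying $(\hat K - K)^2 \leq 2\hat K^2 + 2K^2$), reduces to estimating $\E\{|D_{\mathbf i,\pi(\mathbf i)}|^q e^{2t|D_{\mathbf i,\pi(\mathbf i)}|}\Psi_{\beta,t}(W)\}$ for small $q$. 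After the decoupling step, such a factor splits (up to constants) into $\E\{|D|^q e^{Ct|D|}\}\cdot \E\Psi_{\beta,t}(W^{(\mathbf i)})$, and the four independent entries in $D$ each satisfy $\E\{|X|^r e^{Ct|X|}\}\leq C_r b \alpha_n^{-r}$ for $t\ll \alpha_n$, yielding $\E\{|D|^q e^{Ct|D|}\}\leq C_q b \alpha_n^{-q}$. Summing over $n(n-1)$ pairs and dividing by $4n$ accounts for the $n$ and $n^2$ prefactors in the stated bounds. The estimate for $M_t$ follows from $|K(u)|\leq \E|\hat K(u)|$ combined with the same integral identity.

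\textbf{The remaining bounds and the main obstacle.} For $\E\{|R|\Psi_{\beta,t}(W)\}$ I would apply Cauchy--Schwarz: the centering conditions \cref{Ex2} and the normalization \cref{eq:2020_11_8}, together with independence of the $X_{i,j}$, give $\Var(R)\leq 1/n$, and the self-bounding estimate $\Psi_{\beta,t}(W)^2 \lesssim \Psi_{\beta,2t}(W)$ combined with \cref{l5.1} controls the second factor. The main obstacle, and the most delicate step, is \cref{ap2el3b}: showing that $\hat K_1 = \frac{1}{4n}\sum_{\mathbf i\in[n]_2} D_{\mathbf i,\pi(\mathbf i)}^2$ concentrates near $1$ jointly with the weight $\Psi_{\beta,t}(W)$. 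I would first verify $\E\hat K_1 = 1 + O(n^{-1})$ by expanding $D^2$ into variance and cross-mean contributions and using \cref{Ex2,eq:2020_11_8} to cancel the mean part. The fluctuation $\hat K_1 - \E\hat K_1$ would then be decomposed according to the overlap pattern of index pairs $(\mathbf i,\mathbf j)\in[n]_2\times[n]_2$ (disjoint, one shared index, both shared). A Cauchy--Schwarz against $\Psi_{\beta,t}(W)$ reduces the task to bounding the corresponding second moment, and careful enumeration of the three overlap patterns produces precisely the three summands $n^{-1/2}$, $n^{1/2}\alpha_n^{-2}$, and $n\alpha_n^{-3}$ appearing in the stated bound. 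Handling how the permutation structure couples covariances across different overlap patterns, while simultaneously carrying the exponential weight $\Psi_{\beta,t}(W)$, is where the bookkeeping is heaviest.
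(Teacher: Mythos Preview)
Your decoupling scheme for $\hat K_{2,t}$ and $M_t$ is essentially what the paper does (via its Lemma~B.1), and your outline for $\hat K_1 - 1$ is on the right track. But the plan for $\hat K_{3,t}$ and $\hat K_{4,t}$ has a genuine gap that breaks the result.

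The step ``applying $(\hat K - K)^2 \leq 2\hat K^2 + 2K^2$'' discards exactly the cancellation you need. Recall $\hat K(u) = (4n)^{-1}\sum_{\mathbf i\in[n]_2} g_{\mathbf i,\pi(\mathbf i)}(u)$ is a sum over $\sim n^2$ pairs, so $\hat K(u)^2$ is a \emph{double} sum over $\sim n^4$ pairs $(\mathbf i,\mathbf i')$, not the single sum of $|D_{\mathbf i}|^q$ terms you claim. For disjoint $\mathbf i,\mathbf i'$ one has $\int e^{2t|u|} g_{\mathbf i}(u) g_{\mathbf i'}(u)\,du \leq |D_{\mathbf i}||D_{\mathbf i'}|\min(|D_{\mathbf i}|,|D_{\mathbf i'}|)\,e^{2t|D|}$, which after taking expectations is of order $\alpha_n^{-3}$. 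Summing $\sim n^4$ such terms and dividing by $16n^2$ gives $\E\{\hat K_{3,t}\Psi_{\beta,t}(W)\}$ of order $n^2\alpha_n^{-3}$, whereas the target in \cref{eq-7.4} is $n\alpha_n^{-3}+n^2\alpha_n^{-5}$. In the standard regime $\alpha_n\asymp\sqrt n$ your bound is $\sqrt n$, the target is $n^{-1/2}$: the moderate deviation theorem would collapse.

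The paper avoids this by keeping the centered products $\bar g_{\mathbf i}\bar g_{\mathbf i'}$ intact and, for disjoint $\mathbf i,\mathbf i'$, Taylor-expanding $\Psi_{\beta,t}(W)$ to second order around the leave-four-out sum $W^{(\mathcal I)}$ (its Lemma~B.3). Conditional on $\pi(\mathcal I)=\mathcal J$, the random variables $\xi_{\mathbf i,\mathbf j}$ and $\xi_{\mathbf i',\mathbf j'}$ are independent of each other and of $W^{(\mathcal I)}$; the mean-zero condition $\E\xi_{\mathbf i,\pi(\mathbf i)}=0$ then kills the zeroth-order term (up to a boundary correction of size $n^{-1}$ from the constraint $\mathbf j'\in[n]_2^{(\mathbf j)}$), the first-order term is handled by a further expansion (Lemma~B.9), and only the second-order remainder survives, carrying the extra factor $t^2 T_{\mathbf i,\mathbf j}^2\sim \alpha_n^{-2}$ that converts $n^2\alpha_n^{-3}$ into $n^2\alpha_n^{-5}$. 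This is the same mechanism you correctly identify for $\hat K_1-1$, and it is needed here too.

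A smaller issue: for \cref{a21el3a} your Cauchy--Schwarz step produces $\sqrt{\Var R}\,\sqrt{\E\Psi_{\beta,t}(W)^2}$, and controlling the second factor by $\E\Psi_{\beta,t}(W)$ is not free (Lemma~5.3 would be circular here). The paper instead applies Cauchy--Schwarz with the weight, $\E\{|R|\Psi\}\leq h(t)^{1/2}(\E\{R^2\Psi\})^{1/2}$, and bounds $\E\{R^2\Psi\}$ directly by conditioning on the events $\{\pi(i)=j\}$, $\{\pi(i')=j'\}$ so that off-diagonal terms vanish by independence.
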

\subsection{Proof of \autoref{thm:11_24_1}}
\begin{proof}[Proof of \cref{thm:11_24_1}]
We apply \cref{thm1} to prove
\cref{thm:11_24_1}. Recalling that $\theta_{n}= n^{1/2} \alpha^{-1}$, by \cref{lem:11_24_3},
we have condition (A1) is satisfied with $m_{0}=\alpha_{n}^{1/3}/64$,
\begin{equ}
    r_0 & = C_{0} b \alpha_{n}^{-1}, \quad   \tau_0  =  0, \quad  
    r_1  = C_{1} b ((\theta_{n}^{2}+\theta_{n}) \alpha_{n}^{-1}+ n^{-1/2}), \quad  \tau_1  =  0, \quad  
		r_2  = C_2 b \theta_n^2 \alpha_n^{-1}, \quad  \tau_2  =   0, \\
    r_3 & = 2 C_3 b^{2} (\theta_n^2 + 1)^2 \alpha_n^{-1}, \quad  \tau_3  =  2, \quad  
    r_4  = 2 C_4 b^{2} (\theta_n^2 + 1)^2 \alpha_n^{-2}, \quad  \tau_4  =  2, \quad  
		\rho  =  C_5 b \theta_n^2, \label{eq:sec7_30}
	\end{equ}
	where $C_1, C_2, C_3, C_4$ and $C_5$ are absolute constants.
 Recalling the definition of $\delta(t)$ in \cref{eq-deltat}, and noting that
    $m_0=\alpha_{n}^{1/3}/64$,  we have
  \begin{equation}
    \begin{aligned}
        \delta(m_{0})&\geq  r_{2} (1+m_{0}^{3})\geq r_{2} m_{0}^{3} \geq C
        b \theta_{n}^{2}.
    \end{aligned}
    \label{eq:sec7_1}
\end{equation}
Combining \cref{eq:sec7_30,eq:sec7_1}, we have that the right hand side of \cref{et1} is less than
\begin{equation}
    \begin{aligned}
        C\Bigl( \frac{1}{\theta_{n}^{2}}+ \theta_{n}^{2}+1\Bigr)\delta(z)&\leq 2 C\Bigl(
            \frac{1}{\theta_{n}^{2}}+
        \theta_{n}^{2}\Bigr) (r_{0}+r_{1}+r_{2}+r_{3}+r_{4}^{1/2})(1+z^{3})
                                                                \leq C \delta_{n}
                                                                     (1+z^{3}),
    \end{aligned}
    \label{eq:sec7_3}
\end{equation}
where $\delta_{n}=(\alpha_{n}^{-1}+n^{-1/2}) (\theta_{n}^{-2}+\theta_{n}^{6})$, and we used the
fact that $x^{2}+1/x^{2}>1$ for $x>0$ in the first inequaltiy.
On the other hand, by \cref{eq:sec7_30}, we have
\begin{equation}
    \begin{aligned}
        r_{0}^{1/(\tau_{0}+1)}&\leq C b \alpha_{n}^{-1},\\
        r_{1}^{1/(\tau_{1}+2)}&\leq Cb^{1/2}
        (\theta_{n}+1)(1+\theta_{n}^{-1/2})\alpha_{n}^{-1/2},\\
        r_{2}^{1/(\tau_{2}+3)}&\leq C b^{1/3} \theta_{n}^{2/3} \alpha_{n}^{-1/3}. 
\end{aligned}
    \label{eq:7_1}
\end{equation}
By \cref{eq:7_1},
\begin{equation}
    \begin{aligned}
        r_{0}^{1/(\tau_{0}+1)}+r_{1}^{1/(\tau_{1}+2)}+   r_{2}^{1/(\tau_{2}+3)}\leq C b
        (1+\theta_{n})(1+\theta_{n}^{-1/2})\alpha_{n}^{-1/3}.
    \end{aligned}
    \label{eq:sec7_4}
\end{equation}
Applying \cref{thm1}, and by \cref{eq:sec7_3,eq:sec7_4}, we obtain the desired result.
\end{proof}

\begin{appendix}
\section{Proof of Lemma \ref{l31}}\label{apdxA}
Throughout this section, we follow the notation and settings in \cref{sec:general method}. We write $h(t) = \IE \{ \Psi_{\beta, t}(W) \}$ and for any $i$, let $V_i = \sum_{i \in B_i} X_i$, $T_i  = \sum_{i \in B_i} |X_i|$ and $W_i = W - V_i$. In what follows, we give
two general lemmas, which will be used in the proof of \cref{l31}. The following lemmas give us some
technical 
inequalities.
\begin{lemma} \label{lem:loc}
Under \textup{(LD1)} and \textup{(LD2)}, let $\zeta_i = \zeta(X_{A_i}) \geq 0$ be a  function of
		$X_{A_i}$. Then, for $0 \leq t \leq m_0$, 
	\begin{align}
		\label{eq:loc1}
		\IE \bigl\{ \zeta_i e^{3t T_i} \Psi_{\beta,t}(W_{i})  \bigr\} & \leq 81 b^{1/4} h(t) \IE \bigl\{ \zeta_i e^{ 3a_{n}T_{i}/8} \bigr\},  \\
		\label{eq:loc2}
		\IE \bigl\{ \zeta_i T_i^2 e^{3t T_i} \Psi_{\beta,t}(W_{i})  \bigr\} & \leq C \kappa^2 b^{1/4} h(t) \tau^{-1} \IE \bigl\{ \zeta_i^2 e^{ 3a_{n}T_{i}/4} \bigr\} + C \kappa^2 \tau a_n^{-4} b h(t),
	\end{align}
	where $C > 0$ is an absolute constant and $\tau > 0$ is any positive number.
\end{lemma}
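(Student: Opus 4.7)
The plan is to combine the multiplicative Lipschitz bound for $\Psi_{\beta,t}$ that follows from $|\Psi_{\beta,t}'|\leq t\Psi_{\beta,t}$ (see \cref{ePsitt}) with H\"older's inequality and the exponential moment bound \cref{eq:9_19_2}, so as to replace $\Psi_{\beta,t}(W_i)$ by a controllable multiple of $\Psi_{\beta,t}(W)$ (so that $h(t)$ appears on the right) and simultaneously decouple the $T_i$-dependent exponential from it.

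For \cref{eq:loc1}, I would first integrate $|\Psi_{\beta,t}'|\leq t\Psi_{\beta,t}$ to obtain the pointwise multiplicative estimate $\Psi_{\beta,t}(w+u)\leq e^{t|u|}\Psi_{\beta,t}(w)$. Applied with $w=W$ and $u=-V_i$, and using $|V_i|\leq T_i$, this gives $\Psi_{\beta,t}(W_i)\leq e^{tT_i}\Psi_{\beta,t}(W)$, so the left-hand side of \cref{eq:loc1} is bounded by $\IE\{\zeta_ie^{4tT_i}\Psi_{\beta,t}(W)\}$. Since $t\leq m_0\leq a_n/16$, we have $4tT_i\leq a_nT_i/4$. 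I would then apply H\"older's inequality to split $\zeta_ie^{4tT_i}$ from $\Psi_{\beta,t}(W)$, using the trivial uniform bound $\Psi_{\beta,t}(w)\leq 3e^{t\beta}$ to reinterpret any fractional power of $\Psi_{\beta,t}(W)$ as $\Psi_{\beta,t}(W)$ itself (so that $h(t)=\IE\Psi_{\beta,t}(W)$ appears at the right), and to absorb a portion $(\IE e^{a_nT_i})^{1/4}\leq b^{1/4}$ of the exponential via \cref{eq:9_19_2}. The residual exponential $e^{3a_nT_i/8}$ is what stays attached to $\zeta_i$, yielding \cref{eq:loc1}.

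For \cref{eq:loc2}, I would apply Young's inequality $ab\leq a^2/(2\tau)+\tau b^2/2$ to the product $(\zeta_iT_i)\cdot T_i$ to obtain
\begin{equation*}
\IE\{\zeta_iT_i^2e^{3tT_i}\Psi_{\beta,t}(W_i)\}\leq \frac{1}{2\tau}\IE\{\zeta_i^2T_i^2e^{3tT_i}\Psi_{\beta,t}(W_i)\}+\frac{\tau}{2}\IE\{T_i^2e^{3tT_i}\Psi_{\beta,t}(W_i)\}.
\end{equation*}
For the first summand I would repeat the argument used for \cref{eq:loc1} applied to $\zeta_i^2$, absorbing the extra $T_i^2$ into an additional exponential factor via the elementary bound $T_i^2\leq Ca_n^{-2}e^{a_nT_i/8}$ and extracting the prefactor $\kappa^2$ from the Cauchy--Schwarz rearrangement $T_i^2\leq\kappa\sum_{j\in B_i}X_j^2$ (recalling $|B_i|\leq\kappa$). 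The second summand does not depend on $\zeta_i$, so after the same Lipschitz/H\"older step and the uniform moment bound $\IE\{T_i^2e^{a_nT_i/2}\Psi_{\beta,t}(W)\}\leq Ca_n^{-2}bh(t)$, which follows from \cref{eq:9_19_2} together with $x^2\leq Ca_n^{-2}e^{a_nx/2}$, it yields the $\tau\kappa^2a_n^{-4}bh(t)$ contribution.

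The main obstacle will be bookkeeping: calibrating the H\"older exponents so that the $\Psi_{\beta,t}(W)$-block contributes exactly $h(t)$ rather than a fractional power, so that the residual exponential attached to $\zeta_i$ is precisely $e^{3a_nT_i/8}$ in \cref{eq:loc1} and $e^{3a_nT_i/4}$ in \cref{eq:loc2}, and so that the $\kappa^2$-factor in \cref{eq:loc2} is tracked consistently through the Cauchy--Schwarz handling of $T_i^2$. The exponent $3a_n/8$ (respectively $3a_n/4$) is precisely what is left after spending $a_n/4\cdot 1/4 = a_n/16$ on the H\"older $b^{1/4}$ factor and distributing the remainder between the residual $\zeta_i$-expectation and the bound $4tT_i\leq a_nT_i/4$; any deviation from this calibration breaks the claimed constant $81$.
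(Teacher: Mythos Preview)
Your plan for \cref{eq:loc1} has a real gap. Once you replace $\Psi_{\beta,t}(W_i)$ by $e^{tT_i}\Psi_{\beta,t}(W)$ and arrive at $\IE\{\zeta_i e^{4tT_i}\Psi_{\beta,t}(W)\}$, you have destroyed the independence that is the engine of the argument: $\zeta_i$ is a function of $X_{A_i}$ while $W$ depends on all of $X_{\mathcal J}$, so no H\"older split can simultaneously keep $\zeta_i$ at the first power and isolate $\Psi_{\beta,t}(W)$ in its own expectation. If you apply ordinary H\"older with exponents $p=(1+\epsilon)/\epsilon$ and $q=1+\epsilon$, the first factor is $(\IE\{\zeta_i^{\,p}e^{4ptT_i}\})^{1/p}$, which is \emph{not} bounded by $\IE\{\zeta_i e^{3a_nT_i/8}\}$ for general nonnegative $\zeta_i$ (take $\zeta_i$ to be the indicator of a rare event). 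If instead you use the weighted form $\IE\{\zeta_i UV\}\leq(\IE\{\zeta_i U^{p}\})^{1/p}(\IE\{\zeta_i V^{q}\})^{1/q}$, the second factor $(\IE\{\zeta_i\Psi_{\beta,t}(W)^{q}\})^{1/q}$ does not factor as $(\IE\zeta_i)^{1/q}$ times a power of $h(t)$, because $\zeta_i$ and $\Psi_{\beta,t}(W)$ are not independent.

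The paper's proof runs in the opposite order: apply the weighted H\"older \emph{before} passing from $W_i$ to $W$. With $U_1=\zeta_i$, $U_2=e^{3tT_i}$, $U_3=\Psi_{\beta,t}(W_i)$ one gets the factor $(\IE\{\zeta_i\Psi_{\beta,t}(W_i)^{1+\epsilon}\})^{1/(1+\epsilon)}$, and now (LD2) gives $\zeta_i\perp W_i$, so this factors as $(\IE\zeta_i)^{1/(1+\epsilon)}(\IE\Psi_{\beta,t}(W_i)^{1+\epsilon})^{1/(1+\epsilon)}$. Only after this factorization does one return from $W_i$ to $W$ via a second H\"older on $\IE\{\Psi_{\beta,t}(W_i)^{1+\epsilon}\}\leq\IE\{e^{(1+\epsilon)tT_i}\Psi_{\beta,t}(W)^{1+\epsilon}\}$; this is where the $b^{1/4}$ and the constant $81$ (from $\Psi^{(1+\epsilon)^2}\leq 27e^{3\epsilon m_0^2}\Psi$) appear. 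The calibration $\epsilon=16m_0/a_n$ is exactly what makes $(1+\epsilon)t/\epsilon\leq a_n/8$ and $\epsilon m_0^2\leq 1/3$ hold simultaneously.

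Your plan for \cref{eq:loc2} inherits the same gap and adds a second one. After your Young split, the term $\tfrac{1}{2\tau}\IE\{\zeta_i^2T_i^2e^{3tT_i}\Psi_{\beta,t}(W_i)\}$ cannot be handled by ``repeating \cref{eq:loc1}'' because $\zeta_i^2T_i^2$ is a function of $X_{B_i}$, not $X_{A_i}$, so the independence needed above is unavailable. The paper instead first writes $T_i^2\leq\kappa\sum_{j\in B_i}X_j^2$ and applies Young to the pair $(\zeta_ie^{3tT_i},X_j^2)$, obtaining $\tfrac{\kappa^2}{2\tau}\IE\{\zeta_i^2e^{6tT_i}\Psi_{\beta,t}(W_i)\}+\tfrac{\kappa\tau}{2}\sum_{j\in B_i}\IE\{X_j^4\Psi_{\beta,t}(W_i)\}$. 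The first summand is now genuinely of the form \cref{eq:loc1} with $\zeta_i$ replaced by $\zeta_i^2$, and for the second one passes to $W_{ij}=W-\sum_{k\in B_i\cup B_j}X_k$ and uses $X_j\perp W_{ij}$ (from the LD structure at $j$) to factor. This is where both the $\kappa^2$ and the power $a_n^{-4}$ come from; your absorption scheme produces neither in the right place.
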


\begin{proof}[Proof of \cref{lem:loc}]
By H\"older's inequality, for any random variables $U_1 , U_2,
	U_3 \geq 0$, we have  
	\begin{equ}
		\IE \{ U_1 U_2 U_3 \} \leq ( \IE \{ U_1 U_2^{(1+\epsilon)/\epsilon} \} )^{\epsilon/(1 + \epsilon)} ( \IE \{ U_1 U_3^{(1 + \epsilon)} \} )^{1/(1 + \epsilon)},
        \label{eq:9_12_6}
	\end{equ} 
	where $\epsilon = 16 m_0 / a_n$. 
    Then
\begin{equation}
    \begin{aligned}
		0 < \epsilon \leq 1, \quad 
        \epsilon m_{0}^2 \leq 1/3 \quad\mbox{and} \quad
        (1 +\epsilon) m_{0} / \epsilon \leq a_n / 8.
    \end{aligned}
    \label{epsprop}
\end{equation}

Applying \cref{eq:9_12_6} with $U_1 = \zeta_i, U_2 = e^{t T_{i}}$ and $U_3 =\Psi_{\beta,t}(W_{i})$, and by \cref{epsprop}, we have 
    \begin{equation}
        \begin{aligned}
           \MoveEqLeft \IE \bigl\{ \zeta_i \Psi_{\beta, t}(W_i)   e^{3 t  T_{i}}\bigr\}
           \\&\leq  \bigl (\IE \bigl\{ \zeta_i    e^{3 (1+\varepsilon)t 
            T_{i}/\varepsilon}\bigr\} \bigr )^{\varepsilon/(1+\varepsilon)}
            \bigl (\IE \bigl\{ \zeta_i \Psi_{\beta, t}(W_i)^{1+\varepsilon}  \bigr\}
            \bigr )^{1/(1+\varepsilon)}
             \\&=   \bigl (\IE \bigl\{ \zeta_i    e^{ 3(1+\varepsilon)t 
            T_{i}/\varepsilon}\bigr\} \bigr )^{\varepsilon/(1+\varepsilon)}
            \bigl (\IE  \zeta_i   
            \bigr )^{1/(1+\varepsilon)}\bigl (\IE \bigl\{ \Psi_{\beta, t}(W_i)^{1+\varepsilon}  \bigr\}
            \bigr )^{1/(1+\varepsilon)}
            \\&\leq   \IE \bigl\{ \zeta_i    e^{ 3(1+\varepsilon)t 
            T_{i}/\varepsilon}\bigr\} 
            \bigl (\IE \bigl\{ \Psi_{\beta, t}(W_i)^{1+\varepsilon}  \bigr\}
            \bigr )^{1/(1+\varepsilon)}
            \\&\leq  \IE \bigl\{ \zeta_i    e^{3 a_{n} 
            T_{i}/8}\bigr\} 
            \IE \bigl\{ \Psi_{\beta, t}(W_i)^{1+\varepsilon}  \bigr\}
            ,
        \end{aligned}
        \label{eq:1_8_1}
    \end{equation}
    where the equality in the third line follows from the fact that $W_i$ is independent of
    $\zeta_i$ and the last inequality follows from
	\cref{epsprop} and $\Psi_{\beta,t} \geq 1$. Recalling the definition of $\Psi_{\beta,t}(w)$ in \cref{eq:sec1_1}, we have for any $u$ and $v$,
    \begin{equation}
        \begin{aligned}
            \Psi_{\beta,t}(u+v)\leq \Psi_{\beta,t}(u) e^{t |v|}.
        \end{aligned}
        \label{eq:3_15_2}
    \end{equation}
    By   \cref{eq:3_15_2} and H\"older's inequality,
\begin{equation}
    \begin{aligned}
		 \IE \bigl\{ \Psi_{\beta, t}(W_i)^{1+\varepsilon}  \bigr\} 
		\leq  \IE \bigl\{ \Psi_{\beta, t}(W)^{1+\varepsilon} e^{(1+\varepsilon) t  T_{i}} \bigr\} 
	   \leq H_{1}\times H_{2},
    \end{aligned}
    \label{eq:1_8_2}
\end{equation}
where     
\begin{equation*}
    \begin{aligned}
		H_{1} & =  \IE \bigl\{ \Psi_{\beta, t}(W)^{(1+\varepsilon)^{2}} \bigr\} , &
		H_{2} & = \bigl (\IE \bigl\{ e^{(1+\varepsilon)^{2} t T_{i}/\varepsilon} \bigr\} \bigr )^{\varepsilon/(1+\varepsilon)}.
    \end{aligned}
\end{equation*}
Recalling the definition of $\Psi_{\beta,t}(w)$, we have 
\begin{equation*}
    \begin{aligned}
        \Psi_{\beta,t}(w)\leq 2e^{m_{0}^{2}}+1\leq3e^{m_{0}^{2}} \quad \mbox{ for  $0\leq \beta,t\leq
        m_{0}$},
    \end{aligned}
\end{equation*}
which further implies that 
\begin{align}
	\Psi_{\beta,t}(w)^{(1 + \epsilon)^2}
	& \leq (3 e^{m_0^2})^{2\epsilon + \epsilon^2} \Psi_{\beta,t}(w).
	\label{eq:sec6_3}
\end{align}
By \cref{eq:sec6_3,epsprop}, we have
\begin{equation}
    \begin{aligned}
        H_{1}\leq 27 e^{m_{0}^2(2\varepsilon+\varepsilon^{2})} h(t)\leq 27
        e^{3m_{0}^2\varepsilon} h(t)\leq 81
        h(t).
    \end{aligned}
    \label{eq:1_8_3}
\end{equation}
For $H_{2}$, by \cref{epsprop} and by H\"older's inequality again, we have 
\begin{equation}
    \begin{aligned}
        H_{2}&\leq \E e^{ a_n T_i/4 } \leq b^{1/4}.
    \end{aligned}
    \label{eq:1_8_4}
\end{equation}
Combining \cref{eq:1_8_1,eq:1_8_2,eq:1_8_4,eq:1_8_3} yields \cref{eq:loc1}. 

We now prove \cref{eq:loc2}. Expanding the square term of the left hand side of \cref{eq:loc2}, we have for all $\tau > 0$,  
\begin{equ}
	\IE \bigl\{ \zeta_i T_i^2 e^{3t T_i} \Psi_{\beta,t}(W_{i})  \bigr\}
	& = \sum_{j \in B_i} \sum_{k \in B_i} \IE \bigl\{ \zeta_i \vert X_j X_k\vert e^{3t T_i} \Psi_{\beta,t}(W_{i})  \bigr\}\\
	& \leq \kappa \sum_{j \in B_i} \IE \bigl\{ \zeta_i X_j^2 e^{3t T_i} \Psi_{\beta,t}(W_{i})  \bigr\}\\
	& \leq \frac{\kappa^2}{2 \tau} \IE \{ \zeta_i^2 e^{6 t T_i} \Psi_{\beta,t}(W_{i}) \} + \frac{\kappa \tau}{2} \sum_{j \in B_i} \E \bigl\{ X_j^4 \Psi_{\beta,t}(W_i) \bigr\}.
    \label{eq-loc3}
\end{equ}
For the first term of the right hand side of \cref{eq-loc3}, by \cref{eq:loc1} with replacing $\zeta_i$ by $\zeta_i^2$ and $3t T_i$ by $6tT_i$, we obtain 
\begin{equ}
	\IE \{ \zeta_i^2 e^{6 t T_i} \Psi_{\beta,t}(W_{i}) \} 
	& \leq 81 b^{1/4} h(t) \E \{ \zeta_i^2 e^{ 3 a_n T_i /4 } \}.
    \label{eq-loc4}
\end{equ}
For the second term of the right hand side of \cref{eq-loc3}, by \cref{eq:3_15_2}, we have for any $j \in B_i$, with $W_{ij} = W - \sum_{k \in B_i \cup B_j} X_k$, 
\begin{align*}
	\E \bigl\{ X_j^4 \Psi_{\beta,t}(W_i) \bigr\}
	& \leq \E \{ X_j^4 e^{t T_j}\Psi_{\beta, t}(W_{ij}) \}.
\end{align*}
Similar to \cref{eq:loc1}, we obtain 
\begin{equ}
	\E \{ X_j^4 e^{t T_j}\Psi_{\beta, t}(W_{ij}) \}
	& \leq 81 b^{1/2} h(t) \E \{ X_j^4 e^{3 a_n T_j / 8} \}. 
    \label{eq-loc6}
\end{equ}
Observing that $\vert X_j\vert \leq T_j$, we have the expectation term of the right hand side of \cref{eq-loc6} can be bounded by 
\begin{equ}
\E \{ X_j^4 e^{t T_j}\} \leq \alpha_{n}^{-4} \E \{ (a_nT_j)^4 e^{3 a_n T_j / 8} \}\leq C \alpha_n^{-4} \E e^{a_nT_j/2}  \leq C a_n^{-4} b^{1/2} h(t). 
    \label{eq-loc5}
\end{equ}
Substituting \cref{eq-loc4,eq-loc5,eq-loc6} into \cref{eq-loc3} yields \cref{eq:loc2}.
\end{proof}
\begin{lemma}
	\label{lemA.2}
  Under \textup{(LD1)} and \textup{(LD2)},  for each $i \in \mathcal{J}$, let $\xi_{i}=\xi(X_{A_{i}})$ be a function of $X_{A_{i}}$ satisfying that $\IE \xi_{i}=0$.   
	 Let $S = \sum_{i\in \mathcal{J}} \xi_i$. 
	For $0 \leq t, \beta\leq m_0$ and any positive number $\tau$, we have
	\begin{equation}
		\begin{aligned}
			\IE   \bigl\{ S^2 \Psi_{\beta,t} (W)  \bigr\}
			  & \leq 81 b^{1/4} \kappa   h(t) \sum_{ i \in \mathcal{J} }  \IE \{\xi_{i}^2 e^{ 3a_n T_i/8 } \} \\
			  & \quad + C b \kappa^2 t^2 h(t) \sum_{i \in \mathcal{J}} \sum_{j \in \mathcal{J} \setminus N_i} \E \lvert \xi_j \rvert \bigl( \tau^{-1}\E \{ \xi_i^2 e^{3 a_n T_i / 4} \} + \tau a_n^{-4} \bigr).
		\end{aligned}
		\label{eq:9_12_3}
	\end{equation}	
\end{lemma}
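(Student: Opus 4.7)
The plan is to split $S^{2}=\sum_{i,j}\xi_{i}\xi_{j}$ according to whether $j\in N_{i}$ (the local part) or $j\in\mathcal{J}\setminus N_{i}$ (the non-local part), and to bound the two contributions by quite different arguments. For the local part I will use $|\xi_{i}\xi_{j}|\leq(\xi_{i}^{2}+\xi_{j}^{2})/2$ together with $|N_{i}|\leq\kappa$ to dominate the $\Psi_{\beta,t}(W)$-weighted double sum by $\kappa\sum_{i}\IE\{\xi_{i}^{2}\Psi_{\beta,t}(W)\}$. Since $\Psi_{\beta,t}(W)\leq\Psi_{\beta,t}(W_{i})e^{3tT_{i}}$ by \eqref{eq:3_15_2} and $\xi_{i}^{2}$ is a nonnegative function of $X_{A_{i}}$, a direct application of \eqref{eq:loc1} with $\zeta_{i}=\xi_{i}^{2}$ produces the first summand on the right-hand side of \eqref{eq:9_12_3}.

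For the non-local part the key observation is that when $B_{i}\cap B_{j}=\emptyset$ the variables $\xi_{i}$, $\xi_{j}$, and $\tilde W_{ij}\coloneqq W-V_{i}-V_{j}$ are mutually independent: (LD2) at $i$ gives $X_{A_{i}}\perp X_{B_{i}^{c}}$, which contains both $X_{A_{j}}$ and $X_{(B_{i}\cup B_{j})^{c}}$, and a second application of (LD2) at $j$ separates $X_{A_{j}}$ from $X_{(B_{i}\cup B_{j})^{c}}$. Combined with $\IE\xi_{i}=\IE\xi_{j}=0$, this forces
\begin{equation*}
\IE\{\xi_{i}\xi_{j}\Psi_{\beta,t}(W_{i})\}=\IE\{\xi_{i}\xi_{j}\Psi_{\beta,t}(W_{j})\}=\IE\{\xi_{i}\xi_{j}\Psi_{\beta,t}(\tilde W_{ij})\}=0,
\end{equation*}
so that $\IE\{\xi_{i}\xi_{j}\Psi_{\beta,t}(W)\}$ equals the expectation of $\xi_{i}\xi_{j}$ times the mixed second difference of $(x,y)\mapsto\Psi_{\beta,t}(\tilde W_{ij}+x+y)$ between $(0,0)$ and $(V_{i},V_{j})$. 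By the integral form of Taylor's theorem this difference equals $\int_{0}^{V_{i}}\!\int_{0}^{V_{j}}\Psi''_{\beta,t}(\tilde W_{ij}+s+r)\,dr\,ds$, and combining $|\Psi''_{\beta,t}|\leq t^{2}\Psi_{\beta,t}$ with $\Psi_{\beta,t}(\tilde W_{ij}+s+r)\leq\Psi_{\beta,t}(\tilde W_{ij})e^{t(T_{i}+T_{j})}$ from \eqref{eq:3_15_2} yields
\begin{equation*}
|\IE\{\xi_{i}\xi_{j}\Psi_{\beta,t}(W)\}|\leq t^{2}\,\IE\bigl\{|\xi_{i}\xi_{j}|\,T_{i}T_{j}\,e^{t(T_{i}+T_{j})}\,\Psi_{\beta,t}(\tilde W_{ij})\bigr\}.
\end{equation*}
Exploiting both mean-zero conditions simultaneously is what generates the $t^{2}$ factor that appears in the second summand of \eqref{eq:9_12_3}.

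It remains to bound the displayed expectation, and this is where the bulk of the work lies. The plan is to apply $T_{i}T_{j}\leq\tfrac{1}{2}(T_{i}^{2}+T_{j}^{2})$ and symmetrise in $(i,j)$, pass from $\tilde W_{ij}$ to $W_{i}$ via $\Psi_{\beta,t}(\tilde W_{ij})\leq\Psi_{\beta,t}(W_{i})e^{tT_{j}}$, decompose $T_{i}^{2}\leq\kappa\sum_{k\in B_{i}}X_{k}^{2}$, and split $|\xi_{i}|X_{k}^{2}$ by Young's inequality $|\xi_{i}|X_{k}^{2}\leq\xi_{i}^{2}/(2\tau)+\tau X_{k}^{4}/2$ so that $|\xi_{j}|$ remains outside linearly. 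On the resulting $\xi_{i}^{2}$-piece I will apply \eqref{eq:loc2} with $\zeta_{i}=|\xi_{i}|$, which delivers the bracket $\tau^{-1}\IE\{\xi_{i}^{2}e^{3a_{n}T_{i}/4}\}+\tau a_{n}^{-4}$ together with the $\kappa^{2}b^{1/4}h(t)$ prefactor; the $X_{k}^{4}$-piece is handled by the same crude estimate $\IE\{X_{k}^{4}e^{(\cdot)T_{k}}\}\leq Cba_{n}^{-4}$ already used inside the proof of \eqref{eq:loc2}. The main obstacle is that $\xi_{j}$ and $T_{j}$ are both $\sigma(X_{B_{j}})$-measurable and hence not jointly independent, so recovering the clean factor $\IE|\xi_{j}|$ on the right-hand side of \eqref{eq:9_12_3} forces a careful weighted H\"older step modelled on \eqref{eq:1_8_1}: the residual factor $e^{ctT_{j}}$ is absorbed into a multiplicative $b$ using $t\leq m_{0}\leq a_{n}/16$ and the exponential-moment assumption \eqref{eq:9_19_2}, while the independence $\xi_{j}\perp X_{B_{j}^{c}}$ supplied by (LD2) is what ultimately allows $|\xi_{j}|$ to be extracted as $\IE|\xi_{j}|$. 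Summing over $i$ and $j\in\mathcal{J}\setminus N_{i}$ then assembles the second summand of \eqref{eq:9_12_3}.
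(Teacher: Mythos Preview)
Your local part and your mixed–second–difference device for the non-local part are fine and match the paper in spirit; the paper reaches the same point by a two-step Taylor expansion (first around $W_{i}$, then around $W_{ij}$), which produces $t^{2}\,\IE\{|\xi_{i}\xi_{j}|\,(T_{i}+T_{j})^{2}e^{t(T_{i}+T_{j})}\Psi_{\beta,t}(W_{ij})\}$ rather than your $t^{2}\,\IE\{|\xi_{i}\xi_{j}|\,T_{i}T_{j}e^{t(T_{i}+T_{j})}\Psi_{\beta,t}(\tilde W_{ij})\}$, but the two are equivalent up to constants.

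The gap is in how you decouple $i$ and $j$ afterwards. Your inequality $T_{i}T_{j}\le\frac12(T_{i}^{2}+T_{j}^{2})$ leaves the factor $e^{t(T_{i}+T_{j})}$ intact, so after symmetrising you are stuck with $\IE\{|\xi_{i}||\xi_{j}|\,T_{i}^{2}\,e^{tT_{i}}e^{tT_{j}}\,\Psi_{\beta,t}(\tilde W_{ij})\}$; passing to $W_{i}$ only makes it worse. The proposed H\"older step cannot then return $\IE|\xi_{j}|$: under (LD2) you only know $X_{A_{j}}\perp X_{B_{j}^{c}}$, not $X_{B_{j}}\perp X_{B_{j}^{c}}$, so $T_{j}$ (which is $X_{B_{j}}$-measurable) need not be independent of $(\xi_{i},T_{i},\tilde W_{ij})$, and any H\"older split keeps $|\xi_{j}|$ entangled with $e^{ctT_{j}}$ rather than producing a clean $\IE|\xi_{j}|$.

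The fix is a one-line change in the decoupling: use AM--GM on the product $T_{i}e^{tT_{i}}\cdot T_{j}e^{tT_{j}}$ to get
\[
T_{i}T_{j}\,e^{t(T_{i}+T_{j})}\le \tfrac12\bigl(T_{i}^{2}e^{2tT_{i}}+T_{j}^{2}e^{2tT_{j}}\bigr),
\]
exactly as the paper does with $(T_{i}+T_{j})^{2}e^{t(T_{i}+T_{j})}\le 4(T_{i}^{2}e^{2tT_{i}}+T_{j}^{2}e^{2tT_{j}})$. After symmetrising, the surviving term is $\IE\{|\xi_{i}||\xi_{j}|\,T_{i}^{2}e^{2tT_{i}}\Psi_{\beta,t}(\tilde W_{ij})\}$, in which $\xi_{j}$ is the \emph{only} $j$-dependent factor; since $\xi_{j}\perp(\xi_{i},T_{i},\tilde W_{ij})$ by (LD2), this factors as $\IE|\xi_{j}|\cdot\IE\{|\xi_{i}|T_{i}^{2}e^{2tT_{i}}\Psi_{\beta,t}(\tilde W_{ij})\}$ with no H\"older step at all. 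Now apply \eqref{eq:loc2} directly with $\zeta_{i}=|\xi_{i}|$ (the proof of \eqref{eq:loc2} works verbatim with $\tilde W_{ij}$ in place of $W_{i}$, since only $\zeta_{i}\perp W_{i}$ is used). Your intermediate ``decompose $T_{i}^{2}\le\kappa\sum_{k}X_{k}^{2}$ and Young'' steps are then unnecessary---they are already inside the proof of \eqref{eq:loc2}.
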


\begin{proof}[Proof of \cref{lemA.2}]
Expanding the left hand side of \cref{eq:9_12_3} yields
	\begin{equation}
			\IE   \bigl\{ S^{2} \Psi_{\beta,t} (W)  \bigr\} :=I_{1}+I_{2},
		\label{eq:9_12_4}
	\end{equation}
	where
    \begin{equation*}
        \begin{aligned}
			I_{1} & = \sum^{ }_{i\in \mathcal{J}} \sum^{ }_{j\in N_{i}}\IE \left\{ \xi_{i} 
			\xi_{j}\Psi_{\beta,t}(W) \right\}, & 
            I_{2} & = \sum^{ }_{i\in \mathcal{J}} \sum^{ }_{j\in \mathcal{J}\backslash N_i}\IE  \left\{ \xi_{i}
			\xi_{j} \Psi_{\beta,t}(W)
            \right\}.
        \end{aligned}
    \end{equation*}
We now give the bounds of $I_1$ and $I_2$ separately. Observe that 
	\begin{equ}
		\{ (i, j): i \in \mathcal{J} ,  j \in N_i \} = \{ (i,j) : B_i \cap B_j \neq \emptyset \}= \{ (i, j) : j \in \mathcal{J}, i \in N_j \}.  
        \label{eq:9_12_5}
	\end{equ}
	Recall that $W_i = W - \sum_{j \in B_i} X_j$. 
	For $I_1$, we have 
	\begin{equ}
		I_1 
		& \leq \frac{1}{2}\sum_{i \in \mathcal{J}, \, j \in N_i} \IE \{ (\xi_i^2 + \xi_j^2) \Psi_{\beta,t}(W) \}\\
		& = \frac{1}{2}\sum_{i \in \mathcal{J}} \sum_{j \in N_i} \IE \{ \xi_i^2 \Psi_{\beta,t}(W) \} + \frac{1}{2}\sum_{i \in \mathcal{J}} \sum_{j \in N_i} \IE \{ \xi_j^2 \Psi_{\beta,t}(W) \}\\
		& = \frac{1}{2}\sum_{i \in \mathcal{J}} \sum_{j \in N_i} \IE \{ \xi_i^2 \Psi_{\beta,t}(W) \} + \frac{1}{2}\sum_{j \in \mathcal{J}} \sum_{i \in N_j} \IE \{ \xi_j^2 \Psi_{\beta,t}(W) \}\\
		& = \sum_{i \in \mathcal{J}} \sum_{j \in N_i} \IE \{ \xi_i^2 \Psi_{\beta,t}(W) \}\leq \kappa \sum_{i \in \mathcal{J}} \IE \{ \xi_i^2 \Psi_{\beta,t}(W_{i})  e^{tT_{i}}\},
        \label{eq:9_12_7}
	\end{equ}
    where we used \cref{eq:9_12_5}, \cref{eq:3_15_2} and  $|N_{i}|\leq \kappa$ in the last line.
    By \cref{lem:loc} with  $\zeta_i =\xi_{i}^2$, we obtain 
    \begin{equation}
        \begin{aligned}
            \IE \bigl\{\xi_{i}^{2} \Psi_{\beta,t}(W_{i}) e^{tT_{i}}\bigr\}\leq 81 b^{1/4} h(t) \IE
            \bigl\{ \xi_{i}^{2} e^{3a_{n}T_{i}/8} \bigr\}. 
        \end{aligned}
        \label{eq:1_9_1}
    \end{equation}
    Substituting \cref{eq:1_9_1} into \cref{eq:9_12_7},  we have 
	\begin{equ}
		I_1 \leq 81 b^{1/4} \kappa  h(t)  \sum_{ i \in \mathcal{J} }  \IE \{\xi_{i}^2 e^{
		3 a_n T_i/8 } \} .
        \label{eq:9_12_10}
	\end{equ}
	For $i, j \in \mathcal{J}$, let  $V_{ij} = \sum_{k \in B_i \cup B_j} X_k$, 
	\begin{math}
		W_{ij} = W - V_{ij},  T_{ij} = \sum_{k \in B_i \cup B_j} |X_k|. 
	\end{math}
	It is easy to see that $\lvert T_{ij} \rvert \leq T_{i} + T_j$.  	
    In order bound $I_2$, for any $i\in \mathcal{J} $ and $j\notin N_{i}$, by Taylor's expansion, we have  
	\begin{align*}
		\begin{split}
			\MoveEqLeft
			\IE \{ \xi_i \xi_j \Psi_{\beta, t}(W) \} \\
			& = \IE \{ \xi_i \xi_j \Psi_{\beta, t}(W_i) \} + \IE \{ \xi_i \xi_j V_i
			\Psi_{\beta,t}'(W_i) \} + \int_0^1 \IE \{ \xi_i \xi_j V_i^2 \Psi_{\beta,t}'' (W_i + u
		V_i) \}(1-u) du \\
			& = \IE \{ \xi_i \xi_j \Psi_{\beta, t}(W_i) \} + \IE \{ \xi_i \xi_j V_i \Psi_{\beta, t}'(W_{ij}) \} \\
			& \quad + \int_0^1 \IE \{ \xi_i \xi_j V_i (V_{ij} - V_i) \Psi_{\beta,t}''(W_{ij} + u
			(W_i - W_{ij}))  \}(1-u) du \\
			& \quad  +  \int_0^1 \IE \{ \xi_i \xi_j V_i^2 \Psi_{\beta,t}'' (W_i + u V_i) \} du .
		\end{split}
	\end{align*}
    If $j\notin N_{i} $, then $\xi_i$ is independent of $(\xi_j, W_i)$ and $\xi_j$ is independent of $(\xi_i, V_i, W_{ij})$. Recalling that $\IE  \xi_i  = \IE  \xi_j  = 0$, we have
	\begin{math}
		\IE \{ \xi_i \xi_j \Psi_{\beta, t}(W_i) \} = \IE \{ \xi_i \xi_j V_i \Psi_{\beta, t}'(W_{ij}) \} = 0.
	\end{math}
	By \cref{ePsiww} and by the monotonicity of $\Psi_{\beta, t}(\cdot)$,  we have for any $ 0 \leq u \leq 1 $,
	\begin{align*}
		\MoveEqLeft  \lvert \IE \{ \xi_i \xi_j V_i (V_{ij} - V_i) \Psi_{\beta, t}''(W_{ij} + u (W_i - W_{ij}))  \} \rvert\\
		& \leq t^2 \IE \{ \lvert \xi_i \xi_j V_i (V_{ij} - V_i) \rvert \Psi_{\beta,t}(W_{ij} + u
		(W_i - W_{ij})) \}\\ 
        & \leq t^2 \sum_{l \in B_i}\sum_{m \in B_j \setminus B_i} \IE \{ \lvert \xi_i \xi_j X_l X_m \rvert ( \Psi_{\beta, t}(W_i) + \Psi_{\beta, t}(W_{ij}) ) \}
	\\	& \leq 2 t^2 \sum_{l \in B_i}\sum_{m \in B_j \setminus B_i} \IE \{ \lvert \xi_i \xi_j X_l X_m \rvert 
        \Psi_{\beta, t}(W_{ij})  e^{t(T_{i}+T_{j})} \}, 
	\end{align*}
	and  similarly, 
	\begin{align*}
		\lvert \IE \{ \xi_i \xi_j V_i^2 \Psi_{\beta,t}'' (W_i + u V_i) \}  \rvert
		& \leq2  t^2 \sum_{l \in B_i}\sum_{m \in B_i} \IE \bigl\{  \lvert \xi_i \xi_j X_l X_m \rvert
         \Psi_{\beta, t}(W_{ij}) e^{t(T_{i}+T_{j})}  \bigr\}. 
	\end{align*}
	Observe that 
	\begin{align*}
		(T_i + T_j)^2 e^{t(T_i + T_j)} \leq 4 (T_i^2 e^{2t T_i} + T_j^2 e^{2t T_j}).
	\end{align*}
	Hence, it follows that 
    \begin{equ}
		I_{2}&\leq 2 t^2 \sum_{i\in\mathcal{J}} \sum_{j\in \mathcal{J}\backslash N_{i}} \sum_{l \in B_i}\sum_{m \in B_{i}\cup B_j} \IE \{ \lvert \xi_i \xi_j X_l X_m \rvert \Psi_{\beta, t}(W_{ij})  e^{t(T_{i}+T_{j})} \}\\
			 & \leq 2 t^2 \sum_{i \in \mathcal{J}} \sum_{j \in \mathcal{J} \setminus N_i} \E \bigl\{ \lvert \xi_i \xi_j (T_i + T_j)^2 \rvert e^{t( T_i + T_j )} \Psi_{\beta, t}(W_{ij}) \bigr\}\\
		 & \leq 8 t^2 \sum_{i \in \mathcal{J}} \sum_{j \in \mathcal{J} \setminus N_i} \E \bigl\{ \lvert \xi_i \xi_j \rvert (T_i^2 e^{2t T_i} + T_j^2 e^{2t T_j}) \Psi_{\beta,t}(W_{ij}) \bigr\}\\
		 & = 16 t^2 \sum_{i \in \mathcal{J}} \sum_{j \in \mathcal{J} \setminus N_i} \E \bigl\{ \lvert \xi_i \xi_j \rvert T_i^2 e^{2t T_i} \Psi_{\beta,t}(W_{ij}) \bigr\}, 
        \label{eq:12_24_1}
    \end{equ}
	where we used \cref{eq:9_12_5} in the last line. If $j \in \mathcal{J} \setminus N_i$, then
	$\xi_j$ is independent of $(\xi_i, T_i, W_{ij})$. Therefore, by \cref{eq:loc2} in
	\cref{lem:loc}, we obtain
	\begin{align*}
		I_2 & \leq 16 t^2 \sum_{i \in \mathcal{J}} \sum_{j \in \mathcal{J} \setminus N_i} \E | \xi_j | \E \bigl\{ \lvert \xi_i \rvert T_i^2 e^{2t T_i} \Psi_{\beta,t}(W_{ij}) \bigr\}\\
			& \leq 16 t^2 \sum_{i \in \mathcal{J}} \sum_{j \in \mathcal{J} \setminus N_i} \E | \xi_j | \E \bigl\{ \lvert \xi_i \rvert T_i^2 e^{3t T_i} \Psi_{\beta,t}(W_{i}) \bigr\}\\
			& \leq C b \kappa^2 t^2 h(t) \sum_{i \in \mathcal{J}} \sum_{j \in \mathcal{J} \setminus N_i} \E \lvert \xi_j \rvert \bigl( \tau^{-1}\E \{ \xi_i^2 e^{3 a_n T_i / 4} \} + \tau a_n^{-4} \bigr). 
	\end{align*}	
    Combining \cref{eq:9_12_10,eq:12_24_1}, we complete the proof.
	\end{proof}
Based on \cref{lem:loc,lemA.2}, we are now ready to give the proof of \cref{l31}.
\begin{proof}
[Proof of \cref{l31}] 
Recall that $\hat K(u), K(u), \hat K_i(u)$ and $K_i(u)$ are defined as in \cref{eki,eeki}, and  
	\begin{equ}
		\hat{K}_1 = \int_{-\infty}^{\infty} \hat K(t) dt =  \sum_{i \in \mathcal{J}} X_i Y_i, \quad 
		\hat{K}_{2,t} = \sum_{i \in \mathcal{J}} \int_{-\infty}^{\infty} |u| e^{t |u|} \hat K_i(u) du. 
        \label{ebk2}
	\end{equ}
	Since $\IE  W^2  = 1$, it follows that 
	\begin{math}
		\IE \hat K_1 = 1. 
	\end{math}
	 For \cref{ap1el3b}, by Jensen's and H\"older's inequalities, we have 
	\begin{equ}
        \IE \{ \lvert \IE[\hat{K}_1|W] -\IE \hat K_1 \rvert \Psi_{\beta, t}(W) \}	&\leq\IE \{
		\lvert \hat{K}_1 - \IE \hat K_1 \rvert \Psi_{\beta, t}(W) \} 
	\\	& \leq h(t)^{1/2} (\IE \{ \lvert \hat{K}_1 - \IE\hat K_1 \rvert^2 \Psi_{\beta, t}(W) \})^{1/2}.
        \label{Ibound}
	\end{equ}
	Recall that $\hat K_{1}-\IE \hat K_{1}= \sum_{i\in \mathcal{J}}( X_{i}Y_{i}-\IE
	X_{i}Y_{i})$. Then,	applying \cref{lemA.2} with $\xi_i = X_i Y_i - \IE \{ X_i Y_i \}$ and $\tau = b^{1/2}$, we have 
	\begin{equ}
		 \IE \{ \lvert \hat{K}_1 - \IE\hat K_1 \rvert^2 \Psi_{\beta, t}(W) \}
			 \leq G_1 + G_2,  
        \label{I1and2}
	\end{equ}
	where 
    \begin{equation*}
        \begin{aligned}
			G_{1} & =  81 b^{1/4} \kappa   h(t) \sum_{ i \in \mathcal{J} }  \IE \{\xi_{i}^2 e^{ a_n T_i/8 } \} , \\
			G_{2}& = C b \kappa^2 t^2 h(t) \sum_{i \in \mathcal{J}} \sum_{j \in \mathcal{J} \setminus N_i} \E \lvert \xi_j \rvert \bigl( b^{-1/2}\E \{ \xi_i^2 e^{3 a_n T_i / 4} \} + b^{1/2} a_n^{-4} \bigr).
         \end{aligned}
    \end{equation*}
	Recalling that $T_{i}= \sum^{}_{j\in B_{i}} \vert X_{j}\vert$, we have $\lvert \xi_i \rvert \leq
	T_i^2 + \E T_i^2$, and thus, for $0<s\leq 3/4$, 
	\begin{equation}
		\begin{aligned}
			\IE (\{X_{i}Y_{i}-\IE X_{i}Y_{i}\}^{2}e^{s a_{n}T_{i}})
                & \leq a_n^{-4} \IE
                (\{a_{n}^{4} T_{i}^{4}+ \IE a_{n}^{4} T_{i}^{4} \} e^{sa_{n}T_{i}})\\
				& \leq C a_n^{-4} (\E e^{a_n T_i})^{s + 1/4}\\
				& \leq C b^{s + 1/4} a_n^{-4},
		\end{aligned}
		\label{I1bound}
	\end{equation}
    where we used the inequality that $y^{4}\leq C e^{y/4}$ for
    $y\geq 0$ and some $C > 0$. 
	Moreover, 
	\begin{equ}
		\E \lvert \xi_j \rvert & \leq 2 \E T_j^2  \leq C a_n^{-2} \E e^{a_n T_j / 2} \leq C b^{1/2} a_{n}^{-2}.
        \label{eq-xijbound6}
	\end{equ}
	Substituting \cref{I1bound,eq-xijbound6} into \cref{I1and2} gives 
	\begin{align*}
		\IE \{ \lvert \hat{K}_1 - 1 \rvert^2 \Psi_{\beta, t}(W) \}
        & \leq C  (b \kappa n a_n^{-4} + b^2 \kappa^2n^2 a_n^{-6}) h(t) (1 + t^2), 
	\end{align*}
	which proves \cref{ap1el3b} together with \cref{Ibound}.

	We next prove \cref{ap1el3c}. 
	Recalling that $\hat K_i(u) $ is defined as in \cref{eki}, by \cref{eq:3_15_2} and applying \cref{lem:loc} with $\xi_i = \lvert \hat K_i(u) \rvert$, we have 
	\begin{align*}
		|\E \{ \hat K_i(u) \Psi_{\beta,t}(W) \}|
		& \leq \E \{ |\hat K_i(u)| e^{ t T_i } \Psi_{\beta,t}(W_i) \}\\
		& \leq 81 b^{1/4} h(t) \E \{ |\hat K_i(u)| e^{ 3 a_n T_i /8 } \}. 
	\end{align*}
	Thus, by \cref{ebk2} and recalling that $t \leq a_n/16$ and $\lvert X_i \rvert \leq T_i, \lvert Y_i \rvert \leq T_i$, we have 
	\begin{align*}
		\E \{ |\hat K_{2,t}| \Psi_{\beta,t}(W) \}
		& \leq 81 b^{1/4} h(t) \sum_{i \in \mathcal{J}} \E \{ \lvert X_i^2 Y_i \rvert e^{a_n T_i / 2} \} \\
		& \leq 81 b^{1/4} h(t) \sum_{i \in \mathcal{J}} \E \{ T_i^3 e^{a_n T_i/2} \}\\
		& \leq C b n a_n^{-3} h(t). 
	\end{align*}
We now move to prove \cref{ap1eq:9_20_11,ap1eq:9_20_12} together.
By definition, 
\begin{equation}
	\begin{aligned}
		\IE \{ \hat{K}_{3,t} \Psi_{\beta, t}(W) \}= \int_{ \lvert u \rvert \leq 1 }^{} e^{ 2 t |u|
        }\IE \left\{ \bigl( \hat{K}(u) - K(u) \bigr)^2 \Psi_{\beta, t}(W) \right\}du  
	\end{aligned}
	\label{eq:9_12_1}
\end{equation}
and 
\begin{equation}
	\begin{aligned}
		\IE \{ \hat{K}_{4,t} \Psi_{\beta, t}(W) \}= \int_{ \lvert u \rvert \leq 1 }^{} |u| e^{ 2 t |u|
        }\IE \left\{ \bigl( \hat{K}(u) - K(u) \bigr)^2 \Psi_{\beta, t}(W) \right\}du.  
	\end{aligned}
	\label{eq:1_12_1}
\end{equation}
For fixed $u$, applying \cref{lemA.2} with $\xi_i = \hat K_i(u)- K_{i}(u)$ and $\tau = b^{1/2}a_{n}$, we have  
\begin{equation}
	\begin{aligned}
       \IE \bigl\{ \bigl( \hat{K}(u) - K(u) \bigr)^2 \Psi_{\beta, t}(W) \bigr\}
        =  H_{1}(u) + H_{2}(u),
	\end{aligned}
	\label{eq:9_12_2}
\end{equation}
where 
\begin{equation*}
    \begin{aligned}
        H_{1}(u) & = 81 b^{1/4} \kappa   h(t) \sum_{ i \in \mathcal{J} }  \IE \{(\hat
        K_i(u)-K_{i}(u))^2 e^{ 3a_n T_i/8 } \},\\
		H_{2}(u) & = C b \kappa^2 t^2 h(t) \sum_{i \in \mathcal{J}} \sum_{j \in \mathcal{J}
            \setminus N_i} \E \lvert \hat K_j(u) -K_{j}(u)\rvert \bigl( b^{-1/2}a_n^{-1}\E \{ (\hat
        K_i(u)-K_{i}(u))^2  e^{3 a_n T_i / 4} \} +  b^{1/2}a_n^{-3} \bigr).
    \end{aligned}
\end{equation*}
For $H_1(u)$, recalling that $\lvert X_i \rvert	 \leq T_i, \lvert Y_i \rvert \leq T_i$ and $t \leq a_n/16$, 
\begin{equ}
	\int_{-\infty}^{\infty} e^{2 t |u|}\E \{ \hat K_i(u)^2 e^{3a_nT_i/8} \} du 
	& \leq \E \{ \lvert X_i^2 Y_i \rvert e^{a_n T_i/2} \} \\
	& \leq C a_n^{-3} \E \{ (a_n T_i)^3 e^{a_n T_i / 2} \}\\
	& \leq C b^{3/4} a_n^{-3},
    \label{eq-H1u}
\end{equ}
and similarly, 
\begin{equ}
	\int_{-\infty}^{\infty} e^{2 t |u|}\E \{ K_i(u)^2 e^{3a_nT_i/8} \} du 
	\leq C b^{3/4} a_n^{-3}. 
    \label{eq-H1u2}
\end{equ}
For $H_2(u)$, note that $\lvert \hat K_i(u) \rvert^{2} \leq \lvert X_i \rvert^2$, and we have 
\begin{align*}
    &\E \{ \lvert \hat K_i(u) \rvert^2 e^{3 a_n / 4} \} 
	  \leq \E \{ \lvert X_i \rvert^2 e^{3 a_n / 4} \}
	 \leq \E \{ T_i^2 e^{3a_n / 4} \} \leq C b a_n^{-2} 
\end{align*}
and 
\begin{align*}
    &\E \{ \lvert  K_i(u) \rvert^2 e^{3 a_n / 4} \} 
	  \leq C b a_n^{-2}.
\end{align*}
Then, 
\begin{equ}
	H_2(u) 
	& \leq C b^{3/2} \kappa^2 t^2 a_{n}^{-3} h(t) \sum_{i \in \mathcal{J} }
    \sum_{j \in \mathcal{J} \setminus N_i} \E \lvert \hat K_j (u)-K_{j}(u) \rvert. 
    \label{eq-H2u}
\end{equ}
Similar to \cref{eq-H1u,eq-H1u2},  
\begin{equ}
    \int_{-\infty}^{\infty} e^{2 t |u|}\E \lvert \hat K_j(u)-K_{i}(u) \rvert du
	& \leq \E \{ (\lvert X_i Y_i  \rvert+\IE\lvert X_i Y_i  \rvert)  e^{2t T_i} \} \leq C b^{1/2} a_n^{-2}. 
    \label{eq-H2u1}
\end{equ}
Substituting \cref{eq-H1u,eq-H1u2,eq-H2u,eq-H2u1,eq:9_12_2} into \cref{eq:9_12_1} gives 
\cref{ap1eq:9_20_11}.
The inequality \cref{ap1eq:9_20_12} can be shown similarly.

    It now remains to prove \cref{eq:ap1r_3}. By definition,
	\begin{equation}
		\begin{aligned}
            \sup\limits_{0\leq t\leq m_{0} } M_{t} &= \int_{ \lvert u \rvert \leq 1 } e^{ m_{0}|u| } \lvert K(u) \rvert du\\
            &\leq    \sum^{ }_{i\in \mathcal{J}} \IE \biggl\{ \int_{\vert u\vert \leq 1}^{}
				\lvert \hat K_i(u)  \rvert
            e^{m_{0}|u|}du \biggr\}\\
           &\leq     2\sum^{ }_{i\in \mathcal{J}} \IE 
           \biggl\{ \vert X_{i}Y_{i} \vert e^{m_{0}|Y_{i}|} \biggr\} \leq     2 a_{n}^{-2} \sum^{ }_{i\in \mathcal{J}} \IE 
           \bigl\{ \vert a_{n}^{2} T_{i}^{2} \vert e^{m_{0}T_{i}} \bigr\}\\
           &\leq  C n  a_{n}^{-2}   b.
		\end{aligned}
		\label{eq:r4b}
	\end{equation}
	This completes the proof. 
\end{proof}

\section{Proof of Lemma \ref{lem:11_24_3}}\label{apdxB}
This section includes three subsections. In \cref{sub:B1}, we prove \cref{lem:11_24_3}. Before that, we give some preliminary lemmas, whose proofs are given in \cref{sub:B2,sub:B3}.
\subsection{Proof of \autoref{lem:11_24_3}}%
\label{sub:B1}
For any $\mathbf{i}\in [n]_k$, we write $i_l$ be the $l$-th element of $\mathbf{i}$ and write $\pi(\mathbf{i}) = (\pi(i_1), \dots, \pi(i_k))$, Let $A(\mathbf{i}) = \{ i_1, \dots, i_k \}$ be the set of elements in $\mathbf{i}$. For any $\mathbf{i}, \mathbf{j} \in [n]_k$ and any
matrix
$(x_{i,j})_{1 \leq i , j \leq n}$, let
$x_{\mathbf{i},\mathbf{j}} = (x_{i,j} : i \in A(\mathbf{i}), j \in A(\mathbf{j}))$. Let $T_{\mathbf{i},\mathbf{j}} = \sum_{l = 1}^k \lvert X_{i_l,j_l} \rvert$.
For any two positive integers $m\leq n$, we write $n_{(m)}:= \prod ^{m}_{i=1} (n-i+1)$ as the \emph{descending factorial}.
	Let $h(t) = \E \{ \Psi_{\beta,t}(W) \}$.
	The following preliminary lemmas are useful in the proof of \cref{lem:11_24_3}. 
 
\begin{lemma}
	\label{lem-B1}
	Let $0 \leq m \leq 2$ be an integer and assume that $n \geq 4$. Let $\sigma$ be a uniform
    permutation on $[n - m]$ which is independent of $\mathbf{X}$ and let $S = \sum_{i \in [n - m]} X_{i, \sigma(i)}$. 
	For $k = 1,2$, and for any $\mathbf{i},\mathbf{j} \in [n - m]_k$, let $\zeta_{\mathbf{i},\mathbf{j}} \coloneqq \zeta (X_{\mathbf{i},\mathbf{j}})$ be a positive function of $X_{\mathbf{i},\mathbf{j}}$. 
	We have 
	\begin{align}
		\E \{ \zeta_{\mathbf{i}, \sigma(\mathbf{i})} \Psi_{\beta,t}(S) \}
		& \leq 4 b^{1/16} \E \Psi_{\beta,t}(S) \max_{\mathbf{v} \in [n-m]_k} \E \{ \zeta_{\mathbf{i},\mathbf{v}} e^{t T_{\mathbf{i},\mathbf{v}}} \}, \label{eq-lB1-a}\\
		\E \{ \zeta_{\sigma^{-1}(\mathbf{j}), \mathbf{j}} \Psi_{\beta,t}(S) \}
		& \leq 4 b^{1/16} \E \Psi_{\beta,t}(S) \max_{\mathbf{v} \in [n-m]_k} \E \{ \zeta_{\mathbf{i},\mathbf{v}} e^{t T_{\mathbf{i},\mathbf{v}}} \}, \label{eq-lB1-b}\\
		\E \{ \zeta_{\sigma^{-1}(\mathbf{j}), \sigma(\mathbf{i})} \Psi_{\beta,t}(S) \}
		& \leq 4 b^{1/8} \E \Psi_{\beta,t}(S) \max_{\mathbf{u},\mathbf{v} \in [n-m]_k} \E \{ \zeta_{\mathbf{u},\mathbf{v}} e^{t (T_{\mathbf{i},\mathbf{v}}+ T_{\mathbf{u},\mathbf{j}})}  \}. \label{eq-lB1-c}
	\end{align}
\end{lemma}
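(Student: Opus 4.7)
The plan is to condition on the image $\sigma(\mathbf{i})=\mathbf{v}$ (and, for part (c), also on the preimage $\sigma^{-1}(\mathbf{j})=\mathbf{u}$), exploit the independence of the $k\times k$ submatrix $X_{\mathbf{i},\mathbf{v}}$ from the remaining randomness, and peel off the factor $\zeta$ using the quasi-multiplicativity $\Psi_{\beta,t}(u+v)\le \Psi_{\beta,t}(u)e^{t|v|}$ established in \cref{eq:3_15_2}. The residual expectation of $\Psi_{\beta,t}$ evaluated at a sum with fewer summands is then related back to $\IE\Psi_{\beta,t}(S)$ by a H\"older-type estimate modelled on the one used in the proof of \cref{lem:loc}.

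For \cref{eq-lB1-a}, I will start from
\begin{align*}
\IE\{\zeta_{\mathbf{i},\sigma(\mathbf{i})}\Psi_{\beta,t}(S)\}
=\sum_{\mathbf{v}\in[n-m]_k} \IP(\sigma(\mathbf{i})=\mathbf{v})\,\IE\{\zeta_{\mathbf{i},\mathbf{v}}\Psi_{\beta,t}(S)\mid \sigma(\mathbf{i})=\mathbf{v}\}.
\end{align*}
On the event $\{\sigma(\mathbf{i})=\mathbf{v}\}$ I decompose $S=\sum_{l=1}^k X_{i_l,v_l}+S^{(\mathbf{v})}$, where $S^{(\mathbf{v})}=\sum_{i\in[n-m]\setminus A(\mathbf{i})}X_{i,\sigma(i)}$ only involves entries $X_{i,j}$ with $i\notin A(\mathbf{i})$ and $j\notin A(\mathbf{v})$. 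Then the submatrix $X_{\mathbf{i},\mathbf{v}}$ is independent of $(S^{(\mathbf{v})},\sigma)$, so bounding $\Psi_{\beta,t}(S)\le \Psi_{\beta,t}(S^{(\mathbf{v})})e^{tT_{\mathbf{i},\mathbf{v}}}$, factoring by independence, pulling out the maximum over $\mathbf{v}$, and then summing over $\mathbf{v}$ gives
\begin{align*}
\IE\{\zeta_{\mathbf{i},\sigma(\mathbf{i})}\Psi_{\beta,t}(S)\}
\le \max_{\mathbf{v}\in[n-m]_k}\IE\{\zeta_{\mathbf{i},\mathbf{v}}e^{tT_{\mathbf{i},\mathbf{v}}}\}\cdot \IE\Psi_{\beta,t}\bigl(S^{(\sigma(\mathbf{i}))}\bigr),
\end{align*}
where $S^{(\sigma(\mathbf{i}))}=S-\sum_l X_{i_l,\sigma(i_l)}$.

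It then remains to show that $\IE\Psi_{\beta,t}(S^{(\sigma(\mathbf{i}))})\le 4b^{1/16}\IE\Psi_{\beta,t}(S)$. Since $\Psi_{\beta,t}(S^{(\sigma(\mathbf{i}))})\le \Psi_{\beta,t}(S)e^{tT_\mathbf{i}}$ with $T_\mathbf{i}=\sum_l|X_{i_l,\sigma(i_l)}|$, I apply H\"older's inequality with exponents $q=1+\epsilon$ and $p=(1+\epsilon)/\epsilon$, choosing $\epsilon$ of the same order as in \cref{lem:loc}. Conditionally on $\sigma$, the $k\le 2$ summands of $T_\mathbf{i}$ are in distinct rows and columns and therefore independent; combined with the interpolation $\IE e^{s|X_{i,j}|}\le b^{s/\alpha_n}$ for $0\le s\le \alpha_n$, this yields $(\IE e^{tpT_\mathbf{i}})^{1/p}\le b^{kt/\alpha_n}\le b^{1/16}$ under $t\le m_0\le \alpha_n^{1/3}/64$. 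The other factor $(\IE\Psi_{\beta,t}(S)^q)^{1/q}$ is controlled by the power-trick $\Psi_{\beta,t}^{(1+\epsilon)^2}\le (3e^{m_0^2})^{2\epsilon+\epsilon^2}\Psi_{\beta,t}$ from the proof of \cref{lem:loc}, together with $\IE\Psi_{\beta,t}(S)\ge 1$ (which allows me to absorb the root $1/q$), contributing only a bounded constant.

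Parts \cref{eq-lB1-b} and \cref{eq-lB1-c} follow the same template. Part (b) is the symmetric statement obtained by applying the argument to $\sigma^{-1}$, which is also uniform on $\mathcal{S}_{n-m}$. Part (c) requires a joint conditioning on $\sigma(\mathbf{i})=\mathbf{v}$ and $\sigma^{-1}(\mathbf{j})=\mathbf{u}$, the sum being restricted to compatible pairs (so that $A(\mathbf{u})$ and $A(\mathbf{i})$ can be disjoint, and similarly for $A(\mathbf{v})$ and $A(\mathbf{j})$); the $\Psi$-peeling then produces the combined factor $e^{t(T_{\mathbf{i},\mathbf{v}}+T_{\mathbf{u},\mathbf{j}})}$, and the H\"older step is applied twice (once for each row/column family), yielding the prefactor $4b^{1/8}$. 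The hardest part is precisely the bookkeeping in (c): tracking the compatibility constraints on $(\mathbf{u},\mathbf{v})$, isolating the two submatrices $X_{\mathbf{i},\mathbf{v}}$ and $X_{\mathbf{u},\mathbf{j}}$ from the residual permutation sum over rows outside $A(\mathbf{i})\cup A(\mathbf{u})$ and columns outside $A(\mathbf{v})\cup A(\mathbf{j})$, and verifying that the two H\"older peelings can be applied simultaneously without breaking the constants.
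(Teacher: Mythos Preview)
Your approach is essentially the same as the paper's: condition on $\sigma(\mathbf{i})=\mathbf{v}$, use the quasi-multiplicativity \cref{eq:3_15_2} to strip off $\zeta_{\mathbf{i},\mathbf{v}}e^{tT_{\mathbf{i},\mathbf{v}}}$ by conditional independence, and then recover $\IE\Psi_{\beta,t}(S)$ via a H\"older step. The only cosmetic difference is that the paper carries the conditioning $\{\sigma(\mathbf{i})=\mathbf{j}\}$ through the H\"older inequality (getting $4b^{1/16}\IE\{\Psi_{\beta,t}(S)\mid\sigma(\mathbf{i})=\mathbf{j}\}$) and averages at the very end, whereas you average first to obtain $\IE\Psi_{\beta,t}(S^{(\sigma(\mathbf{i}))})$ and then apply H\"older unconditionally; both orderings work. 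Two small points: the correct $\epsilon$ here is $\alpha_n^{-2/3}$ (so that $t(1+\epsilon)/\epsilon\le \alpha_n/32$ and $\epsilon\beta t\le 0.001$), not the $\epsilon$ from \cref{lem:loc} which lives in the $a_n$-setting; and you only need the single power bound $\Psi_{\beta,t}^{1+\epsilon}\le 4\Psi_{\beta,t}$ rather than the iterated $(1+\epsilon)^2$ version.
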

\begin{proof} 
We only prove \cref{eq-lB1-a}, because \cref{eq-lB1-b,eq-lB1-c} can be shown similarly.
For any $\mathbf{i} \in [n - m]_k$,  let 
	\begin{math}
		S^{(\mathbf{i})} = \sum_{i' \not\in A(\mathbf{i})} X_{i', \sigma(i')}. 
	\end{math}
	By definition, we have 
    \begin{equation}
        \begin{aligned}
			\E \{ \zeta_{\mathbf{i}, \sigma(\mathbf{i})} \Psi_{\beta,t}(S) \}
		& = \frac{1}{(n - m)_k}\sum_{\mathbf{j}\in [n-m]_{k}} \E \{ \zeta_{\mathbf{i},\mathbf{j}} \Psi_{\beta,t}(S) \vert
		\sigma( \mathbf{i}	 ) = \mathbf{j} \} \\
		& \leq \frac{1}{(n - m)_k}\sum_{\mathbf{j}\in [n-m]_{k}} \E \{ \zeta_{\mathbf{i},\mathbf{j}} e^{t T_{\mathbf{i},\mathbf{j}}} \Psi_{\beta,t}(S^{(\mathbf{i})}) \vert
		\sigma( \mathbf{i}	 ) = \mathbf{j} \}.
		\end{aligned}
       \label{eq-lemB1-1}
    \end{equation}
    Since $X_{\mathbf{i},\mathbf{j}}$  is conditionally independent of
    $S^{(\mathbf{i})}$ given the event that $\sigma(\mathbf{i}) = \mathbf{j}$, and
	$X_{\mathbf{i},\mathbf{j}}$ is independent of $\sigma$, the last conditional expectation in \cref{eq-lemB1-1} can be rewritten as 
	\begin{equ}
		\E \{ \zeta_{\mathbf{i},\mathbf{j}} e^{t T_{\mathbf{i},\mathbf{j}}} \Psi_{\beta,t}(S^{(\mathbf{i})}) \vert
		\sigma( \mathbf{i}	 ) = \mathbf{j} \}
		& = \E \{ \zeta_{\mathbf{i},\mathbf{j}} e^{t T_{\mathbf{i},\mathbf{j}}} \} \E \{ \Psi_{\beta,t}(S^{(\mathbf{i})}) \vert
		\sigma( \mathbf{i}	 ) = \mathbf{j} \} .
		\label{eq-lemB1-2}
	\end{equ}
	For the second term on the RHS of \cref{eq-lemB1-2}, 
	note that  $\Psi_{\beta,t}(w + x) \leq e^{t|x|} \Psi_{\beta,t}(w)$ and $\lvert S - S^{(\mathbf{i})} \rvert \leq T_{\mathbf{i},\mathbf{j}}$ given $\sigma(\mathbf{i}) = \mathbf{j}$.
	With $\epsilon = \alpha_{n}^{-2/3}$, by H\"older's inequality, and noting that $T_{\mathbf{i},\mathbf{j}}$ is independent of $\sigma$, we have 
	\begin{equ}
		\MoveEqLeft
		\E \{ \Psi_{\beta,t}(S^{(\mathbf{i})})  \vert \sigma(\mathbf{i}) = \mathbf{j}\}\\
		& \leq \E \{ e^{t T_{\mathbf{i},\mathbf{j}}} \Psi_{\beta,t}(S) \vert \sigma(\mathbf{i}) = \mathbf{j}	\}\\
		& \leq \bigl( \E \bigl\{ e^{(1 + \epsilon)t T_{\mathbf{i},\mathbf{j}}/\epsilon}\bigr\} \bigr)^{\epsilon/(1 + \epsilon)} \bigl( \E \bigl\{ \Psi_{\beta,t}^{1 + \epsilon}(S)\bigr\} \bigm\vert \sigma(\mathbf{i}) = \mathbf{j}\bigr)^{1/(1 + \epsilon)}. 
        \label{eq-lB1-21}
	\end{equ}
	Noting that $0 \leq t \leq \beta  \leq
	\alpha_{n}^{1/3}/64$, we obtain
	\begin{equ}
		0 < \epsilon < 1, \quad 
		(1 + \epsilon)t / \epsilon \leq 2t / \epsilon \leq \alpha_n/32, \quad \epsilon \beta t \leq 0.001.  
        \label{eq-com-eps}
	\end{equ}
	For the first term in the RHS of \cref{eq-lB1-21}, because $k=1,2$ and $\mathbf{i},\mathbf{j}\in
	[n-m]_{k}$, by \cref{eq:11_24_6}, we obtain 
	\begin{equ}
		\E \bigl\{ e^{(1 + \epsilon)t T_{\mathbf{i},\mathbf{j}}/\epsilon}\bigr\}
		\leq \E e^{\alpha_n T_{\mathbf{i},\mathbf{j}}/32} 
		\leq \max_{i,j \in [n-m]}\E e^{\alpha_n \lvert X_{i,j} \rvert/16} 
		\leq b^{1/16}.
        \label{eq-lB1-22}
	\end{equ}
	Noting that $\Psi_{\beta,t}(w) \leq 2 e^{ t \beta } + 1 \leq 3 e^{t \beta}$,
	we have 
	\begin{align*}
		\Psi_{\beta,t}(w)^{1 + \epsilon} \leq (3 e^{t \beta})^{\epsilon} \Psi_{\beta,t}(w) \leq 4 \Psi_{\beta,t}(w).
	\end{align*}
	Thus, 
	\begin{equ}
		\E \{ \Psi_{\beta,t}(S)^{1 + \epsilon} \vert \sigma(\mathbf{i}) = \mathbf{j}\} \leq 4 \E \{ \Psi_{\beta,t}(S) \vert \sigma(\mathbf{i}) = \mathbf{j}\} .
        \label{eq-lB1-13}
	\end{equ}
	Combining \cref{eq-lemB1-1,eq-lemB1-2,eq-lB1-21,eq-lB1-22,eq-lB1-13}, we have 
	\begin{align*}
		\E \{ \zeta_{\mathbf{i},\mathbf{j}} e^{t T_{\mathbf{i},\mathbf{j}}} \Psi_{\beta,t}(S^{(\mathbf{i})}) \vert
		\sigma( \mathbf{i}	 ) = \mathbf{j} \}
		& \leq 4 b^{1/16} \max_{\mathbf{v} \in [n-m]_k} \E \{ \zeta_{\mathbf{i},\mathbf{v}} e^{t T_{\mathbf{i},\mathbf{v}}} \} \E \{ \Psi_{\beta,t}(S) \vert \sigma(\mathbf{i}) = \mathbf{j}\}.
	\end{align*}
	Now, taking average over $\mathbf{j} \in [n - m]_k$ yields \cref{eq-lB1-a}. Using a similar argument, we obtain \cref{eq-lB1-b} and \cref{eq-lB1-c}.
\end{proof}

\begin{lemma}
    \label{lemB_3}
	For $\mathbf{i}, \mathbf{j}$, let $\xi_{\mathbf{i}, \mathbf{j}} \coloneqq \xi (X_{\mathbf{i},\mathbf{j}})$ be a function of $X_{\mathbf{i},\mathbf{j}}$ such that $\E  \xi_{\mathbf{i},
\pi(\mathbf{i})}  = 0$. For any $\mathbf{i} \in [n]_2$ and $\mathbf{i}' \in [n]_2^{(\mathbf{i})}$, where $[n]_2^{(\mathbf{i})}  \coloneqq \{ (k,l) \in [n]_2 : k,l \in [n] \setminus A(\mathbf{i}) \}$, we have
    \begin{equation*}
       \begin{aligned}
      \MoveEqLeft |\E \{ 
          \xi_{\mathbf{i},\pi(\mathbf{i})} \xi_{\mathbf{i}',\pi(\mathbf{i}')} 
	   \Psi_{\beta,t}(W) \}|\\ 
      &\leq C b (1 + t^{2}) h(t) n^{-4}\sum_{\mathbf{j}, \mathbf{j}' \in
           [n]_2}
                           \bigl( \E \{ \lvert\xi_{\mathbf{i},\mathbf{j}}\rvert
                                   T_{\mathbf{i},\mathbf{j}}^{2} e^{2 t\lvert T_{\mathbf{i},\mathbf{j}}\rvert}\}\IE\{
                                   \lvert\xi_{\mathbf{i}',\mathbf{j}'}\rvert\}+\E \{\lvert
                                   \xi_{\mathbf{i}',\mathbf{j}'}\rvert
         T_{\mathbf{i}',\mathbf{j}'}^{2} e^{2 t\lvert T_{\mathbf{i}',\mathbf{j}'}\rvert}\}\IE\{
     \lvert\xi_{\mathbf{i},\mathbf{j}}\rvert \}\bigr)\\
&\quad+C b (1 + t^{2}) h(t) n^{-4}\sum_{\mathbf{j}, \mathbf{j}' \in
[n]_2} \bigl(\alpha_{n}^{-2}+n^{-1}+\1(E_{\mathbf{j},\mathbf{j}'})\bigr)
                            \E \{ \lvert\xi_{\mathbf{i},\mathbf{j}}\rvert
                                   \}\IE\{ \lvert\xi_{\mathbf{i}',\mathbf{j}'}\rvert\},
    \end{aligned}
        \label{eq:B_84}
    \end{equation*}
    where $E_{\mathbf{j},\mathbf{j}'}= \{A(\mathbf{j})\cap
	A(\mathbf{j}') \neq \emptyset \}$.
\end{lemma}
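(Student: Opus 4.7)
The plan is a second-order Taylor expansion of $\Psi_{\beta,t}(W)$ around the reduced combinatorial sum $W^* \coloneqq W - \sum_{l \in A(\mathbf{i}) \cup A(\mathbf{i}')} X_{l,\pi(l)}$, combined with the zero-mean hypothesis $\E\xi_{\mathbf{i},\pi(\mathbf{i})}=0$. Writing $V = V_\mathbf{i}+V_{\mathbf{i}'}$ with $V_\mathbf{i} = X_{i_1,\pi(i_1)}+X_{i_2,\pi(i_2)}$, the key structural observation is that because $A(\mathbf{i})\cap A(\mathbf{i}')=\emptyset$ and $\pi$ is a bijection, the five quantities $\xi_{\mathbf{i},\pi(\mathbf{i})}$, $\xi_{\mathbf{i}',\pi(\mathbf{i}')}$, $V_\mathbf{i}$, $V_{\mathbf{i}'}$, and $W^*$ depend on entries of $\mathbf{X}$ lying in pairwise disjoint sub-matrices, hence are conditionally independent given $\pi$. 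Using $|\Psi_{\beta,t}''|\leq t^2 \Psi_{\beta,t}$ we write $\Psi_{\beta,t}(W) = \Psi_{\beta,t}(W^*) + V\Psi_{\beta,t}'(W^*) + R$ with $|R| \leq \tfrac{1}{2}t^2 V^2 e^{t|V|}\Psi_{\beta,t}(W^*)$ and analyse the three resulting contributions separately.

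The remainder $R$ yields the first two ($T^2$-type) terms in the bound. Dominating $V^2 \leq 2(V_\mathbf{i}^2 + V_{\mathbf{i}'}^2)$, $e^{t|V|} \leq e^{t|V_\mathbf{i}|} e^{t|V_{\mathbf{i}'}|}$ and $|V_\bullet| \leq T_{\bullet,\pi(\bullet)}$, we condition on $\pi$ and factor the expectation into three independent pieces; applying \cref{lem-B1} to the $\Psi_{\beta,t}(W^*)$ factor (viewing $W^*$ as a permutation statistic on the residual $(n-4)\times(n-4)$ sub-array) bounds it by $Cb^{1/16}h(t)$ and absorbs an extra $e^{tT_{\bullet,\pi(\bullet)}}$ factor into each of the other two expectations, producing the $e^{2tT_{\mathbf{i},\mathbf{j}}}$ exponent inside the $|\xi|T^2$ and $|\xi'|$ expectations after averaging over $\pi$. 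For the leading and first-order terms $\E[\xi_{\mathbf{i},\pi(\mathbf{i})}\xi_{\mathbf{i}',\pi(\mathbf{i}')}\Psi_{\beta,t}(W^*)]$ and $\E[\xi_{\mathbf{i},\pi(\mathbf{i})}\xi_{\mathbf{i}',\pi(\mathbf{i}')}V\Psi_{\beta,t}'(W^*)]$, conditioning on $\pi$ and factoring reduces matters to estimating sums
\[
\frac{1}{n_{(4)}}\sum_{(\mathbf{j},\mathbf{j}')\text{ disjoint}} \alpha(\mathbf{j})\,e'(\mathbf{j}')\,\tilde F(\mathbf{j},\mathbf{j}'),
\]
where $e'(\mathbf{j}') = \E\xi_{\mathbf{i}',\mathbf{j}'}$ satisfies $\sum_{\mathbf{j}'\in[n]_2}e'(\mathbf{j}')=0$ by hypothesis, $\alpha(\mathbf{j})$ is either $\E\xi_{\mathbf{i},\mathbf{j}}$ (leading) or $\E[\xi_{\mathbf{i},\mathbf{j}}V_\mathbf{i}(\mathbf{j})]$ (gradient), and $\tilde F$ is a conditional expectation of $\Psi_{\beta,t}(W^*)$ or $\Psi_{\beta,t}'(W^*)$; the pointwise bound $|\Psi_{\beta,t}'|\leq t\Psi_{\beta,t}$ furnishes the $t$-factor that combines with $t^2$ from $R$ to give the $(1+t^2)$ prefactor.

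Writing $\tilde F(\mathbf{j},\mathbf{j}') = h(t) + (\tilde F(\mathbf{j},\mathbf{j}')-h(t))$, the $h(t)$-piece together with $\sum_{\mathbf{j}'}e'(\mathbf{j}')=0$ converts the disjoint-pair sum into a non-disjoint one, yielding the $\1(E_{\mathbf{j},\mathbf{j}'})$ contribution; in the gradient case, $|V_\mathbf{i}| \leq T_{\mathbf{i},\mathbf{j}}$ together with $T \leq (1+T^2)/2$ splits $\alpha(\mathbf{j})$ between the $\E|\xi_{\mathbf{i},\mathbf{j}}|$-type and $\E\{|\xi_{\mathbf{i},\mathbf{j}}|T_{\mathbf{i},\mathbf{j}}^2 e^{2tT_{\mathbf{i},\mathbf{j}}}\}$-type bounds. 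The main obstacle is the uniform deviation estimate $|\tilde F(\mathbf{j},\mathbf{j}')-h(t)| \leq Cb(\alpha_n^{-2}+n^{-1})h(t)$, which produces the $\alpha_n^{-2}+n^{-1}$ portion of the bound: we plan to obtain it by a two-step comparison, first a Taylor expansion passing from $\Psi_{\beta,t}(W^*)$ to $\Psi_{\beta,t}(W)$ (costing $t^2\E V^2 \lesssim b\alpha_n^{-2}$ via $\E X_{i,j}^2 \leq Cb\alpha_n^{-2}$), then a replacement of the conditional permutation distribution by its unconditional counterpart via $|n_{(4)}^{-1}-(n_{(2)})^{-2}| = O(n^{-5})$, each step invoking \cref{lem-B1} to keep the exponential-moment weights aligned with the target $e^{2tT}$ factor.
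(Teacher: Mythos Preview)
Your Taylor-expansion decomposition into leading/gradient/remainder pieces, the conditional-independence factoring, and the use of $\sum_{\mathbf{j}'}\E\xi_{\mathbf{i}',\mathbf{j}'}=0$ to produce the $\1(E_{\mathbf{j},\mathbf{j}'})$ term all match the paper's proof, and your treatment of the remainder (your $R$, the paper's $Q_3$) via $|\Psi_{\beta,t}''|\leq t^2\Psi_{\beta,t}$ and \cref{lem-B1} is correct.

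The gap is your two-step plan for the deviation estimate $|\tilde F(\mathbf{j},\mathbf{j}')-h(t)|\leq Cb(\alpha_n^{-2}+n^{-1})(1+t^2)h(t)$. In Step~1 (Taylor from $W^*$ to $W$ given $\pi(\mathcal{I})=\mathcal{J}$), the first-order term does \emph{not} vanish: by conditional independence it equals $\bigl(\sum_{l}a_{i_l,j_l}\bigr)\,\E[\Psi_{\beta,t}'(W^*)\mid\cdot]$, and $\sum_l a_{i_l,j_l}$ is only $O(\alpha_n^{-1}\log b)$ under \cref{eq:11_24_6}, so the cost is $t\alpha_n^{-1}$, not the $t^2\alpha_n^{-2}$ you claim. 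Step~2 is a non-sequitur: the normalization discrepancy $|n_{(4)}^{-1}-(n_{(2)})^{-2}|$ compares two constants, not the conditional law $\law(W\mid\pi(\mathcal{I})=\mathcal{J})$ to the unconditional one, and those laws differ by $O(1)$ in total variation. The paper instead proves the deviation estimate as a separate lemma (\cref{lemB.9}) built on the Goldstein coupling $\pi_{\mathcal{I},\mathcal{J}}=\mathcal{P}_{\mathbf{i}',\mathbf{j}'}\mathcal{P}_{\mathbf{i},\mathbf{j}}\pi$ of \cref{eq-Pcalpi}, which realizes $\law(W^*\mid\pi(\mathcal{I})=\mathcal{J})$ as a random variable $W_{\mathcal{I},\mathcal{J}}^{(\mathcal{I})}$ that is pathwise within a bounded number of $|X_{\cdot,\cdot}|$'s of the unconditional $W$; the first-order comparison then reduces to terms of the form $\E\{X_{i,\sigma(i)}\Psi_{\beta,t}'(S)\}$, and \cref{lemB5} shows these are $O((n^{-1}\alpha_n^{-1}+\alpha_n^{-2})(1+t^2))$ by exploiting the row-centering $\sum_j a_{i,j}=0$ from \cref{Ex2} so that $\sum_j a_{i,j}\E[\Psi_{\beta,t}'\mid\sigma(i)=j]$ sees only the \emph{fluctuations} of the conditional expectation in $j$. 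That cancellation is the missing ingredient in your sketch.
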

Recalling that $D_{\mathbf{i},\mathbf{j}}$ is defined in \cref{4khatu}, we give the following
lemmas.
\begin{lemma}
    \label{lem:B_4}
	For $\mathbf{i}, \mathbf{j}\in [n]_2$,let 
    $ {g}_{\mathbf{i},\mathbf{j}}(u)=D_{\mathbf{i},\mathbf{j}} \bigl( \1 ( - D_{\mathbf{i},
   \mathbf{j}   }  \leq u \leq 0) - \1 ( 0 < u \leq -D_{\mathbf{i},\mathbf{j}} )	 \bigr)$ 
   and $\bar{g}_{\mathbf{i},\mathbf{j}}(u)={g}_{\mathbf{i},\mathbf{j}}(u)-\IE {g}_{\mathbf{i},\pi(\mathbf{i})}(u)$. We
   have 
   \begin{multline}
       \biggl\lvert \int_{|u|\leq 1}^{} |u|^{v} e^{2t \lvert u\rvert}\E \Biggl\{ \Bigl(\sum^{
       }_{\mathbf{i}\in [n]_{2}} \bar{g}_{\mathbf{i},\pi(\mathbf{i})}(u)\Bigr)^{2}  \Psi_{\beta,t}(W) \Biggr\}du \biggr\rvert \\
       \leq C b^{2} ( n^{2} \alpha_{n}^{-5-v} +
			n \alpha_{n}^{-3-v}) (1+t^{2}) h(t), \quad \text{for $v = 0, 1$}.       
       \label{eq:B.84}
   \end{multline}
\end{lemma}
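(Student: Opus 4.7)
The plan is to expand the square and partition the resulting double sum by the overlap size $|A(\mathbf{i})\cap A(\mathbf{i}')|\in\{0,1,2\}$. By construction $\E\bar g_{\mathbf{i},\pi(\mathbf{i})}(u)=0$ for each fixed $u$, which is precisely the mean-zero hypothesis required to invoke \cref{lemB_3}. The $O(n^{4})$ disjoint pairs will produce the $n^{2}\alpha_{n}^{-5-v}$ contribution in the target bound, while the $O(n^{3})$ overlapping pairs---including the diagonal $\mathbf{i}=\mathbf{i}'$ and its index-swap partner $\mathbf{i}'=(i_{2},i_{1})$, for which $D_{\mathbf{i}',\pi(\mathbf{i}')}=D_{\mathbf{i},\pi(\mathbf{i})}$---will produce the $n\alpha_{n}^{-3-v}$ contribution.

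For the disjoint case I would apply \cref{lemB_3} pointwise in $u$ with $\xi_{\mathbf{i},\mathbf{j}}=\bar g_{\mathbf{i},\mathbf{j}}(u)$, multiply by $|u|^{v}e^{2t|u|}$, and integrate over $|u|\le 1$. Because the right-hand side of \cref{lemB_3} factorizes in $(\mathbf{i},\mathbf{j})$ and $(\mathbf{i}',\mathbf{j}')$, everything reduces to evaluating the one-index integrals
\begin{align*}
\int_{|u|\le 1}|u|^{v}e^{2t|u|}\,\E|\bar g_{\mathbf{i},\mathbf{j}}(u)|\,du
\quad\text{and}\quad
\int_{|u|\le 1}|u|^{v}e^{2t|u|}\,\E\bigl\{|\bar g_{\mathbf{i},\mathbf{j}}(u)|\,T_{\mathbf{i},\mathbf{j}}^{2}e^{2tT_{\mathbf{i},\mathbf{j}}}\bigr\}\,du,
\end{align*}
which I would bound via the pointwise estimate $|g_{\mathbf{i},\mathbf{j}}(u)|\le T_{\mathbf{i},\mathbf{j}}\mathbf{1}(|u|\le T_{\mathbf{i},\mathbf{j}})$ together with the exponential moment bound $\E T_{\mathbf{i},\mathbf{j}}^{k}e^{3tT_{\mathbf{i},\mathbf{j}}}\le C_{k}b\alpha_{n}^{-k}$ (valid under \eqref{eq:11_24_6} and the constraint $t\le\alpha_{n}^{1/3}/64$). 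The three auxiliary corrections $\alpha_{n}^{-2}$, $n^{-1}$, and $\mathbf{1}(E_{\mathbf{j},\mathbf{j}'})$ on the right-hand side of \cref{lemB_3} are treated separately, with the indicator $\mathbf{1}(E_{\mathbf{j},\mathbf{j}'})$ saving a factor of $n$ by restricting a coordinate of $\mathbf{j}'$ to lie in $A(\mathbf{j})$.

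For the overlapping pairs I would bypass \cref{lemB_3} and apply \cref{lem-B1} directly: the bound $|\bar g_{\mathbf{i},\pi(\mathbf{i})}(u)\bar g_{\mathbf{i}',\pi(\mathbf{i}')}(u)|\le\tfrac12\bigl(\bar g_{\mathbf{i},\pi(\mathbf{i})}(u)^{2}+\bar g_{\mathbf{i}',\pi(\mathbf{i}')}(u)^{2}\bigr)$ reduces the task to controlling $\E\{\bar g_{\mathbf{i},\pi(\mathbf{i})}(u)^{2}\Psi_{\beta,t}(W)\}$, for which \cref{lem-B1} with $\zeta_{\mathbf{i},\mathbf{v}}=\bar g_{\mathbf{i},\mathbf{v}}(u)^{2}$ yields, after integration in $u$, a per-pair contribution of order $b(1+t^{2})h(t)\alpha_{n}^{-(v+3)}$. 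Summing over the $O(n^{3})$ overlapping pairs---and using that coincident indices eliminate one level of summation---produces the $n\alpha_{n}^{-3-v}$ piece. The principal technical obstacle is the bookkeeping in the disjoint case: one must invoke \cref{lemB_3} pointwise in $u$, push the $u$-integral past the sums and the expectations via Fubini, and exploit the independence of $\bar g_{\mathbf{i},\mathbf{j}}$ and $\bar g_{\mathbf{i}',\mathbf{j}'}$ for disjoint index sets to rewrite the product of one-index expectations as an expectation over two independent copies, to which a Cauchy--Schwarz in $u$ can then be applied; ensuring that the $\mathbf{1}(E_{\mathbf{j},\mathbf{j}'})$ correction is absorbed into the $n\alpha_{n}^{-3-v}$ budget rather than inflating it by an extra factor of $n$ is the most delicate step.
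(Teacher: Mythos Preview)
Your approach is essentially the same as the paper's: split the double sum into overlapping pairs ($\mathbf{i}'\in[n]_2\setminus[n]_2^{(\mathbf{i})}$) and disjoint pairs ($\mathbf{i}'\in[n]_2^{(\mathbf{i})}$), handle the former by Young's inequality plus \cref{lem-B1}, and the latter by applying \cref{lemB_3} pointwise in $u$ with $\xi_{\mathbf{i},\mathbf{j}}=\bar g_{\mathbf{i},\mathbf{j}}(u)$, then integrate. The only place you overcomplicate things is the ``principal technical obstacle'': the paper avoids the independent-copies/Cauchy--Schwarz-in-$u$ device entirely by using the trivial pointwise bound $\E|\bar g_{\mathbf{i}',\mathbf{j}'}(u)|\le 2\E|D_{\mathbf{i}',\mathbf{j}'}|$, which is constant in $u$ and therefore decouples the $u$-integral from the $(\mathbf{i}',\mathbf{j}')$ factor immediately (see \cref{eq:B_85}). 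With that simplification the bookkeeping for the $\mathbf{1}(E_{\mathbf{j},\mathbf{j}'})$ term is also straightforward, since the factor $\E|D_{\mathbf{i}',\mathbf{j}'}|\le Cb^{1/2}\alpha_n^{-1}$ can be pulled out and the remaining $u$-integral of the $(\mathbf{i},\mathbf{j})$ factor is $O(\alpha_n^{-2-v})$, giving $n^{3}\cdot n^{-4}\cdot\alpha_n^{-3-v}$ as required.
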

\begin{lemma}
    \label{lem:B_108}
	For $\mathbf{i}, \mathbf{j}\in [n]_2$, let 
     $ H_{\mathbf{i},\mathbf{j}}(u)=D^{2}_{\mathbf{i},\mathbf{j}} $ 
   and $\bar{H}_{\mathbf{i},\mathbf{j}}(u)={H}_{\mathbf{i},\mathbf{j}}(u)-\IE {H}_{\mathbf{i},\pi(\mathbf{i})}(u)$. We
   have 
\begin{equation}
       \begin{aligned}
           \MoveEqLeft  \biggl\lvert \E \biggl\{ \biggl(\sum^{ }_{\mathbf{i}\in [n]_{2}}
           \bar{H}_{\mathbf{i},\pi(\mathbf{i})}\biggr)^{2}  \Psi_{\beta,t}(W) \biggr\} \biggr\rvert
           \leq C b^{2} ( n^{4} \alpha_{n}^{-6} +
			n^{3} \alpha_{n}^{-4} ) (1+t^{2}) h(t).       
       \end{aligned}
       \label{eq:B.107}
   \end{equation}
\end{lemma}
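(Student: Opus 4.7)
The plan is to invoke \cref{lemB_3} with $\xi_{\mathbf{i},\mathbf{j}} := \bar H_{\mathbf{i},\mathbf{j}} = D_{\mathbf{i},\mathbf{j}}^{2} - \E D_{\mathbf{i},\pi(\mathbf{i})}^{2}$, which automatically satisfies $\E \xi_{\mathbf{i},\pi(\mathbf{i})} = 0$. Expanding the square and splitting on the overlap pattern of the indices, I write
\[
\E\Bigl\{\Bigl(\sum_{\mathbf{i}\in[n]_{2}}\bar H_{\mathbf{i},\pi(\mathbf{i})}\Bigr)^{2}\Psi_{\beta,t}(W)\Bigr\} = \Sigma_{1} + \Sigma_{2},
\]
where $\Sigma_{1}$ collects the contributions of pairs $(\mathbf{i},\mathbf{i}') \in [n]_{2}\times[n]_{2}$ with $A(\mathbf{i})\cap A(\mathbf{i}') = \emptyset$ and $\Sigma_{2}$ collects the remaining $O(n^{3})$ overlapping pairs.

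For $\Sigma_{1}$, \cref{lemB_3} applies directly. The two quantitative inputs needed are $\E|\xi_{\mathbf{i},\mathbf{j}}| \leq 2\E D_{\mathbf{i},\pi(\mathbf{i})}^{2} \leq C b\alpha_{n}^{-2}$ and $\E\{|\xi_{\mathbf{i},\mathbf{j}}|T_{\mathbf{i},\mathbf{j}}^{2}e^{2tT_{\mathbf{i},\mathbf{j}}}\} \leq C b^{2}\alpha_{n}^{-4}$; both follow from the pointwise domination $|\xi_{\mathbf{i},\mathbf{j}}| \leq T_{\mathbf{i},\mathbf{j}}^{2} + C b\alpha_{n}^{-2}$, the elementary inequality $y^{p} \leq C_{p}\alpha_{n}^{-p}e^{\alpha_{n}y/2}$, and the exponential moment assumption \cref{eq:11_24_6} (recalling $2t \leq \alpha_{n}^{1/3}/32 \ll \alpha_{n}/2$). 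Substituting into \cref{lemB_3}, the $n^{-4}\sum_{\mathbf{j},\mathbf{j}'\in [n]_{2}}$ averaging produces, per disjoint pair $(\mathbf{i},\mathbf{i}')$, a bound of order $Cb^{2}(1+t^{2})h(t)[\alpha_{n}^{-6} + n^{-1}\alpha_{n}^{-4}]$: the first term arises from the summands of the form $\E\{|\xi|T^{2}e^{2tT}\}\cdot\E|\xi|$, and the factor $n^{-1}\alpha_{n}^{-4}$ arises because only $\sum_{\mathbf{j},\mathbf{j}'}\1(E_{\mathbf{j},\mathbf{j}'}) = O(n^{3})$ of the $\sim n^{4}$ pairs trigger the indicator term. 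Multiplying by the at most $n^{4}$ disjoint pairs then yields $|\Sigma_{1}| \leq Cb^{2}(1+t^{2})h(t)(n^{4}\alpha_{n}^{-6} + n^{3}\alpha_{n}^{-4})$.

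For $\Sigma_{2}$, I handle each overlapping pair directly. Cauchy--Schwarz gives
\[
\bigl|\E\bigl\{\bar H_{\mathbf{i},\pi(\mathbf{i})}\bar H_{\mathbf{i}',\pi(\mathbf{i}')}\Psi_{\beta,t}(W)\bigr\}\bigr| \leq \bigl(\E\{\bar H_{\mathbf{i},\pi(\mathbf{i})}^{2}\Psi_{\beta,t}(W)\}\E\{\bar H_{\mathbf{i}',\pi(\mathbf{i}')}^{2}\Psi_{\beta,t}(W)\}\bigr)^{1/2}.
\]
Applying \cref{lem-B1} with $\zeta_{\mathbf{i},\mathbf{j}} = \bar H_{\mathbf{i},\mathbf{j}}^{2}$ and the same moment estimates above yields $\E\{\bar H_{\mathbf{i},\pi(\mathbf{i})}^{2}\Psi_{\beta,t}(W)\} \leq Cb^{2}h(t)\alpha_{n}^{-4}$, so each summand of $\Sigma_{2}$ is at most $Cb^{2}h(t)\alpha_{n}^{-4}$. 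Multiplying by the $O(n^{3})$ cardinality of $\Sigma_{2}$ gives $|\Sigma_{2}| \leq Cb^{2}h(t)n^{3}\alpha_{n}^{-4}$, which is absorbed into the target bound; combining the estimates on $\Sigma_{1}$ and $\Sigma_{2}$ then yields \cref{eq:B.107}.

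The main technical obstacle I foresee is reducing the various weighted moments of $T_{\mathbf{i},\mathbf{j}}$ and $D_{\mathbf{i},\mathbf{j}}^{2}$ to the single-entry exponential moment \cref{eq:11_24_6}: when indices overlap the $X_{i,j}$'s along shared rows or columns are no longer independent, so H\"older's inequality with carefully chosen exponents (exactly as in the proofs of \cref{lem-B1,lemB_3,lem:B_4}) will be needed to decouple the product expectations while preserving the final $Cb^{2}$ coefficient claimed in the lemma.
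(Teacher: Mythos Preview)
Your proposal is correct and follows essentially the same route as the paper, which states that the proof of \cref{lem:B_108} is ``similar to that of \cref{lem:B_4}'' and omits the details. Your split into disjoint pairs (handled via \cref{lemB_3}) and overlapping pairs (handled via Cauchy--Schwarz and \cref{lem-B1}) mirrors exactly the decomposition $H_{2}(u)+H_{1}(u)$ in the proof of \cref{lem:B_4}, and your moment estimates $\E|\xi_{\mathbf{i},\mathbf{j}}|\leq Cb\alpha_{n}^{-2}$ and $\E\{|\xi_{\mathbf{i},\mathbf{j}}|T_{\mathbf{i},\mathbf{j}}^{2}e^{2tT_{\mathbf{i},\mathbf{j}}}\}\leq Cb^{2}\alpha_{n}^{-4}$ are the right inputs. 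One remark: the ``technical obstacle'' you anticipate is not actually present, because in this model the entries $X_{i,j}$ are \emph{mutually independent} across all $(i,j)$; overlap of row or column indices in $\mathbf{i},\mathbf{i}'$ only means that $D_{\mathbf{i},\mathbf{j}}$ and $D_{\mathbf{i}',\mathbf{j}'}$ may share individual entries, which your Cauchy--Schwarz step already absorbs without any need for H\"older decoupling.
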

The proofs of \cref{lemB_3,lem:B_4} are given in \cref{sub:B3}. The proof of \cref{lem:B_108} is similar to that of \cref{lem:B_4} and thus we omit the details.

We are now ready to give the proof of \cref{lem:11_24_3}.
\def\EX{\E^{\mathbf{X}}}
\proof[Proof of \cref{lem:11_24_3}]
We prove \cref{a21el3a}--\cref{eq:ap2r_3} one by one. 

\medskip 
{\it\noindent (i). Proof of \cref{a21el3a}.}
Let $\bar X_{i,j} = X_{i,j} - a_{i,j}$. By H\"older's inequality, we have 
\begin{equ}
	\E \{ |R| \Psi_{\beta,t}(W) \}
	& \leq h^{1/2}(t) ( \E \{ R^2 \Psi_{\beta,t}(W) \} )^{1/2}. 
    \label{eq-l71-R}
\end{equ}
By \cref{Ex2,eq-s7-R}, we have 
\begin{equ}
	\E \{ R^2 \Psi_{\beta,t}(W) \} 
	 & = \frac{1}{n^2} \E \biggl\{ \biggl\lvert \sum_{i = 1}^n \sum_{j = 1}^n \bar X_{i,j} \biggr\rvert^2 \Psi_{\beta,t}(W) \biggr\} \\
	 & = \frac{1}{n^2} \sum_{i = 1}^n \sum_{j = 1}^n \sum_{i' = 1}^n \sum_{j' = 1}^n  \E \{ \bar X_{i,j} \bar X_{i',j'} \Psi_{\beta,t}(W) \}.
    \label{eq-l71-R0}
\end{equ}
Now, for fixed $i,j, i', j' \in [n]$, 
\begin{align*}
	\MoveEqLeft[0]
	\E \{ \bar X_{i,j} \bar X_{i',j'} \Psi_{\beta,t}(W) \}\\
	& = \E \{ \bar X_{i,j} \bar X_{i',j'} \Psi_{\beta,t}(W), \pi (i) = j, \pi(i') = j' \}  + \E \{ \bar X_{i,j} \bar X_{i',j'} \Psi_{\beta,t}(W) , \pi(i) = j, \pi(i') \neq j' \} \\
	& \quad + 
	\E \{ \bar X_{i,j} \bar X_{i',j'} \Psi_{\beta,t}(W), \pi (i) \neq j, \pi(i') = j' \} + \E \{ \bar X_{i,j} \bar X_{i',j'} \Psi_{\beta,t}(W) , \pi(i) \neq j, \pi(i') \neq j' \} . 
\end{align*}
For the first term,  with $W^{(i,i')} = W - \sum^{ }_{k\in \{i,i'\}}  X_{k,\pi(k)}$, let consider
the corresponding conditional expectation,  
\begin{align*}
	\begin{split}
		\MoveEqLeft|\E \{ \bar X_{i,j} \bar X_{i',j'} \Psi_{\beta,t}(W) \vert \pi (i) = j, \pi(i') = j' \}|\\
	& \leq \frac{1}{2}\E \{ (\bar X_{i,j}^2 + \bar X_{i',j'}^2) \Psi_{\beta,t}(W) \vert \pi(i) = j, \pi(i') = j' \}  \\
	& \leq \frac{1}{2} \E \{ (\bar X_{i,j}^2 + \bar X_{i',j'}^2) e^{t ( \lvert X_{i,j} \rvert + \lvert X_{i',j'} \rvert ) } \Psi_{\beta,t}(W^{(i,i')}) \vert \pi(i) = j, \pi(i') = j' \} \\
	& = \frac{1}{2} \E \{ (\bar X_{i,j}^2 + \bar X_{i',j'}^2) e^{t ( \lvert X_{i,j} \rvert + \lvert X_{i',j'} \rvert ) } \} \E \{ \Psi_{\beta,t}(W^{(i,i')}) \vert \pi(i) = j, \pi(i') = j' \},
	\end{split}
\end{align*}
where in the last line we used the fact that $(X_{i,j}, X_{i',j'})$ and $W^{(i,i')}$ are conditionally independent given $\pi(i) = j$ and $\pi(i') = j'$. Recalling that that $t \leq \alpha_n^{1/3}/4 \leq \alpha_n/4$ and by \cref{eq:11_24_6}, we have 
\begin{equ}
	\E \{ \bar X_{i,j}^2 e^{t \lvert X_{i,j} \rvert} \}
	& \leq C \alpha_n^{-2} \E \{ \lvert \alpha_n X_{i,j} \rvert^2 e^{ \alpha_n \lvert X_{i,j} \rvert/4 } \} \\
	& \leq C \alpha_n^{-2}\E \{ e^{\alpha_n \lvert X_{i,j} \rvert/2} \} \leq C b^{1/2} \alpha_n^{-2}.
    \label{eq-l71-Xbar}
\end{equ}
Choosing $\epsilon = \alpha_n^{-2/3}$ and according to \cref{eq:11_24_6} and \cref{eq-com-eps}, we have
\begin{align*}
	\E \{ e^{(1 + \epsilon)t ( \lvert X_{i,j} \rvert + \lvert X_{i',j'} \rvert ) / \epsilon} \} & \leq \E \{ e^{\alpha_n( \lvert X_{i,j} \rvert + \lvert X_{i',j'} \rvert )/2} \} \leq C b, 
\end{align*}
and 
\begin{align*}
	\MoveEqLeft\E \{ \Psi_{\beta,t}(W^{(i,i')}) \vert \pi(i) = j, \pi(i') = j' \}\\
	& \leq \E \{ e^{t \lvert W - W^{(i,j)} \rvert}\Psi_{\beta,t}(W) \vert \pi(i) = j, \pi(i') = j'\} \\
	& \leq ( \E \{ e^{(1 + \epsilon)t ( \lvert X_{i,j} \rvert + \lvert X_{i',j'} \rvert ) / \epsilon} \} )^{\frac{\epsilon}{1 + \epsilon}} ( \E \{ \Psi_{\beta,t}^{1 + \epsilon}(W) \vert \pi(i) = j, \pi(i') = j' \} )^{\frac{1}{1+\epsilon}}\\
    & \leq C b^{1/2}  \E \{ \Psi_{\beta,t}(W) \vert \pi(i) = j, \pi(i') = j' \} .
\end{align*}
Therefore, we have 
\begin{multline}
	\lvert \E \{ \bar X_{i,j} \bar X_{i',j'} \Psi_{\beta,t}(W) \vert \pi(i) = j, \pi(i') = j' \} \rvert \\
	\leq C b \alpha_n^{-2} \E \{ \Psi_{\beta,t}(W) \vert \pi(i) = j, \pi(i') = j' \}.
    \label{eq-l71-R1}
\end{multline}
Moreover, noting that $X_{i',j'}$ is independent of $(X_{i,j}, W)$ given $\pi(i) = j$ and $\pi(i')
\neq j'$, we have 
\begin{multline}
	\E \{ \bar X_{i,j} \bar X_{i',j'} \Psi_{\beta,t}(W) \vert \pi(i) = j, \pi(i') \neq j' \}\\
	= \E \{ \bar X_{i',j'} \} \E \{ \bar X_{i,j} \Psi_{\beta,t}(W) \vert \pi(i) = j, \pi(i') \neq j' \} = 0. 
    \label{eq-l71-R2}
\end{multline}
Similarly, 
\begin{equ}	
	\E \{ X_{i,j} X_{i',j'} \Psi_{\beta,t}(W) \vert \pi(i) \neq j, \pi(i') = j' \}  = 0 .
    \label{eq-l71-R3}
\end{equ}
Furthermore, if $\pi(i) \neq j$ and $\pi(i') \neq j'$, we have 
\begin{align*}
	\MoveEqLeft\E \{ \bar X_{i,j} \bar X_{i',j'} \Psi_{\beta,t}(W) \vert \pi(i) \neq j, \pi(i') \neq j' \} \\
	& = \E \{ \bar X_{i,j} \bar X_{i',j'} \} \E \{ \Psi_{\beta,t}(W) \vert \pi (i) \neq j, \pi(i') \neq j' \} \\
	& = 
	\begin{cases}
		\E \{ \bar X_{i,j}^2 \} \E \{ \Psi_{\beta,t}(W) \vert \pi(i) \neq j, \pi(i') \neq j' \} & \text{if $i = i'$ and $j = j'$}, \\
		0 & \text{otherwise}.
	\end{cases}
\end{align*}
By \cref{eq-l71-Xbar}, we obtain
\begin{multline}
	\label{eq-l71-R4}
	\lvert \E \{ \bar X_{i,j} \bar X_{i',j'} \Psi_{\beta,t}(W) \vert \pi(i) \neq j, \pi(i') \neq j' \} \rvert \\ 
	\leq C b \alpha_n^{-2} \E \{ \Psi_{\beta,t}(W) \vert \pi(i) \neq j, \pi(i') \neq j' \} \1 ( (i,j) = (i',j') ).
\end{multline}
Substituting \cref{eq-l71-R1,eq-l71-R2,eq-l71-R3,eq-l71-R4} to \cref{eq-l71-R0} and using \cref{eq-l71-R}
yields \cref{a21el3a}.

\medskip
{\it \noindent (ii). Proof of \cref{ap2el3b}.}
Recalling the definition of $\hat{K}_{1}$ in \cref{ek1} and $\hat{K}(u)$ in \cref{4khatu}, we have
\begin{equation}
    \begin{aligned}
        \hat{K}_{1}= \frac{1}{4n} \sum^{ }_{\mathbf{i}\in [n]_{2}}
        D_{\mathbf{i},\pi(\mathbf{i})}^{2}.
    \end{aligned}
    \label{eq:B_100}
\end{equation}
By \cref{eq:B_100,eq:2020_11_8}, one can verify (see, e.g.,  Eq. (3.10)  in  \cite{chen2015error}) that 
\begin{math}
    \vert \IE \hat{K}_{1}-1\vert\leq 2/\sqrt{n}.
\end{math}
Thus, 
\begin{equation}
    \begin{aligned}
        \IE \{ \lvert \hat{K}_{1}-1\rvert \Psi_{\beta,t}(W)\}\leq \IE \{ \lvert \hat{K}_{1}-\IE
        \hat{K}_{1}\rvert \Psi_{\beta,t}(W)\}+ \frac{2}{\sqrt{n}} \IE \{\Psi_{\beta,t}(W)\}.
    \end{aligned}
    \label{eq:B_102}
\end{equation}
For the first term of the R.H.S. of \cref{eq:B_102}, recalling that $h(t) = \E \Psi_{\beta,t}(W)$,
by H\"older's inequality, we have 
\begin{equation}
    \begin{aligned}	
    \MoveEqLeft \IE \{ \lvert \hat{K}_{1}-\IE \hat{K}_{1}\rvert \Psi_{\beta,t}(W)\}
	\\&	\leq \frac{h^{1/2}(t)}{4n} \biggl(\IE \biggl\{ \biggl( \sum^{ }_{\mathbf{i}\in [n]_{2}}
                  (D_{\mathbf{i},\pi(\mathbf{i})}^{2}-\IE
              D_{\mathbf{i},\pi(\mathbf{i})}^{2})\biggr)^{2}
   \Psi_{\beta,t}(W)\biggr\}\biggr)^{1/2}  .
    \end{aligned}
    \label{eq:B_103}
\end{equation}
  Applying \cref{lem:B_108} to the expectation in the RHS of \cref{eq:B_103},
 we obtain
\begin{equation}
    \begin{aligned}
        \IE \{ \lvert \hat{K}_{1}-\IE \hat{K}_{1}\rvert \Psi_{\beta,t}(W)\} \leq C b
        (n \alpha_{n}^{-3}+ n^{1/2} \alpha_{n}^{-2}) h(t).
    \end{aligned}
    \label{eq:B_111}
\end{equation}
Combining \cref{eq:B_111,eq:B_102} yields \cref{ap2el3b}.

\medskip
{\it \noindent (iii). Proof of \cref{ap2el3c}.}
Recalling $\hat K(u)$ as in \cref{4khatu}, we have 
\begin{equation}
    \begin{aligned}
		E \{ \hat K_{2,t} \Psi_{\beta,t}(W) \}
	& = \int_{-\infty}^{\infty} |u | e^{t |u|} \E \{ \hat K(u) \Psi_{\beta,t}(W) \} du \\
	& \leq  \frac{1}{4n} \sum_{\mathbf{i} \in [n]_2} 
    \E \{ |D_{\mathbf{i},\pi(\mathbf{i})}|^{3} e^{t \lvert D_{\mathbf{i},\pi(\mathbf{i})}\rvert} \Psi_{\beta,t}(W) \}  .
	\end{aligned}
    \label{eq:B_98}
\end{equation}
Then, applying \cref{lem-B1} with $k = 2$, $m = 0$, $\sigma = \pi$, $S = W$, and $\zeta_{\mathbf{i},\mathbf{j}}(u) =
|D_{\mathbf{i},\mathbf{j}}|^{3} e^{t \lvert D_{\mathbf{i},\mathbf{j}}\rvert}$, we have for
any $\mathbf{i}\in [n]_{2}$,
\begin{equation}
    \begin{aligned}
		\MoveEqLeft
        \IE \{|D_{\mathbf{i},\pi(\mathbf{i})}|^{3} e^{t \lvert
D_{\mathbf{i},\pi(\mathbf{i})}\rvert} \Psi_{\beta,t}(W)\}\\
		&\leq C b^{1/8} h(t)  \max_{ \mathbf{j} \in [n]_2} 
        \E \{ |D_{\mathbf{i},\mathbf{j}}|^{3} e^{t \lvert
D_{\mathbf{i},\mathbf{j}}\rvert + t T_{\mathbf{i},\mathbf{j}}   } \}\\
		&\leq Cb^{1/8} h(t) \alpha_{n}^{-3} \max_{j_1,j_2 \in [n]}\sum_{i \in \{ i_1,i_2 \}} \sum_{j \in \{j_1,j_2\}} \E \{ \lvert \alpha_n X_{i,j} \rvert^3 e^{ 6 t \lvert X_{i,j} \rvert } \}\\
		& \leq C b \alpha_{n}^{-3} h(t),
    \end{aligned}
    \label{eq:B_99}
\end{equation}
where the last inequality follows from \cref{eq:11_24_6}.
Therefore, by \cref{eq:B_98,eq:B_99},  we have 
\begin{align*}
	\E \{ \hat K_{2,t} \Psi_{\beta,t}(W) \} & \leq C b n \alpha_{n}^{-3}. 
\end{align*}

\medskip
{\it \noindent (iv). Proofs of \cref{eq-7.4,ap2eq:9_20_12}.}
Recall the definitions in 
\cref{4khatu,eq:sec2_2,eq:sec2_3}, 
and we have 
\begin{equation}
    \begin{aligned}
        \IE \{\hat{K}_{3,t} \Psi_{\beta,t}(W)\}
		& =   \int_{\left|u\right|\leq 1}^{} e^{2t\left|u\right|} \IE \{(\hat{K}(u)-K(u))^{2}
		\Psi_{\beta,t}(W)\} du. \
    \end{aligned}
    \label{k5hat}
\end{equation}
By 
\cref{lem:B_4} with $v=0$, we complete the proof of \cref{eq-7.4}. 
By \cref{lem:B_4} with $v=1$,  the inequality
\cref{ap2eq:9_20_12} follows similarly.

\medskip
{\it \noindent (v). Proofs of \cref{eq:ap2r_3}.} Recalling the definition of $\hat{K}(u)$ in
\cref{4khatu}, by Fubini's theorem we have 
\begin{equation}
    \begin{aligned}
        \sup_{0\leq t\leq \alpha_{n}^{1/3}/64} M_{t}\leq \frac{1}{n} \sum^{ }_{\mathbf{i}\in
        [n]_2}\IE \{e^{\alpha_{n} \lvert D_{\mathbf{i},\pi(\mathbf{i})}\rvert/64 }\lvert
    D_{\mathbf{i},\pi(\mathbf{i})}\rvert^{2}\} \leq C b n \alpha^{-2} 
    \end{aligned}
    \label{eq:B_136}
\end{equation}
where the last inequality follows from the similar argument in \cref{eq:B_99}.

\subsection{Some useful lemmas}%
\label{sub:B2}
In order to prove \cref{lemB_3,lem:B_4}, we need to show some preliminary lemmas. 
Recall that $\mathcal{S}_n$ is the collection of all permutations over $[n]$. 
\begin{lemma}
	\label{lemB5}
	For $n \geq 4$, $m = 0, 1, 2$, let $S$ and $\sigma$ be defined as in \cref{lem-B1}. For any $i,j \in [n-m]$, we have 
	\begin{align}
		|\E \{ X_{i, \sigma(i)} \Psi_{\beta,t}'(S) \}| & \leq C (n^{-1} \alpha_{n}^{-1} + \alpha_{n}^{-2}) b^{1/2} (1 + t^2) \E \Psi_{\beta,t}(S), \label{eq-lB5-a}\\
		|\E \{ X_{\sigma^{-1}(j), j} \Psi_{\beta,t}'(S) \}| & \leq C (n^{-1} \alpha_{n}^{-1} + \alpha_{n}^{-2}) b^{1/2} (1 + t^2) \E \Psi_{\beta,t}(S), \label{eq-lB5-b}\\ 
		|\E \{ X_{\sigma^{-1}(j), \sigma(i)} \Psi_{\beta,t}'(S) \}| & \leq C (n^{-1} \alpha_{n}^{-1} + \alpha_{n}^{-2}) b^{1/2} (1 + t^2) \E \Psi_{\beta,t}(S). \label{eq-lB5-c}
	\end{align}
\end{lemma}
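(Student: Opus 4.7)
My plan is to prove \cref{eq-lB5-a} first and then to obtain \cref{eq-lB5-b,eq-lB5-c} by symmetry and analogous arguments. The decomposition
\[
	X_{i,\sigma(i)} = \bar X_{i,\sigma(i)} + a_{i,\sigma(i)}, \qquad \bar X_{i,j} := X_{i,j} - a_{i,j},
\]
splits the target into a centered contribution, which will deliver the $\alpha_n^{-2}$ part of the bound via Taylor expansion, and a deterministic-mean contribution, which will deliver the $n^{-1}\alpha_n^{-1}$ part via the row-sum constraint in \cref{Ex2}. The hardest step will be the mean part, which I plan to handle by a random-swap argument coupled with \cref{lem-B1}.

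For the centered piece I will condition on $(\sigma, \{X_{k,l}\}_{k \neq i})$. Writing $S^{(i)} = S - X_{i,\sigma(i)}$, the residual $\bar X_{i,\sigma(i)}$ is mean zero and independent of $S^{(i)}$, so Taylor-expanding $\Psi'_{\beta,t}(S)$ around $\Psi'_{\beta,t}(a_{i,\sigma(i)} + S^{(i)})$ makes the linear term vanish in conditional expectation. The quadratic remainder is bounded using $|\Psi''_{\beta,t}| \leq t^{2}\Psi_{\beta,t}$ from \cref{ePsiww} and $\Psi_{\beta,t}(\xi) \leq e^{t(|a_{i,\sigma(i)}|+|\bar X_{i,\sigma(i)}|)}\Psi_{\beta,t}(S^{(i)})$; the exponential moment \cref{eq:11_24_6} together with $t \leq \alpha_n^{1/3}/64$ gives $\E\{\bar X_{i,j}^{2}e^{t|\bar X_{i,j}|}\} \leq C \alpha_n^{-2} b^{1/2}$ uniformly in $j$. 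Finally the residual factor $\Psi_{\beta,t}(S^{(i)})$ is brought back to $h(t) = \E\Psi_{\beta,t}(S)$ by the same H\"older trick used in the proof of \cref{lem-B1}, producing a bound of the form $C t^{2} \alpha_n^{-2} b^{1/2}h(t)$.

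For the mean piece, which I expect to be the main obstacle, I run a random-swap argument. Let $J$ be uniform on $[n-m]\setminus\{i\}$ and independent of everything else, and let $\tau = (i\;J)$; exchangeability of $\sigma$ gives $\sigma \circ \tau \stackrel{d}{=} \sigma$, hence
\[
	\E\{a_{i,\sigma(i)} \Psi'_{\beta,t}(S)\} = \frac{1}{n-m-1}\sum_{j\neq i} \E\{a_{i,\sigma(j)} \Psi'_{\beta,t}(S - D_j)\},
\]
where $D_j = X_{i,\sigma(i)} + X_{j,\sigma(j)} - X_{i,\sigma(j)} - X_{j,\sigma(i)}$ is exactly the $D_{\mathbf{i}',\sigma(\mathbf{i}')}$ of \cref{4khatu} with $\mathbf{i}' = (i,j)$. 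Using $\frac{1}{n-m-1}\sum_{j\neq i} a_{i,\sigma(j)} = (A_i - a_{i,\sigma(i)})/(n-m-1)$ with $A_i = \sum_{j \in [n-m]} a_{i,j}$ and noting that \cref{Ex2} forces $|A_i| \leq 2\max_{i,j}|a_{i,j}| \leq 2b/\alpha_n$ for $m \leq 2$ (and $A_i = 0$ when $m = 0$), a rearrangement yields
\[
	\E\{a_{i,\sigma(i)} \Psi'_{\beta,t}(S)\} = \frac{A_i}{n-m}\E\Psi'_{\beta,t}(S) + \frac{n-m-1}{n-m}\,\E\Big\{\frac{1}{n-m-1}\sum_{j\neq i} a_{i,\sigma(j)}\bigl[\Psi'_{\beta,t}(S-D_j) - \Psi'_{\beta,t}(S)\bigr]\Big\}.
\]
The first summand contributes $O(b (n\alpha_n)^{-1}\,t\,h(t))$ via $|\Psi'_{\beta,t}| \leq t\Psi_{\beta,t}$. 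For the second I use $|\Psi'_{\beta,t}(S-D_j) - \Psi'_{\beta,t}(S)| \leq t^{2}|D_j|e^{t|D_j|}\Psi_{\beta,t}(S)$ and $|a_{i,\sigma(j)}| \leq b/\alpha_n$, and then apply \cref{lem-B1} with $\zeta_{\mathbf{i}',\mathbf{j}'} = |D_{\mathbf{i}',\mathbf{j}'}|e^{t|D_{\mathbf{i}',\mathbf{j}'}|}$; the required $\max_{\mathbf{v} \in [n-m]_2} \E\{\zeta_{\mathbf{i}',\mathbf{v}} e^{t T_{\mathbf{i}',\mathbf{v}}}\} \leq C b \alpha_n^{-1}$ follows from the independence of the four involved entries and \cref{eq:11_24_6}. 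The overall swap contribution is therefore of order $C t^{2} \alpha_n^{-2}b^{1/2} h(t)$, which combines with the centered piece to establish \cref{eq-lB5-a}.

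Bound \cref{eq-lB5-b} follows by the identical argument applied to the transpose of $X$ and to $\sigma^{-1}$ (which is also uniform on $[n-m]$), the column-sum half of \cref{Ex2} now playing the role of the row-sum half. For \cref{eq-lB5-c} I again split $X_{\sigma^{-1}(j),\sigma(i)} = \bar X_{\sigma^{-1}(j),\sigma(i)} + a_{\sigma^{-1}(j),\sigma(i)}$. On the event $\{\sigma(i)\neq j\}$ the pair $(\sigma^{-1}(j),\sigma(i))$ lies off the diagonal $\{(r,\sigma(r))\}_{r}$, so $X_{\sigma^{-1}(j),\sigma(i)}$ is conditionally independent of $S$ given $\sigma$; the centered part then contributes zero in expectation, up to the $\1(\sigma(i)=j)$ piece of probability $1/(n-m)$, which is handled exactly as in step (a). The mean part $\E\{a_{\sigma^{-1}(j),\sigma(i)}\Psi'_{\beta,t}(S)\}$ is controlled by a two-sided swap (simultaneously on the $i$-th row and the $j$-th column) and an application of \cref{eq-lB1-c}; the same bookkeeping of the row- and column-zero-sum constraints then produces the claimed bound $C(n^{-1}\alpha_n^{-1} + \alpha_n^{-2}) b^{1/2}(1+t^{2})\E\Psi_{\beta,t}(S)$.
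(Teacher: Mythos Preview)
Your proposal is correct and follows essentially the same route as the paper: the paper conditions on $\{\sigma(i)=j\}$ and splits $\E\{X_{i,j}\Psi_{\beta,t}'(S)\mid\sigma(i)=j\}$ into a term evaluated at $S^{(i)}$ (where only the mean $a_{i,j}$ survives---your ``mean part'') and a Taylor remainder (your ``centered part''), then handles the mean part by constructing $\sigma_{i,j}=\sigma\circ\tau_{i,\sigma^{-1}(j)}$ so that $\law(\sigma_{i,j})=\law(\sigma\mid\sigma(i)=j)$, which is precisely your random-swap identity $\sigma\circ(i\ J)\stackrel{d}{=}\sigma$ written as an explicit coupling. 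The row-sum cancellation yielding the $n^{-1}\alpha_n^{-1}$ term and the $\Psi_{\beta,t}'$-difference bound via \cref{lem-B1} yielding the $\alpha_n^{-2}$ term appear identically in both arguments.
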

\begin{proof}
	[Proof of \cref{lemB5}]
	We only prove \cref{eq-lB5-a}, because \cref{eq-lB5-b,eq-lB5-c} can be shown similarly. 
	Note that 
	\begin{equ}
		\E \{ X_{i, \sigma(i)} \Psi_{\beta,t}'(S) \}
		& = \frac{1}{n - m} \sum_{j = 1}^{n - m} \E \{ X_{i,j} \Psi_{\beta,t}'(S) \vert \sigma (i) = j \}\\
		& = \frac{1}{n - m} \sum_{j = 1}^{n - m} \E \{ X_{i,j} \Psi_{\beta,t}'(S^{(i)}) \vert \sigma(i) = j\} \\
		& \quad + \frac{1}{n - m} \sum_{j = 1}^{n - m} \E \{ X_{i,j} ( \Psi_{\beta,t}'(S) - \Psi_{\beta,t}'(S^{(i)}) )  \vert \sigma(i) = j \}\\
		& := I_1 + I_2. 
        \label{eq-lb5-1}
	\end{equ}	
	Denote by $\tau_{i,j}$ the transposition of $i$ and $j$, and define  
	\begin{align*}
        \sigma_{i,j} 
		= 
		\begin{cases}
			\sigma & \text{if $\sigma(i) = j$},\\
			\sigma \circ \tau_{i,\sigma^{-1}(j)} & \text{if $\sigma(i) \neq j$}.
		\end{cases}
	\end{align*}
    Then $\sigma_{i,j}(i) = j$. For any given distinct $k_1, \dots, k_{n - m - 1} \in [n - m] \setminus \{ i \}$ and $l_1, \dots, l_{n - m - 1} \in[n - m]\setminus \{j\}$, denote by $A $ the event that $\{\sigma_{i,j}(k_u)= l_u, u = 1, \dots, n - m - 1  \}$. Then, 
    \begin{equation*}
        \begin{aligned}
           \IP(A)
			&= \IP(A, \sigma(i) = j) 
            + \sum_{u = 1}^{n - m - 1} \IP(A,
            \sigma(i)=l_u, \sigma(k_u)=j)
		  \\&= \frac{1}{(n-m)!}+ (n-m-1)\frac{1}{(n-m)!} = \frac{1}{(n - m - 1)!}.
        \end{aligned}
    \end{equation*}
	On the other hand, we have 
	\begin{align*}
	\IP(\sigma(k_u)=l_u, u = 1, \dots, n - m - 1 \vert \sigma(i)=j) = \frac{1}{(n - m - 1)!}.	
	\end{align*}
    This proves that $	\law(\sigma_{i,j}) = \law(\sigma \vert \sigma(i) = j)$. Moreover,
	with $S^{(i)} = S - X_{i,\sigma(i)}$, $S_{i,j} = \sum_{i' = 1}^n X_{i',\sigma_{i,j}(i')}$, and let $S_{i,j}^{(i)} = S_{i,j} -
	X_{i,j}$, it follows that 
	\begin{align*}
	 \law(S_{i,j}^{(i)}) = \law (S^{(i)} \vert \sigma (i) = j). 
	\end{align*}
	Noting that $X_{i,j}$ is independent of $\Psi_{\beta,t}(S^{(i)})$ conditional on $\sigma(i) = j$, and recalling that $\E X_{i,j} = a_{i,j}$, 
	we have 
	\begin{align*}
		\E \{ X_{i,j} \Psi_{\beta,t}'(S^{(i)}) \vert \sigma(i) = j\}
		& = a_{i,j} \E \{ \Psi_{\beta,t}'(S^{(i)}) \vert \sigma(i) = j \}
		= a_{i,j} \E \{ \Psi_{\beta,t}'(S_{i,j}^{(i)}) \}.
	\end{align*}
    Therefore, recalling that $\sum_{j \in [n]} a_{i,j} = 0$, by assumption \cref{Ex2}, we obtain
	\begin{equ}
		I_1 & =  \frac{ \E \Psi_{\beta,t}'(S) }{n - m} \sum_{j \in [n - m]} a_{i,j} + \frac{1}{n - m} \sum_{j \in [n - m]} a_{i,j} \bigl( \E \Psi_{\beta,t}'(S_{i,j}^{(i)}) - \E \Psi_{\beta,t}'(S) \bigr)\\
		& = I_{11} + I_{12}	, 
        \label{eq-lB5-3}
	\end{equ}
	where 
	\begin{align*}
		I_{11} & =  - \frac{ \E \Psi_{\beta,t}'(S) }{n - m} \sum_{j \in [n] \setminus [n - m]} a_{i,j}, \\
		I_{12} & = \frac{1}{n - m} \sum_{j \in [n - m]} a_{i,j} \bigl( \E \Psi_{\beta,t}'(S_{i,j}^{(i)}) - \E \Psi_{\beta,t}'(S) \bigr).
	\end{align*}
	For $I_{11}$, by \cref{eq:11_24_6} and Jensen's inequality, 
	\begin{equ}
		\max_{i,j} \lvert a_{i,j} \rvert 
		& \leq \max_{i , j} \E \lvert X_{i,j} \rvert \leq  \alpha_n^{-1} \max_{i,j} \E \{ \lvert \alpha_{n} X_{i,j} \rvert \}\\
		& \leq \alpha_{n}^{-1} \max_{i,j} \log \E e^{ \lvert \alpha_n X_{i,j} \rvert } \leq \alpha_{n}^{-1} \log b. 
        \label{eq-maxab}
	\end{equ}
	Thus, by \cref{ePsitt}, noting that $0 \leq m \leq 2$ and $n - m \geq n / 2$, we have 
	\begin{equ}
		| I_{11} | \leq \frac{t m \alpha_n^{-1} \log b}{n - m} \E \Psi_{\beta,t}(S) \leq C t n^{-1} \alpha_{n}^{-1} \log b \E \Psi_{\beta,t}(S).
        \label{eq-lB5-11}
	\end{equ}
	For $I_{12}$, note that 
	\begin{align*}
		S - S_{i,j}^{(i)} 
		& =  ( X_{i,\sigma(i)} + X_{\sigma^{-1}(j), j} - X_{\sigma^{-1}(j), \sigma(i)} ) \1 ( \sigma (i) \neq j ) + X_{i,\sigma(i)} \1(\sigma(i) = j)\\
		& \leq \lvert X_{i,\sigma(i)} \rvert + \lvert X_{\sigma^{-1}(j), j}  \rvert + \lvert X_{\sigma^{-1}(j), \sigma(i)} \rvert.
	\end{align*}
	Moreover,  
	\begin{align*}
		\MoveEqLeft[1]
		|\Psi_{\beta,t}'(S_{i,j}^{(i)}) - \Psi_{\beta,t}'(S) |\\
		& \leq t \lvert S - S_{i,j}^{(i)} \rvert e^{t \lvert S - S_{i,j}^{(i)} \rvert} \Psi_{\beta,t}(S)\\
		& \leq 3 t \Psi_{\beta,t}(S) \Bigl( \lvert X_{i,\sigma(i)} \rvert e^{3 t \lvert X_{i,\sigma(i)} \rvert}  + \lvert X_{\sigma^{-1}(j), j}  \rvert e^{3t \lvert X_{\sigma^{-1}(j), j} \rvert} + \lvert X_{\sigma^{-1}(j), \sigma(i)} \rvert e^{3 t \lvert X_{\sigma^{-1}(j), \sigma(i)} \rvert} \Bigr). 
	\end{align*}
	Applying \cref{lem-B1} with $k = 1$ and $\zeta_{i,j} = \lvert X_{i,j} \rvert e^{3t \lvert X_{i,j} \rvert}$, and noting that $t \leq \alpha_n / 64$, we have 
	\begin{equ}
		\E |\Psi_{\beta,t}'(S_{i,j}^{(i)}) - \Psi_{\beta,t}'(S) |
		& \leq 36 b^{1/8} t \E \Psi_{\beta,t}(S) \max_{i,j \in [n]} \E \{ \lvert X_{i,j} \rvert e^{4 t \lvert X_{i,j} \rvert} \} \\
		& \leq 36 \alpha_{n}^{-1} b^{1/8} t \E \Psi_{\beta,t}(S) \max_{i,j \in [n]} \E \{ \lvert \alpha_n X_{i,j} \rvert e^{\alpha_n \lvert X_{i,j} \rvert/16} \} \\
		& \leq C \alpha_{n}^{-1} b^{1/8}t  \E \Psi_{\beta,t}(S) \max_{i,j \in [n]} \E \{ e^{\alpha_n \lvert X_{i,j} \rvert/8} \} \\
		& \leq C \alpha_n^{-1} b^{1/4} t \E \Psi_{\beta,t}(S). 
        \label{eq-lB5-2}
	\end{equ}
	By \cref{eq-lB5-2,eq-maxab}, we obtain  
	\begin{equ}
		|I_{12}| \leq C \alpha_n^{-2} t(b^{1/4}  \log b)  \E \Psi_{\beta,t}(S).
        \label{eq-lB5-12}
	\end{equ}
	Combining \cref{eq-lB5-11,eq-lB5-12} yields 
	\begin{equ}
		\vert I_1\vert \leq C b^{1/2} (n^{-1} \alpha_{n}^{-1} + \alpha_{n}^{-2}) t \E \Psi_{\beta,t}(S) .
        \label{eq-lB5I1}
	\end{equ}
	For $I_2$, observing that 
	\begin{align*}
		\E \{ X_{i,j} ( \Psi_{\beta,t}'(S) - \Psi_{\beta,t}'(S^{(i)}) )  \vert \sigma(i) = j \}
		& \leq t^2 \E \{ \lvert X_{i,j} \rvert^2 e^{t \lvert X_{i,j} \rvert} \Psi_{\beta,t}(S) \}, 
	\end{align*}
	we have 
	\begin{align*}
		|I_2| & \leq t^2 \E \{ \lvert X_{i,\sigma(i)} \rvert^2 e^{t \lvert X_{i,\sigma(i)} \rvert} \Psi_{\beta,t}(S) \}.
	\end{align*}
	Applying \cref{lem-B1} with $k = 1$ and $\zeta_{i,j} = \lvert X_{i,j} \rvert^2 e^{t \lvert X_{i,j} \rvert}$, we have 
	\begin{equ}
	|I_2| & \leq C t^2 b^{1/8} h(t) \max_{i,j \in [n]} \E \{ \lvert X_{i,j} \rvert^2 e^{2t \lvert X_{i,j} \rvert} \} \\
			& \leq C t^2 b^{1/4} \alpha_{n}^{-2} \E \Psi_{\beta,t}(S).
        \label{eq-lB5I2}
	\end{equ}
	Combining \cref{eq-lB5I1,eq-lB5I2} yields \cref{eq-lB5-a}.
\end{proof}


Recall that $\tau_{i,j}$ is the transposition of $i$ and $j$. For $n \geq 4$, $m = 0, 1, 2$, and any permutation $\sigma \in \mathcal{S}_{n - m}$, define the transform 
	\begin{align}
		\label{eq-Pcalpi}
		\mathcal{P}_{\mathbf{i},\mathbf{j}}\sigma
		= 
		\begin{cases}
			\sigma & \text{if $\sigma(\mathbf{i}) = \sigma(\mathbf{j})$}, \\
			\sigma \circ \tau_{\sigma^{-1}(j_1), i_1} & \text{if $\sigma(i_1) \neq j_1$ and $\sigma(i_2) =
			\sigma(j_2)$}, \\
			\sigma \circ \tau_{\sigma^{-1}(j_2), i_2} & \text{if $\sigma(i_1) = j_1$ and $\sigma(i_2) \neq
			\sigma(j_2)$}, \\
				\sigma \circ \tau_{\sigma^{-1}(j_2), i_1} \circ  \tau_{\sigma^{-1}(j_1),i_2} \circ \tau_{i_1,i_2}  & \text{if $\sigma(i_1) \neq j_1$ and $\sigma(i_2) \neq \sigma(j_2)$}. 
		\end{cases}
	\end{align}
	The transformation \cref{eq-Pcalpi} was constructed by \cite{Goldstein2005}, and further applied
	by \cite{chen2015error} to prove a Berry--Esseen bound for combinatorial central limit
	theorems. In the following lemmas, we use this transformation to calculate the conditional
	expectations of
	functions of $W$ given $\pi(i_1) = j_1$ and $\pi(i_2) = j_2$.
\begin{lemma}
    \label{lem:B_6}
	Let  $S$ and $\sigma$ be defined as in \cref{lem-B1}.
    For any $\mathbf{i}=(i_{1},i_{2})\in [n-m]_{2}$, $\mathbf{j}=(j_{1},j_{2})\in
    [n-m]_{2}$ and $1\leq p,q\leq 2$, we have
    \begin{multline}
        \E \{ |X_{\sigma^{-1}(j_{q}),
            \sigma(i_{p})}| \Psi_{\beta,t}'(S) \1 (\sigma(i_1) = j_1 \text{ or } \sigma(i_2) = j_2)
		   \}
		   \leq C b t n^{-1} \alpha_{n}^{-1} \E \Psi_{\beta,t}(S).
        \label{eq:B_113}
    \end{multline}
\end{lemma}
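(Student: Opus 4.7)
Since $0\le\Psi_{\beta,t}'(w)\le t\,\Psi_{\beta,t}(w)$ by \cref{ePsitt}, the plan is first to replace $\Psi_{\beta,t}'(S)$ by $t\,\Psi_{\beta,t}(S)$, which immediately supplies the factor $t$ in the target bound. The indicator is then handled by the union bound $\1(\sigma(i_1)=j_1\text{ or }\sigma(i_2)=j_2)\le\1(\sigma(i_1)=j_1)+\1(\sigma(i_2)=j_2)$, reducing the problem to two symmetric terms. For the term carrying $\1(\sigma(i_1)=j_1)$, I would factor through conditional expectation by writing $\E\{\,\cdot\,\}=\frac{1}{n-m}\E\{\,\cdot\mid\sigma(i_1)=j_1\}$; combined with $n-m\ge n/2$, this supplies the factor $n^{-1}$.

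It remains to establish that for each $(p,q)\in\{1,2\}^{2}$,
\[
A(p,q):=\E\bigl\{|X_{\sigma^{-1}(j_q),\sigma(i_p)}|\,\Psi_{\beta,t}(S)\bigm|\sigma(i_1)=j_1\bigr\}\le Cb\,\alpha_n^{-1}\,\E\Psi_{\beta,t}(S).
\]
When $p=q=1$ the index collapses to the deterministic pair $(i_1,j_1)$, so writing $S=X_{i_1,j_1}+S^{(i_1)}$ with $S^{(i_1)}=\sum_{i\neq i_1}X_{i,\sigma(i)}$ and exploiting that $X_{i_1,j_1}$ is independent of $(\sigma,S^{(i_1)})$ gives
\[
A(1,1)\le \E\{|X_{i_1,j_1}|e^{t|X_{i_1,j_1}|}\}\cdot\E\{\Psi_{\beta,t}(S^{(i_1)})\mid\sigma(i_1)=j_1\}.
\]
The first factor is bounded by $C\alpha_n^{-1}b^{1/2}$ via the elementary estimate $ye^{ty}\le 4\alpha_n^{-1}e^{\alpha_n y/2}$ (valid since $t\le\alpha_n/4$) combined with \cref{eq:11_24_6}. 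The three remaining cases $(p,q)\in\{(1,2),(2,1),(2,2)\}$ are treated by additionally conditioning on $\sigma^{-1}(j_2)=k$ or $\sigma(i_2)=l$ and averaging over $k\in[n-m]\setminus\{i_1\}$ or $l\in[n-m]\setminus\{j_1\}$; on each refined event the relevant variable $X_{k,j_1}$, $X_{i_1,l}$, or $X_{k,l}$ has its row index mapped by $\sigma$ to a value distinct from its column index, so it is independent of $S$ and contributes a moment factor $\E|X|\le\alpha_n^{-1}\log b$.

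The main technical obstacle is the bound
\[
\E\bigl\{\Psi_{\beta,t}(S)\bigm|\sigma(i_1)=j_1,\ldots\bigr\}\le Cb\,\E\Psi_{\beta,t}(S),
\]
needed with one or two prescribed values of $\sigma$. I would establish it via the Goldstein-type transform $\mathcal{P}_{\mathbf{i},\mathbf{j}}$ of \cref{eq-Pcalpi}, whose pushforward of a uniform $\sigma$ has exactly the required conditional law; hence the conditional expectation equals $\E\Psi_{\beta,t}(S')$ with $S'=\sum_i X_{i,(\mathcal{P}_{\mathbf{i},\mathbf{j}}\sigma)(i)}$. Since $S'-S$ is a signed sum of at most four $X$-entries, a multi-factor H\"older inequality parallel to the one used in the proof of \cref{lem-B1}---placing exponent $1+\epsilon$ on $\Psi_{\beta,t}(S)$ with $\epsilon=\alpha_n^{-2/3}$, controlling each exponential factor $e^{(1+\epsilon)t|X|/\epsilon}$ by $b^{1/16}$ via \cref{eq:11_24_6}, and using $\Psi_{\beta,t}^{1+\epsilon}\le 4\Psi_{\beta,t}$---yields $\E\Psi_{\beta,t}(S')\le Cb\,\E\Psi_{\beta,t}(S)$. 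Collecting the factors $t$, $n^{-1}$, $\alpha_n^{-1}$ and $Cb$ across both indicators and all four cases of $(p,q)$ then produces the claimed bound.
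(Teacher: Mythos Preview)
Your proposal is correct but takes a genuinely different route from the paper. You extract the $n^{-1}$ factor upfront by writing $\E\{\cdots\1(\sigma(i_1)=j_1)\}=\frac{1}{n-m}\E\{\cdots\mid\sigma(i_1)=j_1\}$ and then perform a case analysis on $(p,q)$, exploiting that in most sub-cases the relevant $X$-entry does not appear in $S$ and is therefore independent of it. The paper instead conditions uniformly on \emph{both} $\sigma^{-1}(j_q)=u$ and $\sigma(i_p)=v$ via the event $\Gamma_{u,v}$, always peels off row $u$ so that $(X_{u,v},X_{u,j_q})$ is independent of $S^{(u)}$ regardless of $(p,q)$, and recovers the $n^{-1}$ factor only at the end by retaining the indicator and proving $\E\{\Psi_{\beta,t}(S)\1(\Gamma_1)\}\le Cb^{1/2}n^{-1}\E\Psi_{\beta,t}(S)$ through the Goldstein transform. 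Both routes use the same H\"older-with-$\epsilon=\alpha_n^{-2/3}$ device and the same coupling; the paper's is uniform in $(p,q)$ and avoids case splits, while yours is arguably more transparent in each individual case.

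One small gap to patch: in your case $(p,q)=(2,2)$ conditioned on $\sigma(i_1)=j_1$, when you further condition on $\sigma^{-1}(j_2)=k$, the event $k=i_2$ forces $\sigma(i_2)=j_2$, and then $X_{\sigma^{-1}(j_2),\sigma(i_2)}=X_{i_2,j_2}$ \emph{is} a summand of $S$, so your independence claim fails there. This sub-case has conditional probability $1/(n-m-1)$ and is handled exactly as you handle $(p,q)=(1,1)$ (peel off $X_{i_2,j_2}$ and use $\Psi_{\beta,t}(S)\le e^{t|X_{i_2,j_2}|}\Psi_{\beta,t}(S^{(i_2)})$), so the fix is immediate. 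Also note that in your $(1,1)$ case you end with $\E\{\Psi_{\beta,t}(S^{(i_1)})\mid\sigma(i_1)=j_1\}$ rather than $\E\{\Psi_{\beta,t}(S)\mid\cdots\}$; bridging this requires one more application of the H\"older/$\epsilon$ trick, which you do allude to at the end.
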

\begin{proof}[Proof of \cref{lem:B_6}]
Let $\Gamma_{1}=\{\sigma(i_1) = j_1 \text{ or } \sigma(i_2) = j_2\}$ and $\Gamma_{u,v}=\{\sigma(u)=j_{q},
\sigma(i_{p})= v\}$. 
By the law of total expectation, for any $1\leq p,q\leq 2$, we have
\begin{equation}
    \begin{aligned}
         \MoveEqLeft\E \{ |X_{\sigma^{-1}(j_{q}),
			 \sigma(i_{p})}| \Psi_{\beta,t}'(S)\1\{\Gamma_{1}\} 
           \}\\
&=
\sum^{}_{u,v\in
    [n - m]}  \E \bigl\{ |X_{u, v}| \Psi_{\beta,t}'(S) \1 (\Gamma_{1})\bigm\vert \Gamma_{u,v} \bigr\} \IP(\Gamma_{u,v}),\\
&\leq
\sum^{}_{u,v\in
    [n - m]}  t\E \bigl\{ |X_{u,
	v}|e^{t \lvert X_{u,j_{q}}\rvert} \Psi_{\beta,t}(S^{(u)}) \1 (\Gamma_{1})\bigm\vert \Gamma_{u,v} \bigr\} \IP(\Gamma_{u,v}),
    \end{aligned}
    \label{eq:B_120}
\end{equation}
where we used \cref{ePsitt} in the last line.
Since $(X_{u,v},X_{u,j_{q}})$ is independent of $(S^{(u)},\pi)$, we have
\begin{equation}
    \begin{aligned}
       \MoveEqLeft \E \bigl\{ |X_{u,
    v}|e^{t \lvert X_{u,j_{q}}\rvert} \Psi_{\beta,t}(S^{(u)}) \1 (\Gamma_{1})\bigm\vert \Gamma_{u,v}
    \bigr\}\\&\leq \IE \{ |X_{u,
    v}|e^{t \lvert X_{u,j_{q}}\rvert}\}\E \bigl\{  \Psi_{\beta,t}(S^{(u)}) \1 (\Gamma_{1})\bigm\vert \Gamma_{u,v}
    \bigr\}
             \\&\leq C b^{1/8} t \alpha_{n}^{-1} \E \bigl\{  \Psi_{\beta,t}(S^{(u)}) \1 (\Gamma_{1})\bigm\vert \Gamma_{u,v}
    \bigr\}. 
    \end{aligned}
    \label{eq:B_124}
\end{equation}
By H\"older's inequality, we have
\begin{equation}
    \begin{aligned}
		\MoveEqLeft[1]  \E \bigl\{  \Psi_{\beta,t}(S^{(u)}) \1 (\Gamma_{1})\bigm\vert \Gamma_{u,v}
    \bigr\}\\&\leq \E \bigl\{ e^{t \lvert X_{u,j_{q}}\rvert} \Psi_{\beta,t}(S) \1 (\Gamma_{1})\bigm\vert \Gamma_{u,v}
    \bigr\}
           \\&\leq (\IE \{e^{(1+\varepsilon) \lvert
			   X_{u,j_{q}}\rvert/\varepsilon}\1 (\Gamma_{1})\bigm\vert \Gamma_{u,v}\})^{\varepsilon/(1+\varepsilon)} \Bigl(\E \bigl\{
               \Psi_{\beta,t}^{1+\varepsilon}(S) \1 (\Gamma_{1})\bigm\vert \Gamma_{u,v}
\bigr\}\Bigr)^{1/(1+\varepsilon)}.
    \end{aligned}
    \label{eq:B_126}
\end{equation}
  By the property of conditional expectation and the fact that
$\mathbf{X}$ is independent
of $\sigma$, we have the right hand side of \cref{eq:B_126} is equal to
\begin{equation}
	\begin{aligned}
		\MoveEqLeft \Bigl(\IE \{e^{(1+\varepsilon) \lvert
					X_{u,j_{q}}\rvert}\}\Bigr)^{\varepsilon/(1+\varepsilon)}\IP
					(\Gamma_{1}\vert B_{2}
		)^{\varepsilon/(1+\varepsilon)} \Bigl(\E \bigl\{
			\Psi_{\beta,t}^{1+\varepsilon}(S) \bigm\vert \Gamma_{1} \cap \Gamma_{u,v}
	\bigr\}\Bigr)^{1/(1+\varepsilon)}\IP (\Gamma_{1}\vert \Gamma_{u,v}
		)^{1/(1+\varepsilon)}
		   \\&\leq C b^{1/64}  \E \bigl\{
			   \Psi_{\beta,t}^{1+\varepsilon}(S) \bigm\vert \Gamma_{1}  \cap \Gamma_{u,v}
	\bigr\}
			 \IP( \Gamma_{1}\vert \Gamma_{u,v}
),
	\end{aligned}
	\label{eq:101}
\end{equation}
where the inequality follows from \cref{eq-lB1-22,eq-lB1-13} and the fact that $\Psi_{\beta,t}\geq 1$.
By the property of conditional expectation,
\begin{equation}
	\begin{aligned}
	\MoveEqLeft	\E \bigl\{
		\Psi_{\beta,t}^{1+\varepsilon}(S) \bigm\vert \Gamma_{1} \cap  \Gamma_{u,v}
	\bigr\}\IP( \Gamma_{1}\vert
	\Gamma_{u,v})=\E \bigl\{
		\Psi_{\beta,t}(S) \1 (\Gamma_{1})\bigm\vert \Gamma_{u,v} 
\bigr\}.
	\end{aligned}
	\label{eq:100}
\end{equation}
Combining \cref{eq:B_124,eq:B_126,eq:100,eq:101,eq:B_120}, we have 
\begin{equation}
    \begin{aligned}
         \MoveEqLeft\E \{ |X_{\sigma^{-1}(j_{q}),
             \sigma(i_{p})}| \Psi_{\beta,t}'(S) \1 (\Gamma_{1})
             \}\leq  C b^{1/4} t \alpha_{n}^{-1}\E \{  \Psi_{\beta,t}(S) \1 (\Gamma_{1})
     \}.
 \end{aligned}
    \label{eq:B_125}
\end{equation}
For the expectation term on the right hand side of \cref{eq:B_125}, 
\begin{equation}
    \begin{aligned}
    \MoveEqLeft   \E \{  \Psi_{\beta,t}(S) \1 (\Gamma_{1})
     \} \\
     &= \sum^{ }_{v_{1},v_{2}\in [n-m]}  \E \{  \Psi_{\beta,t}(S) \1 (\Gamma_{1})
     \vert \sigma(i_1) = v_1, \sigma(i_2) = v_2\}\\
                  &\quad\quad\times \IP(\sigma(i_1) = v_1, \sigma(i_2) = v_2)\\
                  &= \sum^{ }_{v_{1},v_{2}\in [n-m]}\1 (v_{1} = j_1 \text{ or } v_{2} = j_2)  \E \{  \Psi_{\beta,t}(S) 
     \vert \sigma(i_1) = v_1, \sigma(i_2) = v_2\}\\
                  &\quad\quad\times \IP(\sigma(i_1) = v_1, \sigma(i_2) = v_2).
 \end{aligned}
 \label{eq:B_127}
\end{equation}
For $\mathbf{i}=(i_{1},i_{2})$, $\mathbf{v}=(v_{1},v_{2})$,
 let $\sigma_{\mathbf{i},\mathbf{v}}=\mathcal{P}_{\mathbf{i},\mathbf{v}} \sigma$ and
$S_{\mathbf{i},\mathbf{v}} = \sum^{n-m }_{r=1} X_{r,\sigma_{\mathbf{i},\mathbf{v}}(r)}$.
By (3.14) of \cite{chen2015error} (see also Lemma 4.5 of \cite{chen2010normal}), we have
\begin{equation}
    \begin{aligned}
        \E \{  \Psi_{\beta,t}(S) 
        \vert \sigma(i_1) = v_1, \sigma(i_2) = v_2\}=\IE \{\Psi_{\beta,t}(S_{\mathbf{i},\mathbf{v}})\}.
    \end{aligned}
    \label{eq:B_128}
\end{equation}
Moreover, by the construction of $S_{\mathbf{i},\mathbf{v}}$, it follows that
\begin{equation}
    \begin{aligned}
        \lvert S-S_{\mathbf{i},\mathbf{v}}\rvert
		& \leq |X_{i_{1},v_{1}}| + |X_{i_{2},v_{2}}|+ \sum^{}_{i\in \{i_{1},i_{2}\}} \sum^{}_{v\in \{v_{1},v_{2}\}}|X_{\sigma^{-1}(v),\sigma(i)}| \\
		& \quad + \sum^{ }_{i\in \{i_{1},i_{2}\}} \lvert
        X_{i,\sigma(i)}\rvert + \sum^{ }_{v\in \{v_{1},v_{2}\}} \lvert X_{\sigma^{-1}(v),v}\rvert.  
    \end{aligned}
    \label{eq:B_129}
\end{equation}
By \cref{eq:3_15_2} and H\"older's inequality we have 
\begin{equation}
    \begin{aligned}
        \IE \{\Psi_{\beta,t}(S_{\mathbf{i},\mathbf{v}})\}
		& \leq \IE \{e^{t \lvert
                S-S_{\mathbf{i},\mathbf{v}}\rvert} \Psi_{\beta,t}(S)\}\\
		& \leq (\IE \{e^{(1+\varepsilon)t \lvert
        S-S_{\mathbf{i},\mathbf{v}}\rvert/\varepsilon}\})^{\varepsilon/(1+\varepsilon)}\IE \{
    \Psi_{\beta,t}(S)^{1+\varepsilon}\}.
    \end{aligned}
    \label{eq:B_130}
\end{equation}
By the similar argument to \cref{eq-lB1-22,eq-lB1-13} again, we obtain
\begin{equation}
    \begin{aligned}
      \IE \{\Psi_{\beta,t}(S_{\mathbf{i},\mathbf{v}})\}\leq 
      C b^{1/2} \IE \{\Psi_{\beta,t}(S)\}.
  \end{aligned}
    \label{eq:B_131}
\end{equation}
Combining \cref{eq:B_127,eq:B_128,eq:B_130}, we obtain
\begin{equation}
    \begin{aligned}
         \MoveEqLeft\E \{  \Psi_{\beta,t}(S) \1 (\Gamma_{1})
             \}\\
             &\leq C b^{1/2} \E \{  \Psi_{\beta,t}(S) \}\sum^{ }_{v_{1},v_{2}\in [n-m]}\1 (v_{1} = j_1 \text{ or } v_{2} = j_2)   
        \IP(\sigma(i_1) = v_1, \sigma(i_2) = v_2)\\&\leq C b^{1/2} n^{-1} \IE \{\Psi_{\beta,t}(S)\}.
    \end{aligned}
    \label{eq:B_132}
\end{equation}
By \cref{eq:B_125,eq:B_132}, we complete the proof.
\end{proof}

	The following lemma, whose proof is based on \cref{lemB5,lem:B_6}, plays an important role in the proof of \cref{lemB_3}. 
	\begin{lemma}
		\label{lemB.9}
		Let $\pi$, $\mathbf{X}$ and $W$ be defined as in \cref{thm:11_24_1}, and recall that $\mathcal{P}_{\mathbf{i},\mathbf{j}}$ is defined as in \cref{eq-Pcalpi}. 
		For $\mathbf{i},\mathbf{j} \in [n]_2$, $\mathbf{i}' \in [n]_2^{(\mathbf{i})}$ and $\mathbf{j}' \in [n]_2^{(\mathbf{j})}$, let $\mathcal{I} = (\mathbf{i},\mathbf{i}')$, $\mathcal{J} = (\mathbf{j},\mathbf{j}')$, and 
		\begin{align*}
			\pi_{\mathbf{i},\mathbf{j}} = \mathcal{P}_{\mathbf{i},\mathbf{j}} \pi, \quad \pi_{\mathcal{I},\mathcal{J}} = \mathcal{P}_{\mathbf{i}',\mathbf{j}'} \pi_{\mathbf{i},\mathbf{j}}, \quad W_{(\mathcal{I},\mathcal{J})}
			^{(\mathcal{I})} = \sum_{i' \in [n] \setminus \{ i_1,i_2,i_1',i_2' \}} X_{i', \pi_{\mathcal{I},\mathcal{J}}(i')}.
		\end{align*}
		Then, 
		\begin{equ}
			|\E \{ \Psi_{\beta,t}(W_{\mathcal{I},\mathcal{J}}^{(\mathcal{I})}) \} - h(t)| 
		& \leq C b^2 (n^{-1}+\alpha_n^{-2}) (1 + t^2) h(t).
            \label{eq-B7-a}
		\end{equ}
	\end{lemma}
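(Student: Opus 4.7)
The plan is to Taylor-expand $\Psi_{\beta,t}$ at $W$ and exploit the fact that $W_{\mathcal{I},\mathcal{J}}^{(\mathcal{I})}$ differs from $W$ by only $O(1)$ many individual entries of $\mathbf{X}$. By the definition of the transform $\mathcal{P}$ in \cref{eq-Pcalpi}, the permutation $\pi_{\mathcal{I},\mathcal{J}}$ agrees with $\pi$ outside a set of at most eight indices (the four elements of $\mathcal{I}$ together with at most four ``source'' indices of the form $\pi^{-1}(j_k)$ or $\pi_{\mathbf{i},\mathbf{j}}^{-1}(j_k')$). Since $W_{\mathcal{I},\mathcal{J}}^{(\mathcal{I})}$ additionally drops the summands indexed by $\mathcal{I}$, the difference $D \coloneqq W - W_{\mathcal{I},\mathcal{J}}^{(\mathcal{I})}$ satisfies $|D|\leq \sum_{k=1}^{K}|Y_k|$ for some $K=O(1)$, where each $Y_k$ is an entry of one of the three types $X_{i,\pi(i)}$, $X_{\pi^{-1}(j),j}$, or $X_{\pi^{-1}(j),\pi(i)}$, with indices drawn from $\{i_1,i_2,i_1',i_2'\}$ and $\{j_1,j_2,j_1',j_2'\}$ (and analogues with $\pi$ replaced by $\pi_{\mathbf{i},\mathbf{j}}$).

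For the first-order analysis, Taylor's formula with integral remainder gives
\[
\E\{\Psi_{\beta,t}(W_{\mathcal{I},\mathcal{J}}^{(\mathcal{I})})\} - h(t) = -\E\{\Psi_{\beta,t}'(W)\,D\} + \int_0^1 \E\{\Psi_{\beta,t}''(W - sD)\,D^2\}(1-s)\,ds,
\]
so the first-order contribution is a sum of $O(1)$ terms of the form $\E\{Y_k\,\Psi_{\beta,t}'(W)\}$. Each such term is bounded by \cref{lemB5} (using \cref{eq-lB5-a}, \cref{eq-lB5-b}, or \cref{eq-lB5-c} according to the type of $Y_k$) by $C(n^{-1}\alpha_n^{-1}+\alpha_n^{-2})b^{1/2}(1+t^2)h(t)$; since $\alpha_n\geq 1$ this is dominated by $Cb^{1/2}(n^{-1}+\alpha_n^{-2})(1+t^2)h(t)$, which is within the target budget.

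For the second-order piece, the bounds $|\Psi_{\beta,t}''|\leq t^2 \Psi_{\beta,t}$ and $\Psi_{\beta,t}(W-sD)\leq e^{t|D|}\Psi_{\beta,t}(W)$ reduce the remainder to $t^2\,\E\{D^2 e^{t|D|}\Psi_{\beta,t}(W)\}$. Expanding the square yields $O(1)$ summands $\E\{|Y_jY_k|\,e^{t|D|}\Psi_{\beta,t}(W)\}$; H\"older's inequality with exponent $\epsilon=\alpha_n^{-2/3}$ (as in \cref{eq:9_12_6} in the proof of \cref{lem-B1}) decouples the factor $|Y_jY_k|e^{(1+\epsilon)t|D|/\epsilon}$, bounded by $Cb\alpha_n^{-2}$ via \cref{eq:11_24_6}, from the Stein weight $\E\{\Psi_{\beta,t}^{1+\epsilon}(W)\}^{1/(1+\epsilon)}$, bounded by $Ch(t)$. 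The second-order contribution is thus at most $Cbt^2\alpha_n^{-2}h(t)$, again within budget, and combining the two estimates yields \cref{eq-B7-a}.

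The main obstacle will be organizing the case analysis of $\pi_{\mathcal{I},\mathcal{J}}$ versus $\pi$ based on which of the coincidences $\{\pi(i_k)=j_k\}$ and $\{\pi_{\mathbf{i},\mathbf{j}}(i_k')=j_k'\}$ hold, and verifying that the mixed ``swap'' entries of the form $X_{\pi^{-1}(j),\pi(i)}$ are indeed captured by \cref{eq-lB5-c}. The nuisance sub-cases where the two swaps overlap (i.e.\ some $\pi^{-1}(j_k)$ coincides with an element of $\{i_1',i_2'\}$, or $\pi_{\mathbf{i},\mathbf{j}}^{-1}(j_k')$ coincides with an element of $\{i_1,i_2\}$) must be absorbed into the already-counted direct summands $X_{i,\pi(i)}$ rather than being double-counted, which requires a slightly careful enumeration but introduces no new terms.
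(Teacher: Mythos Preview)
Your overall strategy (Taylor-expand and use \cref{lemB5} for the linear term, the inequality $|\Psi_{\beta,t}''|\le t^2\Psi_{\beta,t}$ together with a H\"older/\cref{lem-B1}-type argument for the quadratic remainder) is the right one, but there is a genuine gap in the first-order analysis, and your route differs from the paper's in a way that matters.

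\textbf{The gap.} The ``swap'' entries do not appear cleanly in $D$: they come multiplied by indicator functions from the case analysis of $\mathcal{P}_{\mathbf{i},\mathbf{j}}$. For instance, in case~4 of \cref{eq-Pcalpi} the new values at the source indices $\pi^{-1}(j_1),\pi^{-1}(j_2)$ produce the \emph{cross} entries $X_{\pi^{-1}(j_1),\pi(i_2)}$ and $X_{\pi^{-1}(j_2),\pi(i_1)}$, whereas in cases~2--3 one gets the \emph{diagonal} entries $X_{\pi^{-1}(j_k),\pi(i_k)}$. Thus the first-order term contains contributions such as
\[
\E\bigl\{X_{\pi^{-1}(j_1),\pi(i_2)}\,\1\bigl(\pi(i_1)\neq j_1,\ \pi(i_2)\neq j_2\bigr)\,\Psi_{\beta,t}'(W)\bigr\},
\]
and \cref{eq-lB5-c} only controls the \emph{unrestricted} expectation $\E\{X_{\pi^{-1}(j),\pi(i)}\Psi_{\beta,t}'(W)\}$. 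Writing the indicator as $1$ minus its complement, the leftover piece is exactly of the form
\[
\E\bigl\{|X_{\pi^{-1}(j_q),\pi(i_p)}|\,\Psi_{\beta,t}'(W)\,\1(\pi(i_1)=j_1\ \text{or}\ \pi(i_2)=j_2)\bigr\},
\]
and this is what \cref{lem:B_6} is for; it gives the missing $O(n^{-1}\alpha_n^{-1})$ bound via the $O(1/n)$ probability of the coincidence. Your ``nuisance sub-cases'' paragraph addresses a different issue (overlap between the \emph{two} swaps, e.g.\ $\pi^{-1}(j_k)\in\{i_1',i_2'\}$), not the indicator structure inside a single $\mathcal{P}$.

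\textbf{How the paper's route differs.} Rather than expanding at $W$ in one shot, the paper introduces an auxiliary uniform permutation $\sigma$ on $[n]\setminus\{i_1,i_2\}\to[n]\setminus\{j_1,j_2\}$, sets $S^{(\mathbf{i})}=\sum_{i'\notin A(\mathbf{i})}X_{i',\sigma(i')}$, and uses $\law(W_{\mathcal{I},\mathcal{J}}^{(\mathcal{I})})=\law(S_{\mathbf{i}',\mathbf{j}'}^{(\mathbf{i},\mathbf{i}')})$ to perform a \emph{two-step} comparison $W_{\mathcal{I},\mathcal{J}}^{(\mathcal{I})}\to S^{(\mathbf{i})}\to W$. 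Each step involves only \emph{one} application of $\mathcal{P}$, so the case analysis is clean and \cref{lemB5,lem:B_6} apply directly with $\sigma$ (or $\pi$) as a genuine uniform permutation. Your one-step expansion instead faces the second transform $\mathcal{P}_{\mathbf{i}',\mathbf{j}'}$ acting on $\pi_{\mathbf{i},\mathbf{j}}$, whose source indices $\pi_{\mathbf{i},\mathbf{j}}^{-1}(j_k')$ coincide with $\pi^{-1}(j_k')$ only generically; the exceptional events must again be peeled off at cost $O(1/n)$, compounding the bookkeeping. Your approach can be made to work once \cref{lem:B_6} (and its analogues for the nested cases) is invoked, but the paper's two-step decoupling is what keeps the argument short.
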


	\begin{proof}
	Recall that $h(t) = \E \Psi_{\beta,t}(W)$. To bound the difference between $\E \Psi_{\beta,t}(W_{\mathcal{I},\mathcal{J}})$ and $\E \Psi_{\beta,t}(W)$, we consider the following three steps. In the first step, we construct an auxiliary random variable
	$S_{\mathbf{i}',\mathbf{j}'}^{(\mathbf{i}, \mathbf{i}')}$ that is
	close to $W$ and has the same
	distribution as $W_{\mathcal{I},\mathcal{J}}^{(\mathcal{I})}$. In the rest, 
	we apply Taylor's expansion to calculate the difference of the expectations. 

	\medskip 
	{\it Step 1. Constructing $S_{\mathbf{i},\mathbf{j}}^{(\mathbf{i},\mathbf{i}')}$}.
	Note that $\pi$ is a random permutation chosen uniformly from $\mathcal{S}_{n}$, 
	and it follows from Eq. (3.14) of \cite{chen2015error} (see also Lemma 4.5 of \cite{chen2010normal}) that 
	\begin{equation}
		\law ( \pi_{\mathbf{i},\mathbf{j}} ) = \law (\pi | \pi(\mathbf{i}) = \mathbf{j}).
		\label{eq-piij7}
	\end{equation}
	Write $W_{\mathbf{i},\mathbf{j}} = \sum_{i' \in [n]} X_{i', \pi_{\mathbf{i},\mathbf{j}}(i')}$ and $W_{\mathbf{i},\mathbf{j}}^{(\mathbf{i})} = \sum_{i' \not\in A(\mathbf{i})} X_{i',\pi_{\mathbf{i},\mathbf{j}}(i')}$, and it follows from \cref{eq-piij7} that $\law(W_{\mathbf{i},\mathbf{j}}) = \law (W \vert
	\pi(\mathbf{i}) = \mathbf{j})$ and $\law(W_{\mathbf{i},\mathbf{j}}^{(\mathbf{i})}) = \law (W^{(\mathbf{i})} \vert \pi(\mathbf{i}) = \mathbf{j})$. To calculate $\E \Psi_{\beta,t}(W_{\mathbf{i},\mathbf{j}})$, we introduce an auxiliary permutation $\sigma$ as follows.
	Let $\sigma$ be a uniform permutation from $[n] \setminus \{ i_1,i_2 \}$ to $[n] \setminus \{ j_1,j_2 \}$, independent of
		everything
		else, and let 
		\begin{equation}
			\label{eq-S7i}
			S^{(\mathbf{i})} = \sum_{i' \not\in A(\mathbf{i})} X_{i', \sigma(i')}. 
		\end{equation}
		It also follows from Lemma 4.5 of \cite{chen2010normal} that 
		\begin{equation}
			\label{eq-S7ii}
			\law(W_{\mathbf{i},\mathbf{j}}^{(\mathbf{i})}) = \law (S^{(\mathbf{i})}).
		\end{equation}
		Moreover, noting that $\{ i_1,i_2 \} \cap \{ i_1',i_2' \} = \emptyset$, using 
		\cref{eq-piij7} twice implies that
		\begin{equ}
			\law ( \pi_{\mathcal{I},\mathcal{J}} ) = \law (\pi | \pi(\mathcal{I}) = \mathcal{J}).
            \label{eq-piIJJ}
		\end{equ}
		Recalling \cref{eq-Pcalpi}, we define 
		\begin{align*}
			\sigma_{\mathbf{i}',\mathbf{j}'} = \mathcal{P}_{\mathbf{i}',\mathbf{j}'} \sigma, \quad S_{\mathbf{i}',\mathbf{j}'}^{(\mathbf{i})} = \sum_{i' \in [n] \setminus \{ \mathbf{i} \}} X_{i', \sigma_{\mathbf{i}',\mathbf{j}'}(i')}, \quad S_{\mathbf{i}',\mathbf{j}'}
			^{(\mathbf{i},\mathbf{i}')} = 
			\sum_{i' \in [n] \setminus ( \{ \mathbf{i} \} \cup \{ \mathbf{i}' \} )} X_{i', \sigma_{\mathbf{i}',\mathbf{j}'}(i')}.
		\end{align*}
		Then, it follows by definition that $\law(W_{\mathcal{I},\mathcal{J}}^{(\mathcal{I})}) = \law( S_{\mathbf{i}',\mathbf{j}'}^{(\mathbf{i},\mathbf{i}')} )$.

		\medskip 
		{\it Step 2. Bounding $ \lvert \E \Psi_{\beta,t}(S_{\mathbf{i}',\mathbf{j}'}^{(\mathbf{i},\mathbf{i}')}) - h(t) \rvert $.} We first bound $\lvert \E \Psi_{\beta,t}(S_{\mathbf{i}',\mathbf{j}'}^{(\mathbf{i},\mathbf{i}')}) - \E \Psi_{\beta,t}
		(S^{(\mathbf{i})}) \rvert$, and the bound of $ \lvert \E \Psi_{\beta,t}(S^{(\mathbf{i})}) - h(t)\rvert $ can be obtained similarly. 
By Taylor's expansion, 
\begin{equ}
	\Psi_{\beta,t}(w)
	& = \Psi_{\beta,t}(w_0) + (w - w_0) \Psi_{\beta,t}'(w_0) + \frac{1}{2} (w - w_0)^2 \E \{
	\Psi_{\beta,t}''(w_0 + U (w - w_0))(1-U)\},
    \label{eq-Tay7}
\end{equ}
where $U$ is a uniform random variable over the interval $[0,1]$ independent of all others. 
	Applying Taylor's expansion \cref{eq-Tay7} with $w_0 = S^{(\mathbf{i})}$ and $w = S_{\mathbf{i}',\mathbf{j}'}^{(\mathbf{i},\mathbf{i}')}$, we have  
	\begin{equ}
		\E \{ \Psi_{\beta,t}(S_{\mathbf{i}',\mathbf{j}'}^{(\mathbf{i},\mathbf{i}')}) \}
		& = \E \{ \Psi_{\beta,t}(S^{(\mathbf{i})}) \} + \E \bigl\{ \bigl(S_{\mathbf{i}',\mathbf{j}'}^{(\mathbf{i},\mathbf{i}')} - S^{(\mathbf{i})}\bigr) \Psi_{\beta,t}'(S^{(\mathbf{i})}) \bigr\} \\
		& \quad + \E \bigl\{ \bigl(S_{\mathbf{i}',\mathbf{j}'}^{(\mathbf{i},\mathbf{i}')} -
		S^{(\mathbf{i})}\bigr)^2 \Psi_{\beta,t}''\bigl( S^{(\mathbf{i})} + U
(S_{\mathbf{i}',\mathbf{j}'}^{(\mathbf{i},\mathbf{i}')} - S^{(\mathbf{i})})\bigr)(1-U)  \bigr\}.
        \label{eq-B_91}
	\end{equ}
Denote by $B_{\mathbf{i},\mathbf{j}}$ the event that $\{ \sigma(i_1) = j_1
	    \text{ or } \sigma(i_2') = j_2'\}$ and denote by $B_{\mathbf{i},\mathbf{j}}^c$ the complement of $B_{\mathbf{i},\mathbf{j}}$.
	For the second term of \cref{eq-B_91}, by the construction of $\mathcal{P}_{\mathbf{i}',\mathbf{j}'}$, we have 
	\begin{align*}
		\begin{split}
			\MoveEqLeft
		|\E \{ (S_{\mathbf{i}',\mathbf{j}'}^{(\mathbf{i},\mathbf{i}')} - S^{(\mathbf{i})}) \Psi_{\beta,t}'(S^{(\mathbf{i})}) \}|\\
		& \leq |\E \{ X_{\sigma^{-1}(j_1'),\sigma(i_1')} \Psi_{\beta,t}'(S^{(\mathbf{i})}) \1
        (B_{\mathbf{i}',\mathbf{j}'})\}|\\
        & \quad +  |\E \{ X_{\sigma^{-1}(j_2'),\sigma(i_2')} \Psi_{\beta,t}'(S^{(\mathbf{i})})
            \1
        (B_{\mathbf{i}',\mathbf{j}'})\}|\\
		& \quad +  |\E \{ X_{\sigma^{-1}(j_1'),\sigma(i_2')} \Psi_{\beta,t}'(S^{(\mathbf{i})}) \1
        (B_{\mathbf{i}',\mathbf{j}'}^{c}) \}|\\
		& \quad +  |\E \{ X_{\sigma^{-1}(j_2'),\sigma(i_1')} \Psi_{\beta,t}'(S^{(\mathbf{i})}) \1
        (B_{\mathbf{i}',\mathbf{j}'}^{c}) \}|\\
		& \quad + \sum_{i' \in \{ i_1', i_2' \}}\lvert \E \{ X_{i', \sigma(i')} \Psi_{\beta,t}'(S^{(\mathbf{i})}) \} \rvert + \sum_{j' \in \{ j_1', j_2' \}}\lvert \E \{ X_{\sigma^{-1}(j'), j'} \Psi_{\beta,t}'(S^{(\mathbf{i})}) \} \rvert\\
		& \leq \sum_{i' \in \{ i_1',i_2' \}} \sum_{j' \in \{ j_1',j_2' \}} \E \{ |X_{\sigma^{-1}(j'), \sigma(i')}| \Psi_{\beta,t}'(S^{(\mathbf{i})}) \1 (B_{\mathbf{i}',\mathbf{j}'})  \}\\
        & \quad + |\E \{ X_{\sigma^{-1}(j_1'),\sigma(i_2')} \Psi_{\beta,t}'(S^{(\mathbf{i})})\}| +
        |\E \{ X_{\sigma^{-1}(j_2'),\sigma(i_1')} \Psi_{\beta,t}'(S^{(\mathbf{i})})\}|\\
		& \quad + \sum_{i' \in \{ i_1', i_2' \}}\lvert \E \{ X_{i', \sigma(i')} \Psi_{\beta,t}'(S^{(\mathbf{i})}) \} \rvert + \sum_{j' \in \{ j_1', j_2' \}}\lvert \E \{ X_{\sigma^{-1}(j'), j'} \Psi_{\beta,t}'(S^{(\mathbf{i})}) \} \rvert.
		\end{split}
	\end{align*}
	Applying \cref{lemB5,lem:B_6} with $S = S^{(\mathbf{i})}$ under a relabeling of indices, we obtain 
	\begin{equ}
		|\E \{ (S_{\mathbf{i}',\mathbf{j}'}^{(\mathbf{i},\mathbf{i}')} - S^{(\mathbf{i})}) \Psi_{\beta,t}'(S^{(\mathbf{i})}) \}|
		& \leq C (n^{-1} \alpha_{n}^{-1} + \alpha_{n}^{-2}) b (1 + t^2) \E \Psi_{\beta,t}(S^{(\mathbf{i})}). 
        \label{eq-B91'}
	\end{equ}

	For the third term of \cref{eq-B_91}, we have 
	\begin{align}
		\label{eq-B93}
		\begin{split}
			\MoveEqLeft
			\bigl\lvert \E \bigl\{ (S_{\mathbf{i}',\mathbf{j}'}^{(\mathbf{i},\mathbf{i}')} -
			S^{(\mathbf{i})})^2 \Psi_{\beta,t}''\bigl( S^{(\mathbf{i})} + U
        (S_{\mathbf{i}',\mathbf{j}'}^{(\mathbf{i},\mathbf{i}')} -
S^{(\mathbf{i})})\bigr)(1-U)  \bigr\} \bigr\rvert\\
			& \leq t^2 \E \bigl\{ (S_{\mathbf{i}',\mathbf{j}'}^{(\mathbf{i},\mathbf{i}')} - S^{(\mathbf{i})})^2 e^{t \lvert (S_{\mathbf{i}',\mathbf{j}'}^{(\mathbf{i},\mathbf{i}')} - S^{(\mathbf{i})}) \rvert} \Psi_{\beta,t}( S^{(\mathbf{i})} )  \bigr\} \\
			& \leq 24 t^2 \biggl\{\sum_{k \in \{ i_1,i_2,i_1',i_2' \}} \sum_{l \in \{ j_1,j_2,j_1',j_2' \}} \E \bigl\{ |X_{\sigma^{-1}(l), \sigma(k)}|^2 e^{24 t \lvert X_{\sigma^{-1}(l), \sigma(k)} \rvert} \Psi_{\beta,t}(S^{(\mathbf{i})}) \bigr\}  \\
			& \hspace{2cm}+ \sum_{i \in \{ i_1,i_2,i_1',i_2' \}} \E \bigl\{ X_{i, \sigma(i)}^2 e^{24 t \lvert X_{i,\sigma(i)} \rvert} \Psi_{\beta,t}(S^{(\mathbf{i})}) \bigr\}\\
			&\hspace{2cm}  + \sum_{j \in \{ j_1,j_2,j_1',j_2' \}}  \E \bigl\{ X_{\sigma^{-1}(j), j}^2 e^{24 t \lvert X_{i,\sigma(i)} \rvert} \Psi_{\beta,t}(S^{(\mathbf{i})}) \bigr\}\biggr\}.
		\end{split}
	\end{align}
	Applying \cref{lem-B1} with $S = S^{(\mathbf{i})}$ and $\zeta_{i,j} = \lvert X_{i,j} \rvert^2 e^{24 t \lvert X_{i,j} \rvert}$ under a relabeling of indices, and recalling that $t \leq \alpha_n/64$, we obtain 
	\begin{align*}
		\begin{split}
			\MoveEqLeft
		\E \bigl\{ |X_{\sigma^{-1}(l), \sigma(k)}|^2 e^{24 t \lvert X_{\sigma^{-1}(l), \sigma(k)} \rvert} \Psi_{\beta,t}(S^{(\mathbf{i})}) \bigr\} \\
		& \leq 4 b^{1/8} \E \Psi_{\beta,t}(S^{(\mathbf{i})})\max_{u,v \in [n]} \E \bigl\{ |X_{u,v}|^2 e^{24 t \lvert X_{u, v} \rvert + t ( \lvert X_{k, v} \rvert
		+ \lvert X_{u,l} \rvert) } \bigr\} \\
		& \leq C b^{1/8} \alpha_{n}^{-2} \E \Psi_{\beta,t}(S^{(\mathbf{i})}) \max_{u,v \in [n]} \E \bigl\{ ( 1 + \lvert \alpha_{n} X_{u,v} \rvert^2  ) e^{13\alpha_n \lvert X_{u,v} \rvert/32} \bigr\}\\
		& \leq C b^{1/8} \alpha_{n}^{-2} \E \Psi_{\beta,t}(S^{(\mathbf{i})}) \max_{u,v \in [n]} \E \bigl\{  e^{\alpha_n \lvert X_{u,v} \rvert/2} \bigr\}\\
		& \leq C b \alpha_n^{-2} \E \Psi_{\beta,t}(S^{(\mathbf{i})}), 
		\end{split}
	\end{align*}
	where we used \cref{eq:11_24_6} in the last line. 
	Similarly, we obtain 
	\begin{align*}
		\E \bigl\{ X_{i, \pi(i)}^2 e^{24 t \lvert X_{i,\pi(i)} \rvert} \Psi_{\beta,t}(S^{(\mathbf{i})}) \bigr\} & \leq C b \alpha_n^{-2} \E \Psi_{\beta,t}(S^{(\mathbf{i})}), \\
		\E \bigl\{ X_{\pi^{-1}(j), j}^2 e^{24 t \lvert X_{\pi^{-1}(j), j} \rvert} \Psi_{\beta,t}(S^{(\mathbf{i})}) \bigr\} & \leq C b \alpha_n^{-2} \E \Psi_{\beta,t}(S^{(\mathbf{i})}). 
	\end{align*}
	Hence, we have 
	\begin{align*}
		\text{R.H.S. of \cref{eq-B93}}
		& \leq C b \alpha_n^{-2}(1 + t^2)\E \Psi_{\beta,t}(S^{(\mathbf{i})}).
	\end{align*}
    Together with \cref{eq-B_91,eq-B91'}, we have
	\begin{equ}
		|\E \{ \Psi_{\beta,t}(W_{\mathcal{I},\mathcal{J}}^{(\mathcal{I})}) \} 
        - \E \Psi_{\beta,t}(S^{(\mathbf{i})}) | \leq C b (n^{-1}+\alpha_n^{-2}) (1 + t^2) \E \Psi_{\beta,t}(S^{(\mathbf{i})}). 
        \label{eq-B81}
	\end{equ}
	A similar argument yields 
	\begin{equ}
		|\E \Psi_{\beta,t}(S^{(\mathbf{i})}) - h(t) | \leq C b (n^{-1}+\alpha_n^{-2}) (1 + t^2) h(t).
        \label{eq-B82}
	\end{equ}
  Then applying \cref{lem-B1} to the last expectation of \cref{eq-B81} and 
   similar to \cref{eq-B93}, we
   have 
   \begin{equation}
       \begin{aligned}
       \E \Psi_{\beta,t}(S^{(\mathbf{i})})=\E \Psi_{\beta,t}(W_{\mathbf{i},\mathbf{j}}^{(\mathbf{i})}) \leq \IE \{
              \Psi_{\beta,t}(W)e^{t|W_{\mathbf{i},\mathbf{j}}^{(\mathbf{i})}-W|} \}  \leq C
			 b h(t).
       \end{aligned}
       \label{eq:B_172}
   \end{equation}
	Combining \cref{eq-B81,eq-B82,eq:B_172} and recalling that $t \leq \alpha_n / 64$, we obtain
	\begin{align*}
		|\E \{ \Psi_{\beta,t}(W_{\mathcal{I},\mathcal{J}}^{(\mathcal{I})}) \} - h(t)| 
		& \leq C b^2 (n^{-1}+\alpha_n^{-2}) (1 + t^2) h(t).
	\end{align*}
	This completes the proof.
	\end{proof}

    \subsection{Proofs of \autoref{lemB_3} and \ref{lem:B_4}}%
\label{sub:B3}

First, we apply \cref{lem-B1,lemB.9} to prove \cref{lemB_3}.
\begin{proof}[Proof of \cref{lemB_3}]
Applying the Taylor expansion \cref{eq-Tay7} to $\Psi_{\beta,t}(W)$ yields
\begin{equation}
   \begin{aligned}
	   \MoveEqLeft
		\E \{ \xi_{\mathbf{i},\pi(\mathbf{i})} \xi_{\mathbf{i}', \pi(\mathbf{i}')} \Psi_{\beta,t}(W) \}\\
		& = \frac{1}{(n)_4} \sum_{\mathbf{j} \in [n]_2} \sum_{\mathbf{j}' \in [n]_2^{(\mathbf{j})}} \E \{ \xi_{\mathbf{i},\pi(\mathbf{i})} \xi_{\mathbf{i}', \pi(\mathbf{i}')} \Psi_{\beta,t}(W) \vert \pi(\mathcal{I}) = \mathcal{J}\}\\
		& = Q_1 + Q_2 + Q_3, 
	\end{aligned}
    \label{eq:B_83}
\end{equation}
where
	\begin{align*}
		Q_1 & = \frac{1}{(n)_4} \sum_{\mathbf{j} \in [n]_2} \sum_{\mathbf{j}' \in
		[n]_2^{(\mathbf{j})}} \E \{ \xi_{\mathbf{i},\mathbf{j}} \xi_{\mathbf{i}', \mathbf{j}'}
	\Psi_{\beta,t}(W^{(\mathcal{I})}) \vert \pi(\mathcal{I}) = \mathcal{J}\},\\
		Q_2 & = \frac{1}{(n)_4} \sum_{\mathbf{j} \in [n]_2} \sum_{\mathbf{j}' \in
		[n]_2^{(\mathbf{j})}} \E \{ \xi_{\mathbf{i},\mathbf{j}} \xi_{\mathbf{i}', \mathbf{j}'}
	V_{\mathcal{I},\mathcal{J}}\Psi_{\beta,t}'(W^{(\mathcal{I})}) \vert \pi(\mathcal{I}) =
\mathcal{J}	\},\\
		Q_3 & = \frac{1}{(n)_4} \sum_{\mathbf{j} \in [n]_2} \sum_{\mathbf{j}' \in
		    [n]_2^{(\mathbf{j})}} \E \{ \xi_{\mathbf{i},\mathbf{j}} \xi_{\mathbf{i}', \mathbf{j}'}
		    V_{\mathcal{I},\mathcal{J}}^2\Psi_{\beta,t}''(W^{(\mathcal{I})} + U(W -
            W^{(\mathcal{I})}))(1-U) \vert
		\pi(\mathcal{I}) = \mathcal{J}\},
	\end{align*}
	and where $\mathcal{I} = (i_1,i_2,i_1',i_2')$, $\mathcal{J} = (j_1,j_2,j_1',j_2')$,  $W^{(\mathcal{I})} = \sum_{i \in [n] \setminus A(\mathcal{I})} X_{i \pi(i)}$, $V_{\mathcal{I}, \mathcal{J}} = X_{i_1,j_1} + X_{i_2,j_2} + X_{i_1',j_1'} +
	X_{i_2',j_2'}$ and $U$ is a uniform random variable on $[0,1]$ independent of $\mathbf{X}$ and $\pi$.

	For $Q_1$, as $(\xi_{\mathbf{i},\mathbf{j}}, \xi_{\mathbf{i}',\mathbf{j}'})$ is conditionally independent of $W^{(\mathcal{I})}$ given $\pi(\mathcal{I}) = \mathcal{J}$, and as $(\xi_{\mathbf{i},\mathbf{j}}, \xi_{\mathbf{i}',\mathbf{j}'})$ is
	also independent of $\pi$, we have 
	\begin{align}
		\begin{split}
			\E \{ \xi_{\mathbf{i},\mathbf{j}} \xi_{\mathbf{i}', \mathbf{j}'} \Psi_{\beta,t}(W^{(\mathcal{I})}) \vert \pi(\mathcal{I}) = \mathcal{J}\}
			& = 
            \E \{ \xi_{\mathbf{i},\mathbf{j}} \xi_{\mathbf{i}', \mathbf{j}'}  \} \E \{ \Psi_{\beta,t}(W^{(\mathcal{I})}) \vert \pi(\mathcal{I}) = \mathcal{J}\}.
		\end{split}
        \label{eq:B_20}
	\end{align} 
	Let $W_{\mathcal{I},\mathcal{J}}$ be defined as in \cref{lemB.9}. By \cref{eq-piIJJ}, we have 
	\begin{align*}
		\E \{ \Psi_{\beta,t}(W^{(\mathcal{I})}) \vert \pi(\mathcal{I}) = \mathcal{J}\}
		& =  \E \{ \Psi_{\beta,t}(W_{\mathcal{I},\mathcal{J}}^{(\mathcal{I})}) \}\\
		& = h(t) + \lvert \E\{ \Psi_{\beta,t}(W_{\mathcal{I},\mathcal{J}}^{(\mathcal{I})}) \} - h(t)  \rvert.
	\end{align*}
	Taking average over $\mathbf{j} \in [n]_2, \mathbf{j}' \in [n]_2^{(\mathbf{j})}$ on both sides of \cref{eq:B_20} and applying \cref{lemB.9} gives 
	\begin{equ}
		|Q_1|
		& \leq \frac{h(t)}{(n)_4} \biggl\lvert \sum_{\mathbf{j} \in [n]_2} \sum_{\mathbf{j}' \in [n]_2^{(\mathbf{j})}} \bigl(\E \xi_{\mathbf{i}, \mathbf{j}} \E \xi_{\mathbf{i}',\mathbf{j}'}\bigr) \biggr\rvert  \\
		& \quad + \frac{1}{(n)_4} \sum_{\mathbf{j} \in [n]_2} \sum_{\mathbf{j}' \in [n]_2^{(\mathbf{j})}} \bigl(\E |\xi_{\mathbf{i}, \mathbf{j}}| \E |\xi_{\mathbf{i}',\mathbf{j}'}| \lvert \E\{ \Psi_{\beta,t}(W_{\mathcal{I},\mathcal{J}}^{(\mathcal{I})}) \} - h(t)  \rvert\bigr)\\
		& \leq  \frac{h(t)}{(n)_4} \biggl\lvert \sum_{\mathbf{j} \in [n]_2} \sum_{\mathbf{j}' \in [n]_2^{(\mathbf{j})}} \E \xi_{\mathbf{i}, \mathbf{j}} \E \xi_{\mathbf{i}',\mathbf{j}'} \biggr\rvert \\
		& \quad + {C b^2 n^{-4} (n^{-1}+\alpha_n^{-2}) (1 + t^2) h(t)} \sum_{\mathbf{j} \in [n]_2} \sum_{\mathbf{j}' \in [n]_2^{(\mathbf{j})}} \bigl(\E |\xi_{\mathbf{i}, \mathbf{j}}| \E |\xi_{\mathbf{i}',\mathbf{j}'}| \bigr).
        \label{eq-Q111}
	\end{equ}
	For the first term of the right hand side of \cref{eq-Q111}, since $\E \xi_{\mathbf{i}',\pi(\mathbf{i}')} = 0$, it follows that $\sum_{\mathbf{j}' \in [n]_2} \E \xi_{\mathbf{i}',\mathbf{j}'} = 0$. Therefore, 
	\begin{equ}
		\biggl\lvert \sum_{\mathbf{j}' \in [n]_2^{(\mathbf{j})}} \E \xi_{\mathbf{i}',\mathbf{j}'} \biggr\rvert
		& = \biggl\lvert- \sum_{\mathbf{j}' \in [n]_2 \setminus [n]_2^{(\mathbf{j})}} \E
            \xi_{\mathbf{i}',\mathbf{j}'} \biggr\rvert
            \leq \sum^{ }_{\mathbf{j}'\in
        [n]_{2}} \1(E_{\mathbf{j},\mathbf{j}'})\E \lvert \xi_{\mathbf{i}',\mathbf{j}'} \rvert,
        \label{eq-B_43}
	\end{equ}
	where $E_{\mathbf{j},\mathbf{j}'}= \{A(\mathbf{j})\cap
	A(\mathbf{j}') \neq \emptyset \}$.
   Hence, we have 
	\begin{equ}
		\lvert Q_1 \rvert
        & \leq C b (1 + t^2) n^{-4} h(t)   \sum_{\mathbf{j}, \mathbf{j}' \in [n]_2}
        (\1(E_{\mathbf{j},\mathbf{j}'}) + \alpha_{n}^{-2}+ n^{-1})\bigl(\E \lvert \xi_{\mathbf{i},\mathbf{j}} \rvert \E \lvert \xi_{\mathbf{i}', \mathbf{j}'} \rvert\bigr).
        \label{eq-B_82}
	\end{equ}

    We now consider $Q_{2}$. Since $\xi_{\mathbf{i},\mathbf{j}} \xi_{\mathbf{i}', \mathbf{j}'}
    V_{\mathcal{I},\mathcal{J}}$ is independent of $W^{(\mathcal{I})}$ conditional on $\pi(\mathcal{I})=\mathcal{J}$, we have 
    \begin{equation}
        \begin{aligned}
            Q_2 & = \frac{1}{(n)_4} \sum_{\mathbf{j} \in [n]_2} \sum_{\mathbf{j}' \in
            [n]_2^{(\mathbf{j})}} \E \{ \xi_{\mathbf{i},\mathbf{j}} \xi_{\mathbf{i}', \mathbf{j}'}
            V_{\mathcal{I},\mathcal{J}}\}\IE \{\Psi_{\beta,t}'(W^{(\mathcal{I})}) \vert
        \pi(\mathcal{I})=\mathcal{J}\} \\
& = \frac{1}{(n)_4} \sum_{\mathbf{j} \in [n]_2} \sum_{\mathbf{j}' \in
            [n]_2^{(\mathbf{j})}} \E \{ \xi_{\mathbf{i},\mathbf{j}} \xi_{\mathbf{i}', \mathbf{j}'}
            V_{\mathcal{I},\mathcal{J}}\}\IE
            \{\Psi_{\beta,t}'(W^{(\mathcal{I})}_{\mathcal{I},\mathcal{J}}) \}\\
&= Q_{2,1}+Q_{2,2}+Q_{2,3},
        \end{aligned}
        \label{eq:B_63}
    \end{equation}
   where 
   \begin{align*}
       Q_{2,1}&=  \frac{1}{(n)_4} \sum_{\mathbf{j} \in [n]_2} \sum_{\mathbf{j}' \in
            [n]_2^{(\mathbf{j})}} \E \{ \xi_{\mathbf{i},\mathbf{j}} \xi_{\mathbf{i}', \mathbf{j}'}
        V_{\mathbf{i},\mathbf{j}} \}\IE
            \{\Psi_{\beta,t}'(W) \},\\ 
            Q_{2,2}&=  \frac{1}{(n)_4} \sum_{\mathbf{j} \in [n]_2} \sum_{\mathbf{j}' \in
            [n]_2^{(\mathbf{j})}} \E \{ \xi_{\mathbf{i},\mathbf{j}} \xi_{\mathbf{i}', \mathbf{j}'}
        V_{\mathbf{i}',\mathbf{j}'} \}\IE
            \{\Psi_{\beta,t}'(W) \},\\
                Q_{2,3}&= \frac{1}{(n)_4} \sum_{\mathbf{j} \in [n]_2} \sum_{\mathbf{j}' \in
            [n]_2^{(\mathbf{j})}} \E \{ \xi_{\mathbf{i},\mathbf{j}} \xi_{\mathbf{i}', \mathbf{j}'}
            V_{\mathcal{I},\mathcal{J}}\}\IE
            \{\Psi_{\beta,t}''(W+U(W^{(\mathcal{I})}_{\mathcal{I},\mathcal{J}}-W))
            (W^{(\mathcal{I})}_{\mathcal{I},\mathcal{J}}-W)\},
       \label{eq:B_64}
   \end{align*}
   and $V_{\mathbf{i},\mathbf{j}}= X_{i_{1},j_{1}}+X_{i_{2},j_{2}}$, $V_{\mathbf{i}',\mathbf{j}'}=
   X_{i_{1}',j_{1}'}+X_{i_{2}',j_{2}'}$.
   Similar to \cref{eq-B_43}, we have 
   \begin{equation}
       \begin{aligned}
           |Q_{2,1}|&\leq C n^{-4} t h(t) \sum_{\mathbf{j},\mathbf{j}' \in [n]_2}\1(E_{\mathbf{j},\mathbf{j}'})
           \vert \E \{ \xi_{\mathbf{i},\mathbf{j}} V_{\mathbf{i},\mathbf{j}}\}\IE\{
             \xi_{\mathbf{i}',\mathbf{j}'}\}\vert,\\
		   |Q_{2,2}|&\leq C n^{-4} t h(t) \sum_{\mathbf{j},\mathbf{j}' \in [n]_2} \1(E_{\mathbf{j},\mathbf{j}'})
		   \vert \E \{ \xi_{\mathbf{i},\mathbf{j}} \}\IE\{
             \xi_{\mathbf{i}',\mathbf{j}'}V_{\mathbf{i}',\mathbf{j}'}\}\vert.
       \end{aligned}
       \label{eq:B_65} 
   \end{equation}
    We next consider $Q_{2,3}$.  By \cref{ePsiww} and using a similar argument for \cref{eq-B93},
   \begin{equation}
	   \begin{split}
		   \MoveEqLeft
	   	 \lvert \IE \{\Psi_{\beta,t}''(W+U(W^{(\mathcal{I})}_{\mathcal{I},\mathcal{J}}-W))
		 (W^{(\mathcal{I})}_{\mathcal{I},\mathcal{J}}-W)\} \rvert \\
		& \leq t^{2} \IE \{\Psi_{\beta,t}(W)e^{t (W^{(\mathcal{I})}_{\mathcal{I},\mathcal{J}}-W)}
		  \lvert W^{(\mathcal{I})}_{\mathcal{I},\mathcal{J}}-W\rvert\}\\
		& \leq C b \alpha_{n}^{-1} (1 + t^2) h(t).
	   \end{split}
       \label{eq:B_67}
   \end{equation}
   Therefore, 
   \begin{align}
		   \lvert Q_{2,3}\rvert
           & \leq  C b (1 + t^{2}) h(t) n^{-4} \alpha_{n}^{-1}  
		   \Bigl(  \sum_{\mathbf{j}, \mathbf{j}' \in [n]_2} 
            \vert \E \{ \xi_{\mathbf{i},\mathbf{j}} V_{\mathbf{i},\mathbf{j}}\}\IE\{
            \xi_{\mathbf{i}',\mathbf{j}'}\}\vert  + \sum_{\mathbf{j}, \mathbf{j}' \in [n]_2} 
            \vert \E \{ \xi_{\mathbf{i},\mathbf{j}} \}\IE\{
             \xi_{\mathbf{i}',\mathbf{j}'}V_{\mathbf{i}',\mathbf{j}'}\}\vert \Bigr)\nonumber. \\
       \label{eq:B_70}
   \end{align}
   By \cref{eq:B_63,eq:B_65,eq:B_70}, we have
   \begin{equation}
       \begin{aligned}
           \lvert Q_{2}\rvert&\leq C b (1 + t^{2}) h(t) n^{-4}\sum_{\mathbf{j}, \mathbf{j}' \in
           [n]_2} \bigl(\alpha_{n}^{-1}+\1(E_{\mathbf{j},\mathbf{j}'})\bigr)
                           \bigl( \lvert\E \{ \xi_{\mathbf{i},\mathbf{j}} V_{\mathbf{i},\mathbf{j}}\}\IE\{
         \xi_{\mathbf{i}',\mathbf{j}'}\}\rvert+\lvert\E \{ \xi_{\mathbf{i}',\mathbf{j}'}
         V_{\mathbf{i}',\mathbf{j}'}\}\IE\{
     \xi_{\mathbf{i},\mathbf{j}}\}\rvert\bigr)\\
&\leq C b (1 + t^{2}) h(t) n^{-4}\sum_{\mathbf{j}, \mathbf{j}' \in
           [n]_2}
                           \bigl( \E \{ \lvert\xi_{\mathbf{i},\mathbf{j}}\rvert
                                   T_{\mathbf{i},\mathbf{j}}^{2}\}\IE\{
                                   \lvert\xi_{\mathbf{i}',\mathbf{j}'}\rvert\}+\E \{\lvert
                                   \xi_{\mathbf{i}',\mathbf{j}'}\rvert
         T_{\mathbf{i}',\mathbf{j}'}^{2}\}\IE\{
     \lvert\xi_{\mathbf{i},\mathbf{j}}\rvert \}\bigr)\\
&\quad+C b (1 + t^{2}) h(t) n^{-4}\sum_{\mathbf{j}, \mathbf{j}' \in
           [n]_2} \bigl(\alpha_{n}^{-2}+\1(E_{\mathbf{j},\mathbf{j}'})\bigr)
                            \E \{ \lvert\xi_{\mathbf{i},\mathbf{j}}\rvert
                                   \}\IE\{ \lvert\xi_{\mathbf{i}',\mathbf{j}'}\rvert\},
       \end{aligned}
       \label{eq:B_73}
   \end{equation}
   where the second inequality follows from Cauchy's inequality and the fact that
   $|V_{\mathbf{i},\mathbf{j}}|\leq T_{\mathbf{i},\mathbf{j}}$ for any
   $\mathbf{i},\mathbf{j}$.
   Finally, for $Q_{3}$, by \cref{ePsiww} again, and noting that $V_{\mathcal{I},\mathcal{J}} = V_{\mathbf{i},\mathbf{j}} + V_{\mathbf{i}', \mathbf{j}'}$, we have 
   \begin{equation}
       \begin{aligned}
          \MoveEqLeft \lvert \E \{ \xi_{\mathbf{i},\mathbf{j}} \xi_{\mathbf{i}', \mathbf{j}'} V_{\mathcal{I},\mathcal{J}}^2 
              \Psi_{\beta,t}''(W^{(\mathcal{I})} + U(W - W^{(\mathcal{I})}))(1-U) \vert \pi(\mathbf{i}) =
              \mathbf{j}; \pi(\mathbf{i}') = \mathbf{j}'\}\\&\leq t^{2}\E \{
              \lvert\xi_{\mathbf{i},\mathbf{j}} \xi_{\mathbf{i}', \mathbf{j}'}\rvert 
			  V_{\mathcal{I},\mathcal{J}}^2 \Psi_{\beta,t}(W^{(\mathcal{I})})
               e^{t\lvert V_{\mathcal{I},\mathcal{J}}\rvert} \vert \pi(\mathbf{i}) =
           \mathbf{j}; \pi(\mathbf{i}') = \mathbf{j}'\}\rvert 
              \\&= t^{2}\E \{
              \lvert\xi_{\mathbf{i},\mathbf{j}} \xi_{\mathbf{i}', \mathbf{j}'}\rvert 
           V_{\mathcal{I},\mathcal{J}}^2 e^{t\lvert V_{\mathcal{I},\mathcal{J}}\rvert}\}\IE \{
           \Psi_{\beta,t}(W^{(\mathcal{I})})
                \vert \pi(\mathbf{i}) =
		\mathbf{j}; \pi(\mathbf{i}') = \mathbf{j}'\}
            \\&= t^{2}\E \{
              \lvert\xi_{\mathbf{i},\mathbf{j}} \xi_{\mathbf{i}', \mathbf{j}'}\rvert 
           V_{\mathcal{I},\mathcal{J}}^2 e^{t\lvert V_{\mathcal{I},\mathcal{J}}\rvert}\}\IE \{
           \Psi_{\beta,t}(W^{(\mathcal{I})}_{\mathcal{I},\mathcal{J}}) \}
\\&\leq  2 t^{2}\E \{
              \lvert\xi_{\mathbf{i},\mathbf{j}} \xi_{\mathbf{i}', \mathbf{j}'}\rvert 
			  (T_{\mathbf{i},\mathbf{j}}^2 e^{2 t\lvert T_{\mathbf{i},\mathbf{j}}\rvert} + T_{\mathbf{i}',\mathbf{j}'}^2 e^{2 t\lvert T_{\mathbf{i}',\mathbf{j}'}\rvert} )\}\IE \{
       \Psi_{\beta,t}(W)e^{t|W^{(\mathcal{I})}_{\mathcal{I},\mathcal{J}}-W|} \},
       \end{aligned}
       \label{eq:B_71}
   \end{equation}
   where we use  $|V_{\mathbf{i},\mathbf{j}}|\leq T_{\mathbf{i},\mathbf{j}}$ for any
   $\mathbf{i},\mathbf{j}$.
   Then applying \cref{lem-B1} to the second expectation in the last line of \cref{eq:B_71} and 
   similar to \cref{eq-B93}, we
   have 
   \begin{equation}
       \begin{aligned}
         \IE \{
             \Psi_{\beta,t}(W)e^{t|W^{(\mathcal{I})}_{\mathcal{I},\mathcal{J}}-W|} \}  \leq C
			 b h(t).
       \end{aligned}
       \label{eq:B_72}
   \end{equation}
   By \cref{eq:B_71,eq:B_72}, we have
\begin{equation}
    \begin{aligned}
        \lvert Q_{3}\rvert \leq C b n^{-4} t^{2} h(t) \sum^{ }_{\mathbf{j},\mathbf{j}'\in
            [n]_{2}}  \bigl( \E \{
              \lvert\xi_{\mathbf{i},\mathbf{j}} \rvert 
		  T_{\mathbf{i},\mathbf{j}}^2 e^{2 t\lvert T_{\mathbf{i},\mathbf{j}}\rvert}\} \E
	  |\xi_{\mathbf{i}', \mathbf{j}'}|+ \E \{
              \lvert\xi_{\mathbf{i}',\mathbf{j}'} \rvert 
		  T_{\mathbf{i}',\mathbf{j}'}^2 e^{2 t\lvert T_{\mathbf{i}',\mathbf{j}'}\rvert}\} \E |\xi_{\mathbf{i}, \mathbf{j}}| \bigr).
    \end{aligned}
    \label{eq:B_74}
\end{equation}
Combining \cref{eq:B_83,eq-B_82,eq:B_73,eq:B_74} yields the desired result.
      \end{proof}  
	  We finish our paper by proving \cref{lem:B_4}, which is based on \cref{lem-B1,lemB_3}. 
\begin{proof}[Proof of \cref{lem:B_4}]
First, for $v=0$, 
expanding the square term in \cref{eq:B.84} yields 
\begin{equation}
    \begin{aligned}
        \IE \Bigl\{\Bigl(\sum^{
        }_{\mathbf{i}\in [n]_{2}} \bar{g}_{\mathbf{i},\pi(\mathbf{i})}(u)\Bigr)^{2} \Psi_{\beta,t}(W)\Bigr\}= H_{1}(u) + H_{2}(u),
    \end{aligned}
    \label{eq:11_27_13}
\end{equation}
where 
\begin{equation*}
    \begin{aligned}
          H_{1}(u) & = \frac{1}{16n^{2}} \sum^{ }_{\mathbf{i}\in [n]_{2}} \sum^{ }_{\mathbf{i}'\in
        [n]_{2}\setminus[n]_{2}^{(\mathbf{i})}}  \IE
        \{\bar g_{\mathbf{i},\pi(\mathbf{i})}(u) \bar g_{\mathbf{i}',\pi(\mathbf{i}')}(u)
        \Psi_{\beta,t}(W)\},\\
        H_{2}(u) & =\frac{1}{16n^{2}} \sum^{ }_{\mathbf{i}\in [n]_{2}} \sum^{ }_{\mathbf{i}'\in
        [n]_{2}^{(\mathbf{i})}}  \IE
        \{\bar g_{\mathbf{i},\pi(\mathbf{i})}(u) \bar g_{\mathbf{i}',\pi(\mathbf{i}')}(u)
        \Psi_{\beta,t}(W)\}.
		    \end{aligned}
\end{equation*}
For $H_{1}(u)$, by Young's inequality,  
\begin{equation}
    \begin{aligned}
        H_{1}(u)&\leq \frac{1}{16n^{2}} \sum^{ }_{\mathbf{i}\in [n]_{2}} \sum^{ }_{\mathbf{i}'\in
        [n]_{2}\setminus[n]_{2}^{(\mathbf{i})}}  \IE
		\bigl\{\bigl(g_{\mathbf{i},\pi(\mathbf{i})}^{2}(u)+ g_{\mathbf{i}',\pi(\mathbf{i}')}^{2}(u) \bigr)
        \Psi_{\beta,t}(W)\bigr\}\\
                &\leq C n^{-1} \sum^{ }_{\mathbf{i}\in [n]_{2}}   \IE
        \{g_{\mathbf{i},\pi(\mathbf{i})}^{2}(u) \Psi_{\beta,t}(W)\}.
    \end{aligned}
    \label{eq:B_93}
\end{equation}
Taking integration on both sides of \cref{eq:B_93} implies 
\begin{equation}
    \begin{aligned}
		\MoveEqLeft
        \int_{\lvert u\rvert\leq 1} e^{2 t \lvert u\rvert} \IE
        \{g_{\mathbf{i},\pi(\mathbf{i})}^{2}(u) \Psi_{\beta,t}(W)\} du\\
		& \leq C n^{-1} \IE\biggl\{
        \Psi_{\beta,t}(W)
    \Bigl(|D_{\mathbf{i},\pi(\mathbf{i})}|^{3} e^{2t \lvert
D_{\mathbf{i},\pi(\mathbf{i})}\rvert}+\IE|D_{\mathbf{i},\pi(\mathbf{i})}|^{3} e^{2t \lvert
D_{\mathbf{i},\pi(\mathbf{i})}\rvert}\Bigr)\biggr\} .
    \end{aligned}
    \label{eq:B_95}
\end{equation}
Applying \cref{lem-B1} with $k=2, m=0, \sigma=\pi$ and
$\zeta_{\mathbf{i},\mathbf{j}}=|D_{\mathbf{i},\mathbf{j}}|^{3} e^{2t \lvert
D_{\mathbf{i},\mathbf{j}}\rvert}$, we have 
\begin{equation}
    \begin{aligned}
        \MoveEqLeft \IE \{|D_{\mathbf{i},\pi(\mathbf{i})}|^{3} e^{2t \lvert
D_{\mathbf{i},\pi(\mathbf{i})}\rvert} \Psi_{\beta,t}(W)\}\\&\leq Cb \IE
      \{\Psi_{\beta,t}(W)\}  \max_{ \mathbf{v} \in [n]_2} 
        \E \{ |D_{\mathbf{i},\mathbf{v}}|^{3} e^{2t \lvert
D_{\mathbf{i},\mathbf{v}}\rvert} e^{ t  \lvert \sum^{2}_{r=1}
        X_{i_{r},v_{r}} \rvert   } \}\\
                                                           &\leq Cb^{1/16} \alpha_{n}^{-3} \IE
      \{\Psi_{\beta,t}(W)\}  \max_{ \mathbf{v} \in [n]_2} 
      \E \{ |\alpha_{n} D_{\mathbf{i},\mathbf{v}}|^{3} e^{2t \lvert
D_{\mathbf{i},\mathbf{v}}\rvert} e^{ t  \lvert \sum^{2}_{r=1}
        X_{i_{r},v_{r}} \rvert   } \}\\
&\leq Cb\alpha_{n}^{-3} \IE
      \{\Psi_{\beta,t}(W)\} ,
    \end{aligned}
    \label{eq:B_94}
\end{equation}
and similarly, 
\begin{equation}
    \begin{aligned}
        \IE|D_{\mathbf{i},\pi(\mathbf{i})}|^{3} e^{2t \lvert
        D_{\mathbf{i},\pi(\mathbf{i})}\rvert}\leq C b \alpha_{n}^{-3}.
    \end{aligned}
    \label{eq:B_96}
\end{equation}
By \cref{eq:B_93,eq:B_95,eq:B_94,eq:B_96}, we have 
\begin{equation}
    \begin{aligned}
        \int_{\lvert u\rvert\leq 1} e^{2 t \lvert u\rvert} H_{1}(u) du \leq C b n
        \alpha_{n}^{-3} \IE \{\Psi_{\beta,t}(W)\} .
    \end{aligned}
    \label{eq:B_97}
\end{equation}
For $H_{2}(u)$, by \cref{lemB_3}, with $\xi_{\mathbf{i},\mathbf{j}}= g_{\mathbf{i},\mathbf{j}}(u)$, we have any $\mathbf{i}\in
[n]_{2},\mathbf{i}'\in [n]_{2}^{(\mathbf{i})},$ and 
$\mathbf{j},\mathbf{j}'\in [n]_{2}$,
\begin{equation}
    \begin{aligned}
      \MoveEqLeft    \IE
        \{\bar g_{\mathbf{i},\pi(\mathbf{i})}(u) \bar g_{\mathbf{i}',\pi(\mathbf{i}')}(u)
    \Psi_{\beta,t}(W)\}\\
    &\leq C b (1 + t^{2}) h(t) n^{-4}\sum_{\mathbf{j}, \mathbf{j}' \in
           [n]_2}
            \E \{ \lvert \bar g_{\mathbf{i},\mathbf{j}}(u)\rvert
                                   T_{\mathbf{i},\mathbf{j}}^{2} e^{2 t\lvert T_{\mathbf{i},\mathbf{j}}\rvert}\}\IE\{
        \lvert\bar
        g_{\mathbf{i}',\mathbf{j}'}(u)\rvert\}\\&\quad+C b (1 + t^{2}) h(t) n^{-4}\sum_{\mathbf{j}, \mathbf{j}' \in
           [n]_2}\E \{\lvert
                                   \bar g_{\mathbf{i}',\mathbf{j}'}(u)\rvert
         T_{\mathbf{i}',\mathbf{j}'}^{2} e^{2 t\lvert T_{\mathbf{i}',\mathbf{j}'}\rvert}\}\IE\{
     \lvert\bar g_{\mathbf{i},\mathbf{j}}(u)\rvert \}\\
&\quad+C b (1 + t^{2}) h(t) n^{-4}\sum_{\mathbf{j}, \mathbf{j}' \in
[n]_2} \bigl(\alpha_{n}^{-2}+n^{-1}+\1(E_{\mathbf{j},\mathbf{j}'})\bigr)
                            \E \{ \lvert\bar g_{\mathbf{i},\mathbf{j}}(u)\rvert
                                   \}\IE\{ \lvert\bar g_{\mathbf{i}',\mathbf{j}'}(u)\rvert\}\\
&:=H_{21}(u)+H_{22}(u)+H_{23}(u).
    \end{aligned}
    \label{eq:B_133}
\end{equation}
First we consider $H_{21}(u)$. Since $\IE \{|\bar g_{\mathbf{i},\mathbf{j}}(u)|\}\leq 2 \IE
\vert  D_{\mathbf{i},\mathbf{j}}\vert$, by Fubini's theorem, we have for any $\mathbf{i}\in
[n]_{2},\mathbf{i}'\in [n]_{2}^{(\mathbf{i})},$ and 
$\mathbf{j},\mathbf{j}'\in [n]_{2}$,
\begin{equation}
    \begin{aligned}
      \MoveEqLeft\biggl\lvert\int_{|u|\leq 1}^{} e^{2t \lvert u\rvert} \E \{
           \lvert \bar g_{\mathbf{i},\mathbf{j}}(u)\rvert 
          T_{\mathbf{i},\mathbf{j}}^2 e^{2 t\lvert T_{\mathbf{i},\mathbf{j}}\rvert}\}\IE \{ \lvert \bar g_{\mathbf{i}', \mathbf{j}'}(u)\rvert \}du\biggr\rvert\\
           &\leq 2 \biggl\lvert\int_{|u|\leq 1}^{} e^{2t \lvert u\rvert} \E \{
           \lvert \bar g_{\mathbf{i},\mathbf{j}}(u)\rvert 
           T_{\mathbf{i},\mathbf{j}}^2 e^{2 t\lvert T_{\mathbf{i},\mathbf{j}}\rvert}\}\IE \{
  \lvert D_{\mathbf{i}',\mathbf{j}'}\rvert \}du\biggr\rvert\\
	  &\leq  2 \IE\Bigl\{ \Bigl(| D_{\mathbf{i},\mathbf{j}}|^{2} e^{2t \lvert D_{\mathbf{i},\mathbf{j}}\rvert}+
	   \IE\bigl\{ |
			  D_{\mathbf{i},\pi(\mathbf{i})}|^{2} e^{2t \lvert D_{\mathbf{i},\pi(\mathbf{i})}\rvert}\bigr\}\Bigr) T_{\mathbf{i},\mathbf{j}}^2 e^{2 t\lvert T_{\mathbf{i},\mathbf{j}}\rvert} \Bigr\}\IE \{
  \lvert D_{\mathbf{i}',\mathbf{j}'}\rvert \},
    \end{aligned}
    \label{eq:B_85}
\end{equation}
where use the same notation as in \cref{lemB_3}. Then, by \cref{eq:11_24_6} and Young's
inequality, we have
\begin{equation}
    \begin{aligned}
       \MoveEqLeft\IE\Bigl\{ | D_{\mathbf{i},\mathbf{j}}|^{2} e^{2t \lvert D_{\mathbf{i},\mathbf{j}}\rvert}
	    T_{\mathbf{i},\mathbf{j}}^2 e^{2 t\lvert T_{\mathbf{i},\mathbf{j}}\rvert} \Bigr\}\IE \{
  \lvert D_{\mathbf{i}',\mathbf{j}'}\rvert \}\\
        &\leq C \alpha_{n}^{-5}  \IE\bigl\{ | \alpha_{n}^{-1} D_{\mathbf{i},\mathbf{j}}|^{2} e^{2t \lvert
        D_{\mathbf{i},\mathbf{j}}\rvert}(\alpha_{n}^{-1}T_{\mathbf{i},\mathbf{j}})^2 e^{t\lvert
T_{\mathbf{i},\mathbf{j}}\rvert}\bigr\}\IE \{\alpha_{n}^{-1}\lvert D_{\mathbf{i}',\mathbf{j}'}\rvert\} \\
        &\leq C b \alpha_{n}^{-5}. 
    \end{aligned}
    \label{eq:B_86}
\end{equation}
Similar to \cref{eq:B_86}, 
\begin{equation}
    \begin{aligned}
        \IE\Bigl\{ | D_{\mathbf{i},\mathbf{j}}|^{2} e^{2t \lvert
        D_{\mathbf{i},\mathbf{j}}\rvert}\Bigr\}
	    \IE \Bigl\{T_{\mathbf{i},\mathbf{j}}^2 e^{2 t\lvert T_{\mathbf{i},\mathbf{j}}\rvert} \Bigr\}\IE \{
  \lvert D_{\mathbf{i}',\mathbf{j}'}\rvert \}\leq C b
      \alpha_{n}^{-5}.
    \end{aligned}
    \label{eq:B_87}
\end{equation}
Using \cref{eq:B_85,eq:B_86,eq:B_87}, we have 
\begin{equation}
    \begin{aligned}
       \biggl\lvert\int_{|u|\leq 1}^{} e^{2t \lvert u\rvert} \E \{
           \lvert \bar g_{\mathbf{i},\mathbf{j}}(u)\rvert 
          T_{\mathbf{i},\mathbf{j}}^2 e^{2 t\lvert T_{\mathbf{i},\mathbf{j}}\rvert}\}\IE \{ \lvert
  \bar g_{\mathbf{i}', \mathbf{j}'}(u)\rvert \}du\biggr\rvert \leq C b \alpha_{n}^{-5}.
    \end{aligned}
    \label{eq:B_88}
\end{equation}
Furthermore, we have, 
\begin{equation}
    \begin{aligned}
        \biggl\lvert\int_{|u|\leq 1}^{} e^{2t \lvert u\rvert} H_{21}(u) du\biggr\rvert\leq C b
        (1+t^{2}) \alpha_{n}^{-5} h(t) 
    \end{aligned}
    \label{eq:B_134}
\end{equation}
Moreover, by the same argument, we have 
\begin{equation}
    \begin{aligned}
        \biggl\lvert\int_{|u|\leq 1}^{} e^{2t \lvert u\rvert} H_{22}(u) du\biggr\rvert&\leq C b
        (1+t^{2}) \alpha_{n}^{-5} h(t)\\
        \biggl\lvert\int_{|u|\leq 1}^{} e^{2t \lvert u\rvert} H_{23}(u) du\biggr\rvert&\leq C b
(1+t^{2}) (\alpha_{n}^{-5}+n \alpha_{n}^{-3}) h(t)
    \end{aligned}
    \label{eq:B_90}
\end{equation}
By \cref{eq:B_134,eq:B_90}, we have 
we have 
\begin{equation}
    \begin{aligned}
        \int_{\lvert u\rvert\leq 1} e^{2 t \lvert u\rvert} H_{2}(u) du \leq C b^{2}
        (n^{2}\alpha_{n}^{-5}+n \alpha_{n}^{-3}) (1+t^{2})\IE \{\Psi_{\beta,t}(W)\}.
    \end{aligned}
    \label{eq:B_92}
\end{equation}
By \cref{eq:B_97,eq:B_92}, we complete the proof \cref{eq:B.84} for $v=0$. The inequality \cref{eq:B.84} for the case $v=1$ can be shown similarly.
\end{proof}
\end{appendix}

\section*{Acknowledgements}
Liu Song-Hao was partially supported by Hong Kong RGC GRF 14304917 and 14302418. Zhuo-Song Zhang was supported by the Singapore Ministry of Education Academic Research Fund Tier 2 grant MOE2018-T2-2-076. Both authors would like to thank Qi-Man Shao and Xiao Fang for
their helpful
advises.



\bibliographystyle{imsart-nameyear} 
\bibliography{reference}       

\begin{thebibliography}{26}

\bibitem[\protect\citeauthoryear{Baldi and Rinott}{1989}]{Bal89}
\begin{barticle}[author]
\bauthor{\bsnm{Baldi},~\bfnm{Pierre}\binits{P.}} \AND
  \bauthor{\bsnm{Rinott},~\bfnm{Yosef}\binits{Y.}}
(\byear{1989}).
\btitle{On normal approximations of distributions in terms of dependency
  graphs}.
\bjournal{Ann. Probab.}
\bvolume{17}
\bpages{1646--1650}.
\end{barticle}
\endbibitem

\bibitem[\protect\citeauthoryear{Baldi, Rinott and Stein}{1989}]{Ba89A}
\begin{bincollection}[author]
\bauthor{\bsnm{Baldi},~\bfnm{Pierre}\binits{P.}},
  \bauthor{\bsnm{Rinott},~\bfnm{Yosef}\binits{Y.}} \AND
  \bauthor{\bsnm{Stein},~\bfnm{Charles}\binits{C.}}
(\byear{1989}).
\btitle{A normal approximation for the number of local maxima of a random
  function on a graph}.
In \bbooktitle{Probability, statistics, and mathematics}
\bpages{59--81}.
\bpublisher{Elsevier}.
\end{bincollection}
\endbibitem

\bibitem[\protect\citeauthoryear{Chatterjee}{2014}]{Cha14}
\begin{barticle}[author]
\bauthor{\bsnm{Chatterjee},~\bfnm{Sourav}\binits{S.}}
(\byear{2014}).
\btitle{A Short Survey of {{Stein}}'s Method}.
\bjournal{Contribution to the Proceedings of the ICM 2014}.
\end{barticle}
\endbibitem

\bibitem[\protect\citeauthoryear{Chen, Fang and Shao}{2013}]{Chen2013From}
\begin{barticle}[author]
\bauthor{\bsnm{Chen},~\bfnm{Louis H.~Y.}\binits{L.~H.~Y.}},
  \bauthor{\bsnm{Fang},~\bfnm{Xiao}\binits{X.}} \AND
  \bauthor{\bsnm{Shao},~\bfnm{Qi-Man}\binits{Q.-M.}}
(\byear{2013}).
\btitle{From {S}tein identities to moderate deviations}.
\bjournal{Ann. Probab.}
\bvolume{41}
\bpages{262-293}.
\end{barticle}
\endbibitem

\bibitem[\protect\citeauthoryear{Chen and Fang}{2015}]{chen2015error}
\begin{barticle}[author]
\bauthor{\bsnm{Chen},~\bfnm{Louis~HY}\binits{L.~H.}} \AND
  \bauthor{\bsnm{Fang},~\bfnm{Xiao}\binits{X.}}
(\byear{2015}).
\btitle{On the error bound in a combinatorial central limit theorem}.
\bjournal{Bernoulli}
\bvolume{21}
\bpages{335--359}.
\end{barticle}
\endbibitem

\bibitem[\protect\citeauthoryear{Chen, Goldstein and
  Shao}{2011}]{chen2010normal}
\begin{bbook}[author]
\bauthor{\bsnm{Chen},~\bfnm{Louis~HY}\binits{L.~H.}},
  \bauthor{\bsnm{Goldstein},~\bfnm{Larry}\binits{L.}} \AND
  \bauthor{\bsnm{Shao},~\bfnm{Qi-Man}\binits{Q.-M.}}
(\byear{2011}).
\btitle{Normal approximation by Stein's method}.
\bpublisher{Springer}.
\end{bbook}
\endbibitem

\bibitem[\protect\citeauthoryear{Chen, Röllin and
  Xia}{2020}]{chen_adrian_xia2020}
\begin{barticle}[author]
\bauthor{\bsnm{Chen},~\bfnm{Louis H.~Y.}\binits{L.~H.~Y.}},
  \bauthor{\bsnm{Röllin},~\bfnm{Adrian}\binits{A.}} \AND
  \bauthor{\bsnm{Xia},~\bfnm{Aihua}\binits{A.}}
(\byear{2020}).
\btitle{{Palm} {theory}, {random} {measures} {and} {Stein} {couplings}}.
\bjournal{Available at \textup{arXiv 2004.05026}}.
\end{barticle}
\endbibitem

\bibitem[\protect\citeauthoryear{Chen and Shao}{2004}]{chenshao2004}
\begin{barticle}[author]
\bauthor{\bsnm{Chen},~\bfnm{Louis H.~Y.}\binits{L.~H.~Y.}} \AND
  \bauthor{\bsnm{Shao},~\bfnm{Qi-Man}\binits{Q.-M.}}
(\byear{2004}).
\btitle{Normal approximation under local dependence}.
\bjournal{Ann. Probab.}
\bvolume{32}
\bpages{1985--2028}.
\end{barticle}
\endbibitem

\bibitem[\protect\citeauthoryear{Cram{\'e}r}{1938}]{Cra38}
\begin{barticle}[author]
\bauthor{\bsnm{Cram{\'e}r},~\bfnm{Harald}\binits{H.}}
(\byear{1938}).
\btitle{Sur un nouveau th{\'e}oreme-limite de la th{\'e}orie des
  probabilit{\'e}s}.
\bjournal{Actual. Sci. Ind.}
\bvolume{736}
\bpages{5--23}.
\end{barticle}
\endbibitem

\bibitem[\protect\citeauthoryear{Dembo and Rinott}{1996}]{Dem96}
\begin{bincollection}[author]
\bauthor{\bsnm{Dembo},~\bfnm{Amir}\binits{A.}} \AND
  \bauthor{\bsnm{Rinott},~\bfnm{Yosef}\binits{Y.}}
(\byear{1996}).
\btitle{Some examples of normal approximations by Stein’s method}.
In \bbooktitle{Random discrete structures}
\bpages{25--44}.
\bpublisher{Springer}.
\end{bincollection}
\endbibitem

\bibitem[\protect\citeauthoryear{Fang}{2019}]{fang2019}
\begin{barticle}[author]
\bauthor{\bsnm{Fang},~\bfnm{Xiao}\binits{X.}}
(\byear{2019}).
\btitle{Wasserstein-2 bounds in normal approximation under local dependence}.
\bjournal{Electron. J. Probab.}
\bvolume{24}
\bpages{1--14}.
\end{barticle}
\endbibitem

\bibitem[\protect\citeauthoryear{Frolov}{2019}]{frolov2019large}
\begin{barticle}[author]
\bauthor{\bsnm{Frolov},~\bfnm{Andrei~N}\binits{A.~N.}}
(\byear{2019}).
\btitle{On large deviations for combinatorial sums}.
\bjournal{Available at \textup{arXiv:1901.04244}}.
\end{barticle}
\endbibitem

\bibitem[\protect\citeauthoryear{Goldstein}{2005}]{Goldstein2005}
\begin{barticle}[author]
\bauthor{\bsnm{Goldstein},~\bfnm{Larry}\binits{L.}}
(\byear{2005}).
\btitle{Berry-Esseen bounds for combinatorial central limit theorems and
  pattern occurrences, using zero and size biasing}.
\bjournal{J. Appl. Probab.}
\bvolume{42}
\bpages{661--683}.
\end{barticle}
\endbibitem

\bibitem[\protect\citeauthoryear{Heinrich}{1982}]{Hei82}
\begin{barticle}[author]
\bauthor{\bsnm{Heinrich},~\bfnm{Lothar}\binits{L.}}
(\byear{1982}).
\btitle{A method for the derivation of limit theorems for sums of $m$-dependent
  random variables}.
\bjournal{Z. Wahrsch. Verw. Gebiete}
\bvolume{60}
\bpages{501--515}.
\end{barticle}
\endbibitem

\bibitem[\protect\citeauthoryear{Ho and Chen}{1978}]{Ho78}
\begin{barticle}[author]
\bauthor{\bsnm{Ho},~\bfnm{Soo~Thong}\binits{S.~T.}} \AND
  \bauthor{\bsnm{Chen},~\bfnm{Louis H.~Y.}\binits{L.~H.~Y.}}
(\byear{1978}).
\btitle{An {$L_{p}$} bound for the remainder in a combinatorial central limit
  theorem}.
\bjournal{Ann. Probab.}
\bvolume{6}
\bpages{231--249}.
\bdoi{10.1214/aop/1176995570}
\bmrnumber{478291}
\end{barticle}
\endbibitem

\bibitem[\protect\citeauthoryear{Hoeffding}{1951}]{Hoe51}
\begin{barticle}[author]
\bauthor{\bsnm{Hoeffding},~\bfnm{Wassily}\binits{W.}}
(\byear{1951}).
\btitle{A combinatorial central limit theorem}.
\bjournal{Ann. Math. Statist.}
\bvolume{22}
\bpages{558--566}.
\bdoi{10.1214/aoms/1177729545}
\bmrnumber{44058}
\end{barticle}
\endbibitem

\bibitem[\protect\citeauthoryear{Hu, Robinson and Wang}{2007}]{hu2007cramer}
\begin{barticle}[author]
\bauthor{\bsnm{Hu},~\bfnm{Zhishui}\binits{Z.}},
  \bauthor{\bsnm{Robinson},~\bfnm{John}\binits{J.}} \AND
  \bauthor{\bsnm{Wang},~\bfnm{Qiying}\binits{Q.}}
(\byear{2007}).
\btitle{Cram{\'e}r-type large deviations for samples from a finite population}.
\bjournal{Ann. Statist.}
\bvolume{35}
\bpages{673--696}.
\end{barticle}
\endbibitem

\bibitem[\protect\citeauthoryear{Linnik}{1961}]{Li61P}
\begin{binproceedings}[author]
\bauthor{\bsnm{Linnik},~\bfnm{Yurii~Vladimirovich}\binits{Y.~V.}}
(\byear{1961}).
\btitle{On the probability of large deviations for the sums of independent
  variables}.
In \bbooktitle{Proceedings of the {{Fourth Berkeley Symposium}} on
  {{Mathematical Statistics}} and {{Probability}}}
\bvolume{2}
\bpages{289--306}.
\bpublisher{Univ of California Press}.
\end{binproceedings}
\endbibitem

\bibitem[\protect\citeauthoryear{Petrov}{1975}]{petrov2012sums}
\begin{bbook}[author]
\bauthor{\bsnm{Petrov},~\bfnm{Valentin~Vladimirovich}\binits{V.~V.}}
(\byear{1975}).
\btitle{Sums of independent random variables}
\bvolume{82}.
\bpublisher{Springer Science \& Business Media}.
\end{bbook}
\endbibitem

\bibitem[\protect\citeauthoryear{Raic}{2003}]{Ra03N}
\begin{binproceedings}[author]
\bauthor{\bsnm{Raic},~\bfnm{Martin}\binits{M.}}
(\byear{2003}).
\btitle{Normal Approximation by {{Stein}}'s Method}.
In \bbooktitle{Proceedings of the 7th {{Young Statisticians Meeting}}}
\bpages{71--97}.
\end{binproceedings}
\endbibitem

\bibitem[\protect\citeauthoryear{Rai{\v c}}{2019}]{Rai19a}
\begin{barticle}[author]
\bauthor{\bsnm{Rai{\v c}},~\bfnm{Martin}\binits{M.}}
(\byear{2019}).
\btitle{A multivariate {{Berry}}\textendash{{Esseen}} theorem with explicit
  constants}.
\bjournal{Bernoulli}
\bvolume{25}
\bpages{2824--2853}.
\end{barticle}
\endbibitem

\bibitem[\protect\citeauthoryear{Rai{\v{c}}}{2007}]{raic2007}
\begin{barticle}[author]
\bauthor{\bsnm{Rai{\v{c}}},~\bfnm{Martin}\binits{M.}}
(\byear{2007}).
\btitle{CLT-related large deviation bounds based on Stein's method}.
\bjournal{Adv. in Appl. Probab.}
\bvolume{39}
\bpages{731--752}.
\end{barticle}
\endbibitem

\bibitem[\protect\citeauthoryear{Rinott}{1994}]{Ri94O}
\begin{barticle}[author]
\bauthor{\bsnm{Rinott},~\bfnm{Yosef}\binits{Y.}}
(\byear{1994}).
\btitle{On normal approximation rates for certain sums of dependent random
  variables}.
\bjournal{J. Comput. Appl. Math.}
\bvolume{55}
\bpages{135--143}.
\end{barticle}
\endbibitem

\bibitem[\protect\citeauthoryear{Shao, Zhang and Zhang}{2021}]{shao2018cram}
\begin{barticle}[author]
\bauthor{\bsnm{Shao},~\bfnm{Qi-Man}\binits{Q.-M.}},
  \bauthor{\bsnm{Zhang},~\bfnm{Mengchen}\binits{M.}} \AND
  \bauthor{\bsnm{Zhang},~\bfnm{Zhuo-Song}\binits{Z.-S.}}
(\byear{2021}).
\btitle{Cram\'er-type moderate deviation theorems for nonnormal approximation}.
\bjournal{Ann. App. Probab.}
\bvolume{31}
\bpages{247—283}.
\end{barticle}
\endbibitem

\bibitem[\protect\citeauthoryear{Stein}{1972}]{stein1972}
\begin{binproceedings}[author]
\bauthor{\bsnm{Stein},~\bfnm{Charles}\binits{C.}}
(\byear{1972}).
\btitle{A bound for the error in the normal approximation to the distribution
  of a sum of dependent random variables}.
In \bbooktitle{Proceedings of the Sixth Berkeley Symposium on Mathematical
  Statistics and Probability, Volume 2: Probability Theory}
\bpages{583--602}.
\bpublisher{University of California Press}, \baddress{Berkeley, Calif.}
\end{binproceedings}
\endbibitem

\bibitem[\protect\citeauthoryear{Zhang}{2019}]{zhang2019cramertype}
\begin{barticle}[author]
\bauthor{\bsnm{Zhang},~\bfnm{Zhuo-Song}\binits{Z.-S.}}
(\byear{2019}).
\btitle{Cram\'er-type moderate deviation of normal approximation for
  exchangeable pairs}.
\bjournal{Available at \textup{arXiv 1901.09526}}.
\end{barticle}
\endbibitem

\end{thebibliography}





\end{document}